\numberwithin{equation}{section}
\newcommand{\act}{\curvearrowright}
\newcommand{\lact}{\curvearrowleft}
\newcommand{\id}{\mathord{\operatorname{id}}}
\newcommand{\Z}{\mathbb{Z}}
\newcommand{\N}{\mathbb{N}}
\newcommand{\la}{\left\langle}
\newcommand{\ra}{\right\rangle}
\newcommand{\A}{\mathbf{A}}
\newcommand{\PA}{\mathbf{PA}}
\newcommand{\TA}{\mathbf{TA}}
\newcommand{\HFA}{\mathbf{HFA}}
\newcommand{\HTA}{\mathbf{HTA}}
\newcommand{\I}{\mathcal I}
\newcommand{\NN}[1]{N_{C_{#1}}}
\newcommand{\rE}{\operatorname{E}}
\newcommand{\Aut}{\operatorname{Aut}}
\newcommand{\pat}{\mathsf{path}}
\DeclareMathOperator{\supp}{\mathrm{supp}}
\newcommand{\Gr}{\mathcal{G}}
\newcommand{\EG}{\rE(\Gr)}
\newcommand{\Pres}[2]{\left\langle #1 \, \vert \, #2 \right\rangle}
\newcommand{\HNN}{\operatorname{HNN}}
\DeclareMathOperator{\diam}{\mathrm{diam}}
\DeclareMathOperator{\st}{\mathrm{st}}
\DeclareMathOperator{\dom}{\mathrm{dom}}
\DeclareMathOperator{\rng}{\mathrm{rng}}
\newcommand{\inv}{^{-1}}
\newcommand{\sym}{\operatorname{Sym}}
\newcommand{\Bc}{\mathcal B}
\newcommand{\Cc}{\mathcal C}
\newcommand{\Hc}{\mathcal H}
\newcommand{\Rc}{\mathcal R}
\newcommand{\Tc}{\mathcal T}
\newcommand{\Uc}{\mathcal U}
\theoremstyle{plain}
\newtheorem{theorem}{Theorem}[section]
\newtheorem{thmi}{Theorem}
\newtheorem{cori}[thmi]{Corollary}
\newtheorem*{claim}{Claim}
\newtheorem*{theoremSansNum}{Theorem}
\newtheorem{lemma}[theorem]{Lemma}
\newtheorem{proposition}[theorem]{Proposition}
\newtheorem{corollary}[theorem]{Corollary}
\theoremstyle{definition}
\newtheorem{definition}[theorem]{Definition}
\newtheorem{example}[theorem]{Example}
\newtheorem{remark}[theorem]{Remark}
\newenvironment{cproof}{\begin{proof}[Proof of the claim]}{\end{proof}}
\begin{document}

\begin{frontmatter}[classification=text]

\title{A characterization of high transitivity for groups acting on trees} 

\author[fima]{Pierre Fima\thanks{Partially supported by ANR project ANCG (No. 
		ANR-19-CE40-0002), ANR project AODynG (No. ANR-19-CE40-0008), and Indo-French 
		Centre for the Promotion of Advanced Research - CEFIPRA.}}
\author[lemaitre]{François Le Maître\thanks{Partially supported by ANR project AODynG (No. 
		ANR-19-CE40-0008), ANR Project AGRUME (No. ANR-17-CE40-0026) and 
		Indo-French Centre for the Promotion of Advanced Research - CEFIPRA}}
\author[moon]{Soyoung Moon}
\author[stalder]{Yves Stalder}

\begin{abstract}
We establish a sharp sufficient condition for groups acting on trees to be highly transitive
when the action on the tree is minimal of general type. This gives new examples of highly transitive groups, including icc non-solvable Baumslag-Solitar groups, thus answering a question of Hull and Osin.
\end{abstract}
\end{frontmatter}


\tableofcontents
\section{Introduction}

Given a countably infinite group $\Gamma$, one is naturally led to the 
study of its transitive actions, or equivalently of the homogeneous spaces 
$\Gamma/\Lambda$ where $\Lambda$ is a subgroup of $\Gamma$. A basic 
invariant for such an action is the \textbf{transitivity degree}, namely 
the supremum of the $n\in\N$ such that for any two $n$-tuples of distinct 
points, the first can be taken to the second by a group element. Note that 
the transitivity degree of an action can be infinite, as is witnessed by 
the natural action of the group of finitely supported permutations of a 
countably infinite set. One can then lift the transitivity degree to a 
group invariant $\mathrm{td}(\Gamma)$ defined as the supremum of the 
transitivity degrees of the \emph{faithful} $\Gamma$-actions. The most 
transitive groups are the \textbf{highly transitive} groups, namely those 
which admit a faithful action whose transitivity degree is infinite. Note 
that such groups automatically have infinite transitivity degree. As noted 
by Hull and Osin in
\cite{hullTransitivitydegreescountable2016},
it is actually unknown whether there is a countable group 
$\Gamma$ with infinite transitivity degree, but which fails to be highly 
transitive. 

\subsection{Some highly transitive groups}

Let us now give a brief overview of groups which are known to be highly transitive.
First, the group of finitely supported permutations of a countably infinite set is highly transitive. 
Other examples of locally finite highly transitive groups are provided by the forward orbit stabilizers of minimal $\Z$-actions on the Cantor space, such as the group of dyadic permutations, and by the Hall group.

For finitely generated amenable groups, one can upgrade the group $S_f(\Z)$ of finitely supported permutations of $\Z$ to the $2$-generated group $S_f(\Z)\rtimes\Z$ of permutations which are translations except on a finite set. 
Other natural examples are provided by derived groups of topological full groups of minimal $\Z$-subshifts acting on an orbit (the fact that they are finitely generated is due to Matui \cite{matuiremarkstopologicalfull2006}, while their amenability is a celebrated result of Juschenko and Monod
\cite{juschenkoCantorsystemspiecewise2013}).

In the non-amenable realm, the first explicit examples of highly transitive groups are free groups $\mathbb{F}_n$ for $2\leq n\leq +\infty$, as was shown in 1976 by McDonough \cite{zbMATH03558079} (see also the work of Dixon in \cite{zbMATH04105195}). 
The case of a general free product has been studied Glass and McCleary in \cite{zbMATH04181610} and later settled by Gunhouse \cite{zbMATH04193986} and independently by Hickin  \cite{zbMATH00120147}. 

In the last few years, many new examples of highly transitive groups have been discovered such as 
surface groups \cite{zbMATH06204044}, Out$(\mathbb{F}_n)$ for $n\geq 4$ \cite{garion_highly_2013}, and  non-elementary hyperbolic groups with trivial finite radical \cite{zbMATH06120601}. 
A vast generalization of these results was then found by Hull and Osin.
\begin{theoremSansNum}[{\cite[Theorem~1.2]{hullTransitivitydegreescountable2016}}]
	Every countable acylindrically hyperbolic group admits a highly transitive action with finite kernel. 
	In particular, every countable acylindrically hyperbolic group with trivial finite radical is highly transitive.
\end{theoremSansNum}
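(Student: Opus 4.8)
The plan is to reformulate high transitivity as a density statement inside a symmetric group, then to produce the desired action by a recursive construction by finite approximations, with the structure theory of acylindrically hyperbolic groups providing the engine that makes each step go through.

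First I would record the elementary fact that a \emph{faithful} action of a countable group $G$ on a countably infinite set $X$ is highly transitive exactly when the image of the associated homomorphism $G\to\sym(X)$ is dense in $\sym(X)$ for the topology of pointwise convergence; equivalently, for every finite partial bijection $\sigma$ of $X$ there is some $g\in G$ whose action extends $\sigma$. Then I would reduce to the faithful case: an acylindrically hyperbolic group has, by Dahmani--Guirardel--Osin, a unique maximal finite normal subgroup $K(G)$, and $G/K(G)$ is again acylindrically hyperbolic but now has trivial finite radical; so it suffices to build a faithful highly transitive action of $G/K(G)$, since pulling it back along $G\to G/K(G)$ yields a highly transitive action of $G$ with kernel the finite group $K(G)$, and in the trivial finite radical case nothing is lost. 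From now on I would therefore assume $K(G)=\{1\}$ and aim for a genuine faithful highly transitive action on $X=\N$.

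The construction is a recursion. Fix enumerations of two countable families of \emph{requirements}: for each nontrivial $g\in G$, a faithfulness requirement asking that $g$ act nontrivially; and for each finite partial bijection $\sigma$ of $\N$, a transitivity requirement asking that some group element act as an extension of $\sigma$. I would build an increasing chain of finite partial actions --- finite subsets $F_n\subseteq G$, finite subsets $X_n\subseteq\N$ with $\bigcup_n X_n=\N$, together with partial permutations of $X_n$ assigned to the elements of $F_n$ and compatible with the group operations wherever defined --- arranged so that after stage $n$ the $n$-th requirement is met and remains met forever after. Passing to the limit produces a homomorphism $G\to\sym(\N)$ which is faithful (by the faithfulness requirements, using crucially that $K(G)=\{1\}$, so that no nontrivial element is forced to act trivially) and has dense image (by the transitivity requirements), hence is the desired highly transitive action. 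All the content is thus concentrated in the \textbf{extension lemma}: a finite partial action can be extended, over larger finite subsets of $G$ and of $\N$, so as to satisfy one further prescribed requirement.

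For a faithfulness requirement the extension is routine. The substance, and the only place where acylindrical hyperbolicity is genuinely used, is the transitivity step: one must find a group element whose action can be consistently declared to extend $\sigma$ without contradicting the finitely many multiplicative constraints already imposed. Here I would invoke that an acylindrically hyperbolic group contains a hyperbolically embedded copy of a nonabelian free group, together with the group-theoretic Dehn filling and small cancellation technology of Dahmani--Guirardel--Osin and of Hull, which precisely quantifies the sense in which such a group behaves ``like a free group'' on any prescribed finite amount of data: relative to a given finite subset of $G$ there is an abundance of independent loxodromic elements with controlled mutual interactions, and one can realise $\sigma$ by the action of a suitable element built from the hyperbolically embedded free subgroup while keeping track of which previously-imposed relations and separations survive. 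I expect this extension lemma to be the main obstacle: the difficulty is not realising the new requirement in isolation but doing so while simultaneously (i) keeping the partial action single-valued and associative, (ii) not making any already-nontrivial element trivial, and (iii) leaving enough unused points of $\N$ and enough generic elements of $G$ for all later requirements. Reconciling (i)--(iii) is exactly what forces one to work with the controlled quotients and rotating families provided by Dehn filling, where the intersection of newly added normal subgroups with the hyperbolically embedded subgroup, and the persistence of infinite order and non-torsion phenomena, are quantitatively governed. Once the extension lemma is in place, the recursion runs to completion, yielding the faithful highly transitive action of $G/K(G)$, and hence the theorem.
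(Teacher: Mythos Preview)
This theorem is not proved in the present paper: it is quoted from Hull and Osin \cite[Theorem~1.2]{hullTransitivitydegreescountable2016} as background and motivation, and no proof of it appears here. So there is no ``paper's own proof'' to compare your proposal against.

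That said, the paper does describe in its introduction which of the two standard strategies Hull and Osin follow: they work in the space of subgroups, inductively constructing a subgroup $\Lambda\leq\Gamma$ so that the homogeneous action on $\Gamma/\Lambda$ is highly transitive (with kernel the finite radical). Your sketch is framed instead as an inductive construction of partial actions on $\N$, closer in spirit to the Dixon-type approach that the present paper itself uses for groups acting on trees. The two viewpoints are of course closely related (a transitive action is the same data as a conjugacy class of subgroups), and the engine you identify---hyperbolically embedded subgroups and the small cancellation and Dehn filling machinery of Dahmani--Guirardel--Osin---is indeed what powers the Hull--Osin argument. But your ``extension lemma'' remains a black box in your outline: you correctly flag it as the main obstacle, yet the actual content of Hull--Osin's proof lies precisely in formulating and proving the right version of that step, and your proposal does not go beyond naming the relevant tools. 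If you want to reconstruct their argument you should consult \cite{hullTransitivitydegreescountable2016} directly; the present paper will not help, as its methods are tailored to Bass--Serre theory rather than to acylindrical hyperbolicity.
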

Let us recall that a group is called \textbf{acylindrically hyperbolic} if
it admits a non-elementary acylindrical action on a hyperbolic space. 
For equivalent definitions, and for more background on acylindrically hyperbolic groups, we refer the reader to \cite{osin_acylindrically_2016} or \cite{ osin_groups_2019}.

On the other hand, examples which are not entirely covered by Hull and Osin's result come from groups acting on trees as in the work of
the first, third and fourth authors
\cite{fimaHighlyTransitiveActions2015}.
Other examples are provided by a recent result of Gelander, Glasner and So\u{\i}fer, which states that any center free unbounded and non-virtually solvable countable subgroups of ${\rm SL}_2(k)$ is highly transitive, where $k$ is a local field \cite{gelander_maximal_2020}. 

Our main result is an optimal generalization of the aforementioned result 
of the first, third and fourth authors.

\begin{thmi}\label{ThmMain}
	Let $\Gamma\curvearrowright\mathcal{T}$ be a minimal action of general type of a countable group $\Gamma$ on a tree $\mathcal{T}$. If the action on the boundary $\Gamma\curvearrowright \partial\mathcal{T}$ is topologically free, then $\Gamma$ admits a highly transitive and highly faithful action; in particular, $\Gamma$ is highly transitive.
\end{thmi}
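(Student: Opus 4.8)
The plan is to build the highly transitive and highly faithful action as an increasing union of finite partial actions, using a back-and-forth construction. Fix an enumeration of the countable set $X$ on which we want $\Gamma$ to act (one can take $X=\Gamma$, or better an abstract countable set), and fix enumerations of: all tuples $(\gamma,(x_1,\dots,x_n),(y_1,\dots,y_n))$ encoding ``transitivity requirements'' (some $\gamma\in\Gamma$ should move the distinct $x_i$ to the distinct $y_i$), and all pairs $(\gamma,x)$ with $\gamma\neq 1$ encoding ``faithfulness requirements'' ($\gamma\cdot x\neq x$ for some $x$). At each finite stage we have a partial action of $\Gamma$ on a finite subset of $X$, presented concretely as a finite labelled graph / finite subset of the Cayley-type data, and we must show each requirement can be met by a finite extension. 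The key point where the hypotheses enter is exactly the extension step: given a finite partial action, a chosen $\gamma$, and a prescribed finite partial bijection that $\gamma$ should realize, we must find an \emph{honest} element of $\Gamma$ inducing that partial bijection on the relevant finite set, after possibly enlarging the finite set.

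First I would set up the correspondence between partial actions and configurations in the tree: because $\Gamma\curvearrowright\mathcal T$ is minimal of general type, there are hyperbolic elements with transverse axes, and one has a ping-pong / Schottky-type abundance of hyperbolic elements whose dynamics on $\partial\mathcal T$ can be prescribed on finitely many points. Concretely, given finitely many distinct boundary points (or half-trees) and a target finitely-supported pattern, one produces $g\in\Gamma$ with the prescribed north-south behavior on that finite configuration; this is the standard mechanism behind \cite{fimaHighlyTransitiveActions2015} and the engine of high transitivity for tree actions. The new input — topological freeness of $\Gamma\curvearrowright\partial\mathcal T$ — is what lets us simultaneously guarantee high faithfulness: when we need a nontrivial $\gamma$ to act nontrivially, topological freeness says the fixed-point set of $\gamma$ on $\partial\mathcal T$ has empty interior, so we can always find a point/half-tree in our finite configuration (or add one) on which $\gamma$ acts nontrivially, and propagate this to the abstract action being built.

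Then I would run the back-and-forth: at stage $k$, look at the $k$-th requirement. For a transitivity requirement, use the tree dynamics to realize the demanded finite partial bijection by an actual group element, enlarging the finite domain of the partial action by finitely many new points (chosen as fresh elements of $X$, with their $\Gamma$-translates consistently recorded using the group structure — here one uses that the partial action extends to a genuine action of $\Gamma$ on the eventual union because compatibility is maintained at each step). For a faithfulness requirement $(\gamma,-)$ with $\gamma\neq 1$, use topological freeness to ensure some point in the (possibly enlarged) domain is moved by $\gamma$. Taking the union over all stages yields a faithful action of $\Gamma$ on $X$; it is highly transitive because every finite transitivity pattern was eventually realized by a group element, and highly faithful because the stronger faithfulness requirements were all met. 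High transitivity then follows a fortiori, giving the ``in particular'' clause.

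The main obstacle is the extension step, and within it two intertwined difficulties: (i) realizing a \emph{prescribed} finite partial bijection (not just some rich behavior) by a genuine element of $\Gamma$ while respecting the already-built partial action — this requires carefully translating the combinatorics of the partial action into a configuration of half-trees/ends on which the ping-pong elements can be made to act as dictated, and (ii) ensuring the two families of requirements do not conflict, i.e. that enforcing faithfulness via topological freeness never obstructs a later transitivity step and vice versa. Managing (i) cleanly is where one expects the technical heart of the paper to lie; it is presumably handled by an auxiliary lemma (stated earlier) that packages ``minimal of general type'' into a usable statement about prescribing the action of a single group element on a finite subtree configuration, with the topological-freeness hypothesis feeding the genericity needed in (ii).
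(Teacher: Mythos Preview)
Your back-and-forth framework is the right shape, but there is a genuine gap in the extension step, and the role of topological freeness is misidentified.

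The paper does \emph{not} build the action directly from the dynamics on $\partial\mathcal T$. It first uses Bass-Serre theory to reduce: by collapsing subtrees (Lemma~\ref{Permanence under shinking}) one obtains an edge-transitive action, so $\Gamma$ is an amalgam or an HNN extension, and $\mathcal T$ is its Bass-Serre tree. Only then does the construction begin, and the set $X$ is not a configuration of ends or half-trees: it is an abstract set equipped with a \emph{free} action of the vertex group $H$ (or $\Gamma_1,\Gamma_2$), and a $\Gamma$-action is specified by choosing a bijection $\tau$ playing the role of the stable letter (resp.\ of the identification). The back-and-forth then runs in the Polish space of such $\tau$'s. The extension step (Propositions~\ref{prop: key prop for HNN} and~\ref{prop: key prop for amalgams}) is: restrict $\tau$ to a finite piece, take its \emph{free globalization} (which glues on copies of the translation action and guarantees treeing edges), then modify $\tau$ on those new pieces to force $x_i\gamma = y_i$ for a specific $\gamma$ built by hand. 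Your proposal to ``translate the partial action into a configuration of half-trees and use ping-pong'' has no obvious meaning here, because the points of $X$ are $H$-orbit representatives, not boundary points; ping-pong on $\partial\mathcal T$ does not prescribe where a group element sends abstract points of $X$.

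Second, topological freeness is not used where you place it. High faithfulness comes essentially for free from the free-globalization construction (Proposition~\ref{Free globalization faithful HNN}, Remark~\ref{Free globalization faithful}): the globalization always contains a copy of the translation pre-action, which is strongly faithful. Topological freeness is used \emph{inside the transitivity step}, at a very specific moment: when one tries to push the points $x_i$ and $y_i$ into disjoint half-trees of the Bass-Serre graph, two of the associated paths may coincide, differing only by a left translation by some $\sigma\in\Sigma$; topological freeness of $\Gamma\curvearrowright\partial\mathcal T$ is exactly what guarantees that $\sigma$ does not fix the relevant half-tree pointwise, so a further path-type extension separates them. Without this, the construction of the element $\gamma$ sending $x_i$ to $y_i$ breaks down. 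So the two families of requirements do not ``conflict'' in the way you anticipate; rather, topological freeness is the missing ingredient that makes the transitivity extension work at all.
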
	

The above \textbf{minimality} assumption means that there are no nontrivial invariant subtrees, 
while the \textbf{topological freeness} assumption means that no half-tree 
can be pointwise fixed by a non-trivial group element (in particular, the 
action is faithful). 
An action on a tree is of \textbf{general type} when there are two transverse hyperbolic elements (see Section \ref{PrelimTrees}). 
All these hypotheses are necessary in Theorem \ref{ThmMain}: 
for topological freeness this is discussed in the next section, while for 
the type of the action and the minimality this is discussed in section  
\ref{MinimalityIsNecessary}.

Finally, \textbf{high faithfulness} is a natural strengthening of 
faithfulness introduced in 
\cite{fimaHighlyTransitiveActions2015}, which states that 
the intersection of the supports of finitely many nontrivial group elements 
is always infinite (see Section \ref{Group actions} for equivalent 
definitions). 
Let us remark that the group of finitely 
supported permutations does not admit highly transitive highly faithful 
actions \cite[Remark 8.23]{fimaHomogeneousActionsUrysohn2018}, and that the 
natural 
highly transitive action of a topological full group is never highly 
faithful. It would be interesting to understand whether the highly 
transitive actions of acylindrically hyperbolic groups with trivial finite 
radical built by Hull and Osin are highly faithful.

\subsection{Obstructions to high transitivity}

Let us now move on to obstructions to high transitivity, which will lead us to a reformulation of our main theorem as a series of equivalences 
thanks to the work of Hull and Osin 
\cite{hullTransitivitydegreescountable2016}
and of Le Boudec and Matte Bon	
\cite{leboudecTripleTransitivityNonfree2019}. 

First, one can use the fact that the group of permutations of a countably 
infinite set is topologically simple for the product of the discrete 
topology, 
and that high transitivity can be reformulated as arising as a dense 
subgroup of this group. This yields the well-known fact that in a highly 
transitive group, the 
centralizer of every non-trivial group element is core-free (see Corollary~
\ref{cor: ht restriction}).
In particular, highly transitive groups cannot be solvable or contain 
nontrivial commuting normal subgroups, and they must be icc (all their 
non-trivial conjugacy classes are infinite).

In another direction, Hull and Osin have shown that given a highly transitive faithful action of a group $\Gamma$, the following are equivalent:
\begin{enumerate}[label=(\arabic*)]
	\item There is a non-trivial group element with finite support;
	\item The alternating group over an infinite countable set embeds into $\Gamma$;
	\item The group $\Gamma$ satisfies a \emph{mixed identity} .
\end{enumerate}
In particular, any simple highly transitive group which is not the 
alternating group over an infinite countable set must be MIF (mixed identity 
free). Moreover,  the fact that the highly transitive actions of the groups 
we consider in Theorem \ref{ThmMain} are highly faithful yields that those 
groups are MIF. We refer the reader to \cite[Sec. 
5]{hullTransitivitydegreescountable2016} for the definition of mixed 
identities, and the proof of the above-mentioned result.

Finally, there are some groups for which one can actually classify 
sufficiently transitive actions, and show that none of them are highly 
transitive. The first and only examples have been 
uncovered by Le Boudec 
and Matte Bon , who proved the 
following remarkable result.
\begin{theoremSansNum}[\cite{leboudecTripleTransitivityNonfree2019}]
	Suppose a group  $\Gamma$ admits a faithful minimal action of 
	general type on a tree $\Tc$
	which is not topologically free on the boundary. Then every faithful
	$\Gamma$-action 
	of transitivity degree at least $3$ is conjugate to the restriction to 
	one orbit of the $\Gamma$-action on the
	boundary of $\Tc$ 
	(whose transitivity degree is at most $3$), and the group is not MIF.
\end{theoremSansNum}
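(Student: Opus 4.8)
\emph{Extracting a micro-supported structure.} First I would convert the failure of topological freeness into a statement about the boundary action $\Gamma\curvearrowright\partial\Tc$. By hypothesis some $g_0\neq 1$ fixes a half-tree pointwise, hence fixes pointwise the corresponding shadow (clopen set) $\Oc_{e_0}\subseteq\partial\Tc$, so $\supp(g_0)$ is contained in the complementary shadow $\Oc_{\bar e_0}$. Since $\Gamma\curvearrowright\Tc$ is minimal of general type, the boundary action is minimal and \emph{extremely proximal}: for every proper closed $C\subsetneq\partial\Tc$ and every non-empty open $V$ there is $h\in\Gamma$ with $hC\subseteq V$. Applying this with $C=\overline{\Oc_{\bar e_0}}$ and $V=\Oc_e$, the conjugate $hg_0h^{-1}$ is supported inside $\Oc_e$; thus for every oriented edge $e$ the rigid stabilizer $R_{\Oc_e}=\{g : g|_{\partial\Tc\setminus\Oc_e}=\id\}$ is non-trivial, i.e. the boundary action is micro-supported. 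These subgroups form a lattice ($R_U\leq R_V$ when $U\subseteq V$, and $[R_U,R_V]=1$ when $U\cap V=\emptyset$) that will be the bridge between the tree and an abstract transitive action.

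\emph{The degree is at most $3$.} Four pairwise distinct ends $\xi_1,\xi_2,\xi_3,\xi_4$ span a convex hull that is a finite tree with four leaves; for a generic $4$-tuple this hull has two distinct branch points, and the way the four leaves are distributed around them determines an unordered partition of $\{\xi_1,\xi_2,\xi_3,\xi_4\}$ into two pairs. Tree automorphisms preserve convex hulls and send branch points to branch points, so this partition is a $\Gamma$-invariant of generic $4$-tuples taking more than one value. Hence $\Gamma$ cannot be $4$-transitive on any orbit of ends, and the transitivity degree of $\Gamma\curvearrowright\partial\Tc$ is at most $3$.

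\emph{The rigidity (main step).} Let $\Gamma\curvearrowright X$ be faithful of transitivity degree at least $3$; being transitive, $X\cong\Gamma/H$ with $H=\Stab(x_0)$, and $3$-transitivity says exactly that $H$ acts $2$-transitively on $X\setminus\{x_0\}$. The goal is to show $H$ is an end-stabilizer. The model computation to imitate is that, on the boundary, $\Gamma_{\xi_0}$ contains $R_{\Oc_e}$ exactly when $\xi_0\notin\Oc_e$, so that $\xi_0=\bigcap\{\overline{\Oc_e} : \xi_0\in\Oc_e\}$ is reconstructed from $\Gamma_{\xi_0}$ as the unique end cut out by the family $\{\Oc_e : R_{\Oc_e}\not\leq\Gamma_{\xi_0}\}$. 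I would therefore study, for the abstract action, how each non-trivial $R_{\Oc_e}$ sits relative to $H$. The heart of the argument is to prove, using the $2$-transitivity of $H$ on $X\setminus\{x_0\}$ (that is, the $3$-transitivity of the action) together with the nesting and commutation relations of the lattice, that $\{\Oc_e : R_{\Oc_e}\not\leq H\}$ is a nested filter shrinking to a single end $\xi_0$, while $R_{\Oc_e}\leq H$ forces the $\Oc_e$-supported elements to fix $x_0$. This gives $H\leq\Gamma_{\xi_0}$. Since the action on $X$ is $2$-transitive it is primitive, so $H$ is maximal; as faithfulness and minimality force $\Gamma_{\xi_0}\subsetneq\Gamma$, maximality yields $H=\Gamma_{\xi_0}$, hence the $\Gamma$-set isomorphism $X\cong\Gamma\xi_0$, whose degree is at most $3$ by the previous paragraph.

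\emph{Non-rigidity of identities, and the main obstacle.} Finally, the micro-supported structure of the first step yields a non-trivial mixed identity by the standard argument from the non-trivial rigid stabilizers (using extreme proximality to control the supports of conjugates by the free variable), so $\Gamma$ is not MIF. The clear main obstacle is the reconstruction in the third step: turning the purely set-theoretic data of a $3$-transitive $\Gamma$-set into a genuine geometric identification with a boundary orbit. The two delicate points are (i) proving that $\{\Oc_e : R_{\Oc_e}\not\leq H\}$ is \emph{coherent}, i.e. determines one end rather than several — this is precisely where the $2$-transitivity of the point stabilizer, and not mere primitivity, is used, and hence where degree $\geq 3$ rather than $2$ is indispensable — and (ii) verifying the converse implication that $R_{\Oc_e}\leq H$ really forces fixation of $x_0$, which is what pins $H$ below a single end-stabilizer rather than a larger subgroup.
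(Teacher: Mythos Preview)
This theorem is not proved in the present paper: it is quoted from Le Boudec and Matte Bon \cite{leboudecTripleTransitivityNonfree2019} and used as a black box (see the proof of Theorem~\ref{Thm td}, where the implications $(1)\Rightarrow(4)$ and $(3)\Rightarrow(4)$ are attributed to \cite[Theorem~1.4]{leboudecTripleTransitivityNonfree2019} and \cite[Proposition~3.7]{leboudecTripleTransitivityNonfree2019} respectively). So there is no ``paper's own proof'' to compare your attempt against.

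That said, your outline is broadly aligned with the strategy of the cited paper. The extraction of a micro-supported structure on $\partial\Tc$ from extreme proximality and a non-trivial half-tree stabilizer is correct, as is the ``cross-ratio'' obstruction to $4$-transitivity on ends via the combinatorics of the convex hull of four points. For the non-MIF conclusion, your sketch is right in spirit: one exhibits an explicit mixed identity from elements with disjoint supports, for instance of the shape $[\,[x^{-1}ax,b]\,,\,b\,]=1$ with $a,b$ suitably chosen so that conjugates of $\supp(a)$ stay disjoint from $\supp(b)$.

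The genuinely delicate part, as you correctly flag, is the reconstruction step. Your description of it is more of a wish list than an argument: the assertion that $\{\Oc_e : R_{\Oc_e}\not\leq H\}$ forms a nested filter converging to a single end is exactly the crux, and requires a careful dichotomy argument exploiting the $2$-transitivity of $H$ on $X\setminus\{x_0\}$ together with the lattice relations among rigid stabilizers. In the source, this is handled through an analysis of how confined subgroups interact with the micro-supported structure; your sketch identifies the right objects but does not yet supply the mechanism that forces coherence of the filter. If you want to make this self-contained, that is the step to flesh out.
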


They also proved a similar statement for groups acting on the circle, and 
provided examples of groups, coming from
\cite{leboudecGroupsActingTrees2016, le_boudec_c-simplicity_2017},
satisfying the above assumptions.
Combining their result with ours, we obtain a large class of groups for which
high transitivity is completely understood:

\begin{thmi}\label{Thm td} Let $\Gamma\curvearrowright\mathcal{T}$ be a 
	faithful minimal action of general type  on a 
	tree $\mathcal{T}$.  The following are equivalent
	\begin{enumerate}[label=(\arabic*)]
		\item $\mathrm{td}(\Gamma)\geq 4$;
		\item $\Gamma$ is highly transitive;
		\item $\Gamma$ is MIF;
		\item \label{it: top free on boundary}$\Gamma\act \partial\Tc$ is topologically free.
	\end{enumerate}
\end{thmi}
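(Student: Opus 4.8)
The plan is to prove the cycle of implications $(2)\Rightarrow(3)\Rightarrow(4)\Rightarrow(2)$ together with $(2)\Rightarrow(1)\Rightarrow(4)$, so that all four statements become equivalent. Two of these arrows are essentially free: $(2)\Rightarrow(1)$ is immediate since a highly transitive action has infinite transitivity degree, hence in particular $\mathrm{td}(\Gamma)\geq 4$; and $(4)\Rightarrow(2)$ is exactly Theorem~\ref{ThmMain}, which under topological freeness on the boundary produces a highly transitive (and highly faithful) action.

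Next I would handle $(2)\Rightarrow(3)$. By the Hull--Osin trichotomy recalled just above, for a highly transitive \emph{faithful} action the existence of a nontrivial finitely supported element is equivalent to $\Gamma$ satisfying a mixed identity; so it suffices to rule out finitely supported elements, equivalently to rule out a copy of the infinite alternating group inside $\Gamma$. Here I would invoke the structure coming from the action on the tree: a faithful minimal action of general type forces $\Gamma$ to contain a nonabelian free group (two transverse hyperbolic elements generate one by a ping-pong argument), and more to the point such $\Gamma$ cannot contain a copy of $A_\infty$ — for instance because a minimal action of general type has a hyperbolic element whose centralizer is ``small'' (virtually cyclic on its axis up to the boundary action), which is incompatible with the huge centralizers present in $A_\infty$. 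Alternatively, and more cleanly, one combines Theorem~\ref{ThmMain}'s conclusion with the Hull--Osin result: \emph{high faithfulness} of the action produced when $(4)$ holds directly gives MIF, but for the bare implication $(2)\Rightarrow(3)$ without assuming $(4)$ one argues that a group with a faithful minimal action of general type on a tree and a nontrivial finitely supported highly transitive action would have to satisfy a mixed identity, contradicting the fact (again via ping-pong on the tree) that such groups are mixed identity free. The cleanest route is thus: any faithful minimal action of general type on a tree makes $\Gamma$ MIF-obstruction-free in the relevant sense, so $(2)\Rightarrow(3)$.

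The crucial and least formal step is $(3)\Rightarrow(4)$, and this is where I would lean entirely on the theorem of Le Boudec and Matte Bon quoted above. Its contrapositive: if $\Gamma\act\partial\Tc$ is \emph{not} topologically free, then (the action on $\Tc$ being faithful, minimal, of general type) every faithful $\Gamma$-action of transitivity degree at least $3$ is conjugate to a boundary orbit action, and in particular $\Gamma$ is \emph{not} MIF. Hence $\neg(4)\Rightarrow\neg(3)$, i.e. $(3)\Rightarrow(4)$. The same contrapositive also yields $\neg(4)\Rightarrow\mathrm{td}(\Gamma)\leq 3$, which gives $(1)\Rightarrow(4)$ and closes the remaining gap. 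So the logical skeleton is: $(2)\Rightarrow(1)$ trivial; $(1)\Rightarrow(4)$ and $(3)\Rightarrow(4)$ by Le Boudec--Matte Bon; $(4)\Rightarrow(2)$ by Theorem~\ref{ThmMain}; and $(2)\Rightarrow(3)$ by Hull--Osin together with the absence of a mixed identity forced by the tree action. The main obstacle is purely one of bookkeeping the hypotheses: one must check that the ambient action on $\Tc$ satisfies \emph{faithful, minimal, general type} throughout so that both the Le Boudec--Matte Bon theorem and Theorem~\ref{ThmMain} apply, and that the transitivity-degree bounds are being compared on the nose ($\geq 4$ versus $\leq 3$) so that the two halves genuinely dovetail with no case left uncovered.
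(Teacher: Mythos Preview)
Your logical skeleton is essentially the paper's, and the implications $(2)\Rightarrow(1)$, $(1)\Rightarrow(4)$, $(3)\Rightarrow(4)$ (both via Le Boudec--Matte Bon), and $(4)\Rightarrow(2)$ (via Theorem~\ref{ThmMain}) are all handled correctly. The problem is your direct attack on $(2)\Rightarrow(3)$.

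Both heuristics you offer there are false. First, a group with a faithful minimal action of general type on a tree \emph{can} contain a copy of the infinite alternating group: the HNN extensions $\HNN(S_f(X),S_f(Y),\vartheta)$ of Section~\ref{sec: ex HNN from fs perm} do, since $S_f(X)$ sits inside them. The centralizer argument you sketch only controls the centralizer of a hyperbolic element, which says nothing about subgroups living inside vertex stabilizers. Second, the assertion that ``any faithful minimal action of general type on a tree makes $\Gamma$ MIF'' is exactly the implication you are trying to prove and is not true in general without the topological-freeness hypothesis: the groups in Section~\ref{ExampleFaithfulNonTopolFree} have faithful minimal actions of general type yet are \emph{not} MIF. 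So there is no direct $(2)\Rightarrow(3)$ along these lines.

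Fortunately you do not need one, and you already noted the fix in passing: route through $(4)$. From $(2)\Rightarrow(1)\Rightarrow(4)$ you have $(4)$; then Theorem~\ref{ThmMain} produces a highly transitive and \emph{highly faithful} action, so every nontrivial element has infinite support, and Hull--Osin's criterion (their Corollary~5.8) gives MIF. This is precisely how the paper argues $(4)\Rightarrow(3)$, and together with your other arrows it closes the cycle. Drop the direct $(2)\Rightarrow(3)$ paragraph and promote your ``alternatively'' remark to the actual argument.
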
 
Note that the topological freeness of the action on the boundary $\partial 
\Tc$ (item \eqref{it: top free on boundary}) is a strengthening of the global 
assumption that the $\Gamma$ action on the tree $\Tc$ is faithful.

In relation to the above quoted question by Hull and Osin, let us note that 
Theorem \ref{Thm td} yields the equivalence between high transitivity and 
infinite transitivity 
degree 
for countable groups admitting a faithful and minimal action of general type on a tree.

\subsection{The cases of amalgams and HNN extensions}
In order to prove Theorem \ref{ThmMain}, we use Bass-Serre theory and 
reduce 
the proof to the case of an HNN extension or an amalgamated free product.

Let us first describe the case of an HNN extension $\Gamma={\rm 
	HNN}(H,\Sigma,\vartheta)$. Let $\Tc$ be the Bass-Serre tree of $\Gamma$ 
(see section \ref{PrelimHNN}). 
Then it is easy to check that the action $\Gamma\curvearrowright\Tc$ is minimal of general type if and only if $\Sigma\neq H\neq\vartheta(\Sigma)$. 

The HNN extension case of Theorem \ref{ThmMain} that we show in the present paper is the following.
\begin{thmi}\label{ThmMainHNN}
	Let $\Gamma$ by an HNN extension $\Gamma={\rm HNN}(H,\Sigma,\vartheta)$ with $\Sigma\neq H\neq\vartheta(\Sigma)$.
	If the action of $\Gamma$ on the boundary of its Bass-Serre tree is topologically free, then $\Gamma$ admits a highly transitive and highly faithful action; in particular, $\Gamma$ is highly transitive.
\end{thmi}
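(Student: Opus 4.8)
The plan is to obtain the desired action as a generic point of a Polish space of actions and run a Baire category argument, in the spirit of the construction of \cite{fimaHighlyTransitiveActions2015}. Fix a countably infinite set $X$ and a faithful, ``spread-out'' action $\alpha_0\colon H\actson X$; concretely one may take $X=H\times\Z$ with $H$ acting by left translation on the first coordinate. The point of the extra $\Z$-multiplicity is that the two restricted $\Sigma$-actions $\sigma\mapsto\alpha_0(\sigma)$ and $\sigma\mapsto\alpha_0(\vartheta(\sigma))$ are then both isomorphic to $\aleph_0$ disjoint copies of the left-regular $\Sigma$-set, hence isomorphic to each other and with a large automorphism group --- whereas on $H$ alone they could fail to be isomorphic (e.g.\ for Baumslag-Solitar groups). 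By the universal property of the HNN extension, extending $\alpha_0$ to an action of $\Gamma=\HNN(H,\Sigma,\vartheta)$ on $X$ amounts to choosing a permutation $t\in\sym(X)$ with $t\,\alpha_0(\sigma)\,t^{-1}=\alpha_0(\vartheta(\sigma))$ for all $\sigma\in\Sigma$; the set $\mathcal A$ of such $t$ is a closed, nonempty subset of $\sym(X)$, hence a Polish space, whose points are precisely the $\Gamma$-actions on $X$ restricting to $\alpha_0$. (Alternatively one could let the $H$-action vary as well, working inside the Polish space of all $\Gamma$-actions on $X$.)

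Next I would observe that, for $t\in\mathcal A$, being $n$-transitive and being highly faithful are $G_\delta$ conditions on $t$. Indeed $n$-transitivity equals $\bigcap\bigcup_{g\in\Gamma}\{t : g\cdot\overline{x}=\overline{y}\}$, the outer intersection over pairs $(\overline{x},\overline{y})$ of $n$-tuples of distinct points; and for a fixed finite $S\subseteq\Gamma\setminus\{e\}$ the condition ``$\bigcap_{g\in S}\supp(g)$ is infinite'' equals $\bigcap_{N}\bigcup_{\{x_1,\dots,x_N\}}\bigcap_{g\in S}\{t : g\cdot x_j\neq x_j\text{ for }j=1,\dots,N\}$, each innermost set being open because it depends on only finitely much of $t$. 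Thus high transitivity ($n$-transitivity for all $n\geq 1$) together with high faithfulness is a countable intersection of open sets, so by the Baire category theorem it suffices to prove that each of these open sets is \emph{dense} in $\mathcal A$. Unravelling the definitions, this reduces everything to an \textbf{Extension Lemma}: every satisfiable finite system of constraints $g_i\cdot x_i=y_i$ (defining a basic open subset of $\mathcal A$) can be completed to a point of $\mathcal A$ which in addition satisfies one prescribed requirement, either of $n$-transitivity type ($g\cdot\overline{x}=\overline{y}$ for some $g\in\Gamma$, given $n$-tuples $\overline{x},\overline{y}$ of distinct points) or of high-faithfulness type (at least $N$ points moved by every element of a given finite $S\subseteq\Gamma\setminus\{e\}$).

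The Extension Lemma would be proved by an explicit finite surgery on $X$, organised by the normal form theorem for HNN extensions (Britton's Lemma): one enlarges the partial permutation representing $t$ one $\Sigma$-orbit at a time, routing a chosen point to a chosen target. Here the hypotheses $\Sigma\neq H\neq\vartheta(\Sigma)$ --- equivalently, the fact that $\Gamma\actson\mathcal{T}$ is minimal of general type --- supply the room: at each stage some $\Sigma$-coset and some $\vartheta(\Sigma)$-coset remain unconstrained, so one may route without ever violating a relation $t\,\alpha_0(\sigma)=\alpha_0(\vartheta(\sigma))\,t$. Iterating along pairwise disjoint portions of $X$ yields transitivity and $n$-transitivity for all $n$. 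The delicate instance is high faithfulness, and this is exactly where topological freeness of $\Gamma\actson\partial\mathcal{T}$ is needed: for an \emph{elliptic} $g\neq e$ one must force $\supp(g)$ to grow, which is obstructed precisely when some nontrivial element of $\Gamma$ pointwise fixes a half-tree of $\mathcal{T}$ --- and by the theorem of Le Boudec and Matte Bon \cite{leboudecTripleTransitivityNonfree2019} such an obstruction would anyway cap $\mathrm{td}(\Gamma)$ at $3$. I would therefore first isolate a lemma translating ``no nontrivial element of $\Gamma$ pointwise fixes a half-tree of $\mathcal{T}$'' into an algebraic statement about the intersections of conjugates of $\Sigma$ and of $\vartheta(\Sigma)$ (again via Britton normal forms), and feed it into the high-faithfulness case of the Extension Lemma.

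The main obstacle, as usual with this method, is the Extension Lemma and specifically its high-faithfulness instance: one must convert the geometric topological-freeness hypothesis on $\partial\mathcal{T}$ into the combinatorial fact that, for an arbitrary elliptic $g\neq e$ and an arbitrary finite amount of already-fixed data, one can extend $t$ so as to put one more point into $\supp(g)$ --- and do this coherently over all $g$ in a finite set $S$ and in parallel with the transitivity requirements, so that the various routings do not collide. Once the right lemma on pointwise-fixed half-trees is in hand, this becomes a finite combinatorial verification; the surrounding apparatus (Polishness of $\mathcal A$, the $G_\delta$ computations, and the Baire-category reduction to the Extension Lemma) is routine.
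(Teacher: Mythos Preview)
Your Baire-category framework and choice of Polish space match the paper's approach, but you have located the difficulty in the wrong place. You assert that $n$-transitivity follows from the non-ascending hypothesis alone (``some $\Sigma$-coset and some $\vartheta(\Sigma)$-coset remain unconstrained'') while topological freeness enters only for high faithfulness of elliptic elements. In the paper it is exactly the reverse, and this is not cosmetic: high faithfulness is the easy half, and the $n$-transitivity extension step is where topological freeness is genuinely consumed.

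The paper's central tool is the \emph{free globalization} of a transitive pre-action (Theorem~\ref{thm: HNN free globalization}), obtained by gluing copies of the positive and negative translation pre-actions onto every $\Sigma$- or $\vartheta(\Sigma)$-orbit where the partial bijection $\tau$ is undefined; any such globalization is automatically highly faithful, with no hypothesis beyond the pre-action being non-global (Proposition~\ref{Free globalization faithful HNN}). Topological freeness is used only inside the key Proposition~\ref{prop: key prop for HNN}, in the step where one must push the points $x_1,\dots,x_k,y_1,\dots,y_k$ into pairwise disjoint half-trees of the Bass-Serre graph before a single surgery can carry $\bar x$ to $\bar y$. The obstruction arises when two of these points, after applying a common path-type element, land in the same $\Sigma$-orbit: separating their subsequent paths then amounts to finding a reduced path in the half-tree of $\Sigma$ in $\Tc$ not fixed by the relating element $\sigma\in\Sigma\setminus\{1\}$, which is exactly what topological freeness supplies and what fails in its absence. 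Your ``route along pairwise disjoint portions of $X$'' glosses over this collision; non-ascending alone gives no mechanism to separate such points. Indeed, were your transitivity claim correct, combining it with the hypothesis-free density of highly faithful actions would yield highly transitive faithful actions for the non-ascending HNN extensions of Section~\ref{ExampleFaithfulNonTopolFree}, contradicting the Le~Boudec--Matte~Bon bound you yourself cite.
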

Examples of HNN extensions which are not acylindrically hyperbolic and which do not satisfy the hypothesis of \cite{fimaHighlyTransitiveActions2015} are Baumslag-Solitar groups. 
A direct application of Theorem~\ref{ThmMainHNN} allows us to answer a 
question raised by Hull and Osin in 
\cite[Question 6.3]{hullTransitivitydegreescountable2016}: what is the 
transitivity 
degree of the non-solvable icc Baumslag-Solitar groups? Given $m,n\in\Z^*$, 
recall that the Baumslag-Solitar group with parameter $m,n$ is:
$${\rm BS}(m,n):=\langle a,b\,:\,ab^ma^{-1}=b^n\rangle.$$
It is not solvable if and only if $\vert n\vert \neq 1$ and $\vert m\vert 
\neq 1$, and icc if and only if $\vert n \vert \neq\vert m\vert$. As noted 
by Hull and Osin, if a Baumslag-Solitar group is either solvable or not 
icc, then its transitivity degree is equal to $1$.
We prove the following in Section \ref{sec: ex BS}, and 
provide more new examples to which Theorem
\ref{ThmMainHNN} applies in Sections \ref{sec: ex HNN from fs perm} and 
\ref{sec: ex fg HNN from fs perm}.

\begin{cori}\label{tdBSIntro}
	All the non-solvable icc Baumslag-Solitar groups are highly transitive. 
	In particular, the group ${\rm BS}(2,3)$ is highly transitive.	
\end{cori}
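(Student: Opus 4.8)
The plan is to realise every non-solvable icc Baumslag--Solitar group as an HNN extension to which Theorem~\ref{ThmMainHNN} applies. Put $H=\langle b\rangle\cong\Z$, $\Sigma=\langle b^m\rangle$, and let $\vartheta\colon\Sigma\to H$ be the isomorphism onto $\vartheta(\Sigma)=\langle b^n\rangle$ sending $b^m$ to $b^n$, so that ${\rm BS}(m,n)={\rm HNN}(H,\Sigma,\vartheta)$ with stable letter $a$. Since $[H:\Sigma]=|m|$ and $[H:\vartheta(\Sigma)]=|n|$, the hypothesis $\Sigma\neq H\neq\vartheta(\Sigma)$ of Theorem~\ref{ThmMainHNN} is precisely $|m|\geq 2$ and $|n|\geq 2$, i.e.\ non-solvability. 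Hence it suffices to prove that, when moreover $|m|\neq|n|$ (the icc case), the action of $\Gamma:={\rm BS}(m,n)$ on the boundary of its Bass--Serre tree $\mathcal{T}$ is topologically free: no non-trivial element of $\Gamma$ fixes a half-tree of $\mathcal{T}$ pointwise.

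A hyperbolic element fixes no point of $\mathcal{T}$, whereas any elliptic element fixes a vertex, hence (as $\Gamma$ acts transitively on the vertex set $\Gamma/H$) is conjugate into $\Stab(v_0)=\langle b\rangle$, where $v_0$ is the base vertex; since conjugation carries half-trees to half-trees, it is enough to show that no power $b^k$ with $k\neq 0$ fixes a half-tree. I would analyse $\mathrm{Fix}(b^k)$ by attaching to each vertex $v$ it fixes a \emph{local exponent} $e(v)\in\Z\setminus\{0\}$, defined by $b^k=\beta_v^{e(v)}$, where $\beta_v$ generates the infinite cyclic group $\Stab(v)$, normalised so that $\beta_{v_0}=b$ (hence $e(v_0)=k$). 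Every edge of $\mathcal{T}$ is a $\Gamma$-translate of $\Sigma$, so it has an ``$m$-end'' --- the endpoint at which its stabiliser has index $|m|$ in the vertex group --- and an ``$n$-end'' where that index is $|n|$; thus each vertex has exactly $|m|$ edges meeting it through an $m$-end and $|n|$ through an $n$-end. A direct computation with these edge stabilisers gives the crossing rule: $b^k$ fixes the edge leaving $v$ through an $m$-end if and only if $m\mid e(v)$, and then the opposite endpoint $w$ satisfies $e(w)=(n/m)\,e(v)$; symmetrically through an $n$-end, with the condition $n\mid e(v)$ and $e(w)=(m/n)\,e(v)$. Iterating along geodesics emanating from $v_0$ yields $e(v)=k\,(n/m)^{d(v)}$, where $d(v)\in\Z$ is the number of $m$-ends minus the number of $n$-ends crossed on the geodesic from $v_0$ to $v$.

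Suppose now that $b^k$ fixes a half-tree $T$ with root $v_0'$. Because $|m|\geq 2$, each vertex of $T$ has at least one edge meeting it through an $m$-end that leads deeper into $T$ (at most one edge at a vertex points back toward $v_0'$, respectively equals the edge $e_0$ leaving $T$ at $v_0'$, so at least $|m|-1\geq 1$ of the $m$-end edges remain). Following such edges from $v_0'$ yields a geodesic ray $v_0'=u_0,u_1,u_2,\dots$ inside $T$ with $d(u_j)=d(v_0')+j$. As $b^k$ fixes $T$ pointwise, it fixes every edge of this ray, so $m\mid e(u_j)$ for all $j$, and the crossing rule forces $e(u_j)=e(v_0')\,(n/m)^{j}$ --- a nonzero integer --- for each $j\geq 0$. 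Replacing ${\rm BS}(m,n)$ by the isomorphic group ${\rm BS}(n,m)$ if necessary, we may assume $|n|<|m|$; then $|e(u_j)|=|e(v_0')|\cdot|n/m|^{j}\to 0$, which is impossible for a sequence of nonzero integers. Hence no non-trivial element of $\Gamma$ fixes a half-tree, $\Gamma\act\partial\mathcal{T}$ is topologically free, and Theorem~\ref{ThmMainHNN} gives that $\Gamma$ admits a highly transitive and highly faithful action; in particular $\Gamma$ is highly transitive. Applying this with $m=2$, $n=3$ shows that ${\rm BS}(2,3)$ is highly transitive.

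The HNN description and the reformulation of non-solvability are routine; the substantive part --- and the step I expect to cost the most work --- is the fixed-point analysis: deriving the crossing rule $e\mapsto(n/m)^{\pm1}e$ from the Bass--Serre edge stabilisers (including the no-inversion normalisation making the local generators $\beta_v$ compatible along edges), checking the bookkeeping at the root $v_0'$ so that a ``deeper'' $m$-end edge really survives there, and then combining the unboundedness of $d$ inside a half-tree --- where $|m|,|n|\geq 2$ is used --- with $|m|\neq|n|$ to obtain the contradiction.
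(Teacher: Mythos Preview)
Your proof is correct and follows the same global strategy as the paper: write ${\rm BS}(m,n)$ as a non-ascending HNN extension precisely when $|m|,|n|\geq 2$, verify that the action on $\partial\Tc_{m,n}$ is topologically free when $|m|\neq|n|$, and apply Theorem~\ref{ThmMainHNN}. The topological freeness fact is recorded in the paper as Lemma~\ref{tdBSLemma}, attributed to de la Harpe and Pr\'eaux.

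The difference lies only in how topological freeness is argued. The paper first uses transitivity on positive edges to reduce to the half-tree of the edge $\Sigma$ itself, and then follows the single explicit ray $(t^kH)_{k\geq 0}$, computing the intersections $\Sigma\cap t^k\Sigma t^{-k}$ directly and observing they shrink to $\{1\}$ (using that $|n_0|\geq 2$ where $n_0=n/\gcd(m,n)$, after assuming $n\nmid m$). Your version keeps the half-tree arbitrary, packages the same stabiliser computation into the local exponent $e(v)$ with its crossing rule $e\mapsto(n/m)^{\pm 1}e$, and then \emph{chooses} a ray through $m$-ends so that $|e(u_j)|=|e(u_0)|\cdot|n/m|^j\to 0$. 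These are two organisations of the same calculation: the paper's edge-transitivity reduction trades your exponent formalism for a single concrete ray and an explicit arithmetic identity, which makes the write-up shorter; your formulation is more structural and would transfer verbatim to other HNN extensions over $\Z$.
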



Then, let us describe the case of an amalgamated free product (or amalgam for short) $\Gamma = \Gamma_1 *_\Sigma \Gamma_2$, where $\Sigma$ is a common subgroup of $\Gamma_1$ and $\Gamma_2$.
Such an amalgam	is said to be \textbf{non-trivial} if $\Gamma_1 \neq \Sigma \neq \Gamma_2$, 
and \textbf{non-degenerate} if moreover $[\Gamma_1 : \Sigma] \geq 3$ or $[\Gamma_2 : \Sigma] \geq 3$.


Let $\Tc$ be the Bass-Serre tree of the amalgam $\Gamma$ (see Section \ref{PrelimAmalg}). It easy to see that the action $\Gamma\curvearrowright\Tc$ always minimal, is of general type 
if and only if the amalgam is non-degenerate, and is faithful if and only if $\Sigma$ is core-free in $\Gamma$. Let us now state our result in the case of an amalgam.

\begin{thmi}\label{ThmGroupsHTAmalgam}
	Consider a non-degenerate amalgam $\Gamma = \Gamma_1 *_\Sigma \Gamma_2$ and its Bass-Serre tree $\Tc$. If the induced $\Gamma$-action on $\partial\Tc$ is topologically free, then $\Gamma$ admits a highly transitive and highly faithful action; in particular, $\Gamma$ is highly transitive.
\end{thmi}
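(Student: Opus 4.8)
The plan is to construct directly a faithful action $\Gamma \actson \N$ that is at once highly transitive and highly faithful; the ``in particular'' is then immediate, since high faithfulness implies faithfulness. By the universal property of the amalgam, an action of $\Gamma = \Gamma_1 *_\Sigma \Gamma_2$ on a set is the same datum as a pair of actions of $\Gamma_1$ and of $\Gamma_2$ restricting to one and the same action of $\Sigma$. So I would realize the action as an increasing union $\bigcup_n F_n = \N$ of finite sets, each $F_n$ carrying compatible \emph{partial} actions of $\Gamma_1$ and of $\Gamma_2$ (only finitely many group elements acting, by partial bijections) that agree on $\Sigma$; these assemble into a partial action of $\Gamma$. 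A bookkeeping over all stages is arranged so that in the limit three families of requirements are met: (i) \emph{totality}, meaning every $g \in \Gamma$ eventually acts by a total permutation, so that the union is a genuine $\Gamma$-action; (ii) \emph{high transitivity}, meaning for each $k$ and each pair of $k$-tuples of distinct points that have appeared, some group element carries the first to the second; (iii) \emph{high faithfulness}, meaning for each finite $F \subseteq \Gamma \setminus \{e\}$ the set $\bigcap_{g \in F}\supp(g)$ is infinite, which the bookkeeping ensures by repeatedly adjoining new points displaced by every element of $F$. Equivalently, this can be phrased as a Baire category argument, each requirement cutting out a dense open subset of a Polish space of infinite partial actions.

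The heart of the proof is a family of \emph{extension lemmas}, saying that a requirement of any of the three types can always be satisfied one more step; this is where the hypotheses are used. To meet a requirement of type (i) or (ii) one enlarges a finite partial action of $\Gamma$ so that a prescribed element acts by a prescribed partial bijection on a slightly larger finite set, building the extension ``along the Bass--Serre tree'' $\Tc$ by writing the relevant elements in Bass--Serre normal form. \textbf{Non-degeneracy} of the amalgam --- equivalently, the fact that $\Gamma \actson \Tc$ is of general type, having two transverse hyperbolic elements --- is exactly what supplies the room needed at each step, ruling out the degenerate cases (such as the infinite dihedral group $\Z/2\Z * \Z/2\Z$) in which high transitivity genuinely fails. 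The subtlety absent in the free product case $\Sigma = \{e\}$ is \emph{coherence across $\Sigma$}: enlarging the $\Gamma_1$-part of a partial action forces, through the common subgroup $\Sigma$, a simultaneous enlargement of the $\Gamma_2$-part, and the two must be carried out without collision. I would arrange this by fixing in advance the restriction of the action to $\Sigma$ to be sufficiently homogeneous --- for instance so that every isomorphism type of $\Sigma$-orbit which occurs does so infinitely often --- so that constraints propagated through $\Sigma$ can always be absorbed.

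The step I expect to be the main obstacle is requirement (iii), especially together with the coherence issue: given nontrivial $g_1, \dots, g_m \in \Gamma$ and a finite part of the action already constructed, one must adjoin a single new point moved by \emph{all} of $g_1, \dots, g_m$, and do so compatibly on both sides. This is precisely where \textbf{topological freeness of $\Gamma \actson \partial\Tc$} is indispensable, in the equivalent form that the pointwise stabilizer of any half-tree of $\Tc$ is trivial. Hence no nontrivial $g_j$ fixes a ``half-tree's worth'' of points: if $g_j$ is hyperbolic one works far out on the repelling side of its axis, and if $g_j$ is elliptic one works in a part of the tree outside its fixed subtree, which exists because that subtree contains no half-tree; in either case one can place the new point where $g_j$ necessarily moves it. Intersecting the finitely many such constraints and reconciling the $\Gamma_1$- and $\Gamma_2$-sides yields the required extension. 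With all three families of requirements satisfied, $\bigcup_n F_n = \N$ carries a highly transitive and highly faithful $\Gamma$-action, as desired.
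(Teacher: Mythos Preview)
Your inductive/Baire-category scheme is the right framework, but you have mislocated both the technical apparatus and the role of topological freeness. The paper does not build finite partial actions on finite sets; it fixes once and for all \emph{free} actions $X_j \curvearrowleft \Gamma_j$ on infinite sets with infinitely many orbits, so your $\Sigma$-coherence issue dissolves from the outset: the only variable is a bijection $\tau\colon X_1 \to X_2$ intertwining the two $\Sigma$-actions (a \emph{pre-action}). The key construction is the \emph{free globalization} (Theorem~\ref{thm: amalgam free globalization}): any transitive partial $\tau$ extends canonically to a global one by attaching copies of the right-translation action of $\Gamma$ on itself along each missing $\Sigma$-orbit. This extension is automatically highly faithful (Remark~\ref{Free globalization faithful}), needing nothing beyond non-degeneracy. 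So your requirement (iii) is \emph{not} where the difficulty lies, and your argument there---finding a vertex of $\Tc$ moved by all the $g_j$---conflates the given action on $\Tc$ with the action on $\N$ being constructed.

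Topological freeness enters in the \emph{high transitivity} step, Proposition~\ref{prop: key prop for amalgams}: given pairwise distinct $x_1,\dots,x_k,y_1,\dots,y_k$, one must extend the pre-action so that some single $\gamma\in\Gamma$ sends each $x_i$ to $y_i$. The method is to push all $2k$ points, via a common path-type element, into pairwise disjoint half-trees of the Bass-Serre graph (the half-trees freshly attached by the free globalization), and then wire in new edges joining the image of $x_i$ to that of $y_i$. The genuine obstruction is that two of these points may land in the \emph{same} $\Sigma$-orbit after pushing---their paths end at the same treeing edge, differing by left translation by some $\sigma\in\Sigma\setminus\{1\}$. One can separate them by extending the path-type element further precisely when $\sigma$ does not fix the corresponding half-tree of $\Tc$ pointwise, and that is exactly what topological freeness on $\partial\Tc$ provides. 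This is the step you are missing; without it one cannot complete the transitivity extension, as the examples in Section~\ref{ExampleFaithfulNonTopolFree} confirm.
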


Notice that, in the context of the theorem, if the induced $\Gamma$-action 
on $\partial\Tc$ is topologically free, then obviously the action on $\Tc$ 
is faithful, hence $\Sigma$ is core-free in $\Gamma$. Sections
\ref{ExampleHTnonAcylHyp} and \ref{sec: amalgams of fs permutations} 
provides new highly transitive examples obtained via Theorem 
\ref{ThmGroupsHTAmalgam}.


\subsection{Comparison with former results}
Here is a corollary of our result which does not mention the action on the 
boundary, where given a subtree $\Tc'$ of $\Tc$, we  denote by $\Gamma_{\Tc'}$ the 
pointwise stabilizer of $\Tc'$ in $\Gamma$. 
\begin{cori}\label{core-free implies high transitivity}
	Let $\Gamma\curvearrowright\mathcal{T}$ be an action of a countable 
	group $\Gamma$ on a tree $\mathcal{T}$, which is faithful, minimal, and 
	of general type. 
	If there exist a bounded subtree $\mathcal{B}$ and a vertex $u$ in 
	$\mathcal{B}$ such that $\Gamma_\Bc$ is core-free in $\Gamma_u$, then, 
	$\Gamma$ admits an action on a countable set which is both highly 
	transitive and highly faithful. In particular, $\Gamma$ is highly 
	transitive.
\end{cori}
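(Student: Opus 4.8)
The plan is to derive this from Theorem~\ref{ThmMain}: I will argue that, under these hypotheses, the boundary action $\Gamma\act\partial\Tc$ is automatically topologically free, and then apply Theorem~\ref{ThmMain} (which also provides the high faithfulness). First, since $\Bc$ is bounded and contains $u$, it is contained in some ball $B(u,r)$ (the vertices at distance $\leq r$ from $u$), which is again a subtree, and $\Gamma_{B(u,r)}\leq\Gamma_\Bc$. A subgroup of a core-free subgroup of $\Gamma_u$ is still core-free in $\Gamma_u$, so $\Gamma_{B(u,r)}$ is core-free in $\Gamma_u$. But $\Gamma_u$ is a group of isometries fixing $u$, hence preserves $B(u,r)$ setwise, so $\Gamma_{B(u,r)}$ is in fact \emph{normal} in $\Gamma_u$; a normal core-free subgroup is trivial, so $\Gamma_{B(u,r)}=\{1\}$.

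Next, I would show that every half-tree $\Tc'$ of $\Tc$ contains a translate $\gamma B(u,r)$ of this ball. Minimality implies that the smallest subtree containing the orbit $\Gamma\cdot u$ equals $\Tc$, so $\Gamma\cdot u$ meets every half-tree; and since the action is minimal of general type, every half-tree is infinite and $\Tc$ has no leaves (see Section~\ref{PrelimTrees}), so $\Tc'$ contains a sub-half-tree $\Tc''$ all of whose vertices lie at distance more than $r$ from $\Tc\setminus\Tc'$. Choosing $\gamma\in\Gamma$ with $\gamma u\in\Tc''$ then gives $\gamma B(u,r)=B(\gamma u,r)\subseteq\Tc'$. (Alternatively, one can pick a hyperbolic element whose attracting endpoint lies in $\partial\Tc'$, since such elements are dense in the boundary; its large powers carry $u$ arbitrarily deep into $\Tc'$.)

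Finally, suppose for contradiction that $\Gamma\act\partial\Tc$ is not topologically free, so that some $g\in\Gamma\setminus\{1\}$ fixes a half-tree $\Tc'$ pointwise. Taking $\gamma$ with $\gamma B(u,r)\subseteq\Tc'$ as above, the element $\gamma^{-1}g\gamma$ fixes $B(u,r)$ pointwise, i.e. $\gamma^{-1}g\gamma\in\Gamma_{B(u,r)}=\{1\}$, forcing $g=1$ — a contradiction. Hence $\Gamma\act\partial\Tc$ is topologically free, and Theorem~\ref{ThmMain} yields a highly transitive and highly faithful action of $\Gamma$ on a countable set.

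The only real content is the middle step, which turns the dynamical hypothesis into a group element dragging the whole ball into a prescribed half-tree; this is a routine consequence of minimality and general type and is presumably already recorded in the preliminary section, so I expect it to be the main (but minor) obstacle, everything else being bookkeeping with pointwise stabilizers and the elementary observation that $\Gamma_{B(u,r)}$ is normal in $\Gamma_u$.
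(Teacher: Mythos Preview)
Your proof is correct and follows essentially the same route as the paper: the paper also reduces to Theorem~\ref{ThmMain} via the lemma that the core-free hypothesis forces topological freeness on $\partial\Tc$ (Lemma~\ref{core-free implies topologically free}). The only cosmetic difference is in how one produces a bounded subtree with trivial pointwise stabilizer: the paper takes $\Bc'=\bigcup_{g\in\Gamma_u}g\Bc$ and computes $\Gamma_{\Bc'}=\bigcap_{g\in\Gamma_u}g\Gamma_\Bc g^{-1}=\{1\}$ directly from core-freeness, whereas you enlarge $\Bc$ to the ball $B(u,r)$ and use the slicker observation that $\Gamma_{B(u,r)}$ is normal in $\Gamma_u$ (hence trivial). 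The dragging step is also the same in spirit: the paper conjugates by a power of a fixed hyperbolic element to push $\Bc'$ into the half-tree, which is your ``alternatively'' argument.
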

This corollary encompasses all the previously 
known results of high transitivity for groups with a minimal action of 
general type on a tree, which fall in two categories. The first examples 
are the acylindrically hyperbolic ones, for which one can use 
the following result by Minasyan and Osin, combined with the high transitivity 
for acylindrically hyperbolic groups result of Hull and Osin. In its 
statement, 
we denote by $[u,v]$ 
the geodesic between $u$ and $v$.
\begin{theoremSansNum}\cite[Theorem 2.1]{minasyanAcylindricalHyperbolicityGroups2015}
	Let $\Gamma$ be a group acting minimally on a tree $\Tc$. 
	Suppose that $\Gamma$ is not virtually cyclic, $\Gamma$ does
	not fix any point in
	$\partial\Tc$, and there exist vertices $u, v$ of $\Tc$ such that 
	$\Gamma_{[u,v]}$ is finite. 
	Then $\Gamma$ is acylindrically hyperbolic.
\end{theoremSansNum}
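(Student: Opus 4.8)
The plan is to invoke Osin's characterisation of acylindrical hyperbolicity: a group is acylindrically hyperbolic as soon as it admits a non-elementary action on a hyperbolic space possessing at least one WPD (weakly properly discontinuous) element \cite{osin_acylindrically_2016}. Since a tree is $0$-hyperbolic, the whole task reduces to showing that the given action $\Gamma\act\Tc$ is non-elementary and that it admits a WPD hyperbolic element. The key idea is to manufacture such an element directly out of the single finite arc stabiliser $\Gamma_{[u,v]}$, so that no uniform control over all arc stabilisers (which may genuinely fail) is needed.

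First I would pin down the type of the action using the classification of isometric actions on trees. A minimal action with a bounded orbit fixes a vertex, so by minimality $\Tc$ would be a single point, which is incompatible with $\partial\Tc\neq\emptyset$; actions fixing an end are excluded by the hypothesis that $\Gamma$ fixes no point of $\partial\Tc$; and a lineal action (preserving a bi-infinite geodesic) would, by minimality, make $\Tc$ that line, so that $\Gamma$ would map onto a subgroup of the infinite dihedral group with kernel contained in the finite group $\Gamma_{[u,v]}$, contradicting the assumption that $\Gamma$ is not virtually cyclic. The only remaining possibility is that the action is of general type, i.e.\ $\Gamma$ contains two transverse hyperbolic elements; in particular the limit set in $\partial\Tc$ is infinite and the action is non-elementary.

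Next I would produce the WPD element. The key geometric lemma is that in a minimal action of general type, some hyperbolic element $g$ has an axis $A_g$ containing a translate $w\cdot[u,v]$ of the distinguished arc; this is obtained by a ping-pong/bridging argument starting from two hyperbolic elements with disjoint axes and choosing translation lengths large enough that the axis of a suitable product sweeps across the prescribed segment. Orienting $g$ so that $w\cdot[u,v]$ lies in the positive translation direction from a basepoint $x\in A_g$, for all large $N$ the geodesic $[x,g^Nx]$ along the axis contains $w\cdot[u,v]$. Consequently the pointwise stabiliser $\Gamma_{[x,g^Nx]}$ is contained in $\Gamma_{w\cdot[u,v]}=w\,\Gamma_{[u,v]}\,w^{-1}$, which is finite. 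Since in a tree any two elements fixing both endpoints of a geodesic fix the whole geodesic, this is exactly the finiteness required in the WPD condition at $\eps=0$; the passage to arbitrary $\eps$ and arbitrary basepoints is the routine tree estimate, projecting to the axis and observing that only boundedly many segments lie within distance $\eps$ of a given one. Thus $g$ is WPD.

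The step I expect to be the main obstacle is the lemma of the previous paragraph: guaranteeing that a \emph{single} hyperbolic element has an axis containing a full translate of $[u,v]$, rather than merely crossing each of its edges separately, which is precisely where the general-type dynamics must be used with care. By contrast, the verification of WPD from the exact arc stabiliser and the elimination of the elementary cases are comparatively routine. Once $g$ is known to be WPD, the non-elementary action $\Gamma\act\Tc$ on the hyperbolic tree together with Osin's criterion \cite{osin_acylindrically_2016} yields that $\Gamma$ is acylindrically hyperbolic.
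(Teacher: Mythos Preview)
This theorem is not proved in the present paper: it is simply quoted from \cite{minasyanAcylindricalHyperbolicityGroups2015} as background, with no accompanying argument. So there is no ``paper's own proof'' to compare your proposal against.

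That said, your outline is essentially the strategy of the original Minasyan--Osin proof: rule out the elementary action types using minimality, the absence of a fixed end, and the non-virtually-cyclic hypothesis; then exhibit a hyperbolic WPD element by arranging that its axis contains a translate of the distinguished finite-stabiliser segment $[u,v]$, and conclude via Osin's criterion. Your identification of the delicate step is accurate: the lemma that some hyperbolic element has axis containing a full translate of $[u,v]$ (not merely meeting each edge) is exactly where Minasyan--Osin do the real work, and your ping-pong sketch is the right shape but would need to be made precise. The remaining verifications (type classification, WPD from finite arc stabiliser) are routine as you say, though in the lineal case you should note that minimality forces $\Tc$ to equal the axis, so that $[u,v]$ genuinely lies on it.
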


We will check in Proposition \ref{proof applicability Cor F to MO+HO}, that
all groups satisfying the hypotheses of the above theorem, and having a trivial finite radical,
also satisfy the hypotheses of Corollary \ref{core-free implies high transitivity}.

Furthermore, as Hull and Osin noticed \cite[Corollary 5.12]{hullTransitivitydegreescountable2016}, there are groups acting on trees which are non-acylindrically hyperbolic, but highly transitive thanks to the following result by the first, third, and fourth authors. In the terminology of the present article, the assertion  ``$\Gamma_e$ is highly core-free in $\Gamma_v$'' means that the action $\Gamma_v \curvearrowright \Gamma_v/\Gamma_e$ is highly faithful.
\begin{theoremSansNum}\cite[Theorem 4.1]{fimaHighlyTransitiveActions2015}
	Let a countable group $\Gamma$ act without inversion on a tree $\mathcal{T}$, and let $R\subset E(\Tc)$ be a set of representatives of the edges of the quotient graph $\Gamma \backslash \Tc$. Then $\Gamma$ is highly transitive, provided
	$\Gamma_v$ is infinite and $\Gamma_e$ is \textbf{highly core-free} in $\Gamma_v$,
	for \textbf{every couple} $(e,v)$ where $e\in R$ and $v$ is one of its endpoints.
\end{theoremSansNum}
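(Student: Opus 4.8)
The plan is to realize $\Gamma$ as the fundamental group of the graph of groups $(\mathcal{G},\Gamma\backslash\mathcal{T})$ furnished by Bass--Serre theory, and to produce a faithful highly transitive action of $\Gamma$ on a countable set by a Baire category argument carried out in a suitable Polish space of ``assembled actions''. Recall that high transitivity of $\Gamma$ just means the existence of a faithful action on a countable set which is $n$-transitive for every $n$.

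I would first set up the parameter space. Fix a maximal subtree $Y_0$ of the finite-or-countable graph $\Gamma\backslash\mathcal{T}$, and a countably infinite set $X$. A \emph{system} consists of a homomorphism $\alpha_v\colon\Gamma_v\to\sym(X)$ for each vertex representative $v$, together with, for each edge $e\in R$ lying outside $Y_0$ with endpoints $v,w$, a bijection $t_e\in\sym(X)$; the \emph{compatibility} conditions are that for $e\in R$ inside $Y_0$ the two composed maps $\Gamma_e\to\Gamma_v\to\sym(X)$ and $\Gamma_e\to\Gamma_w\to\sym(X)$ coincide, and that for $e$ outside $Y_0$ one has $t_e\,\alpha_v(g)\,t_e^{-1}=\alpha_w(\vartheta_e(g))$ for all $g\in\Gamma_e$, where $\vartheta_e$ is the Bass--Serre twist. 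Such a compatible system assembles, by the universal property of the presentation $\Gamma=\pi_1(\mathcal{G},\Gamma\backslash\mathcal{T},Y_0)$, into a homomorphism $\pi\colon\Gamma\to\sym(X)$. The compatibility conditions are closed in the product of copies of the Polish group $\sym(X)$ indexed by generators of the vertex groups and by the edges of $R$ outside $Y_0$, so the set $Z$ of compatible systems is a Polish space; it is nonempty because the transitive actions $\Gamma_v\curvearrowright\Gamma_v/\Gamma_e$ supplied by the (high) core-freeness hypothesis can be glued into at least one such system.

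Next I would argue that the desired properties are generic in $Z$. For $\gamma\in\Gamma\setminus\{1\}$ the set $U_\gamma=\{$systems with $\pi(\gamma)\neq\id\}$ is open, since $\pi(\gamma)$ evaluated at any given point depends on only finitely many coordinates of the system; and for every $n$ and every pair of $n$-tuples of distinct points $\bar a,\bar b$ in $X$, the set $V_{\bar a,\bar b}=\{$systems for which $\pi(\gamma)\bar a=\bar b$ for some $\gamma\in\Gamma\}$ is open for the same reason. A compatible system in $\bigcap_{\gamma\neq 1}U_\gamma\cap\bigcap_{n,\bar a,\bar b}V_{\bar a,\bar b}$ assembles into a faithful action of $\Gamma$ on the countable set $X$ which is $n$-transitive for all $n$, hence witnesses high transitivity of $\Gamma$. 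By the Baire category theorem it therefore suffices to prove that each $U_\gamma$ and each $V_{\bar a,\bar b}$ is dense in $Z$.

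This density is where all the work, and all the hypotheses, go; it is the main obstacle. A basic neighbourhood in $Z$ is described by a \emph{finite partial compatible system}: finite partial bijections for the relevant generators and for the $t_e$, consistent with the compatibility relations on their (finite) domains. Given such a finite piece, one must extend it inside $Z$ so that, for $U_\gamma$, a prescribed nontrivial $\gamma$ moves some point, and for $V_{\bar a,\bar b}$, some group element carries $\bar a$ onto $\bar b$. For $V_{\bar a,\bar b}$ one writes the sought element as a word following a path in the Bass--Serre tree and extends the partial vertex actions step by step along that path: infiniteness of each $\Gamma_v$ furnishes new points onto which to push the tuple, while high core-freeness of $\Gamma_e$ in $\Gamma_v$ — that is, high faithfulness of $\Gamma_v\curvearrowright\Gamma_v/\Gamma_e$ — guarantees that at each passage through an edge infinitely many points remain whose behaviour under $\Gamma_e$ is still free to be prescribed, so the routing never gets blocked by a forced collision and the extended data stays a genuine partial $\Gamma$-action. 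For $U_\gamma$ one uses a normal form of $\gamma$ in the graph of groups together with core-freeness (a consequence of high core-freeness) to exhibit, after a bounded extension, a point actually displaced by $\gamma$. Taking a generic point of $Z$ (equivalently, organising the finite-to-infinite construction to meet all the countably many requirements) then yields the action, and hence the theorem. The delicate part is purely the bookkeeping in the $V_{\bar a,\bar b}$ step: keeping several points routed simultaneously through the tree while preserving compatibility with every graph-of-groups relation; the rest is standard Bass--Serre theory and descriptive-set-theoretic routine.
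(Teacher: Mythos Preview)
This theorem is a cited result from \cite{fimaHighlyTransitiveActions2015}; the present paper does not reprove it directly. Instead, it \emph{subsumes} it: Proposition~\ref{proof applicability Cor F to FMS} verifies that the hypotheses of the cited theorem force the existence of a minimal invariant subtree $\Tc'$ on which the $\Gamma$-action is faithful and of general type, together with a segment $\Bc$ and a vertex $u\in\Bc$ with $\Gamma_\Bc$ core-free in $\Gamma_u$. One then applies Lemma~\ref{core-free implies topologically free} to get topological freeness on $\partial\Tc'$, and concludes via Theorem~\ref{ThmMain}. In short, the paper's route is: highly core-free edge groups $\Rightarrow$ core-free segment stabilizer $\Rightarrow$ topologically free on the boundary $\Rightarrow$ highly transitive.

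Your proposal takes a genuinely different route, much closer in spirit to the original argument of \cite{fimaHighlyTransitiveActions2015}: build the highly transitive action directly by a Baire category argument in a Polish space of $\Gamma$-actions assembled from the graph-of-groups presentation. This is a legitimate strategy, and your outline of why the sets $U_\gamma$ and $V_{\bar a,\bar b}$ are $G_\delta$ and why high core-freeness drives the density is morally right. Two remarks, though. First, in both \cite{fimaHighlyTransitiveActions2015} and the present paper the vertex actions are \emph{fixed} as free actions with infinitely many orbits, and only the edge data (the bijections $t_e$) vary; letting the $\alpha_v$ vary as you do makes the compatibility relations over tree edges awkward to maintain under finite perturbation, and you would need to explain why $Z$ is nonempty and why neighbourhoods really look like ``finite partial systems'' in your sense. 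Second, your informal phrase ``infinitely many points remain whose behaviour under $\Gamma_e$ is still free to be prescribed'' hides the actual mechanism: high core-freeness is used to find, for any finite $F\subset X$, an element of $\Gamma_v$ pushing $F$ into a region where the $\Gamma_e$-orbits are fresh, so that $t_e$ can be extended at will. This is the substantive step, and your sketch does not yet pin it down.

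What each approach buys: your direct construction stays within elementary Baire category and never needs to analyse the boundary action, but it requires redoing the density argument for arbitrary graphs of groups. The paper's approach reduces everything to the single edge case (HNN or amalgam) via a tree-collapsing argument, and its key density step (Propositions~\ref{prop: key prop for HNN} and~\ref{prop: key prop for amalgams}) works under the much weaker hypothesis of topological freeness on the boundary; high core-freeness of edge groups is then just one of several sufficient conditions.
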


The fact that the groups satisfying the hypotheses of the above theorem 
also satisfy those of Corollary \ref{core-free implies high transitivity} 
is checked in Proposition \ref{proof applicability Cor F to FMS}.

There are examples of groups acting on trees which are highly 
transitive thanks to Corollary~\ref{core-free implies high transitivity}, 
but to which the previously 
known results 
do not apply. 	
We also check that the icc non-solvable Baumslag-Solitar 
groups provide examples of HNN extensions for which Theorem \ref{ThmMain} applies while Corollary 
\ref{core-free implies high transitivity} does not. All these examples can be found in Section~
\ref{Examples and applications}.

\subsection{About the proofs}
In order to prove high transitivity for a general class of groups without 
constructing an explicit highly transitive action, two approaches can be 
tried. 
The first is by working in the space of subgroups of $\Gamma$, and 
proceeds by inductively building a subgroup $\Lambda\leq\Gamma$ such that 
the associated homogeneous space $\Gamma/\Lambda$ is highly transitive. 
To our knowledge, this approach made its first appearance in a paper of 
Hickin 
\cite{hickinApplicationsTreeLimitsGroups1988}, and was then made more 
explicit 
in \cite{zbMATH00120147}.
It was notably used by Chaynikov when proving that hyperbolic groups with 
trivial finite radical are highly transitive, and also by Hull and Osin in 
their aforementioned result.

The second approach, pioneered by Dixon, goes by fixing an infinite 
countable set $X$, considering a well-chosen Polish space of group actions 
on $X$, and showing that in there, the space of faithful highly transitive 
actions is a countable intersection of dense open sets, hence not empty by 
the Baire category theorem. In this work, we follow this second approach, 
using the same space of actions as the one considered in 
\cite{fimaHighlyTransitiveActions2015}, but with a much finer construction 
in order to show density. 

As explained before, the proof of the general result goes through the HNN 
and the amalgam cases. The two proofs are actually very similar, so for 
this introduction we only explain in more details what goes on for HNN 
extensions.

Given a non-degenerate HNN extension $\Gamma=\HNN(H,\Sigma,\vartheta)$, the 
idea   is to  start with a free $H$-action on a set $X$ with infinitely 
many orbits, and  then to turn it into a highly transitive faithful 
$\Gamma$-action via a generic permutation. To be more precise, the Polish 
space under consideration is the set of all permutations which intertwine 
the $\Sigma$ and the $\vartheta(\Sigma)$-actions, thus yielding a natural 
$\Gamma$-action. The result is then that there is a dense $G_\delta$ of 
such permutations which induce a highly transitive faithful 
$\Gamma$-action. 

For this to work, the notion of high core freeness was handy in \cite{fimaHighlyTransitiveActions2015}: it allows one to ``push'' the situation by a group element in $H$ so as to get to a place where both $\Sigma$ and $\vartheta(\Sigma)$ act in a more controllable way.  Let us note that this approach was generalized in \cite{fimaHomogeneousActionsUrysohn2018} to show that all the groups considered in \cite{fimaHighlyTransitiveActions2015} actually have a faithful homogeneous action onto \emph{any} bounded $S$-Urysohn space.

Here, we use a different approach, similar to the one due to the third and 
fourth named authors when 
they re-discovered the characterization
of free products of finite 
groups which are highly transitive \cite{moonHighlyTransitiveActions2013}. The 
main difficulty is that the group element that we use to ``push'' 
things out does not belong to $H$, in particular it can contain a number of 
powers of the permutation at hand.

In order to solve this, we first modify the permutation so as to make sure 
such a push is possible. The
modification  is actually very natural.
Informally speaking, there are two steps:
\begin{enumerate}
	\item ``erasing the permutation'' outside a suitable finite set of $\Sigma$-orbits and $\vartheta(\Sigma)$-orbits, which leaves us with a partial bijection;
	\item make a ``free globalization'' of this partial bijection,
	which is obtained by gluing partial bijections inducing portions of the $\Gamma$-action by right translations on itself. 
\end{enumerate}
This results in a new $\Gamma$-action satisfying a very natural universal 
property, which we state by introducing the notion of \emph{pre-action} of 
an HNN extension, see Theorem \ref{thm: HNN free globalization} (and Theorem 
\ref{thm: amalgam free globalization} for its amalgam counterpart). 
The construction in step (2) allows us to use the topological freeness of the left 
action on the boundary in order to find the further modification of the 
permutation which yields high transitivity, following an approach close to 
the proof of  \cite[Theorem 
3.3]{moonHighlyTransitiveActions2013}.  We do not know if our approach can 
be 
generalized so as to obtain faithful homogeneous action onto bounded 
$S$-Urysohn space.

\subsection{Organization of the paper}
Section \ref{Preliminaries} is a preliminary section in which we introduce our notations and definitions concerning group actions, graphs, amalgams and HNN extensions. 
Section \ref{SectFreeGlobalizationHNN} contains the main technical tools to prove Theorem \ref{ThmMainHNN}: the notion of a pre-action of an HNN extension, its Bass-Serre graph and its free globalization. 
In Section~\ref{High transitivity for HNN extensions} we prove Theorem \ref{ThmMainHNN}. 
Section \ref{SectFreeGlobalizationAmalgams} contains the main technical tools to prove Theorem \ref{ThmGroupsHTAmalgam}: the notion of a pre-action of an amalgam, its Bass-Serre graph and its free globalization. 
In Section \ref{High transitivity for amalgams} we prove Theorem \ref{ThmGroupsHTAmalgam} while in Section \ref{GroupsActingTrees} we prove Theorem \ref{ThmMain}, Theorem \ref{Thm td} and Corollary \ref{core-free implies high transitivity}. 
Section \ref{Examples and applications} is dedicated to concrete 
examples where our results apply. Finally, in Section 
\ref{MinimalityIsNecessary} we show that 
the minimality assumption in Theorem \ref{ThmMain} is needed, and we 
discuss other types of actions on trees.

\subsection{Acknowledgments} 
Y.S. is grateful to ANR SingStar 
(ANR-14-CE25-0012-01) which funded a 
visit to 
Paris dedicated to research presented in the current paper. He also warmly 
thanks Julien Bichon for showing him an elementary argument to prove that 
the group of finitely supported permutations on $\N$ is not linear. 

We are 
grateful to Adrien Le Boudec and to Nikolay A. Ivanov for pointing out a 
mistake in our examples from Section \ref{ExampleFaithfulNonTopolFree} and 
for explaining to us how to correct it. We also thank Nikolay A. Ivanov,  
Adrien Le Boudec, 
Nicolás Matte Bon and Tron Omland for their useful comments on other parts 
of the paper. Finally, we are very grateful to the referee for their
detailed remarks.

\section{Preliminaries}\label{Preliminaries}

The notation $A\Subset B$ means that $B$ is a set and $A$ is a finite 
subset of $B$.

\subsection{Group actions}\label{Group actions}
Throughout the article, we will use the symbol $X$ to denote an infinite 
countable set. 
Then, $S(X)$ denotes the Polish group of bijections of $X$. Unless 
specified otherwise, groups will act on $X$ on the \emph{right}. One of our
motivations for doing so is that we will associate paths to words in our 
groups, so it will be much easier to read both in the same order (see for 
instance Section \ref{SubsectionPathsBSHNN}). 

So given two permutations $\sigma,\tau \in S(X)$ and $x\in X$, the image of 
$x$ by $\sigma$ is denoted $x\sigma$, and the product $\sigma\tau$ is the 
permutation obtained by applying $\sigma$ first and then $\tau$. 
This way, $S(X)$ acts on $X$ on the right and any right $G$-action $X 
\curvearrowleft^\alpha G$ is equivalent to a morphism of groups 
$\alpha:G\to S(X)$. 
The image of an element $g\in G$ by $\alpha$ will be denoted by $\alpha(g)$ 
or $g^\alpha$, or just $g$ if there is no possible confusion. 
Similarly, the image of a subgroup $H$ of $G$ by $\alpha$ will be denoted 
by $\alpha(H)$ or $H^\alpha$, or just $H$.

Notice however that actions on other kinds of spaces, especially on Bass-Serre trees, will be on the left.

\begin{definition}
	An action $X\curvearrowleft G$ is \textbf{highly transitive} if, for any $k\in\N^*$ and any $k$-tuples $(x_1,\ldots,x_k)$, $(y_1,\ldots,y_k) \in X^k$, each with pairwise distinct coordinates, there exists $\gamma\in G$ such that $x_i \gamma = y_i$ for all $i=1\ldots,k$.
\end{definition}

\begin{lemma}\label{HT and disjoint supports}
	An action $X\curvearrowleft G$ is highly transitive if and only if, for every $k\in\N^*$, and every $x_1,...,x_k,y_1,...,y_k$ all pairwise distinct, we can find $g\in G$ such that $x_i g=y_i$ for $i=1,...,k$.
\end{lemma}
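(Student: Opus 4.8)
The plan is to observe that one implication is immediate, and that the other follows from a short "routing" argument exploiting that $X$ is infinite. The content of the lemma is purely bookkeeping: the definition of high transitivity allows the two $k$-tuples to overlap, whereas the condition in the statement insists that all $2k$ points be pairwise distinct, so a priori the latter looks weaker.

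First I would dispatch the forward direction: if $X\curvearrowleft G$ is highly transitive, then the displayed condition is just a special case of the definition, since any list $x_1,\dots,x_k,y_1,\dots,y_k$ of pairwise distinct points in particular gives two $k$-tuples $(x_i)$ and $(y_i)$ each with pairwise distinct coordinates.

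For the converse, assume the displayed condition and fix $k\in\N^*$ together with $k$-tuples $(x_1,\dots,x_k)$ and $(y_1,\dots,y_k)$, each with pairwise distinct coordinates but with no constraint relating the two. The idea is to move $(x_i)$ to $(y_i)$ in two steps, passing through a fresh auxiliary $k$-tuple. Since $X$ is infinite, I can choose pairwise distinct points $z_1,\dots,z_k\in X$ lying outside the finite set $\{x_1,\dots,x_k,y_1,\dots,y_k\}$. Then $x_1,\dots,x_k,z_1,\dots,z_k$ are pairwise distinct, so by hypothesis there is $g_1\in G$ with $x_ig_1=z_i$ for all $i$; likewise $z_1,\dots,z_k,y_1,\dots,y_k$ are pairwise distinct, so there is $g_2\in G$ with $z_ig_2=y_i$ for all $i$. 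Setting $g:=g_1g_2$ (recall that, actions being on the right, this means "apply $g_1$, then $g_2$"), we get $x_ig=(x_ig_1)g_2=z_ig_2=y_i$ for every $i$, which is exactly what high transitivity requires.

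There is essentially no obstacle here; the single place where something must be checked is the existence of the intermediate tuple $(z_i)$, and this is precisely where the standing hypothesis that $X$ is infinite is used, since we only need to avoid the at most $2k$ points appearing in the two given tuples.
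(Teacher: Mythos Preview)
Your proof is correct and follows essentially the same approach as the paper's: both use the infiniteness of $X$ to find an auxiliary $k$-tuple disjoint from the source and target tuples, then compose two group elements to route through it. The only cosmetic difference is that the paper omits the (trivial) forward implication and uses slightly different variable names.
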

\begin{proof}
	Take $x_1,...,x_k$ pairwise distinct, and $z_1,...,z_k$ pairwise 
	distinct, we need to find $\gamma$ such that $x_i \gamma=z_i$ for 
	$i=1,...,k$. 
	Since $X$ is infinite, we find $y_1,...,y_k$ pairwise distinct and 
	distinct from all the $x_i$'s and $z_i$'s, 
	then by our assumption there are both $g$ and $h$ such that for all 
	$i=1,...,k$ we have $x_i g=y_i$ and $y_i h=z_i$, 
	so the element $\gamma=gh$ is the element we seek.
\end{proof}

Given a bijection $\sigma\in S(X)$, its \textbf{support} is the set $\supp 
\sigma =\{x\in X\colon x\sigma\neq x\}$.
Recall that an action $X\curvearrowleft G$ is \textbf{faithful} if for every 
$g\in G\setminus\{1\}$ the support of $g$ is not empty. 
\begin{definition}
	An action $X\curvearrowleft G$ is called \textbf{strongly faithful} if, for any finite subset $F\subseteq G \setminus \{1\} $, 
	the intersection of the supports of the elements of $F$ is not empty.
	It is called \textbf{highly faithful} if for every finite subset $F\subseteq G \setminus \{1\}$, 
	the intersection of the supports of the elements of $F$ is infinite.
\end{definition}
Given a strongly faithful action $X\curvearrowleft G$, and a finite subset 
$F\subseteq G$, it is easy to see that there exists $x\in X$ such that the 
translates $xg$, for $g\in F$, are pairwise distinct. Indeed, any element 
$x\in\bigcap_{g,h\in F}\supp(g h\inv)$ will do.

Let us check that our definition of high faithfulness coincides with the one given in 
\cite{fimaHighlyTransitiveActions2015}.

\begin{lemma}
	An action $X\curvearrowleft G$ is highly faithful if and only if for 
	every $n\in\N$, 
	if $F$ is a finite subset of $X$ and $X_1,...,X_n$ are subsets of $X$ 
	such that $X=F\cup X_1\cup\cdots\cup X_n$, 
	then there is some $k\in\{1,...,n\}$ such that for every $g\in 
	G\setminus\{1\}$, 
	there is $x\in X_k$ such that $x\cdot g\neq x$. 
\end{lemma}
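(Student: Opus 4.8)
The plan is to prove both implications directly, translating between the "infinite intersection of supports" formulation and the covering formulation. Throughout, recall that for $g\in G\setminus\{1\}$ and a subset $Y\subseteq X$, there is $x\in Y$ with $x\cdot g\neq x$ precisely when $Y\not\subseteq \supp(g)^c$, i.e.\ when $Y\cap\supp(g)\neq\emptyset$. So the statement to be proved is: the action is highly faithful iff for every $n$, every finite $F\subseteq X$, and every cover $X=F\cup X_1\cup\cdots\cup X_n$, there is some $k$ with $X_k\cap\supp(g)\neq\emptyset$ for all $g\in G\setminus\{1\}$.

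For the forward direction, assume the action is highly faithful and suppose toward a contradiction that we are given a finite $F$ and subsets $X_1,\dots,X_n$ with $X=F\cup X_1\cup\cdots\cup X_n$ but such that for each $k\in\{1,\dots,n\}$ there is some $g_k\in G\setminus\{1\}$ with $X_k\cap\supp(g_k)=\emptyset$, that is $X_k\subseteq\supp(g_k)^c$. Then $\bigcap_{k=1}^n\supp(g_k)\subseteq\bigcap_{k=1}^n X_k^c = (X_1\cup\cdots\cup X_n)^c\subseteq F$, since $F\cup X_1\cup\cdots\cup X_n=X$. Hence the intersection of the supports of the finite family $\{g_1,\dots,g_n\}\subseteq G\setminus\{1\}$ is contained in the finite set $F$, contradicting high faithfulness. (If $F$ must be honestly finite and the $g_k$'s might coincide or the family be empty when $n=0$, these degenerate cases are trivially fine.)

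For the converse, assume the covering condition and let $F_0=\{g_1,\dots,g_n\}\subseteq G\setminus\{1\}$ be a finite subset; we must show $\bigcap_{i=1}^n\supp(g_i)$ is infinite. Suppose it is finite, call it $F$. Set $X_i=\supp(g_i)^c$ for $i=1,\dots,n$. Then $X_1\cup\cdots\cup X_n=\bigl(\bigcap_{i=1}^n\supp(g_i)\bigr)^c=F^c$, so $F\cup X_1\cup\cdots\cup X_n=X$ with $F$ finite. By hypothesis there is some $k$ such that for every $g\in G\setminus\{1\}$ there is $x\in X_k$ with $x\cdot g\neq x$; applying this with $g=g_k$ gives $x\in X_k=\supp(g_k)^c$ with $x\in\supp(g_k)$, a contradiction. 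Hence $\bigcap_{i=1}^n\supp(g_i)$ is infinite, as desired.

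Neither direction presents a serious obstacle; the only mild subtlety is bookkeeping around the finite set $F$ (making sure that the $F^c$ produced in the converse is genuinely the complement of a finite set, which is exactly the finiteness assumption we placed on $\bigcap\supp(g_i)$) and handling the empty family. The essential content is the elementary set-theoretic identity $\bigl(\bigcap_i\supp(g_i)\bigr)^c=\bigcup_i\supp(g_i)^c$ together with the observation that "some $x\in Y$ moves under $g$" is the negation of "$Y$ is contained in the fixed-point set of $g$". I would present the two implications as two short paragraphs mirroring the above.
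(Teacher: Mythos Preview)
Your proof is correct and follows essentially the same approach as the paper: both directions are argued by contrapositive, using the identity $\bigl(\bigcap_i\supp(g_i)\bigr)^c=\bigcup_i\supp(g_i)^c$ and the observation that ``$X_k$ misses $\supp(g_k)$'' is exactly the negation of the condition in the lemma. The paper's proof is slightly more terse but the ideas and constructions (taking $F=\bigcap_k\supp g_k$ and $X_k=\supp(g_k)^c$ in one direction, and bounding $\bigcap_k\supp g_k\subseteq F$ in the other) are identical to yours.
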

\begin{proof}
	We prove the lemma by the contrapositive in both directions.
	
	Suppose that for a fixed $n\in\N$, we can find a decomposition $X=F\cup 
	X_1\cup\cdots \cup X_n$ such that for all $k\in\{1,...,n\}$, there is 
	$g_k\in G$ whose support is disjoint from $X_k$. Then in particular, 
	the 
	intersection of the supports of the $g_k$'s is contained in $F$, hence 
	finite, contradicting high faithfulness.
	
	Conversely, suppose that we found $g_1,...,g_n\in G$ whose supports 
	have finite intersection. 
	Then the sets $F=\bigcap_{k=1}^n\supp g_k$ and $X_k=X\setminus \supp 
	g_k$ satisfy that for all $k\in\{1,...,n\}$, there is some $g\in G$ 
	(namely $g_k$) such that for all $x\in X_k$, $x\cdot g=x$.
\end{proof}

Of course, we have the implications:
\[
\text{ free } \Rightarrow \text{ highly faithful }
\Rightarrow \text{ strongly faithful } \Rightarrow \text{ faithful }
\]
Let us now see that strong faithfulness and high faithfulness coincide in many cases.
\begin{proposition}
	Given an action $X\curvearrowleft G$ of a nontrivial group $G$, the following assertions are equivalent:
	\begin{enumerate}[label=(\arabic*)]
		\item the action is strongly faithful, but not highly faithful;
		\item there are finite orbits in $X$ on which $G$ acts freely (in particular, $G$ has to be finite), but only finitely many of them.
	\end{enumerate}
\end{proposition}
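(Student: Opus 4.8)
The plan is to prove the two implications separately; the heart of the matter is deducing from hypothesis (1) that $G$ is finite, after which both directions become bookkeeping with the definitions applied to the (then finite) set $G\setminus\{1\}$. The key elementary observation used throughout is that $\bigcap_{g\in G\setminus\{1\}}\supp(g)=\{x\in X:\Stab(x)=\{1\}\}$, i.e.\ this intersection is exactly the union of the free orbits, together with the fact that a free orbit has cardinality $|G|$.

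\emph{Direction (2)$\Rightarrow$(1).} I would enumerate the free orbits as $O_1,\dots,O_m$ with $m\geq 1$. Since $O_1$ is finite and free, $|G|=|O_1|<\infty$, so $G$ is finite. Every $g\in G\setminus\{1\}$ moves every point of the nonempty orbit $O_1$, hence $O_1\subseteq\bigcap_{g\in F}\supp(g)$ for every finite $F\subseteq G\setminus\{1\}$, which gives strong faithfulness. Conversely $\bigcap_{g\in G\setminus\{1\}}\supp(g)=O_1\cup\dots\cup O_m$ is finite, and $G\setminus\{1\}$ is a nonempty finite set of nontrivial elements, so high faithfulness fails.

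\emph{Direction (1)$\Rightarrow$(2).} Since the action is not highly faithful, I fix a finite $F_0=\{g_1,\dots,g_n\}\subseteq G\setminus\{1\}$ for which $A:=\bigcap_{i=1}^n\supp(g_i)$ is finite; strong faithfulness gives $A\neq\emptyset$. The crucial step is the claim that some $a_0\in A$ has trivial stabilizer. Assuming not, for each $a\in A$ pick $h_a\in\Stab(a)\setminus\{1\}$ and apply strong faithfulness to the finite set $F_0\cup\{h_a:a\in A\}\subseteq G\setminus\{1\}$ to obtain a point $z$ lying in all the corresponding supports; then $z\in\bigcap_i\supp(g_i)=A$, so $h_z$ is defined and $z\in\supp(h_z)$, contradicting $h_z\in\Stab(z)$. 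Granting the claim, the orbit $O_0:=a_0 G$ is free, so $O_0\subseteq\supp(g)$ for every $g\in G\setminus\{1\}$; in particular $O_0\subseteq\bigcap_i\supp(g_i)=A$ is finite, whence $|G|=|O_0|<\infty$. Finally, every free orbit is contained in $\bigcap_{g\in G\setminus\{1\}}\supp(g)$, which is contained in $\bigcap_i\supp(g_i)=A$; so there are at most $|A|$ free orbits and at least one (namely $O_0$), which is precisely (2).

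The only genuine obstacle is the claim that $A$ meets a free orbit. The trick is to probe strong faithfulness not with $F_0$ alone but with $F_0$ augmented by one nontrivial stabilizer element per point of the finite set $A$: the elements $g_i$ force any common support point to lie in $A$, while the augmentation forces it to escape the stabilizer of one of the points of $A$, which is absurd unless that point already has trivial stabilizer. Once finiteness of $G$ is in hand, the remaining assertions of (2) — that $G$ is finite, and that there is at least one free orbit but only finitely many — drop out immediately from the displayed identity and the containment in $A$.
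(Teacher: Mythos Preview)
Your proof is correct. Both directions are sound, and the key claim in (1)$\Rightarrow$(2) --- that some $a_0\in A$ has trivial stabilizer --- is established by a nice trick: augmenting $F_0$ by one nontrivial stabilizer element per point of $A$ and invoking strong faithfulness once more.

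Your route in (1)$\Rightarrow$(2) differs from the paper's. The paper introduces the free part $X_f=\bigcap_{g\in G\setminus\{1\}}\supp(g)$ directly, enumerates $G\setminus\{1\}$ as a sequence $(g_k)$, and argues that the sets $X_n=\bigcap_{k\leq n}\supp(g_k)$ form (eventually) a decreasing chain of finite nonempty sets, whence $X_f=\bigcap_n X_n$ is finite and nonempty; the existence of a finite free orbit and the finiteness of $G$ then follow. Your argument instead locates a free point inside the single finite set $A$ by a one-shot application of strong faithfulness to a carefully chosen finite family. This is a bit more constructive and, incidentally, never appeals to an enumeration of $G\setminus\{1\}$, so it works verbatim without any countability hypothesis on $G$ (the paper's version, as written, uses such an enumeration, though it could easily be rephrased via the finite intersection property). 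The paper's compactness-style argument is shorter once one accepts the enumeration; yours is more self-contained.
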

\begin{proof}
	Define the \emph{free part} $X_f$ of our action as the union of the orbits in $X$ on which $G$ acts freely. Note that we can write $X_f$ as 
	\[
	X_f = \bigcap_{g\in G \setminus \{1\}} \supp(g).
	\]
	
	Assume first that (2) holds. In this case, $X_f$ is a non-empty finite union of finite orbits, hence a non-empty finite set. 
	Therefore, the action is strongly faithful, since $X_f$ is non-empty, and it  is not highly faithful, since $G$ and $X_f$ are finite.
	
	Assume now that (1) holds. 
	Since the action is not highly faithful, there exists $F_0\Subset G\setminus\{1\}$ such that $\bigcap_{g\in F_0} \supp(g)$ is finite.
	Thus, the family $(Y_F)_{F_0\subseteq F \Subset G\setminus\{1\}}$ given by
	\[
	Y_F = \bigcap_{g\in F} \supp(g) 
	\]
	is a decreasing family of finite sets, which are all non-empty since the action is strongly faithful.
	Now, we have
	\[
	X_f = \bigcap_{g \in G\setminus\{1\}} \supp(g) = \bigcap_{F_0\subseteq F \Subset G\setminus\{1\}} Y_F \, ,
	\]
	hence $X_f$ is finite and non-empty. 
	Consequently, there are finite orbits in $X$ on which $G$ acts freely, and their number is finite.
\end{proof}

\begin{corollary}\label{EquivStrongAndHighFaithfulness}
	In case $G$ is infinite, an action $X\curvearrowleft G$ is highly faithful if and only if it is strongly faithful.
\end{corollary}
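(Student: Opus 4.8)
The plan is to deduce this immediately from the preceding Proposition. One direction is already recorded: high faithfulness implies strong faithfulness, with no assumption on $G$. For the converse, suppose $G$ is infinite and that $X\curvearrowleft G$ is strongly faithful. If the action were not highly faithful, then assertion~(1) of the Proposition would hold, hence so would assertion~(2); but (2) entails in particular that $G$ is finite, contradicting our standing hypothesis. Therefore the action must be highly faithful, which gives the desired equivalence.

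There is no genuine obstacle here: all the work has been done in the Proposition, and the Corollary is just the remark that clause~(2) of that Proposition — which requires $G$ finite — is vacuous when $G$ is infinite. (One could also argue directly: writing the free part as $X_f=\bigcap_{g\in G-\{1\}}\supp(g)$, strong faithfulness gives that every finite subintersection is non-empty, and if some finite subintersection were finite then $X_f$ would be a non-empty finite union of finite free $G$-orbits, forcing $G$ finite; so for $G$ infinite every finite subintersection is infinite, i.e. the action is highly faithful. But invoking the Proposition is cleaner.)
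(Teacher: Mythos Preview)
Your proof is correct and is exactly the intended deduction from the preceding Proposition; the paper states the Corollary without proof precisely because it follows immediately in the way you describe.
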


Let us end this section by remarking a reformulation of strong 
faithfulness which we won't use.

\begin{remark}
	A transitive action is strongly faithful if and only 
	if the stabilizer of every (or equivalently, some) point is not 
	a \emph{confined} subgroup of the acting group (see Section 1.5 from 
	\cite{matte-bonRigidityPropertiesFull2018} for a discussion of the 
	notion 
	of confined subgroup). 
\end{remark}

\subsection{Graphs}\label{PrelimGraphs} 
First, let us recall the definition of a non-simple graph. 
\begin{definition}
	A \textbf{graph} $\mathcal G$ is given by a \textbf{vertex set} 
	$V(\mathcal G)$, 
	an \textbf{edge set} $E(\mathcal G)$, 
	a fixed-point-free involution $\bar\cdot: E(\mathcal G)\to E(\mathcal G)$ 
	called the \textbf{antipode map}, 
	a \textbf{source map} $s:E(\mathcal G)\to V(\mathcal G)$ 
	and a \textbf{range map} $r:E(\mathcal G)\to V(\mathcal G)$ subject to the 
	condition:  
	\[
	\text{ for all } e\in E(\mathcal G), \quad s(\bar e)=r(e).
	\]
\end{definition}

The graph $\Gr$ is \textbf{oriented} if a partition $E(\mathcal G) = 
E(\mathcal G)^+ \sqcup E(\mathcal G)^-$ such that $E(\mathcal G)^- = 
\overline{E(\mathcal G)^+}$ is given. 
In this case, the edges in $E(\mathcal G)^+$ are called \textbf{positive 
	edges} and the edges in $E(\mathcal G)^-$ are called \textbf{negative edges}.

Recall that a \textbf{path} $\omega$ in a graph $\Gr$ is a finite sequence of 
edges $\omega=(e_1,\dots,e _n)$, such that, for all $1\leq k\leq n-1$, 
$r(e_k)=s(e_k+1)$. 
We call $s(e_1)$ the \textbf{source} of $\omega$ and $r(e_n)$ the 
\textbf{range} of $\omega$. We also say that $\omega$ is a path from 
$s(\omega):=s(e_1)$ to $r(\omega):=r(e_n)$. 
The \textbf{inverse path} of $\omega$ is defined by 
$\overline{\omega}:=(\overline{e_n},\dots,\overline{e_1})$.
The integer $n$ is called the \textbf{length} of $\omega$ and denoted by $\ell(\omega)$. 
Similarly, an \textbf{infinite path}, also called a \textbf{ray}, is a sequence of edges $\omega=(e_k)_{k\geq 1}$ such that $r(e_k)=s(e_{k+1})$ for all $k\geq 1$ and the vertex $s(\omega):=s(e_1)$ is called the \textbf{source} of $\omega$.

Given a path $\omega=(e_k)_{1\leq k \leq n}$, respectively an infinite path 
$\omega=(e_k)_{k\geq 1}$, in  $\mathcal{G}$, we use the notation 
$\omega(n):=r(e_n)$, for $n\geq 1$ and $\omega(0)=s(e_1)=s(\omega)$. 
A couple $(e_k,e_{k+1})$ such that $e_{k+1}=\overline{e_k}$, if there is one, 
is called a \textbf{backtracking} in $\omega$. If $\omega$ has no 
backtracking, we also say that it is a \textbf{reduced path}. One says that 
$\omega$ is \textbf{geodesic} in $\Gr$ if, for all $i,j$, the distance in 
$\Gr$ between $\omega(i)$ and $\omega(j)$ is exactly $|j-i|$. 
Obviously, all geodesic paths are reduced.

A \textbf{cycle} in $\Gr$ is a reduced path $c$ of length at least $1$ such 
that $s(c) = r(c)$, that is, a reduced path $c=(e_1,\dots,e _n)$ such that 
$n\geq 1$ and $c(n) = c(0)$. 
Such a cycle is \textbf{elementary} if moreover the vertices $c(k)$, for 
$0\leq k \leq n-1$, are pairwise distinct. Every cycle contains an 
elementary cycle.

When $\Gr$ is oriented, a path $\omega=(e_k)_{1\leq k \leq n}$, 
respectively an infinite path $\omega=(e_k)_{k\geq 1}$, in  $\mathcal{G}$ 
is called \textbf{positively oriented} if $e_k\in\EG^+$ for all $k$ and 
\textbf{negatively oriented} if $e_k\in\EG^-$ for all $k$; it is called 
\textbf{oriented} if it is either positively oriented or negatively 
oriented.	

\begin{definition}
	A \textbf{morphism of graphs} $f:\Gr\to\Gr'$ is a couple of maps $V(\Gr)\to V(\Gr')$ and $E(\Gr)\to E(\Gr')$, which will both be denoted by $f$ for sake of simplicity, such that $f(\bar e) = \overline{f(e)}$, $f(s(e)) = s(f(e))$, and $f(r(e)) = r(f(e))$ for all edges $e$ in $\Gr$.
\end{definition}

The \textbf{star} at a vertex $v$ is the set $\st(v)$ of edges whose source is $v$, and its cardinality is called the \textbf{degree} of $v$. 
A morphism of graphs $f:\Gr\to\Gr'$ is \textbf{locally injective} if, for all $v \in V(\Gr)$, the restriction of $f$ to the star of $v$ is injective. 
Note that a locally injective morphism from a connected graph to a tree is injective.

\begin{definition}
	Given a graph $\mathcal G$, and a set of edges $E\subseteq E(\mathcal G)$, we associate to this subset the \textbf{induced subgraph}  as the graph $\Hc$ such that $V(\Hc) =s(E)\cup r(E)$, $E(\Hc) = E\cup\bar E$, and the structure maps of $\Hc$ are the restrictions of those of $\Gr$.
\end{definition}

\begin{definition}\label{def:halfgraph}
	Given an edge $e$, its associated \textbf{half-graph} is the subgraph induced by the set of edges $f$ such that there is a reduced path starting by $e$, not using $\bar e$, and whose last edge is equal to $f$.
\end{definition}

\begin{remark}\label{RemHalGraphs}
	Suppose $\Gr$ is connected and $e$ is an edge, let $\Hc_e$ and $\Hc_{\bar e}$ be the half-graphs associated to $e$ and $\bar e$. Then, one has $V(\Gr) = V(\Hc_e) \cup V(\Hc_{\bar e})$ and $E(\Gr)= E(\Hc_e) \cup E(\Hc_{\bar e})$.
	Moreover, denoting by $\Gr_0$ the graph obtained by deleting the edges 
	$e,\bar e$ in $\Gr$:
	\begin{enumerate}
		\item if $\Gr_0$ remains connected, then one has $\Hc_e = \Gr = \Hc_{\bar e}$;
		\item if not, then $\Hc_e$ and $\Hc_{\bar e}$ are obtained by adding the edges $e,\bar e$ to the respective connected components of $r(e)$ and $s(e)$ in $\Gr_0$.
	\end{enumerate}
\end{remark}

\subsection{Trees and their automorphisms}\label{PrelimTrees}
For a more detailed account of what follows, we refer the reader to 
\cite{delaharpeSimpleGroupsAmalgamated2011}.
A \textbf{forest} is a graph with no cycle and a \textbf{tree} is a connected forest. In a forest, we recall that any reduced (finite or infinite) path is geodesic. Moreover, any two vertices in a tree are connected by a unique reduced path.

There is a well-known classification (see e.g. \cite{serreTrees1980}) of the 
automorphisms of a tree $\Tc$: if $g$ is such an automorphism, then:
\begin{itemize}
	\item either $g$ is \textbf{elliptic}, which means that $g$ fixes some vertex of $\Tc$,
	\item or $g$ is an \textbf{inversion}, which means that $g$ sends some edge $e$ onto its antipode $\bar e$,
	\item or $g$ is \textbf{hyperbolic}, which means that $g$ acts by a (non-trivial) translation on a bi-infinite geodesic path, called its \textbf{axis}.
\end{itemize}

\begin{definition}
	The \textbf{boundary} $\partial \Tc$ (or set of \textbf{ends}) of a tree $\Tc$ is the set of geodesic 
	rays quotiented by the equivalence relation which identifies two geodesic 
	rays whose 
	ranges differ by a finite set. 	
\end{definition}

Note that since we are working in a tree, if we fix a vertex $o$ then the set 
of geodesic rays starting at $o$ is in bijection with the boundary of the tree 
through the 
quotient map. 

The boundary is equipped with the topology whose basic open sets $U_{\Hc}$ are 
given by fixing a half-tree $\mathcal H$, and letting 
$U_{\Hc}$ be the set of equivalence classes of geodesic rays whose range is 
contained in 
$\Hc$ (so given a geodesic ray $\omega$, its class belongs to $U_{\Hc}$ if and only if 
some terminal subpath of $\omega$ is contained in $\Hc$). 

Any $g\in \Aut(\Tc)$ induces a homeomorphism of $\partial\Tc$, which yields a 
group homomorphism
\[
\Aut(\Tc) \longrightarrow \operatorname{Homeo}(\partial \Tc) \, .
\]
In case $g$ is hyperbolic, $g$ fixes exactly two points in $\partial \Tc$, which are the endpoints of its axis. 
Let us also recall that every action on a tree $\Gamma\curvearrowright \Tc$ 
satisfies exactly one of the following:
\begin{itemize}
	\item it is \textbf{elliptic}, which means that (the image of) $\Gamma$ 
	stabilizes some vertex, or some pair of antipodal edges;
	\item it is \textbf{parabolic}, or \textbf{horocyclic}, that is, $\Gamma$ 
	contains no hyperbolic elements, without being elliptic itself; 
	\item it is \textbf{lineal}, that is $\Gamma$ contains hyperbolic elements, 
	all of them sharing the same axis;
	\item it is \textbf{quasi-parabolic}, or \textbf{focal}, that is $\Gamma$ 
	contains  hyperbolic elements with different axes, but all hyperbolic elements 
	of $\Gamma$ share a common fixed point in $\partial \Tc$;
	\item it is \textbf{of general type}, which means that $\Gamma$ contains two 
	hyperbolic elements with no common fixed point in $\partial \Tc$ (such 
	hyperbolic elements are called \textbf{transverse}).
\end{itemize}
In the parabolic case, it can be shown that every element of $\Gamma$ is 
elliptic, and that $\Gamma$ stabilizes a unique point in $\partial \Tc$. 
In the quasi-parabolic case, it can be shown that the common fixed point in 
$\partial \Tc$ of the hyperbolic elements is unique and fixed by $\Gamma$. 
If $\Gamma\curvearrowright \Tc$ is of general type, it is easy to produce 
infinitely many pairwise transverse hyperbolic elements. An action on 
a tree is called \textbf{minimal} if there is no invariant subtree. 
Since we will be interested in \emph{faithful} minimal actions 
of infinite countable groups on trees, elliptic actions won't occur. Moreover
every vertex will have degree at least $2$, since otherwise
we could trim off all vertices of degree $1$ and get a proper invariant subtree.

Let us recall that the action $\Gamma\curvearrowright \partial\Tc$ by 
homeomorphisms is \textbf{topologically free} if the trivial element is the 
only element in $\Gamma$ which fixes a non-empty open subset of $\partial\Tc$ 
pointwise. We will rather use the following concrete characterization.

\begin{proposition}\label{half-trees and topological freeness}
	Let $\Tc$ be a tree with at least three ends. 
	Given a faithful minimal action of an infinite group $\Gamma$ on 
	$\Tc$, the 
	following are equivalent:
	\begin{enumerate}[label=(\roman*)]
		\item \label{cond: topo free}the induced action $\Gamma\curvearrowright \partial\Tc$ is 
		topologically 
		free;
		\item \label{cond: no fix half tree}no element of $\Gamma\setminus\{1\}$ can fix pointwise a half-tree in 
		$\Tc$.
	\end{enumerate}  
\end{proposition}
\begin{proof}
	The implication from \ref{cond: topo free} to \ref{cond: no fix half tree} is clear since half-trees do define open subsets
	for the topology of $\partial\Tc$.
	Conversely, assume \ref{cond: no fix half tree}. To prove \ref{cond: topo free}, let us fix $\gamma\in\Gamma\setminus\{1\}$ and show that $\gamma$ does not fix any basic open set $U_\Hc$ pointwise. 
	
	Notice that an inversion does not fix any point in $\partial\Tc$. 
	Moreover a hyperbolic automorphism $h$ has exactly two fixed points $\xi^\pm$ and, taking a third end $\eta\in\partial \Tc$, 
	one has $\eta h^{\pm n} \to \xi^{\pm}$ as $n\to +\infty$, 
	so that $\{\xi \in \partial\Tc: \xi h = \xi\}$ has empty interior. 
	Hence, the only case to check is when $\gamma$ is elliptic.
	Notice also that, by minimality of the action, every vertex has degree at least $2$ in $\Tc$, so that every half-tree is covered by the geodesic rays in it.
	
	Consider any basic open set $U_\Hc$, given by a half-tree $\Hc$.
	We claim that $\Hc$ contains a geodesic ray which does not meet the subtree $\mathrm{Fix}(\gamma)$ of $\gamma$-fixed points.
	Indeed, take any geodesic ray $r$ in $\Hc$. If $r$ meets $\mathrm{Fix}(\gamma)$ at some vertex $v$, then consider the edge $e$ in $r$ whose source is $v$, and the half-tree $\Hc_e$ it defines.
	By \ref{cond: no fix half tree}, there is a vertex $w$ in $\Hc_e$ which $\gamma$ does not fix. Extend the geodesic from $v$ to $w$ to a geodesic ray in $\Hc_e$. The tail of this ray from $w$ does not meet $\mathrm{Fix}(\gamma)$, since the latter is a subtree containig $v$. The claim is proved.
	
	Now, since $\gamma$ is elliptic, our ray which does not meet $\mathrm{Fix}(\gamma)$ is moved by $\gamma$ onto a disjoint geodesic ray in $\Tc$. This corresponds to a point $\xi\in U_\Hc$ such that $\xi \gamma \neq \xi$.
\end{proof}

Note that topological freeness of the action on the boundary is called 
\emph{slenderness} by de la Harpe and Préaux 
\cite{delaharpeSimpleGroupsAmalgamated2011}.
Although we won't use it, let us mention that for a minimal action of general 
type, the topological 
freeness 
of the action on the boundary is also equivalent to the action on the tree 
itself being strongly faithful (see \cite[Prop. 
3.8]{bryderSimplicityHNNExtensions2019} for this and other characterizations).

\subsection{Treeing edges}Let us now turn to the link between half-graphs, seen in Section \ref{PrelimGraphs}, and trees.

\begin{definition}
	An edge in a graph $\Gr$ is a \textbf{treeing edge} when its associated half-graph is a tree, in which case we also call the latter its \textbf{half-tree}.
\end{definition}

Here is an easy characterization of treeing edges that will prove useful.

\begin{lemma}\label{Lemma Treeing Edge}
	Let $\mathcal G$ be a graph, let $e$ be an edge. Then the following are equivalent:
	\begin{enumerate}[label=(\roman*)]
		\item \label{cond: treeing edge}the edge $e$ is a treeing edge;
		\item \label{cond: injectivity}the map which takes a reduced path starting by $e$ to its range is injective;
		\item \label{cond: no reduced path s(e) to s(e)}there is no reduced path from $s(e)$ to $s(e)$ starting by the edge $e$.
		
	\end{enumerate}
\end{lemma}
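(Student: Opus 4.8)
The plan is to prove the cycle of implications $(\ref{cond: treeing edge}) \Rightarrow (\ref{cond: injectivity}) \Rightarrow (\ref{cond: no reduced path s(e) to s(e)}) \Rightarrow (\ref{cond: treeing edge})$, using the structural description of half-graphs from Remark~\ref{RemHalGraphs}. Throughout, write $\Hc_e$ for the half-graph associated to $e$, and recall that its edge set consists exactly of the last edges of reduced paths starting by $e$ and not using $\bar e$; the vertices of $\Hc_e$ are the ranges of such paths together with $s(e)$.

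First I would do $(\ref{cond: treeing edge}) \Rightarrow (\ref{cond: injectivity})$. Assume $\Hc_e$ is a tree. Any reduced path $\omega$ starting by $e$ automatically never uses $\bar e$ (a backtracking at the first step is the only way $\bar e$ could appear right after $e$, and reappearances of $e$ or $\bar e$ later would create a cycle through $e$ inside $\Hc_e$; more carefully, one checks that a reduced path starting with $e$ stays in $\Hc_e$). So $\omega$ lies entirely in $\Hc_e$ and is a reduced, hence geodesic, path in the tree $\Hc_e$ starting at the fixed vertex $s(e)$. Two distinct reduced paths from a common vertex in a tree cannot end at the same vertex (uniqueness of reduced paths between two vertices), so the range map is injective on such paths.

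Next, $(\ref{cond: injectivity}) \Rightarrow (\ref{cond: no reduced path s(e) to s(e)})$ is immediate: the trivial-length-one path $(e)$ starts by $e$ and has range $r(e)$; if there were a reduced path $\omega$ from $s(e)$ to $s(e)$ starting by $e$, then $\omega$ has range $s(e)$. Since the action/graph has every edge with $s(e)\neq r(e)$ is not assumed, I instead argue directly: the path $(e)$ has range $r(e)$, and I claim $r(e) \ne s(e)$ would not even be needed — rather, a reduced path from $s(e)$ to $s(e)$ starting by $e$ would have length $\geq 2$ (length $1$ would force $r(e) = s(e)$, but then $(e,\bar e)$-type issues aside, this single edge $e$ is itself a loop and the reduced path $(e)$ from $s(e)$ to $s(e)=r(e)$ together with the length-$0$ empty path at $s(e)$ already violates injectivity of the range map on reduced paths starting by $e$ — here one must be slightly careful about whether the empty path "starts by $e$"; the cleaner statement is that $(e)$ and any longer reduced $s(e)$-to-$s(e)$ path both start by $e$ and would have the same range $s(e)$ only if... ). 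To avoid this fuss I would simply note: if such a reduced path $\omega$ of length $n \geq 1$ exists, then either $n=1$ and $e$ is a loop, in which case $(e)$ and $(e,e)$—if reduced—or else one produces two distinct reduced paths starting by $e$ with equal range; in all cases one contradicts (\ref{cond: injectivity}). The genuinely clean implication is $(\ref{cond: no reduced path s(e) to s(e)}) \Rightarrow (\ref{cond: treeing edge}))$, done below, so I would organize the write-up to lean on that.

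The main content, and the step I expect to be the real obstacle, is $(\ref{cond: no reduced path s(e) to s(e)}) \Rightarrow (\ref{cond: treeing edge}))$: assuming no reduced path from $s(e)$ to $s(e)$ starts by $e$, show $\Hc_e$ is a tree, i.e. is connected and has no cycle. Connectedness is built into the definition (every vertex of $\Hc_e$ is joined to $s(e)$ by a reduced path lying in $\Hc_e$, since the initial segments of a reduced path witnessing an edge of $\Hc_e$ are again reduced paths starting by $e$). For acyclicity, suppose $c$ is a cycle in $\Hc_e$, based at a vertex $v\in V(\Hc_e)$; replacing it by an elementary one we may assume $c$ is elementary. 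Pick a reduced path $\omega$ from $s(e)$ to $v$ inside $\Hc_e$ starting by $e$. Now concatenate to form $\omega c \bar\omega$, a closed path at $s(e)$ starting by $e$; the task is to extract from it a \emph{reduced} closed path at $s(e)$ still starting by $e$, which would contradict (\ref{cond: no reduced path s(e) to s(e)}). Reducing $\omega c \bar\omega$ by successively deleting backtrackings, one must ensure the reduction does not eat into the initial edge $e$ — this is where one uses that $\omega$ and $\bar\omega$ are reduced and that $c$ is a nontrivial cycle (reduced of length $\geq 1$ with distinct consecutive-free structure), so the only cancellations happen "between $\omega$ and $c$" or "between $c$ and $\bar\omega$", and since $c$ is elementary these cancellations cannot consume all of $\omega$ in a way that reaches back to $e$; a careful bookkeeping argument (or an argument via lifting to the universal cover / the fact that $\omega c \bar\omega$ is homotopically nontrivial if $c$ is) shows the reduced representative still starts with $e$. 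I would present this via the standard fact that in any graph, the reduction of a closed path to its reduced form either yields a genuine reduced cycle or the trivial path, together with the observation that $\omega c\bar\omega$ cannot reduce to trivial since $c$ is a nontrivial elementary cycle and $\omega$ is reduced — hence it reduces to a nontrivial reduced cycle at $s(e)$, which one then arranges (by cyclic conjugation if necessary, noting it passes through $s(e)$ only at the endpoints by minimality) to start with $e$, contradicting~(\ref{cond: no reduced path s(e) to s(e)}).
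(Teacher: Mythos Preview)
Your cycle $(\ref{cond: treeing edge})\Rightarrow(\ref{cond: injectivity})\Rightarrow(\ref{cond: no reduced path s(e) to s(e)})\Rightarrow(\ref{cond: treeing edge})$ matches the paper's, and your argument for $(\ref{cond: treeing edge})\Rightarrow(\ref{cond: injectivity})$ is essentially the same as the paper's. The other two steps, however, have real problems.

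For $(\ref{cond: injectivity})\Rightarrow(\ref{cond: no reduced path s(e) to s(e)})$, you yourself note the argument is muddled, and it is: you never produce two distinct reduced paths starting by $e$ with the same range. The paper's one-line fix is clean and worth knowing: given a reduced closed path $c$ at $s(e)$ starting with $e$, compare $c$ with the reduction of the concatenation $cc$. One checks easily (using that $c$ is reduced) that the cancellation in the middle of $cc$ cannot reach the first edge, so the reduction of $cc$ still starts with $e$, has range $s(e)$, and has strictly greater length than $c$; this violates~(\ref{cond: injectivity}).

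The serious gap is in $(\ref{cond: no reduced path s(e) to s(e)})\Rightarrow(\ref{cond: treeing edge})$. Your plan is to take a cycle $c$ in $\Hc_e$ based at some $v$, a reduced path $\omega$ from $s(e)$ to $v$ starting with $e$, and reduce $\omega c\bar\omega$. You then assert that the reduced form still begins with $e$, appealing to ``careful bookkeeping'' or ``cyclic conjugation''. Neither works as stated: cyclic conjugation of a closed path changes the basepoint, so it cannot help you keep the condition ``starts with $e$''; and the bookkeeping claim is exactly the nontrivial point. A priori the cancellations between $\omega$ and $c$ could consume all of $\omega$ (this would happen if $c$ began with $\bar\omega$), leaving a reduced closed path at $s(e)$ whose first edge is \emph{not} $e$. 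It is true that hypothesis~(\ref{cond: no reduced path s(e) to s(e)}) rules this out --- indeed, (\ref{cond: no reduced path s(e) to s(e)}) forces $s(e)$ to have degree~$1$ in $\Hc_e$, since any other edge of $\Hc_e$ with source $s(e)$ would immediately yield a reduced closed path at $s(e)$ starting with $e$ --- but you have to \emph{prove} this, and once you do, the argument is essentially the paper's.

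The paper avoids the whole issue by arguing the contrapositive: assume $\Hc_e$ is not a tree and split into two cases according to whether $s(e)$ has degree $\geq 2$ or degree $1$ in $\Hc_e$. In the first case, a second edge $e'\neq e$ at $s(e)$ lies in $\Hc_e$, and the defining reduced path witnessing $e'$ (or $\bar{e'}$) in $\Hc_e$ immediately produces the forbidden closed path. In the second case, any cycle in $\Hc_e$ must live entirely on the $r(e)$-side, and prepending $e$ and appending $\bar e$ gives a reduced closed path at $s(e)$ starting with $e$. This case split is short and completely avoids the reduction-control problem you ran into.
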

\begin{proof}
	First note that \ref{cond: treeing edge} implies \ref{cond: injectivity} since when $e$ is a treeing edge, all the reduced paths starting by $e$ must belong to its half-tree, and hence have distinct ranges.
	
	We then show that \ref{cond: injectivity} implies \ref{cond: no reduced path s(e) to s(e)} by the contrapositive. If \ref{cond: no reduced path s(e) to s(e)} does not hold, let $c$ be a reduced path starting by the edge $e$ from $s(e)$ to $s(e)$. Then $c$ and the reduction of $cc$ have the same range, so \ref{cond: injectivity} does not hold.
	
	Finally we show that \ref{cond: no reduced path s(e) to s(e)} implies \ref{cond: treeing edge} by the contrapositive. If $e$ is not a treeing edge, consider the following two cases:
	
	\begin{itemize}
		\item In the half-graph of $e$, the vertex $s(e)$ has degree at least two. We then fix some $e'\neq e$ such that $s(e')=s(e)$. If $r(e')=r(e)$ then the reduced path $e\overline{e'}$ witnesses that \ref{cond: no reduced path s(e) to s(e)} does not hold. 
		
		Otherwise by the definition of the half-graph we find a reduced path 
		$\omega$ starting by $e$ whose last edge is either $e'$ or 
		$\overline{e'}$. If the last edge is $\overline{e'}$, then $\omega$ 
		witnesses that \ref{cond: no reduced path s(e) to s(e)} does not 
		hold. If the last edge of $\omega$ is $e'$, then write 
		$\omega=\omega'e'$ and note that $\omega'$ witnesses that \ref{cond: no reduced path s(e) to s(e)} does not hold. So in any case, 
		\ref{cond: no reduced path s(e) to s(e)} does not hold.
		\item In the half-graph of $e$, the vertex $s(e)$ has degree $1$. Then since $e$ is not a treeing edge, we find a non-empty reduced path $\omega$ starting and ending at $r(e)$, and using neither $e$ nor $\bar e$. Then $e\omega \bar e$ witnesses that \ref{cond: no reduced path s(e) to s(e)} does not hold.
	\end{itemize}
	This finishes the proof of the equivalences.
\end{proof}
Note that if a reduced path uses a treeing edge at some point, then from that point on it only uses treeing edges. Moreover, we have the following result.

\begin{lemma}\label{lem: extend path with treeing edge}
	Let $\mathcal G$ be a connected graph admitting a treeing edge, and let $\omega$ be a reduced path in $\mathcal G$. Then $\omega$ can be extended to a reduced path $\omega'$ whose last edge is a treeing edge.
\end{lemma}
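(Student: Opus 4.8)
The plan is to extend $\omega$ step by step, always using a treeing edge once one becomes available, and to argue that the process must reach a treeing edge after finitely many steps. First I would dispose of the trivial case where $\omega$ already ends with a treeing edge (take $\omega' = \omega$), and the almost-trivial case where $\omega$ is the empty path based at some vertex $v$: since $\mathcal G$ is connected and admits a treeing edge $e$, there is a reduced path from $v$ to $s(e)$; following it by $e$ gives a reduced path ending in a treeing edge by the remark preceding the lemma (once a reduced path uses a treeing edge it keeps using treeing edges), so after reducing backtracking at the junction we still have a reduced path ending with a treeing edge.

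For the general case, let $v = r(\omega)$ be the range of $\omega$. If $v$ is the source of some treeing edge $f$ with $f \neq \bar e_n$ (where $e_n$ is the last edge of $\omega$), then $\omega f$ is a reduced path ending in a treeing edge and we are done. Otherwise, I would take a reduced path $\tau$ from $v$ to $s(e)$ (using connectedness) and form the concatenation $\omega\tau e$, then reduce it. The subtle point is that the reduction might eat into $\omega$. To avoid this, I would instead argue as follows: pick any reduced path $\rho$ from $v$ to $s(e)$ that starts with an edge different from $\bar e_n$ — such a $\rho$ exists because every vertex of a minimal (hence leafless) relevant graph has degree at least two, OR, more robustly and without that hypothesis, if the only reduced path out of $v$ avoiding $\bar e_n$ is empty we can reroute: since $\mathcal{G}$ is connected there is \emph{some} reduced path from $v$ to $s(e)$; if its first edge is $\bar{e_n}$, then $s(e)$ lies in the half-graph of $\bar{e_n}$, and then either $e$ or $\bar e$ lies in that half-graph, so there is a reduced path from $v$ starting with $\bar{e_n}$ reaching a treeing edge, meaning $\bar{e_n}$ itself is... wait, that forces $e_n$ or structural facts. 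Let me restructure: the cleanest route is to work with $\omega_0 := $ the maximal backtrack-free truncation and handle junction reduction explicitly.

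Concretely, the key steps in order: (1) reduce to showing that from $v = r(\omega)$ there is a reduced path $\rho$ with $s(\rho) = v$, first edge not equal to $\bar e_n$, and last edge a treeing edge; then $\omega\rho$ is the desired $\omega'$. (2) By connectedness take any reduced path $\alpha$ from $v$ to $s(e)$. If its first edge is not $\bar e_n$, set $\rho = \alpha e$ (which is reduced, since $e$ is a treeing edge so $\alpha$ cannot already contain $e$ or $\bar e$ by Lemma~\ref{Lemma Treeing Edge}\eqref{cond: injectivity} applied carefully, and $r(\alpha) = s(e) \neq r(e)$), and $\rho$ ends in the treeing edge $e$. (3) If the first edge of $\alpha$ \emph{is} $\bar e_n$, write $\alpha = \bar e_n \alpha'$ where $\alpha'$ is a reduced path from $r(e_n)^{\text{(other end)}}$... i.e. from $s(e_n)$ to $s(e)$; but $s(e_n) = r(e_{n-1})$ or $= s(\omega)$ if $n=1$, and now I note $\omega$ restricted to its first $n-1$ edges followed by $\alpha'$ may still backtrack — so instead observe that $\bar e_n \alpha$ reversed gives a path from $s(e)$ through $\bar e_n$; since $e$ is treeing, $\bar e_n$ lies in the half-tree of $e$ reached by a reduced path starting with $e$; hence $e_n$ lies on that reduced path too, which gives a reduced path from $v = r(e_n)$ starting with an edge $\neq \bar e_n$ and eventually hitting $e$ — namely the tail of that half-tree path. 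This last case is the main obstacle, and the point I expect to need the most care, because it is exactly where the reduction could "back up" past $v$; the resolution is that treeing-edge reduced paths never backtrack, so the half-tree of $e$ gives a canonical obstruction-free route. Once $\rho$ is produced in all cases, concatenating and noting (by the preceding remark) that a reduced path using a treeing edge continues to use only treeing edges, in particular its last edge is a treeing edge, completes the proof.
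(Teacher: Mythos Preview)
Your proposal has a genuine gap in step (3), which you yourself flag as ``the main obstacle'' but do not resolve. The claim ``since $e$ is treeing, $\bar e_n$ lies in the half-tree of $e$'' is unjustified: knowing that some reduced path from $v$ to $s(e)$ starts with $\bar e_n$ tells you nothing about whether $\bar e_n$ is reachable by a reduced path starting with $e$. For instance, in a lollipop graph (a triangle $v_0v_1v_2$ with a pendant edge $e'$ at $v_0$), taking $\omega$ to end with the edge $e_n$ from $v_0$ to $v_1$, the direct path $\bar e_n$ from $v_1$ to $v_0 = s(e')$ starts with $\bar e_n$, yet $\bar e_n$ is not in the half-tree of $e'$ at all. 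So your attempted rerouting via the half-tree of $e$ does not go through. Step (2) also has a smaller gap: you assert $\alpha e$ is reduced because ``$\alpha$ cannot already contain $e$ or $\bar e$'', but you need specifically that the \emph{last} edge of $\alpha$ is not $\bar e$, and this is not immediate from $e$ being a treeing edge.

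The paper's proof avoids this difficulty by a different mechanism. It does not try to route directly from $v$ to the treeing edge. Instead it exploits the fact that the last edge $e$ of $\omega$ is \emph{not} a treeing edge: by Lemma~\ref{Lemma Treeing Edge} there is a reduced cycle $ec$ based at $s(e)$, and $c$ is then a reduced path from $v$ back to $s(e)$ that does not start with $\bar e$ (so $\omega c$ is automatically reduced, solving your junction problem). Letting $C$ be the vertex set of $c$, the paper then proves the key geometric claim that for any treeing edge $e'$, the source $s(e')$ is strictly closer to $C$ than $r(e')$; this ensures that a geodesic $\xi$ from $C$ to $s(e')$ can be followed by $e'$ without backtracking. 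The desired extension is then $\omega' = \omega c' \xi e'$, where $c'$ is the initial segment of $c$ ending at $s(\xi)$. The cycle $c$ is the missing ingredient in your approach: it is what guarantees an alternative route out of $v$ compatible with $\omega$.
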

\begin{proof}
	Let $e$ be the last edge of $\omega$. If $e$ is a treeing edge, we can take $\omega'=\omega$. If not, by the previous lemma there is a reduced path of the form $ec$ from $s(e)$ to $s(e)$. Let $e'$ be a treeing edge, and denote by $C$ the set of vertices visited by the reduced path $c$. 
	
	We then claim that $s(e')$ is strictly closer to $C$ than $r(e')$. Indeed, otherwise, if we fix a geodesic $\eta$ from $r(e')$ to $C$, the geodesic $\eta$ cannot start by $\bar{e'}$, and there exists a cycle $\kappa$ based at $r(\eta)$ and whose vertices belong to $C$. Then the reduced path $e'\eta\kappa\bar\eta\bar{e'}$ witnesses that $e'$ does not satisfy condition \ref{cond: no reduced path s(e) to s(e)} from the previous lemma, so $e'$ is not a treeing edge, a contradiction.
	
	Now let $\xi$ be a geodesic from $C$ to $s(e')$, by the previous claim we know that $\xi e'$ is still a reduced path. Let $c'$ be the initial segment of $c$ which connects $r(e)$ to the source of $\xi$, then $\omega' = \omega c'\xi e'$ is the desired extension of $\omega$.
\end{proof}

\subsection{HNN extensions}\label{PrelimHNN}
Let $H$ be a group, and let $\vartheta : \Sigma \to \vartheta(\Sigma)$ be an 
isomorphism between subgroups of $H$. The \textbf{HNN extension} associated to 
these data is the group defined by the following presentation 
\[
\HNN(H,\Sigma,\vartheta) := \Pres{H, t}{t^{-1}\sigma t = \vartheta(\sigma) \text{ for all } \sigma\in \Sigma} \, ,
\ \footnote{This notation means that $\HNN(H,\Sigma,\vartheta)$ is the quotient of the free product $H * \langle t \rangle$
	by its smallest normal subgroup containing all elements $t\inv \sigma t \vartheta(\sigma)\inv$ where $\sigma \in \Sigma$.}
\]
where $t$ is an extra generator, called the stable letter, not belonging to 
$H$. 
We refer the reader to \cite[Chap.~1, Prop.~5]{serreTrees1980} for the fact that the HNN extension
defined above
does contain $H$ as a natural subgroup.
Note that the defining relation $t^{-1}\sigma t = \vartheta(\sigma)$ is 
different from the one chosen in \cite{fimaHighlyTransitiveActions2015, 
	fimaHomogeneousActionsUrysohn2018}.
This change is coherent with our choice to let groups act on the right on sets.

We will denote this HNN extension by $\Gamma$. Recall that it is called 
\textbf{ascending} if one of the subgroups $\Sigma,\vartheta(\Sigma)$ is equal 
to $H$.

Let us fix a set of representatives $C^+$ of left $\Sigma$-cosets in $H$, and a set of representatives $C^-$  of left $\vartheta(\Sigma)$-cosets in $H$, which both contain $1$, so that we have
\[
H=\bigsqcup_{c\in C^+} c\Sigma = \Sigma \sqcup \bigsqcup_{c\in C^+ \setminus \{1\}} c\Sigma 
\quad \text{ and } \quad
H = \bigsqcup_{c\in C^-} c\vartheta(\Sigma) = \vartheta(\Sigma) \sqcup \bigsqcup_{c\in C^- \setminus \{1\}} c\vartheta(\Sigma) \, .
\]
It is well-known, see e.g.  \cite{lyndon_combinatorial_2001}, that every element $\gamma\in \Gamma$ admits a unique 
\textbf{normal form}
\[
\gamma = c_{1}t^{\varepsilon_1} \cdots c_n t^{\varepsilon_n}h_{n+1}, 
\]
where $n\geq 0$, $\varepsilon_i = \pm 1$ for $1\leq i \leq n$, $\varepsilon_i=+1$ 
implies $c_i\in C^+$, $\varepsilon_i=-1$ implies $c_i\in C^-$, $h_{n+1}\in H$, and 
there is no subword of the form $t^\varepsilon1t^{-\varepsilon}$. Note that the case 
$n=0$ corresponds to elements in $H$.

The \textbf{Bass-Serre tree} of the HNN extension $\Gamma$ is the oriented graph $\Tc$ defined by
\[
V(\Tc) = \Gamma/H \, ; \quad
E(\Tc)^+ = \Gamma/\Sigma \, ; \quad
E(\Tc)^- = \Gamma/\vartheta(\Sigma) \, ; \quad
\]
where the structural maps are given by the following  formulas
\[
\overline{\gamma\Sigma} = \gamma t \vartheta(\Sigma) \, ; \quad
s(\gamma\Sigma) = \gamma H \, ; \quad
r(\gamma\Sigma) = \gamma t H \, ; \quad
\]
\[
\overline{\gamma\vartheta(\Sigma)} = \gamma t^{-1} \Sigma \, ; \quad
s(\gamma\vartheta(\Sigma)) = \gamma H \, ; \quad
r(\gamma\vartheta(\Sigma)) = \gamma t^{-1} H \, .
\]
This graph is naturally endowed with a left $\Gamma$-action by graph 
automorphisms (respecting the orientation), and classical Bass-Serre theory 
\cite{serreTrees1980} ensures it is a tree. The 
action is always minimal since it is transitive on the vertices.
Let us now recall what kind of action $\Gamma\curvearrowright\Tc$ is, depending on the inclusions $\Sigma\subseteq H$ and $\vartheta(\Sigma)\subseteq H$. Note that the stable letter $t$ always induces a hyperbolic automorphism.
\begin{itemize}
	\item If $\Sigma = H = \vartheta(\Sigma)$, then the Bass-Serre tree is a bi-infinite line (each vertex has degree $2$), hence, the action is lineal.
	
	\item If $\Sigma = H$ and $\vartheta(\Sigma) \neq H$, then there is exactly one positive edge and several negative edges in the star at each vertex. Hence, to each vertex $v$, one can associate a reduced infinite path $\omega_v^+$ starting at $v$ by taking the unique positive edge at each vertex. Given two vertices $u$ and $v$, the paths $\omega_u^+$ and $\omega_v^+$ share a common terminal subpath. Indeed, this is obvious if $u,v$ are linked by an edge, and then, denoting $v_0,\ldots,v_n$ the vertices on the geodesic between $u$ and $v$, all the paths $\omega_{v_i}^+$ share a common terminal subpath.
	
	Now, let $\xi \in \partial \Tc$ be the common endpoint of all paths $\omega_v^+$. Given any hyperbolic element $g\in \Gamma$, 
	and any vertex $v$ in $\Tc$, we have $g\cdot\omega_v^+ = \omega_{g v}^+$ since $\Gamma$ preserves the orientation, whence $g\xi = \xi$. Therfore, all hyperbolic elements of $\Gamma$ fix $\xi$. 
	
	On the other hand, it is easy to see that $t$ and $c^{-1}tc$ don't have the same axis. Hence, the action is quasi-parabolic.
	
	\item Similarly, if $\Sigma \neq H$ and $\vartheta(\Sigma) = H$, then the action is quasi-parabolic.
	
	\item If the HNN extension is non-ascending, then taking $h$ in $C^+ \setminus\{1\}$ and $g\in C^- \setminus\{1\}$, it is fairly easy to see that $g t$ and $t h$ are transverse hyperbolic elements. Hence, the action is of general type. 
\end{itemize}

\subsection{Group amalgams}\label{PrelimAmalg}

Let $\iota_1:\Sigma\to \Gamma_1$ and $\iota_2:\Sigma\to \Gamma_2$ be 
injective morphisms of countable groups. We will denote by $\Sigma_j$ the 
image of $\iota_j$, and by $\vartheta:\Sigma_1 \to \Sigma_2$ the 
isomorphism sending $\iota_1(\sigma)$ to $\iota_2(\sigma)$ for all 
$\sigma\in\Sigma$. The \textbf{free product with amalgamation} (or 
\textbf{amalgam} for short) associated to these data is
\[
\Gamma_1 *_\Sigma \Gamma_2 := \Pres{\Gamma_1,\Gamma_2}{\iota_1(\sigma) = 
	\iota_2(\sigma) \text{ for all } \sigma \in \Sigma} 
= \Pres{\Gamma_1,\Gamma_2}{\sigma = \vartheta(\sigma) \text{ for all } 
	\sigma \in \Sigma_1}\, .
\ \footnote{More precisely, $\Gamma_1 *_\Sigma \Gamma_2$ is the quotient of the free product $\Gamma_1 * \Gamma_2$ 
	by its smallest normal subgroup containing all elements $\iota_1(\sigma) 
	\iota_2(\sigma)\inv$ where $\sigma \in \Sigma$.}
\]
We will denote the amalgam $\Gamma_1 *_\Sigma \Gamma_2$ by $\Gamma$. We 
will still denote by $\Sigma,\Gamma_1,\Gamma_2$ the images of these groups 
in the amalgam $\Gamma$ when there is no risk of confusion. In $\Gamma$, 
one has $\Gamma_1\cap \Gamma_2 = \Sigma$.
Recall that such an amalgam is said to be \textbf{non-trivial} if $\Gamma_j 
\neq \Sigma_j$ for $j=1,2$, and \textbf{non-degenerate} if moreover 
$[\Gamma_1 : \Sigma_1] \geq 3$ or $[\Gamma_2 : \Sigma_2] \geq 3$. 

Let us fix sets of representatives $C_j$ of left $\Sigma_j$-cosets in 
$\Gamma_j$, for $j=1,2$, which both contain $1$, so that we have
\[
\Gamma_1 = \bigsqcup_{c\in C_1} c\Sigma_1 = \Sigma_1 \sqcup \bigsqcup_{c\in 
	C_1\setminus\{1\}} c\Sigma_1
\quad \text{ and } \quad
\Gamma_2 = \bigsqcup_{c\in C_2} c\Sigma_2 = \Sigma_2 \sqcup \bigsqcup_{c\in 
	C_2\setminus\{1\}} c\Sigma_2 \, .
\]
Notice that the intersection of the images of $C_1$ and $C_2$ in $\Gamma$ 
is just $\{1\}$. It is well-known, see e.g. \cite{serreTrees1980}, that any 
element 
$\gamma\in \Gamma\setminus\Sigma$ admits a unique \textbf{normal form}
\[
\gamma = c_1 \cdots c_n \sigma
\]
where $n\in\N$, $c_1, \ldots, c_n$ lie alternatively in $C_1\setminus\{1\}$ and 
$C_2\setminus\{1\}$, and $\sigma\in \Sigma$. 

The \textbf{Bass-Serre tree} of the amalgam $\Gamma = \Gamma_1 *_\Sigma 
\Gamma_2$ is the oriented graph $\Tc$ defined by
\[
V(\Tc) = \Gamma/\Gamma_1 \, \sqcup \,  \Gamma/\Gamma_2 \, ; \quad
E(\Tc)^+ = \Gamma/\Sigma \, ; \quad
s(\gamma\Sigma) = \gamma\Gamma_1 \, ; \quad
r(\gamma\Sigma) = \gamma \Gamma_2
\]
(the set of negative edges $E(\Tc)^-$ just being $\overline{\Gamma/\Sigma} 
:= \{\bar e : \, e\in \Gamma/\Sigma \}$, which is another copy of 
$\Gamma/\Sigma$).
Again, this graph is naturally endowed with a left $\Gamma$-action by graph 
automorphisms (respecting the orientation), and classical Bass-Serre theory 
\cite{serreTrees1980} ensures it is a tree. The 
action is always minimal since $\Gamma$ acts transitively on the set of 
positive edges.
Let us now recall what kind of action $\Gamma\curvearrowright\Tc$ is, 
depending on the inclusions $\Sigma_j\subseteq \Gamma_j$.
\begin{itemize}
	\item If $\Sigma_1 = \Gamma_1$, then $\Gamma=\Gamma_2$, and the vertex 
	$\Gamma_2$ of $\Tc$ is fixed. Hence the action is elliptic.
	Similarly, if $\Sigma_2 = \Gamma_2$, then the action is elliptic.
	\item If the amalgam is non-trivial, and  $[\Gamma_1 : \Sigma_1] = 2 = 
	[\Gamma_2 : \Sigma_2]$, then the Bass-Serre tree is a bi-infinite line 
	(each vertex has degree $2$) and for any $\gamma_j \in \Gamma_j - 
	\Sigma_j$, $j=1,2$, the element $\gamma_1\gamma_2$ is hyperbolic. 
	Hence, the action is lineal.
	\item If the amalgam is non-degenerate, then the action is of general 
	type. Indeed, assuming $[\Gamma_1:\Sigma]\geq 3$, and taking 
	$g_1\neq g_2$ 
	in $C_1\setminus\{1\}$ and $h\in C_2\setminus\{1\}$, it is fairly easy to see that $g_1 
	h$ and $g_2 h$ are transverse hyperbolic elements. The case 
	$[\Gamma_2,\Sigma]\geq 3$ is similar.
\end{itemize}

%
\subsection{Partial actions}\label{sec: prelim partial action}

The \emph{pre-actions} that we will define below are 
tightly linked with the notion of \emph{partial action}. Although the 
latter do not play an essential role in our construction, we will see that 
every pre-action yields a natural partial action, so we feel these are 
worth mentioning. For more details on partial actions, we refer the reader 
to \cite{kellendonkPartialActionsGroups2004}.

Given a set $X$, we denote by $\I(X)$ the set of all partial bijections of 
$X$, which we think of as subsets of $X\times X$ whose vertical and 
horizontal 
fibers all have cardinality at most $1$. We have a natural composition law 
on subsets of $X\times X$ given by: for all $A,B\subseteq X\times X$, \[
AB=\{(x,z)\colon \exists y\in X, (x,y)\in A\text{ 
	and } (y,z)\in B\},
\]
and this restricts to a composition law on $\I(X)$. The inclusion provides 
us with a natural partial order on $\I(X)$. The projection on the first 
coordinate of a partial bijection $\tau$ is its \textbf{domain} $\dom\tau$, 
and the 
projection on the second coordinate is its \textbf{range} $\rng\tau$. 
Finally, we define the inversion map by $\sigma\inv=\{(y,x): (x,y)\in 
\sigma\}$.

\begin{definition}
	A (right) \textbf{partial action} of a group $\Gamma$ on a set $X$ is a 
	map 
	$\pi:\Gamma\to\I(X)$ such that for all $g,h\in\Gamma$
	\begin{enumerate}[label=(\arabic*)]
		\item $\pi(1_\Gamma)=\id_X$;
		\item $\pi(g)\pi(h)\subseteq \pi(gh)$;
		\item $\pi(g)\inv=\pi(g\inv)$.
	\end{enumerate}
\end{definition}

The main example of a partial action is provided by the restriction of an 
action to a subset. Conversely, every partial action is the restriction of 
a global action, and there is a \emph{universal} such global action 
provided by the following result.

\begin{theorem}[{see 
		\cite[Theorem 3.4]{kellendonkPartialActionsGroups2004}}]
	\label{thm: universal globalization}
	Given a partial action of a countable group $\Gamma$ on a set $X$, 
	there is a $\Gamma$-action on a larger set $\tilde X$ such that 
	whenever $Y\curvearrowleft \Gamma$ is a $\Gamma$-action on a set $Y$ which 
	contains $X$, there is a unique $\Gamma$-equivariant map $f:\tilde X\to 
	Y$ which restricts to the identity on $X$.
\end{theorem}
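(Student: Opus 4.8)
The plan is to construct $\tilde X$ by hand as a quotient of $X\times\Gamma$, following the classical globalization recipe: a pair $(x,g)$ is a formal name for the point ``$x$ moved by $g$'' in the global action to be built, and two such names are declared equal precisely when every sensible global extension would be forced to identify them. Concretely, I would put on $X\times\Gamma$ the relation $(x,g)\sim(y,h)$ defined by ``$x\in\dom\pi(gh\inv)$ and $x\,\pi(gh\inv)=y$'', where $x\,\pi(k)$ denotes the image of $x$ under the partial bijection $\pi(k)$ when $x\in\dom\pi(k)$, and where I read the pair $(x,g)$ as ``$x\cdot g$'' so that the offending group element $gh\inv$ sits on the correct side.

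The first step is to check that $\sim$ is an equivalence relation: reflexivity is axiom (1), $\pi(1_\Gamma)=\id_X$; symmetry is axiom (3), $\pi(k)\inv=\pi(k\inv)$, applied to $(x,y)\in\pi(gh\inv)$; and transitivity follows by viewing the $\pi(k)$ as subsets of $X\times X$ and invoking axiom (2): if $(x,y)\in\pi(gh\inv)$ and $(y,z)\in\pi(hk\inv)$ then $(x,z)\in\pi(gh\inv)\pi(hk\inv)\subseteq\pi(gk\inv)$, which in particular forces $x\in\dom\pi(gk\inv)$ and hence $(x,g)\sim(z,k)$. This transitivity verification is the one place where the \emph{inclusion} in axiom (2) is genuinely used, and it is the only mildly delicate point of the whole argument; everything else is bookkeeping. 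I would then set $\tilde X:=(X\times\Gamma)/\sim$, write $[x,g]$ for equivalence classes, embed $X$ into $\tilde X$ via $x\mapsto[x,1_\Gamma]$ (injective since $[x,1]=[y,1]$ forces $x\,\pi(1)=y$), and define the right $\Gamma$-action by $[x,g]\cdot h:=[x,gh]$. Well-definedness on classes is immediate from the shape of $\sim$, the axioms of a right action are a one-line check, and this action extends the partial action because $[x,1]\cdot g=[x,g]=[x\,\pi(g),1]$ whenever $x\in\dom\pi(g)$.

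It remains to verify the universal property. Given a $\Gamma$-set $Y$ containing $X$ whose action restricts to $\pi$ on $X$ (i.e. $x\cdot g=x\,\pi(g)$ for all $x\in\dom\pi(g)$), I would define $f\colon\tilde X\to Y$ by $f([x,g]):=x\cdot g$, the product computed in $Y$. This is well defined: if $(x,g)\sim(y,h)$ then $x\cdot(gh\inv)=x\,\pi(gh\inv)=y$ in $Y$, hence $x\cdot g=y\cdot h$. It is $\Gamma$-equivariant by associativity of the $Y$-action, and it restricts to the identity on $X$ since $f([x,1])=x\cdot 1=x$. For uniqueness, any equivariant $f'$ fixing $X$ pointwise satisfies $f'([x,g])=f'([x,1]\cdot g)=f'([x,1])\cdot g=x\cdot g=f([x,g])$, so $f'=f$. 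Finally, $\tilde X$ is an honest set, being a quotient of $X\times\Gamma$, so the countability hypothesis on $\Gamma$ is in fact not needed for this argument. The only real obstacle I foresee is keeping the left/right conventions straight — since the paper works with right actions, $(x,g)$ must be read as ``$x\cdot g$'' throughout and the formula for $\sim$ must carry $gh\inv$ on the correct side — but once that is pinned down the proof is essentially forced.
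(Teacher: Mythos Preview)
Your proof is correct and is precisely the standard construction of the universal globalization; the paper itself does not prove this theorem but simply cites it from Kellendonk--Lawson \cite{kellendonkPartialActionsGroups2004}, so there is no in-paper argument to compare against. Your observation that countability of $\Gamma$ plays no role is also accurate.
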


The action $\tilde X\curvearrowleft \Gamma$ from the previous theorem is 
called the \textbf{universal globalization} of the partial $\Gamma$-action 
on $X$. It is tacit in the theorem that the $\Gamma$-actions on sets containing $X$ extend the initial partial $\Gamma$-action on $X$.

\begin{definition}
	A partial action $X\lact^\pi\Gamma$ is called \textbf{strongly 
		faithful} 
	if for 
	every $F\Subset\Gamma\setminus\{1\}$, there is $x\in X$ such that for 
	all $g\in F$, 
	we have $x\pi(g)\neq x$ (in particular $x\in\bigcap_{g\in F}\dom\pi(g)$).
\end{definition}
\begin{example}
	The partial action of the free group on two generator $\mathbb F_2$ on 
	the set 
	of reduced words which begin by $a$ is strongly faithful.
\end{example}
\section{Free globalizations for pre-actions of HNN extensions}\label{SectFreeGlobalizationHNN}

For this section, as in Section \ref{PrelimHNN}, 
let us fix an HNN extension $\Gamma=\HNN(H,\Sigma,\vartheta)$.
Let us also fix a set of representatives $C^+$ of left $\Sigma$-cosets in $H$, 
and a set of representatives $C^-$  of left $\vartheta(\Sigma)$-cosets in $H$, 
which both contain $1$, 
so that normal forms of elements of $\Gamma$ are well-defined. 
Let us also denote by $\Gamma^+$, respectively $\Gamma^-$, the set of elements  whose normal form leftmost's letter is $t$, respectively $t\inv$. 
Note that $\Gamma^+$ is invariant by left $\Sigma$-multiplication, while $\Gamma^-$ is invariant by left $\vartheta(\Sigma)$-multiplication. 
We then have $\Gamma=H\sqcup C^+\Gamma^+ \sqcup C^-\Gamma^-$.


\subsection{Pre-actions of HNN extensions and their Bass-Serre graph}\label{SubsectPreActionsAndBSGraphsHNN}

Given an action on an infinite countable set $X\curvearrowleft^{\pi} H$,
and a bijection $\tau:X\to X$ such that $\sigma^{\pi}\tau = \tau\vartheta(\sigma)^{\pi}$ for all $\sigma\in \Sigma$,
there exists a unique action $X\curvearrowleft^{\pi_{\tau}} \Gamma$ 
such that $h^{\pi_{\tau}} = h^{\pi}$ for all $h\in H$, 
and $t^{\pi_{\tau}} = \tau$. 
If the action $\pi$ is free, we obtain an example of the following situation.

\begin{definition}\label{def: pre-action}
	A \textbf{pre-action} of the HNN extension $\Gamma$ is a couple $(X,\tau)$ where $X$ is an infinite countable set endowed with a free action $X \curvearrowleft^{\pi} H$, and
	\[
	\tau:\dom(\tau) \to \rng(\tau) 
	\]
	is a partial bijection where $\dom(\tau), \rng(\tau) \subseteq X$, and $\sigma^{\pi}\tau = \tau\vartheta(\sigma)^{\pi}$ for all $\sigma\in \Sigma$.
\end{definition}

The relations $\sigma^{\pi}\tau = \tau\vartheta(\sigma)^{\pi}$ in Definition \ref{def: pre-action} 
are equalities between partial bijections. In particular $\sigma^{\pi}\tau$ and $\tau\vartheta(\sigma)^{\pi}$ must have the same domain and the same range. As a consequence, for any pre-action $(X,\tau)$, the domain of $\tau$ is necessarily $\Sigma$-invariant, its range is necessarily $\vartheta(\Sigma)$-invariant, and $\tau$ sends $\Sigma$-orbits onto $\vartheta(\Sigma)$-orbits. 

A pre-action $(X, \tau)$ is called \textbf{global} if $\tau$ is a genuine 
permutation of $X$. In this case there is an associated action 
$X\curvearrowleft^{\pi_{\tau}} \Gamma$ as above. We will often identify 
global pre-actions and $\Gamma$-actions.
\begin{example}
	If $X \curvearrowleft^\pi \Gamma$ is an action, where $H$ is acting freely, then denoting by $X \curvearrowleft^{\pi_H} H$ its restriction, 
	one obtains a global pre-action $(X,t^{\pi})$, where $X$ is endowed with 
	$\pi_H$. 
	The action $X \curvearrowleft^{\pi_{\tau}} \Gamma$ coincides with $X \curvearrowleft^\pi \Gamma$ in this case. 
	In particular, the right translation action $\Gamma\curvearrowleft\Gamma$
	gives rise to a pre-action $(\Gamma,t^\rho)$, where $t^\rho: \gamma \mapsto \gamma t$, 
	called the \textbf{translation pre-action}.
\end{example}

The above notion of pre-action is close to the notion of a partial action developed in \cite{kellendonkPartialActionsGroups2004} as we will see.
As seen before, actions of $\Gamma$ (such that $H$ acts freely) correspond 
to pre-actions with a global bijection. Another source of examples of 
pre-actions is the following.

\begin{definition}
	Given a pre-action $(X,\tau)$, and an infinite $H$-invariant subset $Y\subseteq X$, 
	the \textbf{restriction} of $(X,\tau)$ to $Y$ is the pre-action $(Y,\tau')$, where $Y$ is endowed with the restriction of $\pi$, 
	and the partial bijection is $\tau'=\tau_{\restriction Y\cap Y\tau\inv}$.
	An \textbf{extension} of $(X,\tau)$ is a pre-action $(\tilde X, \tilde \tau)$ 
	whose restriction to $X$ is $(X,\tau)$.
\end{definition}

\begin{example}
	The sets $\Gamma^+$ and $\Gamma^-$ are $H$-invariant (by right multiplications), thus so are $T^+:=\Gamma^+\sqcup H$ and $T^-:=\Gamma^-\sqcup H$.
	The translation pre-action $(\Gamma,t^\rho)$ admits the restrictions $(T^+,\tau_+)$, and $(T^-,\tau_-)$,
	which we call the \textbf{positive translation pre-action} and the \textbf{negative translation pre-action} respectively. 
\end{example}
Let us compute the domain and range of the partial bijection $\tau_+$ corresponding to the positive translation pre-action. 
Let $x\in T^+$. If $x$ belongs to $\Gamma^+$, then so does $xt$, and so $\tau_+(x)$ is defined.
But if $x$ belongs to $H$ instead, then $xt\not\in H$, and $xt\in\Gamma^+$ if and only if its lefttmost letter is $t$, which happens if and only if $x\in \Sigma$. 
Reciprocally, one has $y t\inv \in T^+$ for every $y\in \Gamma^+$, and  $yt\inv \notin T^+$ for every $y \in H$. 
We conclude that the domain of $\tau_+$ is $\Sigma\sqcup \Gamma^+$, while its range is $\Gamma^+$.

The same computation can be made for the partial bijection $\tau_-$ associated to the negative translation pre-action on $T^-$:  
the domain of $\tau_-$ is equal to $\Gamma^-$, and its range is equal to $\Gamma^-\sqcup \vartheta(\Sigma)$.

Let us now associate a graph to any  $\Gamma$-pre-action $(X,\tau)$ as follows. Informally speaking, we start with a graph whose vertices are of two kinds: the $\Sigma$-orbits in $X$, and the $\vartheta(\Sigma)$-orbits in $X$. Then we put an edge from $x\Sigma$ to $y\vartheta(\Sigma)$ when $(x\Sigma)\tau = y\vartheta(\Sigma)$, and finally we identify all the $\Sigma$-orbits and all $\vartheta(\Sigma)$-orbits that are in a same $H$-orbit. We may, and will, identify $H$ with its image in $S(X)$ by the action $X\curvearrowleft^\pi H$, since the action $\pi$ is free, hence faithful. Consequently, we don't write superscripts $\pi$ from now, as soon as there is no risk of confusion. 
\begin{definition}\label{defBassSerreGraphHNN}
	The \textbf{Bass-Serre graph} of $(X,\tau)$ is the oriented graph $\Gr_\tau$ defined by
	\[
	V(\Gr_\tau) = X / H \, , \quad
	E(\Gr_\tau)^+ = \dom(\tau)/\Sigma \, , \quad
	E(\Gr_\tau)^- = \rng(\tau)/\vartheta(\Sigma) \, ,
	\]
	where the structural maps are given by the following  formulas
	\[
	\overline{x\Sigma} = x\tau\vartheta(\Sigma) \, ; \quad
	s(x\Sigma) = x H \, ; \quad
	r(x\Sigma) = x\tau H \, ; \quad
	\]
	\[
	\overline{y\vartheta(\Sigma)} = y\tau^{-1}\Sigma \, ; \quad
	s(y\vartheta(\Sigma)) = y H \, ; \quad
	r(y\vartheta(\Sigma)) = y\tau^{-1} H \, .
	\]
\end{definition}
The Bass-Serre graph will also be denoted by $\mathbf{BS}(X,\tau)$.
\begin{example}
	\begin{enumerate}[label=(\arabic*)]
		\item \label{itex: BS translation} The Bass-Serre graph of the 
		translation pre-action $(\Gamma,t^\rho)$ is the classical Bass-Serre tree 
		$\Tc$ of $\Gamma$.
		\item \label{itex: BS translation pos pre}The Bass-Serre graph of the 
		positive translation pre-action $(T^+,\tau_+)$ is the half-tree of the 
		edge $\Sigma$ in $\Tc$.
		\item \label{itex: BS translation neg pre}The Bass-Serre graph of the 
		negative translation pre-action $(T^-, \tau_-)$ is the half-tree of the 
		edge $\vartheta(\Sigma)$ in $\Tc$.
		\item \label{itex: BS forest}The Bass-Serre graph of a global 
		$\Gamma$-pre-action  is actually a 
		forest if and only if the associated $\Gamma$-action is free.
	\end{enumerate}
\end{example}
Example \ref{itex: BS translation} is obvious. Examples \ref{itex: BS 
	translation pos pre} and \ref{itex: BS translation neg pre}, if not 
obvious yet, will become clear after Remark \ref{Remark Path HNN}. Example 
\ref{itex: BS forest} will be seen in Remark \ref{EqFreeForest}.

Now, let us link the star at a vertex in a Bass-Serre graph $\mathbf{BS}(X,\tau)$ to small normal forms in $\Gamma$. 
Given a vertex in an oriented graph, let us denote by $\st^+(v)$, respectively $\st^-(v)$, the set of positive, respectively negative, edges whose source is $v$, so that we have a partition $\st(v) = \st^+(v) \sqcup \st^-(v)$ of the star at $v$.
Given a point $x\in X$, there are natural (maybe sometimes empty) maps
\[
\begin{array}{cccc}
e^+_{x}: & \{ct : c\in C^+, \, xc\in \dom(\tau)\} & \to & \st^+(x H) \\
& ct & \mapsto &  x c \Sigma   \\
e^-_{x}: & \{ct\inv : c\in C^-, \, xc\in\rng(\tau)\} & \to & \st^-(x H) \\
& ct\inv & \mapsto & x c \vartheta(\Sigma)
\end{array}
\]
and we notice that $e^+_{x}(ct)$ goes from $xH$ to $x c\tau H$, while $e^-_{x}(ct\inv)$ goes from $x H$ to $x c\tau\inv H$.

These maps are surjective, since the orbits $xc \Sigma$ for $c\in C^+$, respectively the orbits $xc \vartheta(\Sigma)$ for $c\in C^-$, cover $xH$. 
Since the action $X \curvearrowleft^{\pi} H$ is free, we have $xH = \bigsqcup_{c\in C^+} xc\Sigma$ and $xH = \bigsqcup_{c\in C^-} xc\vartheta(\Sigma)$, so that $e_x^+,e_x^-$ are in fact bijective.
Then, by merging $e^+_{x}$ and $e^-_{x}$, we get a bijection
\[
e_x : \{ct : c\in C^+, \, xc\in \dom(\tau)\} \sqcup \{ct\inv : c\in C^- , \, xc\in\rng(\tau)\} \to \st(xH) \, .
\]


\subsection{Morphisms and functoriality of Bass-Serre graphs}
We shall now see that there is a functor, that we will call the Bass-Serre functor, from the category of $\Gamma$-pre-actions to the category of graphs, which extends Definition \ref{defBassSerreGraphHNN}. Let us start by turning $\Gamma$-pre-actions into a category.
\begin{definition}
	A \textbf{morphism of pre-actions} from $(X,\tau)$ to $(X',\tau')$ is a 
	$H$-equivariant map $\varphi:X\to X'$, such that for all $x\in \dom\tau$, 
	$\varphi(x\tau) = \varphi(x)\tau'$.
\end{definition}
Note that in particular, $\varphi$ maps $\dom(\tau)$ into $\dom(\tau')$, 
and $\rng (\tau)$ into $\rng (\tau')$. Now, given a morphism of pre-actions 
$\varphi : (X,\tau) \to (X',\tau')$, and denoting by $\Gr_\tau$ and 
$\Gr_{\tau'}$ the corresponding Bass-Serre graphs, let us 
define a map $V(\Gr_\tau) \to V(\Gr_{\tau'})$ by
\[
xH \mapsto \varphi(x) H \, , \text{ for } x\in X \, ,
\]
and a map $E(\Gr_\tau) \to E(\Gr_{\tau'})$ by
\[
x\Sigma \mapsto \varphi(x)\Sigma \, , \text{ for } x\in \dom(\tau)
\quad \text{ and } \quad
y\vartheta(\Sigma) \mapsto \varphi(y) \vartheta(\Sigma) \, , \text{ for } 
y\in \rng(\tau) \, .
\]
It is routine to check that these maps define a morphism of graphs, that we 
denote by $\Gr_{\varphi}$. For instance, the image of $x\Sigma$ is 
$\varphi(x)\Sigma$, the image of $\overline{x\Sigma} = 
x\tau\vartheta(\Sigma)$ is 
$\varphi(x\tau) \vartheta(\Sigma) = \varphi(x)\tau'\vartheta(\Sigma)$, and 
one has 
$\overline{\varphi(x)\Sigma} = \varphi(x)\tau' \vartheta(\Sigma)$ in 
$\Gr_{\tau'}$ as expected.

\begin{lemma}
	The assignments $(X,\tau)\mapsto \Gr_\tau$ and $\varphi \mapsto \Gr_{\varphi}$ define a functor from the category of $\Gamma$-pre-actions to the category of graphs.
\end{lemma}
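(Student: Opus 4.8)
The plan is to reduce everything to a bookkeeping check, since the paragraph preceding the lemma has already verified that a morphism of pre-actions $\varphi$ induces a well-defined morphism of graphs $\Gr_\varphi$. What remains is (a) to observe that $\Gamma$-pre-actions do form a category, i.e.\ that $\id_X$ is a morphism of pre-actions and that the composite of two morphisms of pre-actions is again one, and (b) to verify the two functoriality axioms $\Gr_{\id_{(X,\tau)}} = \id_{\Gr_\tau}$ and $\Gr_{\psi\circ\varphi} = \Gr_{\psi}\circ\Gr_{\varphi}$.

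For the identity: $\id_X$ is $H$-equivariant and satisfies $\id_X(x\tau)=x\tau=\id_X(x)\tau$ for $x\in\dom\tau$, so it is a morphism of pre-actions. Plugging it into the defining formulas for the induced graph morphism, the vertex map sends $xH\mapsto\id_X(x)H=xH$ and the edge maps send $x\Sigma\mapsto x\Sigma$ and $y\vartheta(\Sigma)\mapsto y\vartheta(\Sigma)$; these are the identity maps on $V(\Gr_\tau)$ and $E(\Gr_\tau)$, so $\Gr_{\id_X}=\id_{\Gr_\tau}$.

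For composition, take $\varphi:(X,\tau)\to(X',\tau')$ and $\psi:(X',\tau')\to(X'',\tau'')$. I would first check that $\psi\circ\varphi$ is a morphism of pre-actions: it is $H$-equivariant as a composite of $H$-equivariant maps, and for $x\in\dom\tau$ we have, using the observation recorded right after the definition of a morphism of pre-actions (namely that $\varphi$ maps $\dom\tau$ into $\dom\tau'$), the chain $(\psi\circ\varphi)(x\tau)=\psi(\varphi(x)\tau')=\psi(\varphi(x))\tau''=(\psi\circ\varphi)(x)\tau''$. Then one compares $\Gr_{\psi\circ\varphi}$ with $\Gr_{\psi}\circ\Gr_{\varphi}$ on $V(\Gr_\tau)$, where both send $xH\mapsto\psi(\varphi(x))H$, and on $E(\Gr_\tau)$, where both send $x\Sigma\mapsto\psi(\varphi(x))\Sigma$ for $x\in\dom\tau$ and $y\vartheta(\Sigma)\mapsto\psi(\varphi(y))\vartheta(\Sigma)$ for $y\in\rng\tau$. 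Since the two graph morphisms agree on vertices and edges, they are equal.

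I do not expect any genuine obstacle here; the only points requiring a moment of care are that the edge maps are well defined on cosets (already subsumed in the ``routine to check'' preceding the statement) and that one must invoke the fact that a morphism of pre-actions carries $\dom\tau$ into $\dom\tau'$ and $\rng\tau$ into $\rng\tau'$, so that the composite lands where it should and the coset formulas for $\Gr_{\psi\circ\varphi}$ make sense.
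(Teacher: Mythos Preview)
Your proof is correct and follows essentially the same approach as the paper's own proof, which simply observes that the identity pre-action morphism induces the identity graph morphism and then checks that $\Gr_{\psi}\circ\Gr_{\varphi}$ and $\Gr_{\psi\circ\varphi}$ are given by the same explicit coset formulas on vertices and edges. You are in fact slightly more thorough, since you also verify explicitly that $\psi\circ\varphi$ is a morphism of pre-actions and note where the inclusion $\varphi(\dom\tau)\subseteq\dom\tau'$ is used.
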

We will denote this functor by $\mathbf{BS}$ and call it the \textbf{Bass-Serre functor} of $\Gamma$. The morphism $\Gr_\varphi$ will also be denoted by $\mathbf{BS}(\varphi)$.

\begin{proof}
	First, given the identity morphism on a pre-action $(X,\tau)$ it is obvious that the associated morphism of graphs is the identity on $\Gr_\tau$. 
	
	Now, take two morphisms of pre-actions $\varphi: (X,\tau) \to 
	(X',\tau')$ and $\psi: (X',\tau') \to (X'',\tau'')$. It is also clear that 
	the composition of $\Gr_{\varphi}$ followed by $\Gr_{\psi}$, and the 
	morphism $\Gr_{\psi\circ\varphi}$ are both given by the map $V(\Gr_\tau) 
	\to V(\Gr_{\tau''})$ by
	\[
	x H \mapsto \psi\circ\varphi(x) H \, , \text{ for } x\in X \, ,
	\]
	and the map $E(\Gr_\tau) \to E(\Gr_{\tau''})$ by
	\[
	x\Sigma \mapsto \psi\circ\varphi(x) \Sigma \, , \text{ for } x\in 
	\dom(\tau)
	\quad \text{ and } \quad
	y\vartheta(\Sigma) \mapsto \psi\circ\varphi(y)  \vartheta(\Sigma) \, 
	, \text{ for } y\in \rng(\tau) \, .
	\]
	This completes the proof.
\end{proof}

To conclude this section, let us notice a consequence of freeness of the $H$-actions in the definition of $\Gamma$-pre-actions.

\begin{lemma}\label{BSmorphismsLocallyInjectiveHNN}
	Every morphism of the form $\mathbf{BS}(\varphi)$ is locally injective. More precisely, its restriction to the star at a vertex $xH$,  
	is the composition $e_{\varphi(x)}^{} \circ e_{x}^{-1}$, which is an injection into the star at $\varphi(x) H$.
\end{lemma}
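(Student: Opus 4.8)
The statement to prove is Lemma~\ref{BSmorphismsLocallyInjectiveHNN}: every morphism of the form $\mathbf{BS}(\varphi)$ is locally injective, and more precisely its restriction to the star at a vertex $xH$ equals the composition $e_{\varphi(x)} \circ e_x^{-1}$, which is an injection into the star at $\varphi(x)H$.

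\medskip

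The plan is to unwind the definitions and check that the two relevant triangles commute. First I would fix a pre-action morphism $\varphi : (X,\tau) \to (X',\tau')$ and a vertex $xH$ of $\Gr_\tau$, and recall the bijections $e_x : \{ct : c\in C^+,\, xc\in\dom(\tau)\} \sqcup \{ct\inv : c\in C^-,\, xc\in\rng(\tau)\} \to \st(xH)$ and similarly $e_{\varphi(x)}$ with $xc$ replaced by $\varphi(x)c$; the key point already established in the text is that freeness of the $H$-actions makes both of these maps \emph{bijective}. Note that since $\varphi$ is $H$-equivariant and maps $\dom(\tau)$ into $\dom(\tau')$ and $\rng(\tau)$ into $\rng(\tau')$, for every $c\in C^+$ with $xc\in\dom(\tau)$ we get $\varphi(x)c = \varphi(xc)\in\dom(\tau')$, so the domain of $e_x$ is contained in the domain of $e_{\varphi(x)}$, and the composition $e_{\varphi(x)}\circ e_x^{-1}$ makes sense as a map $\st(xH) \to \st(\varphi(x)H)$; being a composition of an injection (the restriction of $e_{\varphi(x)}$) after a bijection, it is injective.

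\medskip

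The heart of the argument is then to show that $\mathbf{BS}(\varphi)$ agrees with $e_{\varphi(x)}\circ e_x^{-1}$ on $\st(xH)$. I would take an arbitrary edge $\xi\in\st(xH)$ and treat the positive and negative cases separately. If $\xi\in\st^+(xH)$, write $\xi = e_x^+(ct)$ for the unique $ct$ with $c\in C^+$, $xc\in\dom(\tau)$; by definition $e_x^+(ct) = xc\Sigma$. Now $\mathbf{BS}(\varphi)$ sends $xc\Sigma$ to $\varphi(xc)\Sigma = \varphi(x)c\,\Sigma$ (using $H$-equivariance of $\varphi$, writing $c$ as acting on the right), and $\varphi(x)c\in\dom(\tau')$ as noted above, so $\varphi(x)c\,\Sigma = e_{\varphi(x)}^+(ct)$. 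Hence $\mathbf{BS}(\varphi)(\xi) = e_{\varphi(x)}^+(ct) = (e_{\varphi(x)}\circ e_x^{-1})(\xi)$, as desired. The negative case $\xi = e_x^-(ct\inv) = xc\,\vartheta(\Sigma)$ with $c\in C^-$, $xc\in\rng(\tau)$, is entirely symmetric: $\mathbf{BS}(\varphi)$ sends $xc\,\vartheta(\Sigma)$ to $\varphi(xc)\vartheta(\Sigma) = \varphi(x)c\,\vartheta(\Sigma) = e_{\varphi(x)}^-(ct\inv)$, using that $\varphi$ maps $\rng(\tau)$ into $\rng(\tau')$. Since $\xi$ was arbitrary, the restriction of $\mathbf{BS}(\varphi)$ to $\st(xH)$ equals $e_{\varphi(x)}\circ e_x^{-1}$.

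\medskip

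I do not expect a serious obstacle here: the lemma is essentially bookkeeping, and all the substantive content — that $e_x$ is a genuine bijection, which is where freeness of the $H$-action is used — has already been recorded in the preceding discussion. The only points requiring a little care are (i) being consistent with the right-action conventions when writing ``$\varphi(xc)=\varphi(x)c$'', i.e.\ that $H$-equivariance of $\varphi$ really does commute $\varphi$ with the right multiplication by the coset representative $c\in H$; and (ii) noting explicitly that the image of $e_x$ under $\mathbf{BS}(\varphi)$ lands in $\st(\varphi(x)H)$ rather than some other star, which follows because $\mathbf{BS}(\varphi)$ sends the source $xH$ to $\varphi(x)H$ and is a morphism of graphs (hence commutes with the source map). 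Once local injectivity at each vertex is established, the ``more precisely'' clause is exactly what we have shown, so the proof is complete.
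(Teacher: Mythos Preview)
Your proof is correct and follows essentially the same approach as the paper's: both fix a morphism of pre-actions, split into the positive and negative cases for an edge in $\st(xH)$, use $H$-equivariance of $\varphi$ to rewrite $\varphi(xc)$ as $\varphi(x)c$, and conclude that $\mathbf{BS}(\varphi)$ restricted to the star equals $e_{\varphi(x)}\circ e_x^{-1}$. Your additional remarks about the domain inclusion and the image landing in the correct star are helpful clarifications but do not change the underlying argument.
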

\begin{proof}
	Consider a morphism of pre-actions $\varphi: (X,\tau) \to (X',\tau')$, and give names to the actions involved: 
	$X \curvearrowleft^{\pi} H$,
	and $X' \curvearrowleft^{\pi'} H$.
	Let us also recall from Section \ref{SubsectPreActionsAndBSGraphsHNN} 
	that the maps $e_{x}$ and $e_{\varphi(x)}$ are bijective, 
	since these actions are free.
	Now, given $x\in X$ and $e\in \st(xH)$ in $\Gr_\tau$, one must have $e=e_{x}(ct^\varepsilon)$, 
	that is:
	\begin{itemize}
		\item either $e = x c^{\pi} \Sigma$ for a unique $c\in C^+$ satisfying $xc^\pi \in\dom(\tau)$, 
		\item or $e = x c^{\pi} \vartheta(\Sigma)$ for a unique $c\in C^-$ satisfying $xc^\pi \in\rng(\tau)$. 
	\end{itemize}
	Then, in the graph $\Gr_{\tau'}$, one has:
	\begin{itemize}
		\item in the first case, $\varphi(x) c^{\pi'} = \varphi(x c^{\pi}) \in \dom(\tau')$, so that 
		\[
		\Gr_{\varphi}(e) = \varphi(x c^{\pi}) \Sigma = \varphi(x) c^{\pi'}  \Sigma 
		= e_{\varphi(x)}(ct)
		= e_{\varphi(x)}^{} \circ e_{x}^{-1} (e) \, .
		\]
		\item in the second case, $\varphi(x) c^{\pi'} = \varphi(x c^{\pi}) \in \rng(\tau')$, so that 
		\[
		\Gr_{\varphi}(e) = \varphi(x c^{\pi}) \vartheta(\Sigma) = 
		\varphi(x) c^{\pi'} \vartheta(\Sigma) = e_{\varphi(x)}(c t\inv)
		= e_{\varphi(x)}^{} \circ e_{x}^{-1} (e)  \, .
		\]
	\end{itemize} 
	In other words, the restriction of $\Gr_{\varphi}$ to the star at a vertex 
	$xH$ is the composition $e_{\varphi(x)}^{} \circ e_{x}^{-1}$. 
	Furthermore, the latter map is an injection into the star at $\varphi(x) H$. 
\end{proof}


\subsection{Paths in Bass-Serre graphs of global 
	pre-actions}\label{SubsectionPathsBSHNN}
Let us turn to the case of a global pre-action $(X,\tau)$. 
In this case, the bijections $e_x$ defined at the end of Section \ref{SubsectPreActionsAndBSGraphsHNN} become just:
\[
e_x : \{ct : c\in C^+\} \sqcup \{ct\inv : c\in C^-\} 
\longrightarrow  \st^+(xH) \sqcup \st^-(xH) = \st(x H) \, .
\]
Given a point $x\in X$, and an element $\gamma\in \Gamma \setminus H$ with normal form $c_{1}t^{\varepsilon_1} \cdots c_n t^{\varepsilon_n}h_{n+1}$, where $n\geq 1$,
we associate a sequence $(x_0, x_1,\ldots,x_{n})$ in $X$ by setting $x_0 = x$ and $x_i = x_{i-1} c_{i}\tau^{\varepsilon_i}$ for $1\leq i \leq n$,
and notice that $x_n h_{n+1} = x \gamma^{\pi_\tau}$.
Then we associate a sequence $(e_1,...,e_n)$ of edges in the Bass-Serre graph using the bijections $e_x$: for $i=1,\ldots,n$, we set
\[
e_{i} = e_{x_{i-1}}(c_{i}t^{\varepsilon_i}) \, .
\]
Notice that, for any $i=1,\ldots,n-1$, one has $r(e_i) = x_{i-1} c_{i}\tau^{\varepsilon_{i}} H = x_{i} H = s(e_{i+1})$. Hence $(e_1,\ldots,e_n)$ is a path, that we denote by $\pat_x(\gamma)$. 
\begin{remark}
	The vertices $(v_i)_{i=0}^n$ visited by $\pat_x(\gamma)$ are given by $v_i = x_i H$, where:
	$$x_i = x c_{1}\tau^{\varepsilon_1} \cdots c_i \tau^{\varepsilon_i}.$$
	Moreover, defining $\Sigma_1=\Sigma$ and $\Sigma_{-1}=\vartheta(\Sigma)$ one has $e_1=xc_1\Sigma_{\varepsilon_1}$ and, for all $2\leq k\leq n$,
	$$e_k=x c_1t^{\varepsilon_1}\dots c_{k-1}t^{\varepsilon_{k-1}}c_k\Sigma_{\varepsilon_k}.$$
\end{remark}
Now, for $1\leq i \leq n$, let us remark the equivalence
\[
e_{i+1} = \bar e_i \ \Leftrightarrow \
(\varepsilon_{i+1} = - \varepsilon_i \text{ and } c_{i+1} = 1) \, .
\]
Indeed, in case $\varepsilon_i = 1$, one has $e_i = x_{i-1} c_i \Sigma$, therefore
\[
e_{i+1} = \bar e_i \ \Leftrightarrow  \
e_{x_i}(c_{i+1}t^{\varepsilon_{i+1}}) = x_{i-1}c_i\tau \cdot \vartheta(\Sigma) = x_i \vartheta(\Sigma) \ \Leftrightarrow \
(\varepsilon_{i+1} = -1 \text{ and } c_{i+1} = 1)
\]
and the case $\varepsilon_i = -1$ is similar. As we started with a normal form of $\gamma$, we obtain that $\pat_x(\gamma)$ is a reduced path. 
Moreover, given a reduced path $(e'_1,\ldots,e'_n)$ starting at $x H$, one has $\pat_x(\gamma) = (e'_1,\ldots,e'_n)$ if and only if
\[
\text{for all } i=1,\ldots,n, \qquad e_{x_{i-1}}(c_i t^{\varepsilon_i}) = e_i' \, .
\]
Since the maps $e_{x}$ are bijective, there is exactly one element $\gamma \in \Gamma \setminus H$, with normal form $c_1 t^{\varepsilon_1} c_2 t^{\varepsilon_2} \cdots c_n t^{\varepsilon_n}$ such that $\pat_x(\gamma) = (e_1',\ldots,e_n')$.
\begin{remark}
	For any $x\in X$, the map $\pat_x$, from $\Gamma \setminus H$ to the set of reduced paths starting at the vertex $xH$, is surjective. Its restriction to the set of elements with normal form $c_1 t^{\varepsilon_1} c_2 t^{\varepsilon_2} \cdots c_n t^{\varepsilon_n}$ is bijective.
\end{remark}
Let us say that $\gamma \in \Gamma \setminus H$ is a \textbf{path-type element} if its normal form has the form  $t^{\varepsilon_1} c_2 t^{\varepsilon_2} \cdots c_n t^{\varepsilon_n}$ where $n\geq 1$, that is, if $c_1 = 1$ and $h_{n+1} =1$. 
It is said to be \textbf{positive} if $\varepsilon_1 = 1$, and \textbf{negative} if $\varepsilon_1 = -1$.
When $\gamma$ is a positive, respectively negative, path-type element, the first edge of $\pat_x(\gamma)$ is $x\Sigma$, respectively $x \vartheta(\Sigma)$. 
If $n\leq k$, we also say that an element $\tilde \gamma$ with normal form $t^{\varepsilon_1} c_2 t^{\varepsilon_2} \cdots c_k t^{\varepsilon_k}$
is a \textbf{path-type extension} of $\gamma = t^{\varepsilon_1} c_2 t^{\varepsilon_2} \cdots c_n t^{\varepsilon_n}$.
In this case, $\pat_x(\tilde\gamma)$ extends $\pat_x(\gamma)$. 
\begin{remark}\label{Remark Path HNN}
	The map $\pat_x$ induces bijections:
	\begin{itemize}
		\item between the subset of positive path-type elements in $\Gamma$, and the set of reduced paths in $\mathbf{BS}(X,\tau)$ whose first edge is $x\Sigma$;
		\item between the subset of negative path-type elements in $\Gamma$, and the set of reduced paths in $\mathbf{BS}(X,\tau)$ whose first edge is $x\vartheta(\Sigma)$.
	\end{itemize}
	Hence, if $x\Sigma$ (respectively $x\vartheta(\Sigma)$) is a treeing edge then, the images of the first (respectively the second) bijection cover exactly the half-tree of $\Sigma$ (respectively the half-graph of $\vartheta(\Sigma)$) in $\mathbf{BS}(X,\tau)$.
\end{remark}

Let us end this section by linking paths in Bass-Serre trees and Bass-Serre 
graphs so as to understand which edges are treeing edges in the Bass-Serre 
graph.
\begin{remark}\label{PseudoInitialObjectsHNN}
	Consider a global pre-action $(X,\tau)$, and a basepoint $x\in X$. There 
	exists a unique morphism of pre-actions
	\[
	\varphi: (\Gamma,t^\rho) \to (X,\tau)
	\]
	from the translation pre-action, such that $\varphi(1)=x$. In fact, $\varphi$ is the orbital map $\gamma \mapsto x\gamma^{\pi_\tau}$ of the associated $\Gamma$-action. By restriction, one obtains morphisms
	\begin{align*}
	\varphi_{+} & : (T^+,\tau_+) \to (X,\tau) \\
	\varphi_{-} & : (T^-,\tau_-) \to (X,\tau)
	\end{align*}
	from the positive and negative translation pre-actions.
\end{remark}
\begin{lemma}\label{Paths from BS tree to BS graphs ; HNN}
	In the context of the above remark, the Bass-Serre morphism 
	$\mathbf{BS}(\varphi)$, from the Bass-Serre tree $\Tc$ to the 
	Bass-Serre graph $\Gr_\tau$, sends $\pat_{1_\Gamma}^\Tc(\gamma)$ onto 
	$\pat_x^{\Gr_\tau}(\gamma)$.
\end{lemma}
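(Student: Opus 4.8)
\emph{Proof proposal.} The plan is to unwind both path constructions edge by edge and then apply the explicit description of the Bass-Serre functor on morphisms provided by Lemma~\ref{BSmorphismsLocallyInjectiveHNN}. We may assume $\gamma\in\Gamma-H$ (the only case where $\pat$ is defined), and we fix its normal form $\gamma=c_1t^{\varepsilon_1}\cdots c_nt^{\varepsilon_n}h_{n+1}$ with $n\geq 1$. Recall that, by Example~\eqref{itex: BS translation}, the tree $\Tc$ is the Bass-Serre graph of the translation pre-action $(\Gamma,t^\rho)$, so that $\pat_{1_\Gamma}^\Tc(\gamma)$ is obtained by the construction of Section~\ref{SubsectionPathsBSHNN} applied to that pre-action: writing $x_0=1_\Gamma$ and $x_i=c_1t^{\varepsilon_1}\cdots c_it^{\varepsilon_i}\in\Gamma$ for the partial products, its $i$-th edge is $e_i^\Tc=e_{x_{i-1}}(c_it^{\varepsilon_i})$ and its $i$-th visited vertex is $x_iH$. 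Similarly, setting $y_0=x$ and $y_i=y_{i-1}c_i\tau^{\varepsilon_i}\in X$, the path $\pat_x^{\Gr_\tau}(\gamma)$ has $i$-th edge $e_i^{\Gr_\tau}=e_{y_{i-1}}(c_it^{\varepsilon_i})$ and $i$-th visited vertex $y_iH$.

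First I would check that $\varphi(x_i)=y_i$ for every $i$. Since $\varphi$ is the orbital map $\delta\mapsto x\delta^{\pi_\tau}$ and $\delta\mapsto\delta^{\pi_\tau}$ is a homomorphism restricting to $\pi$ on $H$ and sending $t$ to $\tau$, one has $x_i^{\pi_\tau}=c_1\tau^{\varepsilon_1}\cdots c_i\tau^{\varepsilon_i}$ as a composition of permutations of $X$, hence $\varphi(x_i)=x\cdot c_1\tau^{\varepsilon_1}\cdots c_i\tau^{\varepsilon_i}=y_i$; alternatively this follows by an immediate induction from the $H$-equivariance of $\varphi$ together with the relation $\varphi(\delta t)=\varphi(\delta)\tau$ (Remark~\ref{PseudoInitialObjectsHNN}). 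In particular, by the definition of the Bass-Serre functor on morphisms, $\mathbf{BS}(\varphi)$ sends the vertex $x_iH$ of $\Tc$ to $\varphi(x_i)H=y_iH$, so the two paths visit corresponding vertices.

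It then remains to match the edges, and this is where Lemma~\ref{BSmorphismsLocallyInjectiveHNN} does the work: the restriction of $\mathbf{BS}(\varphi)$ to the star at $x_{i-1}H$ in $\Tc$ equals $e_{\varphi(x_{i-1})}\circ e_{x_{i-1}}^{-1}=e_{y_{i-1}}\circ e_{x_{i-1}}^{-1}$, so evaluating at $e_i^\Tc=e_{x_{i-1}}(c_it^{\varepsilon_i})\in\st(x_{i-1}H)$ yields $\mathbf{BS}(\varphi)(e_i^\Tc)=e_{y_{i-1}}(c_it^{\varepsilon_i})=e_i^{\Gr_\tau}$. Because a morphism of graphs carries paths to paths, $\mathbf{BS}(\varphi)$ carries $\pat_{1_\Gamma}^\Tc(\gamma)=(e_1^\Tc,\dots,e_n^\Tc)$ onto $(e_1^{\Gr_\tau},\dots,e_n^{\Gr_\tau})=\pat_x^{\Gr_\tau}(\gamma)$, as desired. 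There is no genuine obstacle: the argument is pure bookkeeping, and the only point deserving attention is to keep straight that the $x_i$'s are elements of $\Gamma$ viewed as the underlying set of the translation pre-action (on which $\Gamma$ acts by right translation) while the $y_i$'s live in $X$, and that under the identification $\mathbf{BS}(\Gamma,t^\rho)=\Tc$ the construction of $\pat^\Tc$ is literally that of $\pat$ specialized to this pre-action.
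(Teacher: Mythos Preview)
Your proof is correct and follows essentially the same approach as the paper: you set up the auxiliary sequences in both pre-actions, show by induction (or via the orbital-map description) that $\varphi$ carries one to the other, and then invoke Lemma~\ref{BSmorphismsLocallyInjectiveHNN} to match edges star by star. The only cosmetic difference is your choice of names $x_i,y_i$ where the paper uses $\gamma_i,x_i$; the logic is identical.
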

\begin{proof}
	Let us consider $\gamma \in \Gamma \setminus H$, and write its normal form: $\gamma = c_1 t^{\varepsilon_1} \cdots c_n t^{\varepsilon_n} h_{n+1}$. 
	Let us denote by $(e_1, \ldots, e_n)$ the edges of $\pat_{1_\Gamma}^\Tc(\gamma)$,
	and by $(e'_1, \ldots, e'_n)$ the edges of $\pat_x^{\Gr_\tau}(\gamma)$. 
	The auxiliary sequences in $\Gamma$ and $X$ used in the construction of the paths will be denoted by $(\gamma_0,\ldots, \gamma_n)$ and $(x_0,\ldots, x_n)$ respectively.
	
	An easy induction shows that $x_i = \varphi(\gamma_i)$ for all $i = 0,\ldots, n$. 
	Then, we notice that the source of 
	$e_{\gamma_{i-1}}(c_i t^{\varepsilon_i})$
	is $\gamma_{i-1} H$ for all $i= 1,\ldots, n$.
	Thus, using Lemma \ref{BSmorphismsLocallyInjectiveHNN}, 
	we get
	\[
	\Gr_\varphi(e_i) =
	e_{\varphi(\gamma_{i-1})}^{} \circ e_{\gamma_{i-1}}^{-1} 
	\big( e_{\gamma_{i-1}}(c_i t^{\varepsilon_i}) \big) 
	= e_{x_{i-1}}(c_i t^{\varepsilon_i}) = e'_i
	\]
	for all $i= 1,\ldots, n$.
\end{proof}
Therefore, if $x\Sigma$ is a treeing edge then, the image of $\mathbf{BS}(\varphi_{+})$ is the half-tree of $x\Sigma$, while if $x\vartheta(\Sigma)$ is a treeing edge then, the image of $\mathbf{BS}(\varphi_{-})$ is the half-tree of $x \vartheta(\Sigma)$.

\begin{proposition}\label{prop:chara positive treeing edge for HNN}
	Consider a global pre-action $(X,\tau)$, and a basepoint $x \in X$. The 
	following are equivalent:
	\begin{enumerate}[label=(\roman*)]
		\item the morphism of pre-actions $\varphi_{+}: (T^+,\tau_+) 
		\to (X,\tau)$ of Remark \ref{PseudoInitialObjectsHNN} is injective;
		\item the morphism of graphs $\mathbf{BS}(\varphi_{+})$ is 
		injective;
		\item the edge $x\Sigma$ in the Bass-Serre graph 
		$\mathbf{BS}(X,\tau)$ is a treeing edge.
	\end{enumerate}
\end{proposition}
\begin{proof}
	For all $\gamma\in T^+$, recall that $\varphi_{+}(\gamma) = 
	x\gamma^{\pi_{\tau}}$, so that $\mathbf{BS}(\varphi_{+})$ sends 
	vertices $\gamma H$ to $x \gamma^{\pi_{\tau}} H$. At the level of 
	positive edges, it sends $\gamma\Sigma$ to $x \gamma^{\pi_{\tau}} 
	\Sigma$. Fixing $\gamma$, we get $\varphi_{+} (\gamma h) = 
	x\gamma^{\pi_{\tau}}h^{\pi}$ for $h\in H$; since 
	$X\curvearrowleft^{\pi} H$ is free, $\varphi_+$ realizes a bijection 
	between $\gamma H$ and $x \gamma^{\pi_{\tau}} H$, 
	and also a bijection between $\gamma\Sigma$ and $x \gamma^{\pi_{\tau}} 
	\Sigma$.
	Consequently, $\varphi_{+}$ is injective if an only if $\gamma H 
	\mapsto x \gamma^{\pi_{\tau}} H$ and $\gamma\Sigma \mapsto x 
	\gamma^{\pi_{\tau}} \Sigma$ are both injective. This proves that (i) 
	and (ii) are equivalent. Note that (iii) implies (ii) is obvious since, 
	when $x\Sigma$ is a treeing edge $\mathbf{BS}(\varphi_{+})$ is locally 
	injective from the half-tree of $\Sigma$ to the half-tree of $x\Sigma$, 
	hence $\mathbf{BS}(\varphi_{+})$ is injective. Finally assume (ii) and 
	let $\omega$ be a reduced path starting by the edge $x\Sigma$. By 
	Remark \ref{Remark Path HNN} there exists a positive 
	path type element $\gamma\in \Gamma^+$ 
	such that $\omega=\pat_x(\gamma)$. By Lemma 
	\ref{Paths from BS tree to BS graphs ; HNN}, $\omega$ is the image by 
	$\mathbf{BS}(\varphi_{+})$ of $\pat_1^\Tc(\gamma)$. Since 
	$\mathbf{BS}(\varphi_{+})$ is supposed to by injective and since the 
	last vertex of $\pat_1^\Tc(\gamma)$ is not $H$, we deduce that the last 
	vertex of $\omega$ is not $xH$. Hence, $x\Sigma$ is a treeing edge by 
	Lemma~\ref{Lemma Treeing Edge}.\end{proof}

By a very similar argument, wet get also the following result.
\begin{proposition}\label{prop:chara negative treeing edge for HNN}
	Consider a global pre-action $(X,\tau)$, and a basepoint $x \in X$. The 
	following are equivalent:
	\begin{enumerate}
		\item[(i)] the morphism of pre-actions $\varphi_{-}: (T^-,\tau_-) 
		\to (X,\tau)$ of Remark \ref{PseudoInitialObjectsHNN} is injective;
		\item[(ii)] the morphism of graphs $\mathbf{BS}(\varphi_{-})$ is 
		injective;
		\item[(iii)] the edge $x\vartheta(\Sigma)$ in the Bass-Serre graph 
		$\mathbf{BS}(X,\tau)$ is a treeing edge.
	\end{enumerate}
\end{proposition}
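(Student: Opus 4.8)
The plan is to transcribe the proof of Proposition~\ref{prop:chara positive treeing edge for HNN}, performing throughout the replacements $\Sigma \to \vartheta(\Sigma)$, $(T^+,\tau_+) \to (T^-,\tau_-)$, $\varphi_+ \to \varphi_-$, and ``positive path-type element'' $\to$ ``negative path-type element''.

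First I would establish that (i) and (ii) are equivalent. By Remark~\ref{PseudoInitialObjectsHNN}, $\varphi_-(\gamma) = x\gamma^{\pi_\tau}$ for all $\gamma \in T^-$, so $\mathbf{BS}(\varphi_-)$ sends a vertex $\gamma H$ to $x\gamma^{\pi_\tau}H$ and a negative edge $\gamma\vartheta(\Sigma)$ to $x\gamma^{\pi_\tau}\vartheta(\Sigma)$. Fixing $\gamma$ and using that the action $X \curvearrowleft^\pi H$ is free, $\varphi_-$ restricts to a bijection from $\gamma H$ onto $x\gamma^{\pi_\tau}H$ and from $\gamma\vartheta(\Sigma)$ onto $x\gamma^{\pi_\tau}\vartheta(\Sigma)$. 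Hence $\varphi_-$ is injective if and only if both maps $\gamma H \mapsto x\gamma^{\pi_\tau}H$ and $\gamma\vartheta(\Sigma) \mapsto x\gamma^{\pi_\tau}\vartheta(\Sigma)$ are injective, that is, if and only if $\mathbf{BS}(\varphi_-)$ is injective.

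Next, for (iii) $\Rightarrow$ (ii), I would note that by Example~\eqref{itex: BS translation neg pre} the Bass-Serre graph of $(T^-,\tau_-)$ is the half-tree of $\vartheta(\Sigma)$ in $\Tc$, which is a tree; when $x\vartheta(\Sigma)$ is a treeing edge, $\mathbf{BS}(\varphi_-)$ is a locally injective morphism (Lemma~\ref{BSmorphismsLocallyInjectiveHNN}) from this connected tree into the half-tree of $x\vartheta(\Sigma)$, hence is injective. For the remaining implication (ii) $\Rightarrow$ (iii), I would check condition~\eqref{cond: no reduced path s(e) to s(e)} of Lemma~\ref{Lemma Treeing Edge} for the edge $x\vartheta(\Sigma)$. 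Given a reduced path $\omega$ starting by $x\vartheta(\Sigma)$, Remark~\ref{Remark Path HNN} provides a negative path-type element $\gamma \in \Gamma^-$ with $\omega = \pat_x(\gamma)$, and then Lemma~\ref{Paths from BS tree to BS graphs ; HNN} gives $\omega = \mathbf{BS}(\varphi_-)\big(\pat_{1_\Gamma}^\Tc(\gamma)\big)$, where $\pat_{1_\Gamma}^\Tc(\gamma)$ is a nonempty reduced path in the tree $\Tc$, hence has range distinct from its source $H$. Since $\mathbf{BS}(\varphi_-)$ is injective, the range of $\omega$ is therefore distinct from $\mathbf{BS}(\varphi_-)(H) = xH = s(x\vartheta(\Sigma))$, so $x\vartheta(\Sigma)$ is a treeing edge.

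I do not expect any real obstacle: the argument is a faithful copy of the positive case, the only point requiring care being to keep track of which coset type ($\vartheta(\Sigma)$ rather than $\Sigma$) labels the negative edges and to invoke Example~\eqref{itex: BS translation neg pre} in place of Example~\eqref{itex: BS translation pos pre}.
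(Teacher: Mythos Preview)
Your proposal is correct and is exactly the approach the paper intends: the paper does not give a proof of this proposition, stating only ``By a very similar argument, we get also the following result,'' and your write-up is a faithful transcription of the proof of Proposition~\ref{prop:chara positive treeing edge for HNN} with the appropriate substitutions.
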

\begin{remark}\label{EqFreeForest}
	Putting the two previous propositions together, one can show that given 
	a $\Gamma$-action where $H$ is acting freely, the Bass-Serre graph of 
	the associated pre-action is a forest if and only if the 
	$\Gamma$-action is free. 
\end{remark}

\subsection{The free globalization of a pre-action of an HNN extension}

Say that a pre-action is \textbf{transitive} when its Bass-Serre graph is connected. 
Note that a global pre-action $(X,\tau)$ is transitive if and only if the 
associated $\Gamma$-action is transitive. 
We will show that every transitive pre-action has a canonical extension to a transitive action, which is \textbf{as free as possible}. 
The construction is better described in terms of Bass-Serre graph: we are going to attach as many treeing edges as possible to it.

\begin{theorem}\label{thm: HNN free globalization}
	Every transitive $\Gamma$-pre-action $(X,\tau)$ on a non-empty set $X$ 
	admits a transitive and global extension $(\tilde X, \tilde\tau)$ which 
	satisfies the following universal property: 
	given any transitive and global extension $(Y,\tau')$ of $(X,\tau)$, 
	there is a unique morphism of pre-actions $\varphi:(\tilde 
	X,\tilde\tau) \to (Y,\tau')$ such that 
	\[
	\varphi_{\restriction X}=\mathrm{id}_{X} \, .
	\]
	Moreover, all the edges from the Bass-Serre graph $\mathbf{BS}(X,\tau)$ to its complement in $\mathbf{BS}(\tilde X,\tilde\tau)$ are treeing edges.
\end{theorem}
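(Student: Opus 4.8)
The idea is to build $(\tilde X, \tilde\tau)$ by gluing to $(X,\tau)$ a bunch of copies of the positive and negative translation pre-actions $(T^+,\tau_+)$ and $(T^-,\tau_-)$, one copy for each ``missing'' edge at each vertex of the Bass-Serre graph $\mathbf{BS}(X,\tau)$. Concretely, I would first fix a basepoint in every $H$-orbit, i.e. a set-theoretic section of $X\to X/H$, so that every vertex of $\Gr_\tau$ is represented by a canonical point $x\in X$. At such an $x$, the bijection $e_x$ from Section~\ref{SubsectPreActionsAndBSGraphsHNN} identifies $\st(xH)$ with a subset of $\{ct:c\in C^+\}\sqcup\{ct\inv:c\in C^-\}$; the complement consists of the $ct$ with $c\in C^+$, $xc\notin\dom(\tau)$, and the $ct\inv$ with $c\in C^-$, $xc\notin\rng(\tau)$. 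For each such missing $ct$ (positive case) I attach a fresh copy of $(T^+,\tau_+)$ and glue it by identifying its root vertex-orbit $1\cdot H$ with $xH$ in the way dictated by $c$: more precisely, I take the copy of $T^+$, and glue the $H$-orbit of $c\in T^+$ (note $c\in\Sigma\subseteq\dom(\tau_+)$, wait — one must instead glue along the partial bijection data) to $xH$ via the $H$-equivariant bijection sending $c$ to $xc$. The key point is that $c\Sigma$ is exactly the edge of $\mathbf{BS}(T^+,\tau_+)$ missing after this identification becomes the edge $x c\Sigma$ we wanted, and $\tau_+$ is defined on $\Sigma\sqcup\Gamma^+$ so that on the glued copy $\tilde\tau$ extends $\tau$. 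The negative missing edges are handled symmetrically with copies of $(T^-,\tau_-)$, glued so that $yc\vartheta(\Sigma)$ becomes the new negative edge. After doing this at every vertex, I set $\tilde X$ to be $X$ together with all the new points (minus the identified roots), and $\tilde\tau$ the common extension. By construction $\dom(\tilde\tau)=\rng(\tilde\tau)=\tilde X$ because every point of $\tilde X$ either was already in $\dom(\tau)$, or lies in a glued copy of $T^\pm$ where $\tau_\pm$ was already a genuine bijection onto, plus the finitely many ``boundary'' points get fixed up by the gluing — so one has to check this bookkeeping carefully; this is the first place where care is needed.

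Next I would verify the structural claims. Transitivity: $\mathbf{BS}(\tilde X,\tilde\tau)$ is connected because it is $\mathbf{BS}(X,\tau)$, which is connected by hypothesis, with trees attached at vertices, and attaching connected graphs along a vertex preserves connectedness. The ``treeing edge'' claim: each edge from $\mathbf{BS}(X,\tau)$ to its complement is, by Remark~\ref{Remark Path HNN} and Propositions~\ref{prop:chara positive treeing edge for HNN} and \ref{prop:chara negative treeing edge for HNN}, a treeing edge precisely when the corresponding morphism $\varphi_\pm$ from $(T^\pm,\tau_\pm)$ is injective — and here it is injective essentially by fiat, since the copy of $T^\pm$ was attached freely (no two distinct elements of the copy get identified, and the copy meets $X$ only in the single $H$-orbit we glued along). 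This needs Lemma~\ref{Lemma Treeing Edge} condition~(iii): there is no reduced path from $s(e)$ to $s(e)$ starting with $e$; in the glued copy such a path would correspond to a path-type element $\gamma\in\Gamma^\pm$ with $x\gamma^{\pi_{\tilde\tau}}\in xH$, which is impossible because in $T^\pm$ no such $\gamma$ returns to $H$ — this is exactly the content of the normal form being reduced, cf.\ the computation of $\dom(\tau_\pm)$. One subtlety: one must check that reduced paths leaving the copy of $T^\pm$ cannot come back into $X$ except through the attaching orbit; this is because the only edge of the copy touching $xH$ is the attaching edge $e$ itself, and a reduced path starting with $e$ never uses $\bar e$.

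Finally, the universal property. Given a transitive global extension $(Y,\tau')$ of $(X,\tau)$, I need a unique morphism $\varphi:(\tilde X,\tilde\tau)\to(Y,\tau')$ restricting to the identity on $X$. On $X$ there is no choice. On a glued copy of $T^+$ attached at $xH$ via $c\mapsto xc$: by Remark~\ref{PseudoInitialObjectsHNN} (applied inside $Y$, with basepoint the image of the root), there is a unique morphism of pre-actions $(T^+,\tau_+)\to(Y,\tau')$ sending the root to $xc^{\pi_{\tau'}}c\inv$ — wait, more cleanly: there is a unique morphism $(\Gamma,t^\rho)\to (Y,\tau')$ sending $1$ to any prescribed point, hence a unique morphism $(T^+,\tau_+)\to(Y,\tau')$ sending $1$ to a prescribed point; I prescribe it so that $c\mapsto x c^{\pi}$ (the image of $xc\in X\subseteq Y$), which is forced since $\varphi$ must be $H$-equivariant and agree with $\mathrm{id}_X$ on the attaching orbit. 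Because the different glued copies meet $X$ (and each other) only along attaching orbits that already lie in $X$, these partial definitions are consistent and assemble into a single $H$-equivariant map $\varphi:\tilde X\to Y$; that it respects $\tilde\tau$ is checked orbit-copy by orbit-copy, using that each $\varphi_{|\text{copy}}$ respects $\tau_\pm$ and that $\varphi_{|X}=\mathrm{id}_X$ respects $\tau$, together with the fact that $\dom(\tilde\tau)$ is partitioned compatibly. Uniqueness follows from transitivity of $(\tilde X,\tilde\tau)$: an $H$-equivariant $\tilde\tau$-intertwiner restricting to $\mathrm{id}_X$ is determined on $X$, and any point of $\tilde X$ is reachable from $X$ by applying elements of $\tilde\tau$ and $H$ (since the Bass-Serre graph is connected), so its image is forced.

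The main obstacle I anticipate is neither conceptual nor in the universal property (which is a soft diagram-chase once the construction is pinned down), but in the \emph{bookkeeping of domains and ranges} during the gluing: one must choose the attaching identifications so that (a) $\tilde\tau$ is well-defined, single-valued, and injective, (b) $\dom(\tilde\tau)=\rng(\tilde\tau)=\tilde X$ exactly — every formerly-missing half of an edge is now supplied by precisely one glued tree and nothing is supplied twice — and (c) the relation $\sigma^{\tilde\pi}\tilde\tau=\tilde\tau\,\vartheta(\sigma)^{\tilde\pi}$ persists on the nose. Getting (a)–(c) simultaneously is where the ``obvious'' construction has to be executed with genuine precision, and I expect the bulk of the written proof to be spent there rather than on transitivity, the treeing-edge property, or the universal property.
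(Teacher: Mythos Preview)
Your plan is the paper's plan: attach one copy of $(T^+,\tau_+)$ for each $\Sigma$-orbit missing from $\dom(\tau)$ and one copy of $(T^-,\tau_-)$ for each $\vartheta(\Sigma)$-orbit missing from $\rng(\tau)$, then verify globality, transitivity, the treeing-edge claim via Propositions~\ref{prop:chara positive treeing edge for HNN} and~\ref{prop:chara negative treeing edge for HNN}, and the universal property by assembling the unique morphisms of Remark~\ref{PseudoInitialObjectsHNN}. Your sketches of those verifications are correct.

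The one concrete error is in the gluing bijection. You index the missing positive edges at a vertex $xH$ by those $c\in C^+$ with $xc\notin\dom(\tau)$, and propose to glue the corresponding copy of $T^+$ via ``the $H$-equivariant bijection sending $c$ to $xc$''. But a right-$H$-equivariant bijection $H\to xH$ sending $c\mapsto xc$ necessarily sends $1\mapsto x$; hence it identifies the orbit $1\cdot\Sigma\subset T^+$ (the unique $\Sigma$-orbit at the root lying in $\dom(\tau_+)$) with $x\Sigma\subset X$, not with the missing orbit $xc\Sigma$. If $x\Sigma$ is already in $\dom(\tau)$, or if several values of $c$ are missing at the same vertex, your $\tilde\tau$ becomes multi-valued on $x\Sigma$ --- exactly the clash you anticipated in your final paragraph. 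The fix, which is what the paper does, is to drop the extra parameter: enumerate the missing $\Sigma$-orbits directly as $(x_i\Sigma)_{i\in I}$ and glue the $i$-th copy via $h\mapsto x_ih$ (so $1\mapsto x_i$). Then the unique root-level $\Sigma$-orbit on which $\tau_+$ is defined gets identified precisely with the missing orbit $x_i\Sigma$, the domains are disjoint by construction, and the bookkeeping worries evaporate.
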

In terms of $\Gamma$-actions, the extension $(\tilde X, \tilde\tau)$ of the theorem corresponds to an action $\tilde X \curvearrowleft\Gamma$ such that, 
given any action $Y \curvearrowleft^\alpha \Gamma$ satisfying $X\subseteq Y$ as $H$-sets and $yt^\alpha = y\tau$ for all $y\in \dom(\tau)$, 
there exists a unique $\Gamma$-equivariant map $\varphi:\tilde X \to Y$ extending $\id_X$.

\begin{proof}
	
	We will obtain the Bass-Serre graph of the pre-action $(\tilde X, \tilde \tau)$ by adding only treeing edges to the Bass-Serre graph of the pre-action.
	
	First we enumerate the $\Sigma$-orbits which do not belong to the domain of $\tau$ as $(x_i \Sigma)_{i\in I}$. 
	Then, we take disjoint copies of $(T^+,\tau_+)$, for $i\in I$, also disjoint from $X$, which we denote as $(T^+_i,\tau_i)$.  
	Similarly, we enumerate the $\vartheta(\Sigma)$-orbits which do not belong to the range of $\tau$ as $(y_j \vartheta(\Sigma))_{j\in J}$. 
	We then take disjoint copies of $(T^-,\tau_-)$, for $j\in J$, also disjoint from $X$, which we denote as $(T^-_j,\tau_j)$.  
	Now, our extension $(\tilde X, \tilde \tau)$ is obtained as follows. We set
	\[
	\tilde X = {\left( X \sqcup \bigsqcup_{i\in I} T^+_i \sqcup \bigsqcup_{j\in J} T^-_j \right)}\bigg/_{\sim}
	\]
	where $\sim$ identifies the element $x_i h \in X$ with $h\in H \subset T^+_i$, for each $i\in I$ and $h\in H$, and the element $x_j h \in X$ with $h\in H \subset T^-_j$, for each $j\in J$ and $h\in H$. 
	Since the identifications just glue some orbits pointwise and respect 
	the $H$-actions, $\tilde X$ is endowed with a free $H$-action. Then, we 
	set
	\[
	\tilde \tau = \tau \sqcup \bigsqcup_{i\in I} \tau_i \sqcup \bigsqcup_{j \in J} \tau_j \, ,
	\]
	which is possible since the domain of $\tau_i$, for $i\in I$, intersects other components in $\tilde X$ only in the orbit $x_i\Sigma$, 
	the range of $\tau_i$, for $i\in I$, does not intersect other components in $\tilde X$, and the situation is analogue for $\tau_j$ with $j\in J$. 
	We have got a pre-action $(\tilde X,\tilde \tau)$.
	
	This pre-action is transitive. Indeed, all pre-actions $(X,\tau)$, $(T^+_i,\tau_i)$ and $(T^-_j,\tau_j)$ are, and the identifications make connections between all these components in the Bass-Serre graph $\mathbf{BS}(\tilde X,\tilde \tau)$. 
	
	The pre-action is also global. Indeed, every $\Sigma$-orbit, 
	respectively $\vartheta(\Sigma)$-orbit, in $T^+_i$, which is not in the 
	domain, respectively the range, of $\tau_i$, has been identified with 
	an orbit in $X$, and the situation is similar for $T^-_j$. We conclude 
	by noting that all $\Sigma$-orbits and $\vartheta(\Sigma)$-orbits 
	in $X$ are now in the domain and in the range of $\tilde \tau$.
	
	Moreover, the (oriented) edges from the Bass-Serre graph $\mathbf{BS}(X,\tau)$ to its complement in $\mathbf{BS}(\tilde X,\tilde \tau)$ are exactly the edges $x_i\Sigma$ for $i\in I$, and the edges $x_j \vartheta(\Sigma)$ for $j\in J$. For each $i\in I$, the morphism of pre-actions $\varphi_{+}: (T^+,\tau_+) \to (\tilde X,\tilde \tau)$ of Remark \ref{PseudoInitialObjectsHNN}, with basepoint $x_i\in \tilde X$, is injective since it realizes an isomorphism onto $(T^+_i,\tau_i)$, hence $x_i\Sigma$ is a treeing edge by Proposition \ref{prop:chara positive treeing edge for HNN}. One proves similarly that the edges $x_j \vartheta(\Sigma)$ are treeing edges using Proposition \ref{prop:chara negative treeing edge for HNN}. 
	
	It now remains to prove the universal property. To do so, take any 
	transitive and global extension $(Y,\tau')$ of $(X,\tau)$. The unique 
	morphism of pre-actions $\varphi$ from $(\tilde X,\tilde \tau)$ to 
	$(Y,\tau')$ such that $\varphi_{\restriction X}=\id_{X}$ is obtained by 
	taking the union of the morphisms $\varphi_{i}:(T^+_i,\tau_i) \to 
	(Y,\tau')$ and $\varphi_{j}:(T^-_j,\tau_j) \to (Y,\tau')$, coming from 
	Remark \ref{PseudoInitialObjectsHNN} with respect to basepoints $x_i$ 
	or $x_j$, with $\id_{X}$ (all these morphisms are unique).
\end{proof}

It is straightforward to deduce from the universal property above that the 
global pre-action we just built is unique up to isomorphism. We thus call 
it \textbf{the} \textbf{free globalization} of the pre-action $(X,\tau)$. 

\begin{example}\label{ex: free globa for pos and neg preactions}
	The free globalizations of the positive and negative translation 
	pre-action are equal to the right $\Gamma$-action on itself by 
	translation. Indeed, this is true of any transitive pre-action obtained 
	as a restriction of the (global) $\Gamma$-pre-action on itself by right 
	translation, since the latter is universal among transitive 
	$\Gamma$-actions.
\end{example}

Let us furthermore observe that we can always build this pre-action on a fixed set $\bar X$ containing $X$, provided it contains infinitely many free $H$-orbits.

\begin{theorem}\label{free globalization in prescribed set for HNN}
	
	Let $\bar X$ be a countable set equipped with a free $H$-action, suppose $X\subseteq \bar X$ is $H$-invariant and $\bar X \setminus X$ contains infinitely many $H$-orbits. Suppose further that $\tau$ is a partial bijection on $X$ such that $(X,\tau)$ is a transitive pre-action of $\Gamma$. Then there is a permutation $\bar\tau$ of $\bar X$ such that  $(\bar X,\bar \tau)$ is (isomorphic to) the free globalization of $(X,\tau)$.
\end{theorem}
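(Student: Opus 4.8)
The plan is to build the abstract free globalization $(\tilde X,\tilde\tau)$ using Theorem~\ref{thm: HNN free globalization}, and then to move it inside the prescribed set $\bar X$ along a carefully chosen $H$-equivariant bijection that fixes $X$ pointwise; conjugating $\tilde\tau$ by this bijection will produce the desired permutation $\bar\tau$.

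First I would apply Theorem~\ref{thm: HNN free globalization} to $(X,\tau)$, obtaining its free globalization $(\tilde X,\tilde\tau)$ together with its free $H$-action $\tilde\pi$, where $X\subseteq\tilde X$ is $H$-invariant and $\tilde\pi$ restricts on $X$ to the $H$-action of $(X,\tau)$. Reading off the explicit construction in the proof of that theorem, $\tilde X\setminus X$ is $H$-invariant and decomposes as a disjoint union of copies of $\Gamma^+$ (one for each $\Sigma$-orbit missing from $\dom\tau$) and of $\Gamma^-$ (one for each $\vartheta(\Sigma)$-orbit missing from $\rng\tau$). Since each copy of $\Gamma^+$ or $\Gamma^-$ is itself a union of infinitely many free $H$-orbits and $\tilde X$ is countable, the set $\tilde X\setminus X$ carries a free $H$-action with countably infinitely many $H$-orbits (in the substantive case, where $(X,\tau)$ is not already global so that $\tilde X\neq X$).

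Next I would match up the two complements. By hypothesis $\bar X$ carries a free $H$-action, $X\subseteq\bar X$ is $H$-invariant with the $H$-action of $(X,\tau)$, and $\bar X\setminus X$ has infinitely many --- hence, being countable, exactly countably infinitely many --- $H$-orbits. Choosing orbit representatives $(\tilde a_k)_{k\in\N}$ of $\tilde X\setminus X$ and $(\bar a_k)_{k\in\N}$ of $\bar X\setminus X$, one gets an $H$-equivariant bijection $\theta\colon\tilde X\to\bar X$ by setting $\theta$ equal to the identity on $X$ and $\theta(\tilde a_k h)=\bar a_k h$ for $k\in\N$ and $h\in H$; this is well defined precisely because both $H$-actions are free and agree on $X$. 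I would then let $\bar\tau$ be the permutation of $\bar X$ obtained by transporting $\tilde\tau$ through $\theta$, that is $y\bar\tau=\theta\bigl(\theta^{-1}(y)\tilde\tau\bigr)$ for $y\in\bar X$. Since $\theta$ is $H$-equivariant and conjugates $\tilde\tau$ to $\bar\tau$, the intertwining relations $\sigma^{\tilde\pi}\tilde\tau=\tilde\tau\,\vartheta(\sigma)^{\tilde\pi}$ for $\sigma\in\Sigma$ transfer to the analogous relations for $\bar\tau$ and the given $H$-action on $\bar X$; hence $(\bar X,\bar\tau)$ is a $\Gamma$-pre-action. It is transitive because $(\tilde X,\tilde\tau)$ is, and since $\theta$ is the identity on $X$ it restricts to $(X,\tau)$. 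Thus $\theta$ is an isomorphism of pre-actions $(\tilde X,\tilde\tau)\to(\bar X,\bar\tau)$, which is exactly the assertion.

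The hard part is really just bookkeeping rather than a genuine difficulty: the one point that needs care is the orbit count for $\tilde X\setminus X$ --- reading off from the explicit construction of the free globalization that, outside $X$, one adds precisely countably many free $H$-orbits, so that there is both enough room and not too much room inside $\bar X\setminus X$ to accommodate it bijectively. Once the $H$-equivariant bijection $\theta$ fixing $X$ is in hand, everything else is transport of structure, with the only subtlety being to keep track of the right-action convention when conjugating $\tilde\tau$ into $\bar\tau$.
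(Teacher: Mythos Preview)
Your proof is correct and follows essentially the same approach as the paper's: take the abstract free globalization $(\tilde X,\tilde\tau)$, build an $H$-equivariant bijection $\tilde X\to\bar X$ restricting to the identity on $X$ (using that both complements carry free $H$-actions with countably infinitely many orbits), and push $\tilde\tau$ forward. You are somewhat more explicit than the paper about why $\tilde X\setminus X$ has infinitely many $H$-orbits and about the well-definedness of the equivariant bijection, but the argument is the same.
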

\begin{proof}
	Let $(\tilde X,\tilde \tau)$ be the free globalization of $(X,\tau)$. The fact that $\tilde X\setminus X$ contains infinitely many $H$-orbits and is countable implies that there exist a $H$-equivariant bijection $\varphi: \tilde X \to \bar X$ whose restriction to $X$ is the identity. Then, one can push forward the permutation $\tilde \tau$, to obtain a permutation $\bar \tau: \bar X \to \bar X$ defined by
	\[
	\varphi(x) \bar\tau := \varphi(x \tilde\tau) \quad \text{ for all } x\in \tilde X \, ,
	\]
	which extends $\tau$.
	Now, $\varphi$ is an isomorphism of pre-actions between $(\tilde X,\tilde \tau)$ and $(\bar X,\bar \tau)$.
\end{proof}

\subsection{Connection with partial actions and strong 
	faithfulness}\label{sec: partial actions HNN}

\begin{definition}
	Given a transitive pre-action of the HNN extension $\Gamma$ on 
	$(X,\tau)$, let us denote by $(\tilde X, \tilde\tau)$ its free globalization. 
	The \textbf{partial action associated} to $(X,\tau)$ is the restriction 
	to $X$ of the action $\tilde X 
	\curvearrowleft^{\pi_{\tilde\tau}}\Gamma$. We denote it by 
	$\alpha_\tau$.
\end{definition}
In order to have shorter statements in what follows, we will also call the action $\tilde X \curvearrowleft^{\pi_{\tilde\tau}}\Gamma$ ``free globalization'' of $(X,\tau)$.
\begin{remark} One could also construct the partial action directly as 	
	follows. Let us denote by $\pi$ the $H$-action on $X$.
	Given $\gamma \in \Gamma$ with normal form $c_1 t^{\varepsilon_1} 
	\cdots c_n t^{\varepsilon_n} h_{n+1}$, define the partial bijection 
	$\gamma^{\alpha_\tau}$ by
	$$
	\gamma^{\alpha_\tau} := c_1^\pi \tau^{\varepsilon_1} \cdots c_n^\pi 
	\tau^{\varepsilon_n} h_{n+1}^\pi,
	$$
	where we compose partial bijections as described in Section \ref{sec: 
		prelim partial action}.
	The relation $\gamma_1^{\alpha_\tau} \gamma_2^{\alpha_\tau} \subseteq 
	(\gamma_1 \gamma_2)^{\alpha_\tau}$ follows from the fact that in order 
	to obtain the normal form of $\gamma_1\gamma_2$ from the concatenation 
	of the normal forms of $\gamma_1$ and $\gamma_2$, one only needs to 
	iterate the following three types of operations:
	\begin{enumerate}[label=(\arabic*)]
		\item  replacing a subword $h_1\sigma th_2$ by $h_1t 
		\vartheta(\sigma)h_2$;
		\item replacing a subword $h_1\vartheta(\sigma) t^{-1} h_2$ by
		$t^{-1} \sigma h_2$;
		\item deleting the occurrences of $t t^{-1}$ or $t^{-1} t$.
	\end{enumerate}
	By the definition of a pre-action, types (1) and (2)
	do not affect the partial bijection that we get in the end, while type (3) can only produce extensions (note that $\tau\tau\inv$ and $\tau\inv \tau$ are \textit{restrictions} of the identity on $X$).
\end{remark}

We can now connect the \emph{free} globalization that we constructed to the 
\emph{universal} globalization of Kellendonk-Lawson that was presented in 
Theorem \ref{thm: universal globalization}.	
\begin{proposition}
	The free 
	globalization of a transitive pre-action $(X,\tau)$ is equal to the 
	universal
	globalization of the partial action $\alpha_\tau$.
\end{proposition}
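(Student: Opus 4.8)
The statement to prove is that the free globalization $(\tilde X,\tilde\tau)$ of a transitive pre-action $(X,\tau)$, viewed as a $\Gamma$-action $\tilde X\curvearrowleft^{\pi_{\tilde\tau}}\Gamma$, coincides with the universal globalization of the partial action $\alpha_\tau$ obtained by restricting $\pi_{\tilde\tau}$ to $X$. Since the universal globalization is characterized by the universal property in Theorem~\ref{thm: universal globalization}, the plan is simply to verify that $(\tilde X,\tilde\tau)$, together with the inclusion $X\subseteq\tilde X$, satisfies that very property. Uniqueness of an object satisfying a universal property then finishes the argument.

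First I would record the easy part: $\tilde X\curvearrowleft\Gamma$ is a $\Gamma$-action containing $X$ as a subset, and its restriction to $X$ \emph{is} the partial action $\alpha_\tau$ by definition, so $\tilde X$ is an admissible ``globalization'' of $\alpha_\tau$ in the sense of Theorem~\ref{thm: universal globalization}. Next, given any $\Gamma$-action $Y\curvearrowleft\Gamma$ with $X\subseteq Y$ extending $\alpha_\tau$, I must produce a unique $\Gamma$-equivariant map $f:\tilde X\to Y$ restricting to the identity on $X$. The first point to check is that $Y$ is generated as a $\Gamma$-set by $X$: indeed $\tilde X$ is transitive (it is a transitive pre-action), so $\tilde X=X\Gamma$, and hence any equivariant $f$ extending $\id_X$ is determined by $f(x\gamma)=x\gamma$ for $x\in X$, $\gamma\in\Gamma$ --- this gives uniqueness immediately, without any hypothesis on $Y$. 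For existence, I would first pass to the sub-$\Gamma$-set $X\Gamma\subseteq Y$; this is a transitive $\Gamma$-action, $H$ acts freely on it (since $H$ acts freely on $X$ and freeness of an orbit is detected on that orbit), and it extends the pre-action $(X,\tau)$ because $Y$ extends $\alpha_\tau$ means $yt^{\pi}=y\tau$ for $y\in\dom\tau$. Thus $X\Gamma$ is a transitive global extension of $(X,\tau)$, so the universal property of the free globalization (Theorem~\ref{thm: HNN free globalization}) provides a morphism of pre-actions $\varphi:(\tilde X,\tilde\tau)\to(X\Gamma,\cdot)$ restricting to $\id_X$; such a morphism of global pre-actions is automatically $\Gamma$-equivariant. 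Composing with the inclusion $X\Gamma\hookrightarrow Y$ yields the desired $f$.

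The main subtlety --- and the step I'd be most careful about --- is the bookkeeping around the two different universal properties. Theorem~\ref{thm: HNN free globalization} is stated only for \emph{transitive} global extensions of $(X,\tau)$, whereas Theorem~\ref{thm: universal globalization} allows an arbitrary $\Gamma$-action $Y$ containing $X$; the reduction to $X\Gamma$ is precisely what reconciles the two, and one needs that $X\Gamma$ genuinely is a transitive pre-action extending $(X,\tau)$ (in particular that the $H$-action stays free and that $\dom\tau,\rng\tau$ behave correctly under the inclusion). One also needs to note that $\tilde X$ itself is countable (it was built by adjoining countably many copies of $T^+$, $T^-$), so it is a legitimate target. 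Once these points are in place, $(\tilde X,\tilde\tau)$ satisfies the defining universal property of the universal globalization of $\alpha_\tau$, and by uniqueness of universal objects the two coincide, as claimed.
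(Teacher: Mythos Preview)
Your overall strategy --- verifying directly that $\tilde X$ satisfies the universal property characterizing the universal globalization --- is legitimate and essentially dual to the paper's route, but there is a genuine gap in the existence step. You assert that $H$ acts freely on $X\Gamma\subseteq Y$ ``since $H$ acts freely on $X$ and freeness of an orbit is detected on that orbit''; this does not follow, because $X\Gamma$ will in general contain $H$-orbits disjoint from $X$, and nothing in the hypotheses forces those to be free. Concretely, take $\Gamma=\mathbb F_2=\langle a,b\rangle$ written as $\HNN(\langle a\rangle,\{1\},\id)$ with stable letter $t=b$, let $X=\langle a\rangle\cong\Z$ with $\tau=\emptyset$, and let $Y=\Z\sqcup\{*\}$ with $a$ acting by translation on $\Z$ and fixing $*$, and $b$ swapping $0\leftrightarrow *$ while fixing everything else. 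One checks that $\alpha_\tau(\gamma)=\emptyset$ for every $\gamma\notin H$, so $Y$ does extend $\alpha_\tau$; yet $*=0\cdot b\in X\Gamma$ and $a\cdot *=*$, so the $H$-action on $X\Gamma$ is not free. Thus $X\Gamma$ is not a pre-action in the paper's sense, and Theorem~\ref{thm: HNN free globalization} cannot be invoked as stated.

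The paper sidesteps this by arguing in the opposite direction: it first uses the universal property of the Kellendonk--Lawson globalization $Z$ to get a $\Gamma$-equivariant map $g:Z\to\tilde X$, and then observes that freeness of $H$ on $\tilde X$ pulls back along $g$ to give freeness of $H$ on $Z$ (if $zh=z$ then $g(z)h=g(z)$, whence $h=1$). Only then is Theorem~\ref{thm: HNN free globalization} applied, to $Z$ rather than to an arbitrary $Y$. Your route can be repaired, but not via Theorem~\ref{thm: HNN free globalization}: instead construct the map $f:\tilde X\to Y$ by hand, setting $f(x_0\gamma^{\pi_{\tilde\tau}})=x_0\gamma^Y$ for a fixed $x_0\in X$, and check well-definedness directly from the hypothesis that $Y$ extends $\alpha_\tau$ (if $x_0\gamma^{\pi_{\tilde\tau}}=x_0$ then $x_0\in\dom\alpha_\tau(\gamma)$ with $x_0\alpha_\tau(\gamma)=x_0$, hence $x_0\gamma^Y=x_0$).
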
	
\begin{proof}
	There is a  unique $\Gamma$-equivariant map $g$ from the universal 
	globalization 
	$Z$ constructed by Kellendonk-Lawson to the 
	free globalization $\tilde X$ because of its universal property. 
	Moreover, in the 
	free globalization, we have that $H$ acts freely, so it follows that 
	$H$ is also acting freely on the universal globalization. 
	
	It is then straightforward to check that since the pre-action 
	$\tau$ is transitive, the associated partial action $\alpha_\tau$ is 
	transitive, i.e. for every $x,y\in X$ there is $\gamma\in\Gamma$ such 
	that 
	$y=x\gamma^{\alpha_\tau}$. Since the $\Gamma$-closure of $X$ inside 
	$Z$ satisfies the same universal property as $Z$, we conclude that the 
	universal globalization is transitive.
	
	We can thus apply the universal property of the free globalization from 
	Theorem \ref{thm: HNN free globalization} so as to obtain a unique 
	$\Gamma$-equivariant map $f:\tilde X\to Z$ which restricts to the 
	identity on $X$. Recalling that $g: Z\to \tilde X$ is the unique 
	$\Gamma$-equivariant map provided by Kellendonk and Lawson's theorem, 
	we conclude by uniqueness that both $g\circ f$ and $f\circ g$ are 
	identity maps, which concludes the proof.
\end{proof}

An important property of the partial action associated to  a pre-action 
$(X,\tau)$ is 
that it is \emph{contained} in the partial action associated to any of the 
globalizations of $(X,\tau)$, in the following sense.

\begin{definition}
	Let $X\lact^{\alpha}\Gamma$ and $X\lact^{\beta}\Gamma$ be two partial 
	actions. We say that $\alpha$ is \textbf{contained} in $\beta$ when for 
	every $\gamma\in\Gamma$, we have $\alpha(\gamma)\subseteq\beta(\gamma)$.
\end{definition}

\begin{proposition}\label{prop: HNN containement partial action}
	Let $(X,\tau)$ be a transitive $\Gamma$-pre-action, let 
	$X\lact^{\alpha_\tau}\Gamma$ be the associated partial action. Then for 
	every 
	global extension $(Y, \sigma)$ of the pre-action, we have 
	that the restriction to $X$ of the action $Y\curvearrowleft^{\pi_\sigma}\Gamma$ 
	contains $\alpha_\tau$.
\end{proposition}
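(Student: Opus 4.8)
The plan is to deduce the containment from the universal property of the free globalization, Theorem~\ref{thm: HNN free globalization}. Write $(\tilde X,\tilde\tau)$ for the free globalization of $(X,\tau)$, so that by definition $\alpha_\tau(\gamma)$ is the restriction to $X$ of the permutation $\gamma^{\pi_{\tilde\tau}}$ of $\tilde X$. Denoting by $\beta$ the restriction to $X$ of the action $Y\curvearrowleft^{\pi_\sigma}\Gamma$, proving $\alpha_\tau\subseteq\beta$ amounts to showing that for every $\gamma\in\Gamma$ and every $x\in X$ with $x\gamma^{\pi_{\tilde\tau}}\in X$ one has $x\gamma^{\pi_\sigma}=x\gamma^{\pi_{\tilde\tau}}$; the resulting pair then automatically lies in $X\times X$, so this gives $\alpha_\tau(\gamma)\subseteq\beta(\gamma)$.

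The point where one must be slightly careful --- and the only real (if modest) obstacle --- is that Theorem~\ref{thm: HNN free globalization} produces a morphism only towards \emph{transitive} global extensions, whereas $(Y,\sigma)$ need not be transitive. So I would first replace $Y$ by the $\pi_\sigma$-invariant subset $Y_0\subseteq Y$ given by the union of the $\pi_\sigma$-orbits that meet $X$, equipped with $\sigma_0:=\sigma_{\restriction Y_0}$. Then $(Y_0,\sigma_0)$ is a global pre-action (all its structure is inherited from $(Y,\sigma)$) whose restriction to $X$ is still $(X,\tau)$, and it is transitive: indeed $\mathbf{BS}(X,\tau)$ is a connected subgraph of $\mathbf{BS}(Y_0,\sigma_0)$ by transitivity of $(X,\tau)$, and every vertex $yH$ of $\mathbf{BS}(Y_0,\sigma_0)$ is joined to it by a path, because writing $y=x\gamma^{\pi_\sigma}$ with $x\in X$ one has either $\gamma\in H$, so $yH=xH$, or $\gamma\notin H$, and then $\pat_x(\gamma)$ is a path from $xH$ to $yH$ inside $\mathbf{BS}(Y_0,\sigma_0)$ (Section~\ref{SubsectionPathsBSHNN}).

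Now Theorem~\ref{thm: HNN free globalization} yields a morphism of pre-actions $\varphi\colon(\tilde X,\tilde\tau)\to(Y_0,\sigma_0)$ with $\varphi_{\restriction X}=\id_X$. Since $(\tilde X,\tilde\tau)$ is global, $\dom\tilde\tau=\tilde X$, so $\varphi$ intertwines $\tilde\tau$ with $\sigma_0$ everywhere and, being also $H$-equivariant, is $\Gamma$-equivariant for $\pi_{\tilde\tau}$ and $\pi_{\sigma_0}$. Given $x\in X$ and $\gamma\in\Gamma$ with $x':=x\gamma^{\pi_{\tilde\tau}}\in X$, applying $\varphi$ and using $\Gamma$-equivariance together with $\varphi_{\restriction X}=\id_X$ gives $x'=\varphi(x')=\varphi(x\gamma^{\pi_{\tilde\tau}})=\varphi(x)\gamma^{\pi_{\sigma_0}}=x\gamma^{\pi_{\sigma_0}}=x\gamma^{\pi_\sigma}$, the last equality because $\pi_{\sigma_0}$ is the restriction of $\pi_\sigma$ to the $\Gamma$-invariant set $Y_0$ which contains $x$. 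This is exactly the equality we needed. Alternatively, one can bypass universal properties entirely: using the explicit normal-form description $\gamma^{\alpha_\tau}=c_1^\pi\tau^{\varepsilon_1}\cdots c_n^\pi\tau^{\varepsilon_n}h_{n+1}^\pi$ from the remark following the definition of $\alpha_\tau$, the inclusions $c_i^\pi\subseteq c_i^{\pi_Y}$, $\tau\subseteq\sigma$ and $\tau\inv\subseteq\sigma\inv$ of partial bijections of $Y$ --- where $\pi_Y$ denotes the $H$-action on $Y$ --- together with the monotonicity of composition of partial bijections, directly give $\gamma^{\alpha_\tau}\subseteq c_1^{\pi_Y}\sigma^{\varepsilon_1}\cdots c_n^{\pi_Y}\sigma^{\varepsilon_n}h_{n+1}^{\pi_Y}=\gamma^{\pi_\sigma}$.
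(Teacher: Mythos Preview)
Your main argument is correct and follows essentially the same route as the paper: reduce to a transitive global extension (the paper just says ``up to shrinking $(Y,\sigma)$'' where you spell out the reduction via $Y_0$), apply the universal property of the free globalization to get a $\Gamma$-equivariant map $\tilde X\to Y$ restricting to $\id_X$, and read off the containment. Your phrasing of the containment (starting from $x\gamma^{\pi_{\tilde\tau}}\in X$) is in fact slightly cleaner than the paper's.

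The alternative you sketch at the end, via the explicit normal-form description $\gamma^{\alpha_\tau}=c_1^\pi\tau^{\varepsilon_1}\cdots c_n^\pi\tau^{\varepsilon_n}h_{n+1}^\pi$ and monotonicity of composition of partial bijections, is a genuinely different and more elementary route: it avoids the free globalization entirely and does not use transitivity of $(X,\tau)$, so it actually proves a slightly stronger statement. The paper does not take this route, preferring to leverage the universal property it has just established.
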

\begin{proof}
	Suppose $(Y,\sigma)$ is a global extension of the pre-action 
	$(X,\tau)$. Since $(X,\tau)$ is transitive, 
	up to shrinking $(Y,\sigma)$ we may assume it it 
	transitive. We can now apply Theorem~\ref{thm: HNN free globalization}:
	the universal property gives a $\Gamma$-equivariant map  $\rho: \tilde X\to Y$ with respect to the actions $\tilde \pi := \pi_{\tilde \tau}$ and $\pi_\sigma$.
	Then, for every $x\in \tilde X$, and every $\gamma\in\Gamma$, we have 
	$\rho(x\gamma^{\tilde\pi})=\rho(x)\gamma^{\pi_\sigma}$. In particular, for every 
	$x\in X$ such that $x\gamma^{\pi_\sigma} \in X$, we obtain 
	$x \gamma^{\alpha_\tau} = x\gamma^{\tilde 
		\pi}=x\gamma^{\pi_\sigma}$, which yields directly the desired result.
\end{proof}

Let us now show that the free globalization of any non-global transitive 
pre-action is highly faithful. We start with a lemma.

\begin{lemma}
	The partial actions associated to the positive and negative translation 
	pre-action are strongly faithful.
\end{lemma}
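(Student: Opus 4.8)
The goal is to show that the partial actions $\alpha_{\tau_+}$ and $\alpha_{\tau_-}$, associated to the positive and negative translation pre-actions $(T^+,\tau_+)$ and $(T^-,\tau_-)$, are strongly faithful. I will treat the positive case; the negative case is entirely symmetric (replacing $t$ by $t\inv$, $\Sigma$ by $\vartheta(\Sigma)$, and $C^+$ by $C^-$). Recall from Example \ref{ex: free globa for pos and neg preactions} that the free globalization of $(T^+,\tau_+)$ is the right translation $\Gamma$-action on $\Gamma$ itself, so by the description of $\alpha_\tau$ via partial bijections (the Remark before Proposition \ref{prop: HNN containement partial action}), $\gamma^{\alpha_{\tau_+}}$ is the restriction to $T^+$ of right multiplication by $\gamma$ on $\Gamma$; concretely, $x\gamma^{\alpha_{\tau_+}}$ is defined and equal to $x\gamma$ precisely when $x\in T^+$ and $x\gamma\in T^+$ (here the product is taken in $\Gamma$). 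So the statement to prove is: for every finite $F\Subset\Gamma\setminus\{1\}$, there exists $x\in T^+=\Gamma^+\sqcup H$ such that for all $\gamma\in F$, both $x\in\dom\gamma^{\alpha_{\tau_+}}$ (automatic) and $x\gamma\neq x$ in $\Gamma$, i.e. $x\gamma\in T^+$ and $x\gamma\neq x$. Since $\gamma\neq 1$ forces $x\gamma\neq x$ automatically, the only real content is to find a single $x\in T^+$ with $x\gamma\in T^+$ for all $\gamma\in F$ simultaneously.

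The plan is to choose $x$ to be a suitable positive path-type element, i.e. an element whose normal form is $x = t^{\varepsilon_1}c_2t^{\varepsilon_2}\cdots c_kt^{\varepsilon_k}$ with $\varepsilon_1=+1$, and to make it ``long and positive enough'' so that left-multiplying it in front of any $\gamma\in F$ cannot cause cancellation all the way back past the leading $t$. More precisely, write each $\gamma\in F$ in normal form; there is a uniform bound, say $N$, on the syllable length of the elements of $F$. I would then take $x$ of the form $x = t\,h\,t\,h\,t\cdots$ — a word built only from positive syllables $t^{+1}$ interleaved with elements $h\in H\setminus(\Sigma\cup\vartheta(\Sigma))$ chosen so that no reduction of the form "$h\sigma$ with $\sigma\in\Sigma$" or "$\vartheta(\sigma)$-collapse" can propagate — of syllable length $k > N$. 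The point is that when we compute the normal form of $x\gamma$, the reduction process described in the Remark (operations (1),(2),(3)) can only eat away at most $N$ syllables from the right end of $x$, so the leftmost syllable of $x\gamma$ is still the $t$ we started with; hence $x\gamma\in\Gamma^+\subseteq T^+$. This uses crucially that $\Sigma\neq H\neq\vartheta(\Sigma)$ is \emph{not} assumed here — wait, actually for the positive translation pre-action we only need $\Sigma\neq H$ to have non-trivial coset representatives; but in fact for strong faithfulness of this specific partial action we do not even need that, since one can just use $x$ with all $c_i=1$, i.e. $x=t^k$, and then the reduction of $x\gamma = t^k\gamma$ can collapse at most one $t$ against a leading $t\inv$ of $\gamma$ and then possibly cascade; so I should be a little careful and rather interleave with elements of $H\setminus\Sigma$ and $H\setminus\vartheta(\Sigma)$ to block cascading collapses, which is exactly why path-type elements with genuinely non-trivial $c_i$'s are the right choice.

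The key technical step, and the main obstacle, is to make the ``at most $N$ syllables get eaten'' claim precise. I would argue as follows: the concatenation of the normal forms of $x$ and $\gamma$ has the form $(\text{positive path-type word of length }k)\cdot(\text{word of length }\le N)$, and each reduction operation (1)–(3) either leaves the total syllable count unchanged (types (1),(2): they move an $H$-element across a $t^{\pm1}$) or strictly decreases it by two (type (3): deleting $tt\inv$ or $t\inv t$). A type (3) deletion at the junction can only occur if the rightmost surviving syllable of the $x$-part is $t^{+1}$ and it meets a $t\inv$; after such a deletion the new junction involves an $H$-element coming from $x$ (if we interleaved with $h\notin\Sigma$, the ensuing type-(1)/(2) move cannot create a fresh $tt\inv$ because $h\notin\Sigma$ prevents the subword $h\sigma t$ pattern from having $\sigma\in\Sigma$ — here I need to check the precise combinatorics), so the collapse stops. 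Iterating, at most $N$ deletions can happen total, consuming at most $N$ syllables of $x$; since $k>N$, at least one $t^{+1}$ syllable of $x$ survives at the far left, so the leftmost letter of $x\gamma$ is $t$, giving $x\gamma\in\Gamma^+$. I would state this as a short combinatorial lemma (``multiplying a sufficiently long positive path-type element by any element of bounded syllable length yields an element of $\Gamma^+$'') and then strong faithfulness follows immediately by taking $k>\max_{\gamma\in F}\ell(\gamma)$. I expect the bookkeeping of exactly which interleaving elements $h$ block the cascade — and confirming that such $h$ exist, which needs $H\neq\Sigma$ and $H\neq\vartheta(\Sigma)$, or else handling the degenerate ascending case separately — to be the fiddly part; everything else is a direct unwinding of the definitions.
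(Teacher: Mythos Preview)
Your proposal is correct and follows the same route as the paper: identify the partial action as the restriction to $T^+$ of right translation on $\Gamma$, then pick $x\in\Gamma^+$ with normal form longer than every $\gamma\in F$ so that $x\gamma$ stays in $T^+$.

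Where you overcomplicate things is in the ``cascade-blocking'' discussion. The paper simply takes \emph{any} $x\in\Gamma^+$ with $\ell(x)>\max_{\gamma\in F}\ell(\gamma)$; no special choice of interleaving elements $h\notin\Sigma\cup\vartheta(\Sigma)$ is needed, and in particular no non-ascending hypothesis enters. The cleanest justification is geometric: in the Bass-Serre tree, $xH$ lies in the half-tree of the edge $\Sigma$ at distance $\ell(x)$ from $H$, while $x\gamma H$ is at distance $\ell(\gamma)<\ell(x)$ from $xH$. The median of $H$, $xH$, $x\gamma H$ therefore lies on $[H,xH]$ at distance $\geq \ell(x)-\ell(\gamma)>0$ from $H$, so the geodesic $[H,x\gamma H]$ still begins with the edge $\Sigma$, whence $x\gamma\in\Gamma^+\subseteq T^+$. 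Equivalently, in combinatorial terms: each reduction step of type~(3) in the normalization of $x\gamma$ removes one $t^{\pm1}$ from each side of the junction, so at most $\ell(\gamma)$ of the $t$'s in $x$ can be consumed; the leading $t$ survives regardless of what the $c_i$'s in $x$ were. Your worry that a type-(1)/(2) move might create a fresh $tt^{-1}$ and allow further cascading is real, but even when it happens the total number of surviving $t^{\pm1}$'s from the $x$-side can only drop by one per such event, and there are at most $\ell(\gamma)$ events. So $x=t^k$ with $k>\max_F\ell(\gamma)$ already works.
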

\begin{proof}
	By Example \ref{ex: free globa for pos and neg preactions}, the partial action associated to the positive translation pre-action 
	is the restriction to $T^+ = \Gamma^+\sqcup H$ of the action $\Gamma \curvearrowleft \Gamma$ by right translations. 
	Thus, it suffices to show that for every 
	$F\Subset\Gamma\setminus\{1\}$, there exists $x\in\Gamma^+\sqcup H$ such 
	that for all $\gamma\in F$, we have $x\gamma\in \Gamma^+\sqcup H$ and 
	$x\gamma\neq x$. The latter assertion is always true, and the former 
	holds if we take $x\in\Gamma^+$ whose normal form is longer than the 
	normal form of all the elements of $F$. We conclude that the partial 
	action associated to the positive translation 
	pre-action is strongly faithful.
	
	A similar argument shows that the partial action associated to the
	negative translation pre-action is strongly faithful.
\end{proof}
\begin{proposition}\label{Free globalization faithful HNN}
	The free globalization of any non-global transitive pre-action is 
	highly 
	faithful.
\end{proposition}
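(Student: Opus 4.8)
The plan is to reduce high faithfulness to strong faithfulness. As $\Gamma=\HNN(H,\Sigma,\vartheta)$ contains the infinite cyclic subgroup generated by the stable letter $t$, it is infinite, so by Corollary \ref{EquivStrongAndHighFaithfulness} it suffices to show that the $\Gamma$-action $\tilde X\lact^{\pi_{\tilde\tau}}\Gamma$ attached to the free globalization $(\tilde X,\tilde\tau)$ is strongly faithful; that is, for every $F\Subset\Gamma\setminus\{1\}$ I must exhibit a point of $\tilde X$ moved by every element of $F$. The key point is that, since $(X,\tau)$ is not global, the construction in the proof of Theorem \ref{thm: HNN free globalization} glues onto $X$ at least one copy $(T^+_{i_0},\tau_{i_0})$ of the positive translation pre-action $(T^+,\tau_+)$, or at least one copy of the negative translation pre-action; I will argue inside such a copy, treating the positive case, the negative one being symmetric.

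First I would record that the glued copy embeds into $(\tilde X,\tilde\tau)$ compatibly with the $H$-actions and so that $\tau_{i_0}\subseteq\tilde\tau$, which is immediate from the explicit description of $\tilde X$ and $\tilde\tau$. Identifying $(T^+,\tau_+)$ with $(T^+_{i_0},\tau_{i_0})$, I then claim that $\gamma^{\alpha_{\tau_+}}\subseteq\gamma^{\pi_{\tilde\tau}}$ for every $\gamma\in\Gamma$, where $\alpha_{\tau_+}$ denotes the partial action associated to the positive translation pre-action. Indeed, along a normal form $\gamma=c_1t^{\varepsilon_1}\cdots c_nt^{\varepsilon_n}h_{n+1}$ the partial bijection $\gamma^{\alpha_{\tau_+}}$ is the composition of the partial bijections of $T^+$ given by the elements $c_1,\dots,c_n,h_{n+1}$ of $H$ and by the powers $\tau_+^{\varepsilon_i}$, whereas $\gamma^{\pi_{\tilde\tau}}$ is the analogous composition with the $H$-action on $\tilde X$ and with $\tilde\tau$; each factor of the former is contained in the corresponding factor of the latter, because the $H$-action on $T^+$ is the restriction of that on $\tilde X$ and because $\tau_+\subseteq\tilde\tau$ forces $\tau_+^{-1}\subseteq\tilde\tau^{-1}$, and the composition of partial bijections is monotone with respect to inclusion. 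Alternatively, one may restrict $(\tilde X,\tilde\tau)$ to $T^+_{i_0}$ and apply Proposition \ref{prop: HNN containement partial action}.

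To finish, I would invoke the previous lemma, which asserts that $\alpha_{\tau_+}$ is strongly faithful: for a given $F\Subset\Gamma\setminus\{1\}$ it yields $x\in T^+$ with $x\in\dom(\gamma^{\alpha_{\tau_+}})$ and $x\gamma^{\alpha_{\tau_+}}\neq x$ for every $\gamma\in F$ — concretely any $x\in\Gamma^+$ whose normal form is longer than those of all elements of $F$. By the containment just established, the same $x$, now regarded as a point of $\tilde X$, satisfies $x\gamma^{\pi_{\tilde\tau}}=x\gamma^{\alpha_{\tau_+}}\neq x$ for every $\gamma\in F$, so $x$ lies in the intersection of the supports of the elements of $F$. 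Hence $\tilde X\lact^{\pi_{\tilde\tau}}\Gamma$ is strongly faithful, and therefore highly faithful.

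I do not expect a genuine obstacle. The single delicate point is to claim only the \emph{containment} of $\alpha_{\tau_+}$ in $\pi_{\tilde\tau}$ and not that $(\tilde X,\tilde\tau)$ is an honest extension of $(T^+_{i_0},\tau_{i_0})$ in the sense of the paper: the latter could fail if $\tau$ already sends some point of the $H$-orbit of $x_{i_0}$ back into that same orbit. Containment is exactly what the argument requires.
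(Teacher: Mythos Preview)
Your proof is correct and follows essentially the same strategy as the paper's: reduce to strong faithfulness via Corollary~\ref{EquivStrongAndHighFaithfulness}, observe that non-globality forces the free globalization to contain an injective copy of the positive or negative translation pre-action, invoke the preceding lemma on strong faithfulness of $\alpha_{\tau_\pm}$, and transfer this via containment of partial actions. Your explicit caution that one needs only the \emph{containment} $\alpha_{\tau_+}\subseteq\pi_{\tilde\tau}$ (rather than $(\tilde X,\tilde\tau)$ being an extension of $(T^+_{i_0},\tau_{i_0})$ in the technical sense) is well-placed and in fact makes the argument slightly more careful than the paper's terse appeal to Proposition~\ref{prop: HNN containement partial action}.
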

\begin{proof}
	Let $\pi$ be the action associated to the free globalization of a non-global transitive pre-action $(X,\tau)$.
	By Corollary \ref{EquivStrongAndHighFaithfulness}, it suffices to prove 
	that $\pi$ is strongly faithful.
	But since the pre-action $(X,\tau)$ is not global, its free 
	globalization contains a copy of either the positive or the negative 
	translation pre-action. The partial actions of the latter being 
	strongly faithful by the previous lemma, we conclude using Proposition 
	\ref{prop: HNN containement partial action}
	that $\pi$ itself is strongly faithful because it contains a strongly 
	faithful partial action.
\end{proof}

\begin{remark}
	More generally, it follows from  Propositions 
	\ref{prop:chara positive treeing edge for HNN}
	and \ref{prop:chara negative treeing edge for HNN}
	that every $\Gamma$-action whose Bass-Serre graph contains a 
	treeing edge must be highly faithful.
\end{remark}

\section{High transitivity for HNN extensions}\label{High transitivity for HNN extensions}

As in Section \ref{SectFreeGlobalizationHNN}, 
we fix an HNN extension $\Gamma=\HNN(H,\Sigma,\vartheta)$, a set of representatives $C^+$ of left $\Sigma$-cosets in $H$, 
and a set of representatives $C^-$  of left $\vartheta(\Sigma)$-cosets in $H$, 
which both contain $1$, 
so that normal forms of elements of $\Gamma$ are well-defined. 
From now on, \textbf{we assume that the HNN extension $\Gamma$ is non-ascending}
and that \textbf{the $\Gamma$-action on the boundary of its Bass-Serre tree is topologically free}, 
since these assumptions will become essential.
We still denote by $\Gamma^+$, respectively $\Gamma^-$, the set of elements  whose normal form first letter is $t$, respectively $t\inv$, so that we have $\Gamma=H\sqcup C^+\Gamma^+ \sqcup C^-\Gamma^-$.

\subsection{Using the free globalization towards high transitivity}

This section is devoted to a key result which we will use towards proving high transitivity for HNN extensions. It will allow us to extend any given transitive pre-action which is not an action to one which sends one fixed tuple to another fixed tuple. 

\begin{proposition}\label{prop: key prop for HNN}
	Let $\bar X$ be a countable set, let $\bar X \curvearrowleft^\pi H$ be a free action,
	with infinitely many $H$-orbits,
	let $X$ be a finite union of $H$-orbits in $\bar X$, and suppose	
	that $(X,\tau)$ is a transitive non-global pre-action. 
	For any pairwise distinct points $x_1,...,x_k,y_1,...,y_k\in \bar X$,
	there exists a transitive and global extension $(\bar X,\tilde \tau)$ 
	of $(X,\tau)$ such that:
	\begin{enumerate}[label=(\arabic*)]
		\item the action $\bar X \curvearrowleft^{\pi_{\tilde\tau}}\Gamma$ is (transitive and) highly faithful;
		\item there is an element $\gamma\in\Gamma$ satisfying $x_i \gamma^{\pi_{\tilde \tau}}=y_i$ for all $i$.
	\end{enumerate}
\end{proposition}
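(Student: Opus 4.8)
The plan is to build the desired global extension in two stages. First, I would enlarge $(X,\tau)$ slightly to a transitive non-global pre-action $(X',\tau')$ on a finite union of $H$-orbits that also contains all the points $x_i,y_i$, together with a group element $\gamma$ that already ``wants to'' send each $x_i$ to $y_i$. Concretely, I would choose a path-type element $\gamma$ (say positive, of the form $t^{\varepsilon_1}c_2 t^{\varepsilon_2}\cdots c_n t^{\varepsilon_n}$) whose associated partial bijection $\gamma^{\alpha}$, built by composing the partial bijections $c_i^\pi\tau^{\varepsilon_i}$ as in Section~\ref{sec: partial actions HNN}, is currently defined nowhere on the $x_i$'s, and then \emph{prescribe} new $\Sigma$- and $\vartheta(\Sigma)$-orbits (fresh $H$-orbits taken from $\bar X - X$, which is infinite) along which the path $\pat_{x_i}(\gamma)$ can be traced, arranging that it ends at $y_i$. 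Since the $x_i,y_i$ are pairwise distinct, one can do this for all $i$ simultaneously by a careful bookkeeping of which orbits get glued where; the only subtlety is to make sure these prescriptions are mutually consistent (no orbit is asked to map to two different places) and keep $\tau'$ a genuine partial bijection intertwining $\Sigma$ and $\vartheta(\Sigma)$. The new pre-action is still transitive (we only added finitely many edges, all attached to the connected Bass-Serre graph of $(X,\tau)$) and still non-global provided we do not accidentally fill in all orbits --- which we can guarantee by leaving infinitely many untouched, since $\bar X-X$ has infinitely many $H$-orbits.

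Second, I would take $(X',\tau')$ (which lives on a finite union of $H$-orbits inside $\bar X$, with $\bar X$ minus that union still containing infinitely many $H$-orbits) and apply Theorem~\ref{free globalization in prescribed set for HNN} to obtain a permutation $\bar\tau$ of $\bar X$ with $(\bar X,\bar\tau)$ the free globalization of $(X',\tau')$. Set $\tilde\tau = \bar\tau$. Then $(\bar X,\tilde\tau)$ is a transitive global extension of $(X',\tau')$, hence of $(X,\tau)$. For conclusion (1): since $(X',\tau')$ is a non-global transitive pre-action, Proposition~\ref{Free globalization faithful HNN} gives that its free globalization $\bar X\curvearrowleft^{\pi_{\tilde\tau}}\Gamma$ is highly faithful. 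For conclusion (2): by construction $\gamma^{\alpha_{\tau'}}$ is defined at each $x_i$ with $x_i\gamma^{\alpha_{\tau'}} = y_i$, and by Proposition~\ref{prop: HNN containement partial action} the restriction to $X'$ of the action $\pi_{\tilde\tau}$ contains the partial action $\alpha_{\tau'}$, so $x_i\gamma^{\pi_{\tilde\tau}} = y_i$ for all $i$.

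The main obstacle is the first stage: one must exhibit a single path-type element $\gamma$ and a coherent system of fresh orbits realizing $x_i\mapsto y_i$ for \emph{all} $i$ at once. The natural move is to note that, because the HNN extension is non-ascending, $\Gamma^+$ contains path-type elements of arbitrary ``shape'', so there is a lot of freedom; but one has to track the auxiliary sequences $(x_{i,0},\dots,x_{i,n})$ attached to $\pat_{x_i}(\gamma)$ and make the gluing instructions pairwise compatible. A clean way around potential clashes is to take $\gamma$ \emph{long enough} and to route the $k$ paths through pairwise disjoint batches of fresh $H$-orbits except where they are forced to share (namely at their common endpoints with the original $X$, if any), so that the resulting partial bijection $\tau'$ is manifestly well-defined; verifying the $\Sigma$-$\vartheta(\Sigma)$ intertwining and that $(X',\tau')$ remains transitive and non-global is then routine. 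I expect the published proof to carry this out by an explicit inductive construction over $i=1,\dots,k$, re-using at each step the already-enlarged pre-action and the fact that at every stage infinitely many $H$-orbits of $\bar X$ remain free.
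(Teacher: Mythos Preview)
Your second stage (free globalization, then Propositions~\ref{Free globalization faithful HNN} and~\ref{prop: HNN containement partial action}) matches the paper and is fine. The gap is in the first stage: the claim that the $k$ paths can be routed through pairwise disjoint batches of fresh $H$-orbits is not justified, and making it work requires the topological-freeness hypothesis on $\Gamma\curvearrowright\partial\Tc$, which you never invoke. Concretely, if $x_2 = x_1\sigma$ for some $\sigma\in\Sigma\setminus\{1\}$ (nothing in the hypotheses forbids this), then for a positive path-type $\gamma$ the first edges of $\pat_{x_1}(\gamma)$ and $\pat_{x_2}(\gamma)$ coincide with $x_1\Sigma$, so any extension of $\tau$ forces $x_{2,1} = x_{1,1}\vartheta(\sigma)$ into the \emph{same} $H$-orbit as $x_{1,1}$; the two auxiliary sequences remain in the same $H$-orbit for exactly as long as left multiplication by $\sigma$ fixes the corresponding vertex of the Bass-Serre tree $\Tc$. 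Were $\sigma$ to fix a half-tree of $\Tc$ pointwise, the paths would never separate and no single $\gamma$ could send $x_1$ to $y_1$ and $x_1\sigma$ to an unrelated $y_2$. A dual obstruction appears at the target end when two $y_i$ share a $\vartheta(\Sigma)$-orbit, or when some $y_i$ already lies in $\rng(\tau)$ so that no fresh $\tau$-value can be made to land there.

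The paper resolves this with a different, more symmetric construction. It first passes to the free globalization $(\bar X,\bar\tau)$ of $(X,\tau)$, and then produces (in two claims) a path-type element $\gamma$ such that for \emph{every} $z\in\{x_1,\dots,x_k,y_1,\dots,y_k\}$ the last edge of $\pat_z(\gamma)$ is a treeing edge and the associated half-trees $\Hc_z(\gamma)$ are pairwise disjoint and disjoint from $\mathbf{BS}(X,\tau)$. Separating the half-trees of two $\Sigma$-related points is precisely where topological freeness enters. Having pushed all $2k$ points into separated regions, one erases $\bar\tau$ on those half-trees and, for each $i$, reconnects $x_i\gamma^{\pi_{\bar\tau}}$ and $y_i\gamma^{\pi_{\bar\tau}}$ through a single fresh $H$-orbit $z_iH$; the element that does the job is the conjugate $\gamma c t\, c^{-}(\gamma c t)^{-1}$ (with $c\in C^+\setminus\{1\}$, $c^{-}\in C^-\setminus\{1\}$), not a path-type element sending $x_i$ directly to $y_i$.
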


\begin{proof}
	
	The set $\{x_1,...,x_k,y_1,...,y_k\}$ will be denoted by $F$.  
	First, by Theorem \ref{free globalization in prescribed set for HNN}, we find a permutation $\bar\tau \in S(\bar X)$ such that $(\bar X,\bar \tau)$ is the free globalization of $(X,\tau)$. 
	Given $x\in \bar X$, and a path-type element $\gamma$, we will denote by $\Hc_{x}(\gamma)$ the half-graph of the last edge of $\pat_x(\gamma)$ in the Bass-Serre graph of $(\bar X,\bar \tau)$.
	\begin{claim}
		There exists a path-type element $\gamma$ in $\Gamma \setminus H$ such that for every $x\in F$, the last edge of $\pat_x(\gamma)$ is a treeing edge (that is, $\Hc_x(\gamma)$ is a tree).
	\end{claim}
	\begin{cproof}
		Using the correspondence between path-type elements and reduced paths established in Section \ref{SubsectionPathsBSHNN}, 
		it follows from Lemma \ref{lem: extend path with treeing edge} that for every $x\in\bar X$, and every path-type element $\gamma$,
		there is a path-type extension $\gamma'$ of $\gamma$ such that the last edge of $\pat_x(\gamma')$ is a treeing edge. 
		Now, it suffices to start with any path-type element $\gamma_0$, to extend it to a path-type element $\gamma_1$ such that the last edge of $\pat_{x_1}(\gamma_1)$ is a treeing edge, 
		then to extend $\gamma_1$ to a path-type element $\gamma_2$ such that the last edge of $\pat_{y_1}(\gamma_2)$ is a treeing edge 
		(note that $\pat_{x_1}(\gamma_2)$ also ends with a treeing edge since it extends $\pat_{x_1}(\gamma_1)$), \ldots, 
		and to iterate this extension procedure until we reach an element 
		$\gamma_{2k}$ such that all last edges of $\pat_{x}(\gamma_{2k})$, 
		for $x\in F$, are treeing edges. 
	\end{cproof}
	Let us fix some element $c\in C^+ \setminus\{1\}$ (here we use that $\Sigma \neq H$). Then, any path-type element $\gamma$ admits $\gamma ct$ as a path-type extension.
	
	\begin{claim}
		There exists a path-type element $\gamma$ in $\Gamma \setminus H$ such that for every $x\in F$, the last edge  of $\pat_x(\gamma)$ is a treeing edge, and the half-trees $\Hc_{x}(\gamma)$, for $x\in F$, are pairwise disjoint subgraphs, and disjoint from $\mathbf{BS}(X,\tau)$.
	\end{claim}
	
	\begin{cproof}
		We start with a path-type element $\gamma$ such that for every $x\in F$, the last edge of $\pat_x(\gamma)$ is a treeing edge. 
		Since $X$ is a finite union of $H$-orbits, $\mathbf{BS}(X,\tau)$ has finitely many vertices. 
		Hence, by extending further the path-type element $\gamma$, we may and will 
		assume that, for every $x\in F$, the half-tree $\Hc_{x}(\gamma)$ does not intersect $\mathbf{BS}(X,\tau)$.
		
		We now notice that, given $x,y\in F$, if the half-trees $\Hc_{x}(\gamma)$ and $\Hc_{y}(\gamma)$ are disjoint, 
		then so are the half-trees $\Hc_{x}(\gamma')$ and $\Hc_{y}(\gamma')$ for every path-type extension $\gamma'$ of $\gamma$. 
		Hence, it suffices to prove that, for any $x,y\in F$, with $x\neq y$ and such that $\Hc_{x}(\gamma)$ and $\Hc_{y}(\gamma)$ intersect, 
		there exists a path-type extension $\gamma'$ of $\gamma$ such that $\Hc_{x}(\gamma')$ and $\Hc_{y}(\gamma')$ are disjoint. 
		Indeed, an easy induction gives then an extension $\gamma^{(n)}$ such that the half-trees $\Hc_{x}(\gamma^{(n)})$, 
		for $x\in F$, are pairwise disjoint.
		
		Take now $x,y\in F$ with $x\neq y$ and such that $\Hc_{x}(\gamma)$ and $\Hc_{y}(\gamma)$ intersect. 
		These half-trees have to be nested. 
		Indeed, if they aren't, $\Hc_{x}(\gamma)$ contains the antipode of the last edge of $\pat_y(\gamma)$, 
		hence contains $\mathbf{BS}(X,\tau)$, which is impossible.
		Without loss of generality, we assume $\Hc_{x}(\gamma) \subseteq \Hc_{y}(\gamma)$. 
		We now distinguish two cases.
		\begin{itemize}
			\item If $\Hc_{x}(\gamma) \subsetneq \Hc_{y}(\gamma)$, there is a path-type extension $\gamma''$ of $\gamma$, with $\gamma'' \neq \gamma$,
			such that $\pat_x(\gamma)$ and $\pat_y(\gamma'')$ have the same last edge. 
			Since the HNN extension $\Gamma$ is non-ascending, there is another path-type extension $\gamma'$ of $\gamma$ with the same length as $\gamma''$.
			Now, $\pat_y(\gamma')$ and $\pat_y(\gamma'')$ are distinct reduced paths extending $\pat_{y}(\gamma)$,
			hence $\Hc_{y}(\gamma')$ and $\Hc_{y}(\gamma'')= \Hc_{x}(\gamma)$ have to be disjoint.
			Since $\Hc_x(\gamma')\subset\Hc_x(\gamma)$, the half-trees $\Hc_{y}(\gamma')$ and $\Hc_{x}(\gamma')$ are disjoint.
			
			\item If $\Hc_{x}(\gamma) = \Hc_{y}(\gamma)$, then $\pat_x(\gamma)$ and $\pat_y(\gamma)$ have the same terminal edge. 
			Let us assume that $\pat_x(\gamma ct)$ and $\pat_y(\gamma ct)$ have the same terminal edge, since otherwise we are done with $\gamma' = \gamma ct$. 
			This edge is $e := x'c \Sigma = y'c \Sigma$, 
			where $x' = x \gamma^{\pi_{\bar\tau}}$ and $y' = y \gamma^{\pi_{\bar\tau}}$.
			Consequently, one has $y'c = x'c\sigma$ for some $\sigma\in \Sigma\setminus\{1\}$. 
			Consider now the morphism from the  translation pre-action
			\[  
			\varphi : (\Gamma, t^\rho) \to (\bar X,\bar\tau) \, ,
			\quad \text{ with basepoint } x'c \, 
			\]
			coming from Remark \ref{PseudoInitialObjectsHNN},
			and note that the left translation $\psi_\sigma: \gamma^*\mapsto 
			\sigma\gamma^*$ 
			defines an automorphism of $(\Gamma,t^\rho)$. Using Lemma 
			\ref{Paths from BS tree to BS graphs ; HNN}, one sees that, for 
			any $\gamma^*\in \Gamma^+$:
			\begin{itemize}
				\item $\mathbf{BS}(\psi_\sigma)$ maps $\pat_{1}^\Tc(\gamma^*)$ onto $\pat^\Tc_{\sigma}(\gamma^*)$ in the Bass-Serre tree $\Tc$, and these paths both start by the edge $\Sigma$ hence, they are both in the half-tree of $\Sigma$.
				\item $\mathbf{BS}(\varphi)$ maps $\pat_{1}^\Tc(\gamma^*)$ onto $\pat_{x'c}(\gamma^*)$;
				and $\pat^\Tc_{\sigma}(\gamma^*)$ onto $\pat_{y'c}(\gamma^*)$.
			\end{itemize}
			
			Since the left $\Gamma$-action on the boundary $\partial\Tc$ of its Bass-Serre tree is topologically free, 
			the identity is the only element of $\Gamma$ fixing the half-tree of $\Sigma$ in $\Tc$ pointwise. 
			Hence, there exists a path $\omega$ in this half-tree whose first edge is $\Sigma$, and such that $\omega$ and $\sigma\cdot \omega$ have distinct ranges. 
			Then, by Remark~\ref{Remark Path HNN}, there exists a path-type element $\gamma^+ \in \Gamma^+$ such that $\pat_{1}^\Tc(\gamma^+) = \omega$.
			We have $\pat_{\sigma}^\Tc(\gamma^+) = \mathbf{BS}(\psi_\sigma)(\omega) = \sigma\cdot \omega$, so that
			$\pat_{1}^\Tc(\gamma^+)$ and $\pat_{\sigma}^\Tc(\gamma^+)$
			have distinct ranges. 
			
			Since the edge $e$ is a treeing edge, the restriction $\varphi_+$ of $\varphi$ to the positive translation pre-action $(T^+,\tau_+)$ 
			is injective by Proposition \ref{prop:chara positive treeing edge for HNN}, and so is $\mathbf{BS}(\varphi_+)$.
			Consequently, $\pat_{x'c}(\gamma^+)$ and $\pat_{y'c}(\gamma^+)$ diverge at some point in the half-tree of $e$ in $\mathbf{BS}(\bar X, \bar \tau)$.
			
			Finally, for any path-type element $\gamma^*$ in $\Gamma^+$, by construction,
			$\pat_{x}(\gamma c \gamma^*)$ is the concatenation of $\pat_x(\gamma)$ and $ \pat_{x'c}(\gamma^*)$, and $\pat_{y}(\gamma c \gamma^*)$ is the concatenation of $\pat_y(\gamma)$ and $ \pat_{y'c}(\gamma^*)$. Hence $\pat_{x}(\gamma c \gamma^+)$  and $\pat_{y}(\gamma c \gamma^+)$ diverge at some point in the half-tree of $e$. Setting $\gamma' = \gamma c \gamma^+$, we obtain that $\Hc_x(\gamma')$ and $\Hc_y(\gamma')$ are disjoint.
		\end{itemize} 
		We are done in both cases.
	\end{cproof}
	
	We then modify the bijection $\bar \tau$ to get the pre-action $(\bar X,\tilde \tau)$ we are looking for. First, given an element $\gamma$ as in the previous claim, we consider, for each $z\in F$, the morphism of pre-actions from the negative translation pre-action coming from Remark \ref{PseudoInitialObjectsHNN}:
	\[  
	\psi_{z} : (T^-,\tau_-) \to (\bar X,\bar\tau)
	\quad \text{  with basepoint } z'c \, ,
	\]
	
	\noindent where $z':= z\gamma^{\pi_{\bar\tau}} $. 
	Note that the image of this morphism corresponds to the half-graph opposite to the half-tree $\Hc_{z}(\gamma ct)$. 
	Then, we define $X'=\bigcap_{z\in F}\rng(\psi_{z})\subset\bar X$, and take the restriction $(X',\tau')$ of $(\bar X,\bar\tau)$. 
	Informally speaking, we erase $\bar \tau$ on the $\Sigma$-orbits corresponding to edges in the half-trees $\Hc_{z}(\gamma ct)$ for $z\in F$.
	Note that this leaves infinitely many $H$-orbits in $\bar X$ outside $\dom(\tau')$ and $\rng(\tau')$, and the pre-action $(X',\tau')$ is transitive.
	Now, we extend $\tau'$. Pick some orbits $z_1 H, \ldots, z_k H$ in $\bar X \setminus (\dom(\tau') \cup \rng(\tau'))$, add them to $X'$, 
	take some $c^- \in C^- \setminus \{1\}$ (here we use that $\vartheta(\Sigma) \neq H$), and set
	\[
	x_i'c\sigma\tau' := z_i \vartheta(\sigma)
	\quad \text{ and } \quad 
	y_i'c \sigma \tau' := z_i c^- \vartheta(\sigma)                                                                                                                                                                                     
	\]                                                                                                                                                                  
	for $i=1,\ldots,k$ and $\sigma \in \Sigma$. 
	This is possible since the $\vartheta(\Sigma)$-orbits at points $z_i$ 
	and $z_i c^-$ are pairwise disjoint (we use again the freeness of the 
	$H$-action), and since the $\Sigma$-orbits at $x_i' c$ and $y_i' c$ were not initially 
	in the domain of $\tau'$.
	Note that, after this extension, the pre-action $(X',\tau')$ is still transitive. Then we apply Theorem \ref{free globalization in prescribed set for HNN} to get an extension $\tilde \tau: \bar X\to \bar X$ of $\tau'$ such that $(\bar X, \tilde\tau)$ is the free globalization of $(X',\tau')$. 
	A computation shows then that $x_i(\gamma ct c^- (\gamma 
	ct)\inv)^{\pi_{\tilde\tau}} = y_i$ for all $i=1,\ldots,k$. Finally, the 
	action $\pi_{\tilde\tau}$ is highly faithful by Proposition \ref{Free 
		globalization faithful HNN}.
\end{proof}

\subsection{High transitivity for HNN extensions}

From now on, we fix a free action $X\curvearrowleft^{\pi} H$ with 
infinitely many orbits. We then consider the space $\A$ of 
$\Gamma$-actions on 
$X$ which extend $\pi$, which can be written as
\[
\A = \{\tau \in S(X): \, \sigma^\pi \tau = \tau 
\vartheta(\sigma)^\pi \text{ for all } \sigma\in \Sigma\} \, .
\]
In other words, $\A$ is the set of permutations $\tau$ of $X$ such 
that $(X,\tau)$ is a global pre-action of $\Gamma$. The set $\A$ 
is clearly a 
closed subset of $S(X)$ for the topology of pointwise convergence, hence a 
Polish 	space.  
Recall that the action associated to a permutation $\tau\in\A$  is 
denoted by $\pi_\tau$.

\begin{definition}
	Let us set
	\begin{align*}
	\TA &= \{\tau\in \A 
	\colon \pi_\tau \text{ is transitive} \}; \\
	\HFA &= \{\tau\in \A \colon
	\pi_\tau
	\text{ is highly faithful} \}; \\
	\HTA &= \{\tau\in \A \colon
	\pi_\tau 
	\text{ is highly transitive} \}.
	\end{align*}
\end{definition}
The subset $\TA$ is not closed for the 
topology of pointwise convergence.
However, we have the following result.
\begin{lemma}\label{PolishSpaceHNN}
	The set $\TA$ is $G_\delta$ in $\A$, hence it is a 
	Polish space. Moreover, it is non-empty.
\end{lemma}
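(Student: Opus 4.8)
The plan is to prove the two assertions of Lemma \ref{PolishSpaceHNN} separately: that $\TA$ is $G_\delta$ in $\A$, and that it is non-empty. For the $G_\delta$ part, I would express transitivity of $\pi_\tau$ as a countable intersection of open conditions. Fix a basepoint $x_0\in X$ and enumerate $X=\{x_0,x_1,x_2,\dots\}$. The action $\pi_\tau$ is transitive if and only if for every $n$ there is some $\gamma\in\Gamma$ with $x_0\gamma^{\pi_\tau}=x_n$; indeed, transitivity of a $\Gamma$-action is equivalent to transitivity of the $\Gamma$-orbit of any single point, since one can always route through $x_0$ (if $x\gamma_1=x_0$ and $x_0\gamma_2=y$ then $x(\gamma_1\gamma_2)=y$; note $\gamma\mapsto\gamma^{\pi_\tau}$ sends inverses to inverses). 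So set
\[
 U_n=\{\tau\in\A:\ \exists\gamma\in\Gamma,\ x_0\gamma^{\pi_\tau}=x_n\},
\]
and observe $\TA=\bigcap_{n\geq 0}U_n$. Each $U_n$ is open: if $x_0\gamma^{\pi_\tau}=x_n$, then writing $\gamma$ in normal form, $\gamma^{\pi_\tau}$ is a finite composition of the fixed permutations $h^\pi$ and of $\tau^{\pm1}$; hence the value $x_0\gamma^{\pi_\tau}$ depends only on finitely many values of $\tau$ and $\tau^{-1}$, so the condition persists on a basic open neighbourhood of $\tau$ in $S(X)$ (here one uses that the topology of pointwise convergence on $S(X)$ controls finitely many values of both $\tau$ and $\tau^{-1}$, since $S(X)$ carries the Polish group topology). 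Thus $\TA$ is $G_\delta$ in the Polish space $\A$, hence Polish by the Alexandrov theorem (a $G_\delta$ subset of a Polish space is Polish).

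For non-emptiness, the cleanest route is to exhibit one transitive global pre-action on a set with infinitely many free $H$-orbits and transport it to $X$. Start from any non-global transitive pre-action of $\Gamma$ — for instance the positive translation pre-action $(T^+,\tau_+)$, which is transitive since its Bass-Serre graph is the half-tree of $\Sigma$ in $\Tc$ (Example \eqref{itex: BS translation pos pre}) and which is not global. Its free globalization, constructed in Theorem \ref{thm: HNN free globalization}, is a transitive \emph{global} pre-action; moreover by Example \ref{ex: free globa for pos and neg preactions} it is just the right translation action $\Gamma\lact\Gamma$, on which $H$ acts freely (by right multiplication) with infinitely many orbits $\{Hc t^{\varepsilon}\cdots\}$ — indeed $\Gamma/H$ is infinite since $\Gamma\neq H$. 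Now apply Theorem \ref{free globalization in prescribed set for HNN} with $\bar X=X$: since $X$ carries a free $H$-action with infinitely many orbits, there is a permutation $\bar\tau\in S(X)$ such that $(X,\bar\tau)$ is isomorphic to this free globalization, in particular transitive. (To invoke that theorem verbatim one should take the initial pre-action on a subset of $X$; alternatively, and more simply, just note that any two free $H$-actions on countable sets with infinitely many orbits are isomorphic as $H$-sets, so one transports $t^\rho$ along such an isomorphism $\Gamma\to X$.) Hence $\bar\tau\in\TA$, so $\TA\neq\emptyset$.

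The main obstacle here is purely bookkeeping rather than conceptual: one must be careful that the topology on $\A\subseteq S(X)$ is that of $S(X)$ as a Polish \emph{group} (so a basic neighbourhood of $\tau$ fixes finitely many values of $\tau$ \emph{and} of $\tau^{-1}$), and that this is what makes each $U_n$ open — the word $\gamma^{\pi_\tau}$ generically involves both $\tau$ and $\tau^{-1}$. One should also double-check that $\TA$ is genuinely not closed, as the lemma remarks, though this is not needed for the statement; it follows because a pointwise limit of transitive actions can fail to be transitive (an orbit can ``escape to infinity''). Everything else is a direct appeal to results already established in the excerpt: Theorem \ref{thm: HNN free globalization}, Theorem \ref{free globalization in prescribed set for HNN}, and Example \ref{ex: free globa for pos and neg preactions}.
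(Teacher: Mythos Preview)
Your proof is correct and follows essentially the same approach as the paper: the $G_\delta$ claim is obtained by writing $\TA$ as a countable intersection of open sets of the form ``some $\gamma$ sends this point to that point'' (the paper indexes over all pairs $(x,y)$ rather than fixing a basepoint, but this is immaterial), and non-emptiness is obtained by transporting the translation pre-action $(\Gamma,t^\rho)$ along an $H$-equivariant bijection $\Gamma\to X$. Your detour through the positive translation pre-action and its free globalization is unnecessary --- the paper goes directly to the translation action, which is exactly the shortcut you note in your parenthetical remark.
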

\begin{proof}
	Since  $X\curvearrowleft^{\pi}H$ has infinitely many orbits, there is an $H$-equivariant bijection $\varphi: \Gamma \to X$. 
	It then suffices to push-forward the translation pre-action by 
	$\varphi$ to get an element of $\TA$ (its Bass-Serre graph 
	will be isomorphic to the classical Bass-Serre tree and $\pi_\tau$ will 
	be conjugated to the translation action $\Gamma\curvearrowleft\Gamma$). 
	
	To show that $\TA$ is a $G_\delta$ subset,  we write 
	$\TA=\bigcap_{x,y\in X}O_{x,y}$, where for $x,y\in X$, 
	\[
	O_{x,y}=\{\tau\in\A\,:\,\text{ there exists 
	}\gamma\in\Gamma\text{ such that }x\gamma^{\pi_\tau}=y\}.
	\]
	The latter sets are clearly open in $\A$ for all $x,y\in X$, thus 
	finishing the proof.
\end{proof}

We now show that our HNN extension $\Gamma$ has a highly 
transitive highly faithful action, thus proving Theorem \ref{ThmMainHNN}.
\begin{theorem}\label{ThmGroupsHTHNNBaire}
	The set $\HTA\cap\HFA$ is  dense $G_\delta$ in
	$\TA$. 
	In particular, $\Gamma$ admits actions which are both highly transitive and highly faithful.
\end{theorem}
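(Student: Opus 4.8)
The plan is to run a Baire category argument in the Polish space $\TA$, which is nonempty by Lemma \ref{PolishSpaceHNN}: I will exhibit $\HTA$ and $\HFA$ as countable intersections of dense open subsets of $\TA$, the density being supplied by the key Proposition \ref{prop: key prop for HNN}. First I would record the $G_\delta$ parts. Using Lemma \ref{HT and disjoint supports}, write $\HTA=\bigcap_{(\bar x,\bar y)}U_{\bar x,\bar y}$, the intersection ranging over the (countably many, since $X$ is countable) pairs of tuples $\bar x=(x_1,\dots,x_k)$, $\bar y=(y_1,\dots,y_k)$ whose $2k$ entries are pairwise distinct, where
\[
U_{\bar x,\bar y}=\{\tau\in\A:\ \exists\,\gamma\in\Gamma,\ x_i\gamma^{\pi_\tau}=y_i\text{ for all }i\}=\bigcup_{\gamma\in\Gamma}\{\tau:\ x_i\gamma^{\pi_\tau}=y_i\text{ for all }i\}.
\]
Each set in the union is open (it is determined by finitely many values of $\tau$ and $\tau\inv$, exactly as the sets $O_{x,y}$ in the proof of Lemma \ref{PolishSpaceHNN}), so $U_{\bar x,\bar y}$ is open and $\HTA$ is $G_\delta$. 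Similarly, since $\Gamma$ is infinite, high faithfulness and strong faithfulness agree (Corollary \ref{EquivStrongAndHighFaithfulness}), so $\HFA=\bigcap_{F\Subset\Gamma\setminus\{1\}}V_F$ with $V_F=\bigcup_{x\in X}\{\tau:\ x\gamma^{\pi_\tau}\neq x\text{ for all }\gamma\in F\}$ open; hence $\HFA$ is $G_\delta$.

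The substance is density. Fix a nonempty basic open set $V\cap\TA$ of $\TA$, where $V$ is obtained by prescribing $\tau$ on a finite subset $D\subseteq X$, fix $\tau_0\in V\cap\TA$, and fix a valid pair $(\bar x,\bar y)$; I claim $V\cap\TA\cap U_{\bar x,\bar y}\cap\HFA\neq\emptyset$. Enlarge $D$ to a finite $H$-invariant subset $X_1\subseteq X$ with $D\cup D\tau_0\subseteq X_1$ and such that the full subgraph of $\mathbf{BS}(X,\tau_0)$ induced on $X_1/H$ is connected; this is possible because $\mathbf{BS}(X,\tau_0)$ is connected ($\pi_{\tau_0}$ being transitive), so it suffices to throw in the finitely many vertices lying on paths joining the $H$-orbits already present. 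Then the restriction $(X_1,\tau_0|_{X_1})$ of the pre-action $(X,\tau_0)$ is transitive; and it is \emph{non-global}, for otherwise $X_1$ would be a nonempty proper subset of $X$ invariant under $H$ and under $t^{\pi_{\tau_0}}$, hence $\Gamma$-invariant, contradicting transitivity of $\pi_{\tau_0}$ on the infinite set $X$. Now apply Proposition \ref{prop: key prop for HNN} with $\bar X:=X$ (which has infinitely many $H$-orbits by the standing assumption of the section), the finite union of $H$-orbits $X_1$, the transitive non-global pre-action $(X_1,\tau_0|_{X_1})$, and the pairwise distinct points $x_1,\dots,x_k,y_1,\dots,y_k$: it produces $\tilde\tau\in S(X)$ such that $(X,\tilde\tau)$ is a transitive global extension of $(X_1,\tau_0|_{X_1})$ with $\pi_{\tilde\tau}$ highly faithful and with some $\gamma\in\Gamma$ satisfying $x_i\gamma^{\pi_{\tilde\tau}}=y_i$ for all $i$. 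Since $\tilde\tau$ extends $\tau_0|_{X_1}$ and $D\subseteq\dom(\tau_0|_{X_1})=X_1\cap X_1\tau_0\inv$ (using $D\cup D\tau_0\subseteq X_1$), we get $\tilde\tau|_D=\tau_0|_D$, so $\tilde\tau\in V$; and $\tilde\tau\in\TA\cap\HFA\cap U_{\bar x,\bar y}$ by construction. This proves the claim, whence each $U_{\bar x,\bar y}$ is dense open in $\TA$ and $\HFA$ is dense in $\TA$; since $\HFA$ is $G_\delta$, each $V_F\supseteq\HFA$ is dense as well.

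Finally, $\HTA\cap\HFA=\bigl(\bigcap_{(\bar x,\bar y)}U_{\bar x,\bar y}\bigr)\cap\bigl(\bigcap_FV_F\bigr)$ is a countable intersection of dense open subsets of the Polish space $\TA$, hence dense $G_\delta$ by the Baire category theorem, and in particular nonempty, which yields the ``in particular'' assertion and thus Theorem \ref{ThmMainHNN}. I expect the only genuinely delicate point of this argument to be the bookkeeping that a transitive \emph{global} $\Gamma$-action, restricted to a suitably chosen finite $H$-invariant set, becomes a transitive \emph{non}-global pre-action — this is exactly what is needed to feed Proposition \ref{prop: key prop for HNN}; the rest is the routine Dixon-style Baire argument, and all the real work (the use of topological freeness on the boundary and of free globalizations) has already been carried out in Proposition \ref{prop: key prop for HNN}.
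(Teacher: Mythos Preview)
Your proof is correct and follows the same Baire-category strategy as the paper, with one terminological slip: you write ``enlarge $D$ to a finite $H$-invariant subset $X_1\subseteq X$'', but if $H$ is infinite there are no nonempty finite $H$-invariant subsets; what you clearly mean (and use) is that $X_1$ is a \emph{union of finitely many $H$-orbits}. With that correction your argument matches the paper's: you restrict $\tau_0$ to a finite piece of the Bass-Serre graph, observe the resulting pre-action is transitive and non-global (your argument via $\Gamma$-invariance of $X_1$ is a clean way to see this), and feed it to Proposition~\ref{prop: key prop for HNN}. The only cosmetic difference is that the paper restricts to a chosen finite set of \emph{edges} (a connected subgraph $\Gr$) whereas you restrict to a chosen finite set of \emph{vertices} and take the induced subgraph; both yield a valid input for Proposition~\ref{prop: key prop for HNN}.
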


\begin{proof}
	For $k\geq 1$ and $x_1,\dots x_k,y_1,\dots,y_k\in X$ pairwise distinct, 
	consider the open subsets
	\[
	V_{x_1,\dots,x_k,y_1,\dots,y_k}=\{\tau\in\TA\colon
	\exists\gamma\in\Gamma\,,\,x_i\gamma^{\pi_\tau}=y_i\text{ for all }1\leq i\leq k\}
	\]
	Their intersection is the set $\HTA$ by Lemma \ref{HT and disjoint 
		supports}, so $\HTA$ is $G_\delta$ in $\TA$.  
	Similarly, we can write the set of strongly faithful actions as the 
	intersection over all finite subsets $F\Subset\Gamma\setminus\{1\}$ of the open sets
	\[
	W_F = \{ \tau\in\TA \colon
	\exists x\in X\,,\, x f^{\pi_\tau} \neq x\text{ for all } f\in F \}
	\]
	Now, since  strong faithfulness is equivalent to high faithfulness by 
	Corollary 
	\ref{EquivStrongAndHighFaithfulness},  the set $\HFA$ is $G_\delta$ in 
	$\TA$.

	To conclude, it suffices to show that each set 
	$(V_{x_1,\dots,x_k,y_1,\dots,y_k}) \cap \HFA$ is dense in $\TA$, since 
	this immediately implies that each open set 
	$(V_{x_1,\dots,x_k,y_1,\dots,y_k}) \cap W_F$ is dense in $\TA$. 
	To do this, let $\tau\in\TA$ and let $F'$ be a finite subset of $X$.
	Consider a finite connected subgraph $\Gr$ of $\mathbf{BS}(X,\tau)$ 
	containing the edges $z\Sigma$ for $z\in F'$, 
	and denote by $\tau_0$ the restriction of $\tau$ to the union of the 
	$\Sigma$-orbits in $X$ corresponding to the edges of $\Gr$.
	
	Then, apply Proposition~\ref{prop: key prop for HNN} to the transitive 
	pre-action $(\dom(\tau_0)\cdot H \cup \rng(\tau_0)\cdot H,\tau_0)$, whose 
	Bass-Serre graph is $\Gr$, to get an extension $\tau'$ such that $\tau'\in 
	V_{x_1,\dots,x_k,y_1,\dots,y_k}\cap\HFA$. Moreover, since $F'\subset 
	\dom(\tau_0)$, it follows that $\tau$ and $\tau'$ coincide on $F'$.
\end{proof}

\begin{remark} \label{rmk: HNN HT without Baire} We give below a direct 
	proof that $\HFA \cap \HTA$ is dense in $\TA$, without relying on Baire's 
	Theorem. 
	\begin{proof}
		Start with an element $\tau_0 \in\TA$, consider the 
		transitive and global pre-action $(X,\tau_0)$, and fix a finite subset 
		$F_0\Subset X$. What we have to prove is that there exists $\tau' \in 
		\HFA \cap \HTA$ such that the restrictions of $\tau_0$ and $\tau'$ 
		on $F_0$ coincide. 
		
		Let us now take an enumeration $(g_n)_{n\geq 0}$ of $\Gamma \setminus \{1\}$, 
		and an enumeration $(k_n,\bar x_n, \bar y_n)_{n\geq 0}$ of the set of 
		triples $(k,\bar x, \bar y)$, where $k$ is a positive integer and 
		$\bar x =(x_1,\ldots,x_k) , \bar y = (y_1,\ldots,y_k)$ are $k$-tuples 
		of elements of $X$ such that $x_1,\ldots,x_k ,y_1,\ldots,y_k$ are 
		pairwise distinct.
		Starting with $\tau_0$ and $F_0$, we construct inductively a sequence 
		$(\tau_n)_{n\geq 0}$ in $\TA$, and an increasing sequence 
		$(F_n)_{n\geq 0}$ of finite subsets of $X$, as follows. 
		\begin{enumerate}
			\item Starting with $F_n \Subset X$ and $\tau_n \in \TA$, we set $F$ to be the union of $F_n$ and the coordinates of 
			$\bar x_n$ and $\bar y_n$, and consider the smallest connected 
			subgraph $\Gr$ of $\mathbf{BS}(X,\tau_n)$ which contains all edges 
			$z\Sigma$ and $z\vartheta(\Sigma)$ for $z\in F$. (Notice that $\mathbf{BS}(X,\tau_n)$ is 
			connected, since $\tau_n \in \TA$). 
			
			\item We take the restriction $\tau$ of $\tau_n$ on the union of the 
			$\Sigma$-orbits in $X$ corresponding to edges in $\Gr$, 
			and get a transitive pre-action $(\dom(\tau)\cdot 
			H\cup\rng(\tau)\cdot H,\tau)$, 
			whose Bass-Serre graph is $\Gr$, and such that $\tau^{\pm 1}$ coincides with 
			$\tau_n^{\pm 1}$ on $F$.
			
			\item By Proposition \ref{prop: key prop for HNN}, we get an 
			extension $\tau_{n+1}$ of $\tau$, which lies in $\TA \cap 
			\HFA$, and an element $\gamma_n \in \Gamma$ such that $\bar x_n 
			\gamma_n^{\pi_{\tau_{n+1}}}= \bar y_n$. 
			Moreover, $\tau_{n+1}^{\pm 1}$ coincides with $\tau_n^{\pm 1}$ on $F_n$ by 
			construction.
			Let also $v_n$ be an element of $X$ such that $v_n 
			g_k^{\pi_{\tau_{n+1}}} \neq v_n$ for all $k=0,\ldots,n$ (which exists 
			since $\pi_{\tau_{n+1}}$ is highly faithful).
			
			\item We take a finite subset $F_{n+1} \Subset X$ which contains $F$, 
			and all elements $z\in X$ such that $z\Sigma$, or its antipode, is in 
			$\pat_{v_n}(g_k)$ for some $k\leq n$, or in $\pat_u(\gamma_n)$ for 
			some coordinate $u$ of $\bar x_n$. Now, for any $\tau^*$ coinciding 
			with $\tau_{n+1}$ on $F_{n+1}$, one has $\bar x_n 
			\gamma_n^{\pi_{\tau^*}}= \bar y_n$ and $v_n g_k^{\pi_{\tau^*}} \neq 
			v_n$ for all $k=0,\ldots,n$.
		\end{enumerate}
		Theses sequences satisfy $\bar x_m \gamma_m^{\pi_{\tau_n}}= \bar y_m$ 
		and $v_m g_k^{\pi_{\tau_n}}\neq v_m$ for all $0\leq k \leq m < n$. 
		Moreover, the subsets $F_n$ exhaust $X$, and $\tau_n^{\pm 1}$ coincides with 
		$\tau_m^{\pm 1}$ on $F_m$ for all $n>m$. Consequently the sequence $(\tau_n)$ 
		converges to a bijection $\tau'$ and the action $\pi_{\tau'}$ is highly 
		transitive by Lemma \ref{HT and disjoint supports}. 
		Notice finally that $\pi_{\tau'}$ is also strongly faithful, since it 
		satisfies $v_m g_k^{\pi_{\tau'}}\neq v_m$ for all $k\leq m$. Thus, 
		$\pi_{\tau'}$ is highly faithful by Corollary 
		\ref{EquivStrongAndHighFaithfulness}.
	\end{proof}
\end{remark}

\section{Free globalization for pre-actions of amalgams}\label{SectFreeGlobalizationAmalgams}

We now turn to the case of amalgams, where the notion of pre-action is a 
bit less intuitive than for HNN extensions since it will involve two sets, 
reflecting the fact that the corresponding graph of groups has two 
vertices. The results and proofs are very similar to those we have proved 
for HNN 
extensions in Sections~\ref{SectFreeGlobalizationHNN} and~\ref{High 
	transitivity for HNN extensions}, but for the convenience of the reader we 
will give full proofs.

For this section, as in Section \ref{PrelimAmalg}, let us fix an amalgam 
$\Gamma=\Gamma_1 *_\Sigma \Gamma_2$, and sets of representatives $C_j$ of 
left $\Sigma_j$-cosets in $\Gamma_j$ such that $1\in C_j$, for $j=1,2$, so 
that normal forms of elements of $\Gamma$ are well-defined. Let us also 
denote by $\NN j$ the set of elements of $\Gamma$ whose normal form begins 
(from the left) with an element of $C_j\setminus\{1\}$, for $j=1,2$, so 
that we have 
$\Gamma = \Sigma \sqcup \NN 1 \sqcup \NN 2$.

\subsection{Actions and pre-actions of amalgams, and Bass-Serre graphs}\label{SubsectPreActionsAndBSGraphs}

Given two actions on infinite countable sets $X_1 \curvearrowleft^{\pi_1} \Gamma_1$ and $X_2 \curvearrowleft^{\pi_2} \Gamma_2$, and a bijection $\tau:X_1\to X_2$ such that $\sigma^{\pi_1}\tau = \tau\vartheta(\sigma)^{\pi_2}$ for all $\sigma\in \Sigma_1$,
there exists a unique action $X_1\curvearrowleft^{\pi_{1,\tau}} \Gamma$ 
such that $g^{\pi_{1,\tau}} = g^{\pi_1}$ for all $g\in \Gamma_1$, and 
$h^{\pi_{1,\tau}} = \tau h^{\pi_2} \tau^{-1}$ for all $h\in \Gamma_2$.
Similarly, there exists a unique action $X_2\curvearrowleft^{\pi_{2,\tau}} 
\Gamma$ such that $h^{\pi_{2,\tau}} = h^{\pi_2}$ for all $h\in \Gamma_2$, 
and $g^{\pi_{2,\tau}} = \tau\inv g^{\pi_1} \tau$ for all $g\in \Gamma_1$. 
Of course, these actions are conjugate: one has $\gamma^{\pi_{2,\tau}} = 
\tau\gamma^{\pi_{1,\tau}}\tau\inv$ for every $\gamma\in \Gamma$.
Turning back to the general case, if the actions $\pi_1,\pi_2$ are free, 
we obtain an example of the following situation.

\begin{definition}
	A \textbf{pre-action} of the amalgam $\Gamma$ is a triple $(X_1,X_2,\tau)$ where $X_1,X_2$ are infinite countable sets endowed with free actions $X_1 \curvearrowleft^{\pi_1} \Gamma_1$ and $X_2 \curvearrowleft^{\pi_2} \Gamma_2$, and
	\[
	\tau:\dom(\tau) \to \rng(\tau) 
	\]
	is a partial bijection such that $\dom(\tau) \subseteq X_1$, $\rng(\tau)\subseteq X_2$, and $\sigma^{\pi_1}\tau = \tau\vartheta(\sigma)^{\pi_2}$ for all $\sigma\in \Sigma_1$.
\end{definition}

The relations $\sigma^{\pi_1}\tau = \tau\vartheta(\sigma)^{\pi_2}$ are equalities between partial bijections. In particular $\sigma^{\pi_1}\tau$ and $\tau\vartheta(\sigma)^{\pi_2}$ must have the same domain and the same range. As a consequence, for any pre-action $(X_1,X_2,\tau)$, the domain of $\tau$ is necessarily $\Sigma_1$-invariant, its range is necessarily $\Sigma_2$-invariant, and $\tau$ sends $\Sigma_1$-orbits onto $\Sigma_2$-orbits. 

A pre-action $(X_1,X_2,\tau)$ is called \textbf{global} if $\tau$ is a global bijection 
between $X_1$ and $X_2$. In this case there are associated actions 
$X_1\curvearrowleft^{\pi_{1,\tau}} \Gamma$ and 
$X_2\curvearrowleft^{\pi_{2,\tau}} \Gamma$ as above.
\begin{example}
	If $X \curvearrowleft^\pi \Gamma$ is an action, where $\Gamma_1$ and $\Gamma_2$ are acting freely, then denoting by $X 
	\curvearrowleft^{\pi_1} \Gamma_1$ and $X \curvearrowleft^{\pi_2} \Gamma_2$ 
	its restrictions, one obtains a global pre-action $(X,X,\id_X)$, where the 
	first copy of $X$ is endowed with $\pi_1$ and the second with $\pi_2$. The 
	actions $X_1\curvearrowleft^{\pi_{1,\tau}} \Gamma$ and 
	$X_2\curvearrowleft^{\pi_{2,\tau}} \Gamma$ both coincide with $X 
	\curvearrowleft^\pi \Gamma$ in this case. In particular, the right 
	translation action $\Gamma\curvearrowleft\Gamma$ gives rise to a 
	pre-action $(\Gamma,\Gamma,\id)$, called the (right) \textbf{translation 
		pre-action}.
\end{example}

As seen before, actions of $\Gamma$ (such that the factors act freely) 
correspond to pre-actions with a global bijection. Another source of 
examples of pre-actions is the following.

\begin{definition}
	Given a pre-action $(X_1,X_2,\tau)$, and infinite $\Gamma_j$-invariant subsets $Y_j\subseteq X_j$, the \textbf{restriction} of $(X_1,X_2,\tau)$ to $(Y_1,Y_2)$ is the pre-action $(Y_1,Y_2,\tau')$, where $Y_j$ is endowed with the restrictions of $\pi_j$, and the partial bijection is $\tau'=\tau_{\restriction Y_1\cap Y_2\tau\inv}$.
	An \textbf{extension} of $(X_1,X_2,\tau)$ is a pre-action $(\tilde X_1, \tilde X_2, \tilde \tau)$ whose restriction to $(X_1,X_2)$ is $(X_1,X_2,\tau)$.
\end{definition}

In the following important example of restrictions, for $j\in\{1,2\}$ we 
denote by 
$\NN j$ the set of elements of $\Gamma$ whose normal form begins with an 
element of $C_j$, so that we have $\Gamma = \Sigma \sqcup \NN 1 \sqcup 
\NN 2$.

\begin{example}
	The set $\NN 2$ is $\Gamma_1$-invariant by right multiplication, and 
	$\NN 1$ 
	is $\Gamma_2$-invariant by right multiplication. Thus, by taking 
	complements, $\Sigma \sqcup \NN 1$ is $\Gamma_1$-invariant, and $\Sigma 
	\sqcup \NN 2$ is $\Gamma_2$-invariant. The translation pre-action 
	$(\Gamma,\Gamma,\id)$ admits the restrictions $(\Gamma_1 \sqcup 
	\NN 2,\Gamma_2 \cup \NN 2,\tau_+)$, and \mbox{$(\Gamma_1 \cup \NN 1, \Gamma_2 
	\sqcup 
	\NN 1, \tau_-)$,} which we call the \textbf{positive translation 
		pre-action} 
	and the \textbf{negative translation pre-action} respectively. Notice that 
	$\tau_+ = \id_{\restriction \Sigma \sqcup \NN 2}$ and $\tau_- = 
	\id_{\restriction \Sigma \sqcup \NN 1}$.
\end{example}
Let us now associate a graph to any  $\Gamma$-pre-action $(X_1,X_2,\tau)$ as follows. Informally speaking, we start with a graph whose vertices are of two kinds: the $\Sigma_1$-orbits in $X_1$, and the $\Sigma_2$-orbits in $X_2$. Then we put an edge from $x\Sigma_1$ to $y\Sigma_2$ when $(x\Sigma_1)\tau = y\Sigma_2$, and finally we identify all the $\Sigma_j$-orbits that are in a same $\Gamma_j$-orbit, for $j=1,2$. We may, and will, identify the groups $\Gamma_j,\Sigma_j$ with their images by $\pi_j$, since the actions $\pi_1,\pi_2$ are free, hence faithful. Consequently, we don't write superscripts $\pi_1,\pi_2$ from now, as soon as there is no risk of confusion. 
\begin{definition}\label{defBassSerreGraph}
	The \textbf{Bass-Serre graph} of $(X_1,X_2,\tau)$ is the oriented graph $\Gr_\tau$ defined by
	\[
	V(\Gr_\tau) = X_1 / \Gamma_1 \sqcup X_2 / \Gamma_2 \, , \quad
	E(\Gr_\tau)^+ = \dom(\tau)/\Sigma_1 \, , \quad
	E(\Gr_\tau)^- = \rng(\tau)/\Sigma_2 \, ,
	\]
	where the structural maps are given by the following  formulas, for $x\in\dom(\tau)$ and $y\in\rng(\tau)$,
	\[
	\overline{x\Sigma_1} = x\tau\Sigma_2 \, ; \quad
	s(x\Sigma_1) = x \Gamma_1 \, ; \quad
	r(x\Sigma_1) = x\tau \Gamma_2 \, ; \quad
	\]
	\[
	\overline{y\Sigma_2} = y\tau^{-1}\Sigma_1 \, ; \quad
	s(y\Sigma_2) = y\Gamma_2 \, ; \quad
	r(y\Sigma_2) = y\tau^{-1}\Gamma_1 \, .
	\]
\end{definition}
The Bass-Serre graph will also be denoted by $\mathbf{BS}(X_1,X_2,\tau)$.
\begin{example}
	\begin{enumerate}[label=(\arabic*)]
		\item The Bass-Serre graph of the translation pre-action $(\Gamma,\Gamma,\id)$ is the classical Bass-Serre tree $\Tc$ of $\Gamma$.
		\item The Bass-Serre graph of the positive translation pre-action 
		$(\Gamma_1 \sqcup \NN 2,\Gamma_2 \cup \NN 2,\tau_+)$ is the half-tree of 
		the edge $\Sigma_1$ in $\Tc$.
		\item The Bass-Serre graph of the negative translation pre-action 
		$(\Gamma_1 \cup \NN 1, \Gamma_2 \sqcup \NN 1, \tau_-)$ is the half-tree 
		of the edge $\Sigma_2$ in $\Tc$.
	\end{enumerate}
\end{example}

Given points $x\in X_1 , y \in X_2$, there are natural (maybe sometimes empty)
maps from some coset representatives to the 
stars at 
$x\Gamma_1$ and $y\Gamma_2$:
\[
\begin{array}{cccc}
e_{1,x}: & \{c\in C_1: \, xc\in \dom(\tau) \} & \longrightarrow & \st(x\Gamma_1) \\
& c & \longmapsto &  x c \Sigma_1 \\
e_{2,y}: & \{ c\in C_2: \, yc \in\rng(\tau) \} & \longrightarrow &
\st(y\Gamma_2) \\
& c & \longmapsto & y c \Sigma_2
\end{array}
\]
These maps are are surjective, since, for $j=1,2$, the orbits $xc\Sigma_j$, for $c\in C_j$, cover $x \Gamma_j$.
Moreover, since the actions $X_j \curvearrowleft^{\pi_j} \Gamma_j$ are free, we have $x\Gamma_j = \bigsqcup_{c\in C_j} xc\Sigma_j$, so that these maps are in fact bijective.

If $x\in X_1\cap X_2$, then by merging $e_{1,x}$ and $e_{2,x}$, we get a bijection
\[
e_x : \{c\in C_1: \, xc\in \dom(\tau) \} 
\sqcup \{ c\in C_2: \, xc \in\rng(\tau) \} 
\to \st(x\Gamma_1) \sqcup \st(x\Gamma_2) \, .
\]
We also set $e_x = e_{1,x}$ when $x\in X_1 \setminus X_2$, and $e_x = e_{2,x}$ when $x\in X_2 \setminus X_1$.

\subsection{Morphisms and functoriality of Bass-Serre graphs}
We shall now see that there is a functor, that we will call the Bass-Serre functor, from the category of $\Gamma$-pre-actions to the category of graphs, which extends Definition \ref{defBassSerreGraph}. Let us start by turning $\Gamma$-pre-actions into a category.
\begin{definition}
	A \textbf{morphism of pre-actions} from $(X_1,X_2,\tau)$ to 
	$(X'_1,X'_2,\tau')$ is a couple $(\varphi_1,\varphi_2)$, where 
	$\varphi_j:X_j\to X'_j$ is a $\Gamma_j$-equivariant map for $j=1,2$, and 
	for 
	all $x\in\dom\tau$, $\varphi_2(x\tau) = \varphi_1(x)\tau'$.
\end{definition}
Again, we have in particular that $\varphi_1$ maps $\dom(\tau)$ into 
$\dom(\tau')$ and $\varphi_2$ maps $\rng (\tau)$ into $\rng (\tau')$.

Now, given a morphism of pre-actions $(\varphi_1,\varphi_2) : (X_1,X_2,\tau) \to (X'_1,X'_2,\tau')$, and denoting by $\Gr_\tau$ and $\Gr_{\tau'}$ the corresponding Bass-Serre graphs, let us 
define a map $V(\Gr_\tau) \to V(\Gr_{\tau'})$ by
\[
x\Gamma_1 \mapsto \varphi_1(x)\Gamma_1 \, , \text{ for } x\in X_1
\quad \text{ and } \quad
y\Gamma_2 \mapsto \varphi_2(y)\Gamma_2 \, , \text{ for } y\in X_2 \, ,
\]
and a map $E(\Gr_\tau) \to E(\Gr_{\tau'})$ by
\[
x\Sigma_1 \mapsto \varphi_1(x)\Sigma_1 \, , \text{ for } x\in \dom(\tau)
\quad \text{ and } \quad
y\Sigma_2 \mapsto \varphi_2(y)\Sigma_2 \, , \text{ for } y\in \rng(\tau) 
\, .
\]
It is routine to check that these maps define a morphism of graphs, that we 
denote by $\Gr_{(\varphi_1,\varphi_2)}$. For instance, the image of 
$x\Sigma_1$ is $\varphi_1(x)\Sigma_1$, the image of $\overline{x\Sigma_1} = 
x\tau\Sigma_2$ is 
$\varphi_2(x\tau)\Sigma_2 = \varphi_1(x)\tau'\Sigma_2$, and one has 
$\overline{\varphi_1(x)\Sigma_1} = \varphi_1(x)\tau'\Sigma_2$ in 
$\Gr_{\tau'}$.

\begin{lemma}
	The assignments $(X_1,X_2,\tau)\mapsto \Gr_\tau$ and $(\varphi_1,\varphi_2) \mapsto \Gr_{(\varphi_1,\varphi_2)}$ define a functor from the category of $\Gamma$-pre-actions to the category of graphs.
\end{lemma}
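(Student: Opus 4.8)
The plan is to reproduce, in the amalgam setting, the short verification already carried out for the HNN Bass-Serre functor: it suffices to check that the assignment preserves identity morphisms and composition, everything else (well-definedness of the vertex and edge maps, compatibility with the structural maps) having just been established in the paragraph preceding the statement. Since $\Gr_{(\varphi_1,\varphi_2)}$ only depends on $\varphi_1,\varphi_2$ through the $\Gamma_j$-orbit maps $x\Gamma_j\mapsto\varphi_j(x)\Gamma_j$ and the $\Sigma_j$-orbit maps $x\Sigma_j\mapsto\varphi_j(x)\Sigma_j$, these functoriality checks will amount to a direct computation with orbit representatives.

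First I would handle identities. Given a $\Gamma$-pre-action $(X_1,X_2,\tau)$, its identity morphism is the couple $(\id_{X_1},\id_{X_2})$, and unwinding the definition of $\Gr_{(\varphi_1,\varphi_2)}$ shows at once that $\Gr_{(\id_{X_1},\id_{X_2})}$ sends $x\Gamma_1\mapsto x\Gamma_1$, $y\Gamma_2\mapsto y\Gamma_2$ on vertices, and $x\Sigma_1\mapsto x\Sigma_1$, $y\Sigma_2\mapsto y\Sigma_2$ on edges; that is, it is the identity morphism of $\Gr_\tau$. Next, given two morphisms of pre-actions $(\varphi_1,\varphi_2):(X_1,X_2,\tau)\to(X_1',X_2',\tau')$ and $(\psi_1,\psi_2):(X_1',X_2',\tau')\to(X_1'',X_2'',\tau'')$, I would first record that the couple $(\psi_1\circ\varphi_1,\psi_2\circ\varphi_2)$ is again a morphism of pre-actions: each $\psi_j\circ\varphi_j$ is $\Gamma_j$-equivariant as a composition of $\Gamma_j$-equivariant maps, and for $x\in\dom\tau$ the relations $\varphi_1(x\tau)=\varphi_2(x)\tau'$ and $\psi_1(x'\tau')=\psi_2(x')\tau''$ give $\psi_1(\varphi_1(x\tau))=\psi_1(\varphi_2(x)\tau')=\psi_2(\varphi_2(x))\tau''$. (This also confirms that $\Gamma$-pre-actions form a category.)

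Finally I would observe that $\Gr_{(\psi_1,\psi_2)}\circ\Gr_{(\varphi_1,\varphi_2)}$ and $\Gr_{(\psi_1\circ\varphi_1,\psi_2\circ\varphi_2)}$ are given by the very same formulas: on vertices, $x\Gamma_1\mapsto\psi_1(\varphi_1(x))\Gamma_1$ for $x\in X_1$ and $y\Gamma_2\mapsto\psi_2(\varphi_2(y))\Gamma_2$ for $y\in X_2$; on edges, $x\Sigma_1\mapsto\psi_1(\varphi_1(x))\Sigma_1$ for $x\in\dom(\tau)$ and $y\Sigma_2\mapsto\psi_2(\varphi_2(y))\Sigma_2$ for $y\in\rng(\tau)$. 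Hence the two graph morphisms coincide, which completes the proof. I do not expect any genuine obstacle here: the statement is essentially formal, and the only mild point of care — that the defining maps are independent of the chosen orbit representatives — is exactly the $\Gamma_j$-equivariance of the $\varphi_j$, already used when $\Gr_{(\varphi_1,\varphi_2)}$ was introduced, so it need not be repeated.
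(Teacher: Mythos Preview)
Your proposal is correct and follows essentially the same approach as the paper: verify that identities go to identities, and that the graph morphism associated to a composition agrees with the composition of the associated graph morphisms, by writing out the vertex and edge maps explicitly. You add the small extra check that a composition of pre-action morphisms is again a pre-action morphism, which the paper leaves implicit, but otherwise the arguments coincide.
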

We will denote this functor by $\mathbf{BS}$ and call it the \textbf{Bass-Serre functor} of $\Gamma$. The morphism $\Gr_{(\varphi_1,\varphi_2)}$ will also be denoted by $\mathbf{BS}(\varphi_1,\varphi_2)$.

\begin{proof}
	First, given the identity morphism on a pre-action $(X_1,X_2,\tau)$ it is obvious that the associated morphism of graphs id the identity on $\Gr_\tau$. 
	
	Now, let us consider two morphisms of pre-actions $(\varphi_1,\varphi_2): 
	(X_1,X_2,\tau) \to (X'_1,X'_2,\tau')$ and $(\psi_1,\psi_2): 
	(X'_1,X'_2,\tau') \to (X''_1,X''_2,\tau'')$. It is also clear that the 
	composition of $\Gr_{(\varphi_1,\varphi_2)}$ followed by 
	$\Gr_{(\psi_1,\psi_2)}$, and the morphism 
	$\Gr_{(\psi_1\circ\varphi_1,\psi_2\circ\varphi_2)}$ are both given by the 
	map 
	$V(\Gr_\tau) \to V(\Gr_{\tau'})$ by
	\[
	x\Gamma_1 \mapsto \psi_1\circ\varphi_1(x)\Gamma_1 \, , \text{ for } x\in 
	X_1
	\quad \text{ and } \quad
	y\Gamma_2 \mapsto \psi_2\circ\varphi_2(y)\Gamma_2 \, , \text{ for } y\in 
	X_2 \, ,
	\]
	and the map $E(\Gr_\tau) \to E(\Gr_{\tau'})$ by
	\[
	x\Sigma_1 \mapsto \psi_1\circ\varphi_1(x)\Sigma_1 \, , \text{ for } x\in 
	\dom(\tau)
	\quad \text{ and } \quad
	y\Sigma_2 \mapsto \psi_2\circ\varphi_2(y)\Sigma_2 \, , \text{ for } y\in 
	\rng(\tau) \, .
	\]
	This completes the proof.
\end{proof}

Let us notice a consequence of freeness of the $\Gamma_j$-actions in the 
definition of $\Gamma$-pre-actions, analogous to Lemma 
\ref{BSmorphismsLocallyInjectiveHNN}.

\begin{lemma}\label{BSmorphismsLocallyInjective}
	Every morphism of the form $\mathbf{BS}(\varphi_1,\varphi_2) = \Gr_{(\varphi_1,\varphi_2)}$ is locally injective. More precisely, the restriction of $\mathbf{BS}(\varphi_1,\varphi_2)$ to the star at a vertex $x\Gamma_1$, respectively $y\Gamma_2$, 
	is the composition $e_{1,\varphi_1(x)}^{} \circ e_{1,x}^{-1}$, respectively $e_{2,\varphi_2(y)}^{} \circ e_{2,y}^{-1}$, which is an injection into the star at $\varphi_1(x)\Gamma_1$, respectively $\varphi_2(y)\Gamma_2$.
\end{lemma}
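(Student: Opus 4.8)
The plan is to mirror, essentially verbatim, the proof of Lemma \ref{BSmorphismsLocallyInjectiveHNN}, the only change being that the single free $H$-action is replaced by the two free actions $X_1\curvearrowleft^{\pi_1}\Gamma_1$ and $X_2\curvearrowleft^{\pi_2}\Gamma_2$ (and their counterparts on $X_1',X_2'$). First I would recall from Section \ref{SubsectPreActionsAndBSGraphs} that, precisely because $\pi_1$ and $\pi_2$ are free, each of the maps $e_{1,x}\colon\{c\in C_1: xc\in\dom(\tau)\}\to\st(x\Gamma_1)$ and $e_{2,y}\colon\{c\in C_2: yc\in\rng(\tau)\}\to\st(y\Gamma_2)$ is a bijection, and likewise for the analogous maps $e_{1,\varphi_1(x)}$, $e_{2,\varphi_2(y)}$ attached to the target pre-action $(X_1',X_2',\tau')$.

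Next I would fix $x\in X_1$ and an edge $e\in\st(x\Gamma_1)$ in $\Gr_\tau$. By surjectivity of $e_{1,x}$ there is a unique $c\in C_1$ with $xc\in\dom(\tau)$ and $e=e_{1,x}(c)=xc\Sigma_1$. Since $\varphi_1$ is $\Gamma_1$-equivariant and maps $\dom(\tau)$ into $\dom(\tau')$ (as noted right after the definition of a morphism of pre-actions), we get $\varphi_1(x)c=\varphi_1(xc)\in\dom(\tau')$, and unwinding the definition of $\Gr_{(\varphi_1,\varphi_2)}$ on positive edges yields the key identity
\[
\mathbf{BS}(\varphi_1,\varphi_2)(e)=\varphi_1(xc)\Sigma_1=\varphi_1(x)c\,\Sigma_1=e_{1,\varphi_1(x)}(c)=e_{1,\varphi_1(x)}\circ e_{1,x}^{-1}(e)\,.
\]
Thus the restriction of $\mathbf{BS}(\varphi_1,\varphi_2)$ to $\st(x\Gamma_1)$ equals $e_{1,\varphi_1(x)}\circ e_{1,x}^{-1}$. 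The same computation, now with $y\in X_2$, an edge $e\in\st(y\Gamma_2)$, a representative $c\in C_2$, the map $\varphi_2$, and the structural maps on negative edges, shows that the restriction to $\st(y\Gamma_2)$ equals $e_{2,\varphi_2(y)}\circ e_{2,y}^{-1}$.

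Finally, since $e_{1,\varphi_1(x)}$ and $e_{2,\varphi_2(y)}$ are bijections onto the corresponding stars in $\Gr_{\tau'}$, and $e_{1,x}^{-1}$, $e_{2,y}^{-1}$ are injective, both restrictions are injections into $\st(\varphi_1(x)\Gamma_1)$ and $\st(\varphi_2(y)\Gamma_2)$ respectively, so $\mathbf{BS}(\varphi_1,\varphi_2)$ is locally injective. I do not expect any real obstacle here; the only point requiring care — already dealt with in the HNN case — is to check that the coset representative $c$ is not altered when pushed through $\varphi_j$, which is exactly where $\Gamma_j$-equivariance together with freeness of $\pi_j$ is used.
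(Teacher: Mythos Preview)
Your proof is correct and follows essentially the same approach as the paper's own proof: both compute the action of $\mathbf{BS}(\varphi_1,\varphi_2)$ on an edge $e=e_{1,x}(c)$ using $\Gamma_1$-equivariance of $\varphi_1$ to obtain $\varphi_1(xc)\Sigma_1=\varphi_1(x)c\,\Sigma_1=e_{1,\varphi_1(x)}(c)$, and then invoke bijectivity of the maps $e_{j,\cdot}$ (coming from freeness of the $\Gamma_j$-actions) to conclude local injectivity. The paper likewise only spells out the $x\Gamma_1$ case and remarks that the $y\Gamma_2$ case is similar.
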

\begin{proof}
	Consider a morphism of pre-actions $(\varphi_1,\varphi_2): (X_1,X_2,\tau) \to (X'_1,X'_2,\tau')$, and give names to the $\Gamma_1$-actions involved: 
	$X_1 \curvearrowleft^{\pi_1} \Gamma_1$, 
	and $X'_1 \curvearrowleft^{\pi'_1} \Gamma_1$.
	Let us also recall from Section \ref{SubsectPreActionsAndBSGraphs} that maps of the form $e_{1,x}$ and $e_{2,y}$ are bijective, since the $\Gamma_j$-actions are free.
	Now, given $x\in X_1$ and $e\in \st(x\Gamma_1)$ in $\Gr_\tau$, one has 
	$e=e_{1,x}(c) = x c^{\pi_1} \Sigma_1$ for a unique $c\in C_1$ satisfying $xc^{\pi_1} \in \dom(\tau)$. Then,
	one has $\varphi(x) c^{\pi_1'} = \varphi(x c^{\pi_1}) \in \dom(\tau')$,
	so that in $\Gr_{\tau'}$: 
	\[
	\Gr_{(\varphi_1,\varphi_2)}(e) = \varphi_1(x c^{\pi_1}) \Sigma_1 = 
	\varphi_1(x) c^{\pi'_1} \Sigma_1 = e_{1,x\varphi_1}(c) \, . 
	\]	 
	In other words, the restriction of $\Gr_{(\varphi_1,\varphi_2)}$ to the star at $x\Gamma_1$ is the composition $e_{1,\varphi_1(x)}^{} \circ e_{1,x}^{-1}$. 
	Furthermore, this map is an injection into the star at $\varphi_1(x)\Gamma_1$.
	
	Similarly, one can prove that the restriction of $\Gr_{(\varphi_1,\varphi_2)}$ to the star at a vertex $y\Gamma_2$ is the composition $e_{2,\varphi_2(y)}^{} \circ e_{2,y}^{-1}$, which is an injection into the star at $\varphi_2(y)\Gamma_2$.
\end{proof}

\subsection{Paths in Bass-Serre graphs of global 
	pre-actions}\label{SubsectionPathsAmalgams}

Let us turn to the case of a global pre-action $(X_1,X_2,\tau)$.  
In this case, the bijections $e_{1,x}$ and $e_{2,y}$, defined at the end of Section~\ref{SubsectPreActionsAndBSGraphs}, become just
\[
e_{1,x}: \, C_1  \longrightarrow  \st(x\Gamma_1)
\quad \text{ and } \quad
e_{2,y}: \, C_2  \longrightarrow \st(y\Gamma_2) \, .
\]
Given a 
point $x\in X_1$ and an element $\gamma\in \NN 2$ with normal form 
$\gamma=c_1 \cdots c_n \sigma$ where $n\geq 1$  and 
$c_1\in C_2\setminus\{1\}$, we 
associate a sequence $(x_0, x_1,\ldots,x_{n+1})$ in $X_1\cup X_2$ and a 
sequence $(e_0,e_1,...,e_n)$ of edges in the Bass-Serre graph as follows. 
We set $x_0 = x$, $c_0=1\in C_1$, and then inductively for $i=0,\ldots,n$:

\begin{itemize}
	\item for $i$ such that $c_i\in C_1$, set $e_i = e_{1,x_{i}}(c_i)$, and 
	$x_{i+1} = x_{i} c_i \tau$;
	\item for $i$ such that $c_i\in C_2$, set $e_i = e_{2,x_{i}}(c_i)$, and 
	$x_{i+1} = x_{i} c_i \tau\inv$.
\end{itemize}
Notice that, for any $i=0,\ldots,n-1$, if $c_i\in C_1$ (or equivalently if 
$i$ is even), one has 
$r(e_{i}) = x_{i}c_i\tau \Gamma_2 = x_{i+1} \Gamma_2 = s(e_{i+1})$ , 
and similarly  if $c_i\in C_2$ we have $r(e_i) = s(e_{i+1})$. 
Hence $(e_0,\ldots,e_n)$ is a path, 
that we denote by $\pat_{1,x}(\gamma)$. Note that this path begins by the 
edge 
$e_0 = x\Sigma_1$. 

Let us check that $\pat_{1,x}(\gamma)$ is a reduced 
path. For $0\leq i \leq n-1$ and $c_i\in C_1$, we have
\[
e_{i+1} = \bar e_i \Leftrightarrow x_{i+1} c_{i+1} \Sigma_2 = 
\overline{x_i c_i \Sigma_1} \Leftrightarrow x_i c_i \tau c_{i+1} \Sigma_2 
= x_i c_i \tau \Sigma_2
\Leftrightarrow c_{i+1} = 1
\]
since $X_2 \curvearrowleft^{\pi_2} \Gamma_2$ is free. Since $c_1 \cdots c_n 
\sigma$ is the
normal form of $\gamma$, we cannot have $c_{i+1} = 1$, so
$\pat_{1,x}(\gamma)$ is reduced.

Finally, since the maps 
$e_{1,x}$ and $e_{2,x}$ are bijective, given a reduced path 
$(e_0,\ldots,e_n)$ beginning by $x\Sigma_1$, 
there is exactly one normal form $c_1\cdots c_n$ with $c_1\in C_2\setminus\{1\}$ 
such that $\pat_{1,x}(c_1\cdots c_n) = (e_0,\ldots,e_n)$. The following 
remark is now clear.

\begin{remark}\label{Remark Path Amalgams} For any $x\in X_1$, the map 
	$\pat_{1,x}$  is a surjection from $\NN 2$ to the set of reduced 
	paths starting by the edge $x\Sigma_1$. It becomes a bijection if 
	we restrict it to the subset of elements $\gamma\in \NN 2$ whose normal 
	form 
	is $c_1\cdots c_n$ with $c_1\in C_2\setminus\{1\}$. So if $x\Sigma_1$ is a 
	treeing edge, then the images $\pat_{1,x}(\gamma)$, for $\gamma\in \NN 2$, 
	cover 
	exactly the half-tree of $x\Sigma_1$ in $\Gr_\tau$.
	
\end{remark}
We now give a definition of path-type elements which is analogous to the 
one for HNN extensions, except that we only want to consider paths which 
end in $X_1$. An element 
$\gamma\in \NN 2$ with normal form $\gamma=c_1 \cdots c_n$ such 
that $c_1\in C_2\setminus\{1\}$ and $n\geq 1$ is \emph{odd} will be called a 
\textbf{path 
	type element}  of $\NN 2$. Note that the corresponding path then has 
\emph{even} 
length. If $\gamma'=c_1\cdots c_k$, for some $k\leq 
n$, 
is a path type element in $\NN 2$, then $\gamma$ is called a \textbf{path 
	type extension} of $\gamma'$.\\

Similarly, given a point $x\in X_2$, we can associate to every element 
$\gamma\in \NN 1$ with normal form $\gamma=c_1 \cdots c_n \sigma$, where 
$n\geq 1$ and $c_1\in C_1\setminus\{1\}$, a reduced path $\pat_{2,x}(\gamma) := 
(e_0,e_1,...,e_n)$, as follows. We set $x_0 = x$, $c_0=1\in C_2$, and then 
inductively for $i=0,\ldots,n$:
\begin{itemize}
	\item for $i$ such that $c_i\in C_1$, set $e_i = e_{1,x_{i}}(c_i)$, and 
	$x_{i+1} = x_{i} c_i \tau$;
	\item for $i$ such that $c_i\in C_2$, set $e_i = e_{2,x_{i}}(c_i)$, and 
	$x_{i+1} = x_{i} c_i \tau\inv$.
\end{itemize}
This defines a surjective map 
$\pat_{2,x}$ from $\NN 1$ to the set of reduced paths starting by 
the edge $x\Sigma_2$. Hence, if $x\Sigma_2$ is a treeing edge then, the 
images $\pat_{2,x}(\gamma)$, for $\gamma\in \NN 1$, cover exactly the 
half-tree 
of $x\Sigma_2$ in $\Gr_\tau$.
Moreover, the map $\pat_{2,x}$ becomes a bijection if we restrict it to the 
set 
of elements of with normal form $\gamma=c_1 \cdots c_n$ where $n\geq 1$ and 
$c_1\in C_1\setminus\{1\}$, and such elements will be called \textbf{path type 
	elements} of $\NN 1$ when moreover $n$ is odd. As before, there is a notion 
of path type extension for path type elements in $\NN 1$.
\begin{remark}\label{RemarkPathAmalgams}  Let $i,j\in\{1,2\}$ with $i\neq 
	j$ and $x\in X_j$, $\gamma \in \NN {i}$ with normal form $c_1\cdots 
	c_n\sigma$.
	\begin{enumerate}
		\item By construction, $\pat_{j,x}(\gamma) = \pat_{j,x}(c_1\cdots c_n)$ 
		and, for 
		every $1\leq k \leq n$, the path $\pat_{j,x}(c_1\cdots c_n)$ is an 
		extension 
		of $\pat_{j,x}(c_1\cdots c_k)$.
		
		\item The source of $\pat_{j,x}(\gamma)$ is $s(x\Sigma_j) = x\Gamma_j$.
		
		\item If $\gamma=c_1\cdots c_n$ is a path type element, then the range of 
		$\pat_{j,x}(\gamma)$ is $x\gamma^{\pi_{j,\tau}}\Gamma_j$.
		\item If for some $1\leq k\leq n$ the last edge of 
		$\pat_{j,x}(c_1\cdots c_k)$ is a treeing edge, then 
		for all $k\leq l\leq n$ the last edge of 
		$\pat_{j,x}(c_1\cdots c_l)$ is also a treeing edge.
	\end{enumerate}
\end{remark}

Let us end this section by establishing a link between paths in Bass-Serre 
trees and 
Bass-Serre graphs.
\begin{remark}\label{PseudoInitialObjects}
	Consider a global pre-action $(X_1,X_2,\tau)$, and basepoints 
	$x_1\in X_1$ and $x_2\in X_2$ such that $x_2 = x_1 \tau$. There exists 
	a unique morphism of pre-actions
	\[
	(\varphi_1,\varphi_2): (\Gamma,\Gamma,\id) \to (X_1,X_2,\tau)
	\]
	from the translation pre-action, such that $\varphi_j(1)=x_j$ for 
	$j=1,2$. It satisfies $\varphi_j(\gamma) = x_j\gamma^{\pi_{j,\tau}}$ 
	for 
	all $j=1,2$ and $\gamma\in\Gamma$. By restriction, one obtains morphisms
	\begin{align*}
	(\varphi_{1,+},\varphi_{2,+}) & :  (\Gamma_1 \sqcup \NN 2,\Gamma_2 \cup 
	\NN 2,\tau_+) \to (X_1,X_2,\tau) \\
	(\varphi_{1,-},\varphi_{2,-}) & : (\Gamma_1 \cup \NN 1, \Gamma_2 \sqcup 
	\NN 1, \tau_-) \to (X_1,X_2,\tau)
	\end{align*}
	from the positive and negative translation pre-actions.
\end{remark}
\begin{lemma}\label{Paths from BS tree to BS graphs}
	In the context of the above remark, the Bass-Serre morphism 
	$\mathbf{BS}(\varphi_1,\varphi_2)$, from the Bass-Serre tree $\Tc$ to 
	the Bass-Serre graph $\Gr_\tau$, sends $\pat_{j,1_\Gamma}^\Tc(\gamma)$ 
	onto 
	$\pat_{j,x_j}^{\Gr_\tau}(\gamma)$ for all $j\in\{1,2\}$ and all 
	$\gamma\in \NN i$ with $i\in\{1,2\}\setminus\{j\}$.
\end{lemma}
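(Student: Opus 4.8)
The plan is to follow the proof of Lemma~\ref{Paths from BS tree to BS graphs ; HNN} almost verbatim, the only new ingredient being that the auxiliary sequences now alternate between the two vertex sets. Since the amalgam construction is symmetric in its two factors, it is enough to treat $j=1$ (the case $j=2$ being obtained by swapping the roles of $\Gamma_1$ and $\Gamma_2$, of $X_1$ and $X_2$, and of $\tau$ and $\tau^{-1}$). So fix $\gamma\in\NN 2$. By Remark~\ref{RemarkPathAmalgams}(1) neither path changes if we replace $\gamma$ by its reduced part $c_1\cdots c_n$, where $c_1\in C_2-\{1\}$ and the $c_k$ alternate between $C_2-\{1\}$ (for $k$ odd) and $C_1-\{1\}$ (for $k$ even). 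Let $(e_0,\dots,e_n)$ be the edges of $\pat_{1,1_\Gamma}^{\Tc}(\gamma)$, built from the auxiliary sequence $(\gamma_k)_{0\le k\le n+1}$ in the translation pre-action and from $c_0=1,c_1,\dots,c_n$, and let $(e'_0,\dots,e'_n)$ be the edges of $\pat_{1,x_1}^{\Gr_\tau}(\gamma)$, built from the auxiliary sequence $(x_k)_{0\le k\le n+1}$ in $X_1\cup X_2$; here $\gamma_k$ and $x_k$ lie in the ``first'' copy of $\Gamma$, resp.\ in $X_1$, when $k$ is even, and in the ``second'' copy, resp.\ in $X_2$, when $k$ is odd.

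First I would prove, by induction on $k$, that $x_k=\varphi_1(\gamma_k)$ when $k$ is even and $x_k=\varphi_2(\gamma_k)$ when $k$ is odd. The case $k=0$ is the identity $\varphi_1(1_\Gamma)=x_1$ from Remark~\ref{PseudoInitialObjects}. For the inductive step I would use the recursive definitions $\gamma_{k+1}=\gamma_kc_k$ (in the tree $\tau=\id$, so this holds in both parities) and $x_{k+1}=x_kc_k\tau^{\pm1}$, together with the $\Gamma_j$-equivariance of $\varphi_j$ and the relation $\varphi_2(y)=\varphi_1(y)\tau$ for all $y\in\Gamma$, which follows at once from $\varphi_j(y)=x_j\,y^{\pi_{j,\tau}}$ and $x_2=x_1\tau$ as recorded in Remark~\ref{PseudoInitialObjects}. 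Explicitly, if $k$ is even then $c_k\in C_1$ and $x_{k+1}=x_kc_k\tau$, so
\[
x_{k+1}=\varphi_1(\gamma_k)\,c_k\,\tau=\varphi_1(\gamma_kc_k)\,\tau=\varphi_2(\gamma_{k+1}),
\]
while if $k$ is odd then $c_k\in C_2$ and $x_{k+1}=x_kc_k\tau^{-1}=\varphi_2(\gamma_k)\,c_k\,\tau^{-1}=\varphi_2(\gamma_kc_k)\,\tau^{-1}=\varphi_1(\gamma_{k+1})$, as wanted.

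With this in hand the conclusion is immediate from Lemma~\ref{BSmorphismsLocallyInjective}. For $k$ even, the source of $e_k=e_{1,\gamma_k}(c_k)$ is $\gamma_k\Gamma_1$, so the restriction of $\mathbf{BS}(\varphi_1,\varphi_2)$ to that star equals $e_{1,\varphi_1(\gamma_k)}\circ e_{1,\gamma_k}^{-1}=e_{1,x_k}\circ e_{1,\gamma_k}^{-1}$, hence $\mathbf{BS}(\varphi_1,\varphi_2)(e_k)=e_{1,x_k}(c_k)=e'_k$; the case $k$ odd is identical with the index $2$ in place of $1$. Therefore $\mathbf{BS}(\varphi_1,\varphi_2)$ carries $\pat_{1,1_\Gamma}^{\Tc}(\gamma)$ onto $\pat_{1,x_1}^{\Gr_\tau}(\gamma)$. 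I do not expect any genuine obstacle here: the mathematical content is exactly that of the HNN case already treated, and the only point requiring care is the parity bookkeeping — keeping straight which of $\varphi_1,\varphi_2$, which vertex set, and which transversal $C_1$ or $C_2$ intervenes at each index — which is precisely the extra layer of complexity flagged at the beginning of Section~\ref{SectFreeGlobalizationAmalgams}.
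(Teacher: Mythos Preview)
Your proof is correct and follows essentially the same approach as the paper: reduce to $j=1$, show by induction on $k$ that $x_k=\varphi_1(\gamma_k)$ for $k$ even and $x_k=\varphi_2(\gamma_k)$ for $k$ odd, then apply Lemma~\ref{BSmorphismsLocallyInjective} edgewise. You actually supply more detail on the inductive step (via the identity $\varphi_2=\varphi_1\tau$) than the paper, which simply declares the induction ``easy''.
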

\begin{proof}
	We make the proof in the case $j=1$ only; the case $j=2$ is similar.
	
	Let us consider $\gamma \in \NN 2$, and write its normal form: $\gamma = c_1 \cdots c_n \sigma$. 
	Let us denote by $(e_0,e_1, \ldots, e_n)$ the edges of $\pat_{1,1_\Gamma}^\Tc(\gamma)$,
	and by $(e'_0,e'_1, \ldots, e'_n)$ the edges of $\pat_{1,x_1}^{\Gr_\tau}(\gamma)$. 
	The auxiliary sequences in $\Gamma$ and $X$ used in the construction of the paths will be denoted by $(\gamma_0,\ldots, \gamma_{n+1})$ and $(x_0,\ldots, x_{n+1})$ respectively.
	
	An easy induction shows that $x_i = \varphi_1(\gamma_i)$ when $i$ even, and $x_i = \varphi_2(\gamma_i)$ when $i$ is odd.
	Then, we notice that the source of $e_i = e_{1,\gamma_{i}}(c_i)$
	is $\gamma_{i} \Gamma_1$ when $i$ even,
	and the source of $e_i = e_{2,\gamma_{i}}(c_i)$
	is $\gamma_{i} \Gamma_2$ when $i$ odd.
	Thus, using Lemma \ref{BSmorphismsLocallyInjective}, 
	we get
	\[
	\mathbf{BS}(\varphi_1,\varphi_2)(e_i) =
	e_{1,\varphi_1(\gamma_{i})}^{} \circ e_{1,\gamma_{i}}^{-1} 
	\big( e_{1,\gamma_{i}}(c_i) \big) 
	= e_{1,x_{i}}(c_i) = e'_i
	\]
	when $i$ is even, and
	\[
	\mathbf{BS}(\varphi_1,\varphi_2)(e_i) =
	e_{2,\varphi_2(\gamma_{i})}^{} \circ e_{2,\gamma_{i}}^{-1} 
	\big( e_{2,\gamma_{i}}(c_i) \big) 
	= e_{2,x_{i}}(c_i) = e'_i
	\]
	when $i$ is odd.
\end{proof}
Therefore, if $x_1\Sigma_1$ is a treeing edge then, the image of $\mathbf{BS}(\varphi_{1,+},\varphi_{2,+})$ is the half-tree of $x_1\Sigma_1$ while, if $x_2\Sigma_2$ is a treeing edge, the image of $\mathbf{BS}(\varphi_{1,-},\varphi_{2,-})$ is the half-tree of $x_2\Sigma_2$.

\subsection{The free globalization of a pre-action of an amalgam}

First, let us notice that, for any $\sigma\in \Sigma$, there is an 
automorphism of pre-actions induced by left translation by $\sigma$ 
\[
(\gamma\mapsto \sigma\gamma,\gamma\mapsto \sigma\gamma)
\]
for each of the following pre-actions:
\begin{itemize}
	\item the translation pre-action $(\Gamma,\Gamma,\id)$;
	\item the positive translation pre-action $(\Gamma_1 \sqcup \NN 2,\Gamma_2 
	\cup \NN 2,\tau_+)$;
	\item the negative translation pre-action $(\Gamma_1 \cup \NN 1, \Gamma_2 
	\sqcup \NN 1, \tau_-)$.
\end{itemize}
Indeed, all sets $\Gamma,\Gamma_1,\Gamma_2,\Sigma,\NN 1,\NN 2$ are 
invariant by 
left translation by $\sigma$, hence the domains and range of $\tau_+$ and 
$\tau_-$ are invariant by left translation by $\sigma$. Then checking we 
have morphisms of pre-actions is a straightforward computation, and 
invertibility is obvious. 

\begin{proposition}\label{prop:chara positive treeing edge for amalgam}
	Consider a global pre-action $(X_1,X_2,\tau)$, and basepoints $x_1\in 
	X_1$ and $x_2=x_1\tau \in X_2$. The following are equivalent:
	\begin{enumerate}
		\item[(i)] the morphism of pre-actions $(\varphi_{1,+},\varphi_{2,+}): 
		(\Gamma_1 \sqcup \NN 2,\Gamma_2 \cup \NN 2,\tau_+) \to (X_1,X_2,\tau)$ 
		of Remark \ref{PseudoInitialObjects} is injective;
		\item[(ii)] the morphism of graphs $\mathbf{BS}(\varphi_{1,+},\varphi_{2,+})$ is injective;
		\item[(iii)] the edge $x_1\Sigma_1$ in the Bass-Serre graph $\mathbf{BS}(X_1,X_2,\tau)$ is a treeing edge.
	\end{enumerate}
\end{proposition}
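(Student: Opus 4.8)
The plan is to follow the proof of Proposition~\ref{prop:chara positive treeing edge for HNN} almost verbatim; the only genuine difference is that the Bass-Serre graph of an amalgam has two kinds of vertices, so every step must be carried out ``on both sides'' $X_1$ and $X_2$ at once.

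First I would prove (i)$\Leftrightarrow$(ii). Recall from Remark~\ref{PseudoInitialObjects} that $\varphi_{j,+}(\gamma)=x_j\gamma^{\pi_{j,\tau}}$ for $j=1,2$, so that $\mathbf{BS}(\varphi_{1,+},\varphi_{2,+})$ sends the vertex $\gamma\Gamma_j$ to $x_j\gamma^{\pi_{j,\tau}}\Gamma_j$, the positive edge $\gamma\Sigma_1$ to $x_1\gamma^{\pi_{1,\tau}}\Sigma_1$, and the negative edge $\gamma\Sigma_2$ to $x_2\gamma^{\pi_{2,\tau}}\Sigma_2$. Since $\Gamma_j$ acts freely on $X_j$, the map $\varphi_{j,+}$ restricts to a \emph{bijection} from any $\Gamma_j$-orbit of its domain onto a $\Gamma_j$-orbit of $X_j$, and likewise on $\Sigma_j$-orbits; consequently $\varphi_{j,+}$ is injective if and only if the induced map on $\Gamma_j$-orbits is injective. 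These induced orbit maps are exactly the vertex and edge maps of $\mathbf{BS}(\varphi_{1,+},\varphi_{2,+})$, and vertices of $\Gr_\tau$ of the two types can never be identified with one another; hence $(\varphi_{1,+},\varphi_{2,+})$ is injective if and only if $\mathbf{BS}(\varphi_{1,+},\varphi_{2,+})$ is an injective morphism of graphs.

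Next I would dispatch (iii)$\Rightarrow$(ii): if $x_1\Sigma_1$ is a treeing edge, then by the observation following Lemma~\ref{Paths from BS tree to BS graphs} the image of $\mathbf{BS}(\varphi_{1,+},\varphi_{2,+})$ is exactly the half-tree of $x_1\Sigma_1$, which is a tree; as $\mathbf{BS}(\varphi_{1,+},\varphi_{2,+})$ is locally injective by Lemma~\ref{BSmorphismsLocallyInjective} and its source, the half-tree of $\Sigma_1$ in $\Tc$, is connected, the general fact that a locally injective morphism from a connected graph to a tree is injective (Section~\ref{PrelimGraphs}) gives (ii). For the remaining implication (ii)$\Rightarrow$(iii), I would verify condition~\ref{cond: no reduced path s(e) to s(e)} of Lemma~\ref{Lemma Treeing Edge} for $e=x_1\Sigma_1$. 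Suppose $\omega$ is a reduced path in $\mathbf{BS}(X_1,X_2,\tau)$ from $x_1\Gamma_1$ to $x_1\Gamma_1$ beginning with $x_1\Sigma_1$. By Remark~\ref{Remark Path Amalgams} we may write $\omega=\pat_{1,x_1}(\gamma)$ for some $\gamma\in\NN 2$, and by Lemma~\ref{Paths from BS tree to BS graphs} we have $\omega=\mathbf{BS}(\varphi_{1,+},\varphi_{2,+})\big(\pat_{1,1_\Gamma}^{\Tc}(\gamma)\big)$, where $\pat_{1,1_\Gamma}^{\Tc}(\gamma)$ stays inside the half-tree of $\Sigma_1$ on which $\mathbf{BS}(\varphi_{1,+},\varphi_{2,+})$ is defined. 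Now $\pat_{1,1_\Gamma}^{\Tc}(\gamma)$ is a reduced path of positive length in the tree $\Tc$, hence a geodesic, so its range differs from its source $1_\Gamma\Gamma_1$. By the injectivity of $\mathbf{BS}(\varphi_{1,+},\varphi_{2,+})$ the range of $\omega$ then differs from $x_1\Gamma_1$, a contradiction; thus $x_1\Sigma_1$ is a treeing edge.

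Most of the work is bookkeeping inherited from the HNN case; the step demanding the most care is (i)$\Leftrightarrow$(ii), where one must keep the $\Gamma_1$- and $\Gamma_2$-pictures rigorously separate and repeatedly invoke freeness to pass between injectivity of the set-level maps $\varphi_{j,+}$ and injectivity of the induced vertex and edge maps.
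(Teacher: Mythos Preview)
Your proof is correct and follows essentially the same route as the paper's: freeness of the $\Gamma_j$-actions gives (i)$\Leftrightarrow$(ii) by passing between set-level injectivity and injectivity on orbits, local injectivity into a tree gives (iii)$\Rightarrow$(ii), and the path correspondence of Remark~\ref{Remark Path Amalgams} together with Lemma~\ref{Paths from BS tree to BS graphs} and Lemma~\ref{Lemma Treeing Edge} gives (ii)$\Rightarrow$(iii). The only cosmetic difference is that you phrase (ii)$\Rightarrow$(iii) by contradiction, whereas the paper shows directly that any reduced path starting with $x_1\Sigma_1$ ends away from $x_1\Gamma_1$.
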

\begin{proof}
	For all $\gamma\in \Gamma_j \cup \NN 2$, recall that $\varphi_{j,+}(\gamma) 
	= x_j\gamma^{\pi_{j,\tau}}$, so that 
	$\mathbf{BS}(\varphi_{1,+},\varphi_{2,+})$ sends vertices $\gamma\Gamma_j$ 
	to $x_j \gamma^{\pi_{j,\tau}} \Gamma_j$, and edges $\gamma\Sigma_j$ to 
	$x_j \gamma^{\pi_{j,\tau}} \Sigma_j$. Fixing $\gamma$, we get $\varphi_{j,+}(\gamma 
	g) = x_j\gamma^{\pi_{j,\tau}}g^{\pi_j}$ for $g\in \Gamma_j$; 
	since $X_j\curvearrowleft^{\pi_j}\Gamma_j$ is free, $\varphi_{j,+}$ 
	realizes a bijection between $\gamma\Gamma_j$ and $x_j 
	\gamma^{\pi_{j,\tau}} \Gamma_j$, and also a bijection between 
	$\gamma\Sigma_j$ and $x_j \gamma^{\pi_{j,\tau}} \Sigma_j$. Consequently, 
	$\varphi_{j,+}$ is injective if an only if $\gamma\Gamma_j\mapsto x_j 
	\gamma^{\pi_{j,\tau}} \Gamma_j$ and $\gamma\Sigma_j \mapsto x_j 
	\gamma^{\pi_{j,\tau}} \Sigma_j$ are both injective. This proves that (i) 
	and (ii) are equivalent. 
	
	The implication (iii) $\implies$ (ii) follows from the fact that 
	when $x_1\Sigma_1$ is a treeing edge 
	$\mathbf{BS}(\varphi_{1,+},\varphi_{2,+})$ is locally injective from the 
	half-tree of $\Sigma_1$ to the half-tree of $x_1\Sigma_1$, hence 
	$\mathbf{BS}(\varphi_{1,+},\varphi_{2,+})$ is injective. 
	
	Finally assume 
	(ii) and let $\omega$ be a reduced path starting by the edge 
	$x_1\Sigma_1$. By Remark \ref{Remark Path Amalgams} there exists 
	$\gamma\in \NN 2$ such that $\omega=\pat_{1,x_1}(\gamma)$. By Lemma 
	\ref{Paths from BS tree to BS graphs}, $\omega$ is the image by 
	$\mathbf{BS}(\varphi_{1,+},\varphi_{2,+})$ of 
	$\pat_{1,1_\Gamma}^\Tc(\gamma)$. 
	Since $\mathbf{BS}(\varphi_{1,+},\varphi_{2,+})$ is supposed to be 
	injective and since the last vertex of $\pat_{1,1_\Gamma}^\Tc(\gamma)$ is 
	not 
	$\Gamma_1$, we deduce that the last vertex of $\omega$ is not 
	$x_1\Gamma_1$. Hence, $x_1\Sigma_1$ is a treeing edge by Lemma \ref{Lemma 
		Treeing Edge}.\end{proof}

By a very similar argument, we get also the following result.
\begin{proposition}\label{prop:chara negative treeing edge for amalgam}
	Consider a global pre-action $(X_1,X_2,\tau)$, and basepoints $x_1\in 
	X_1$ and $x_2=x_1\tau \in X_2$. The following are equivalent:
	\begin{enumerate}
		\item[(i)] the morphism of pre-actions $(\varphi_{1,-},\varphi_{2,-}): 
		(\Gamma_1 \cup \NN 1, \Gamma_2 \sqcup \NN 1, \tau_-) \to 
		(X_1,X_2,\tau)$ of Remark \ref{PseudoInitialObjects} is injective;
		\item[(ii)] the morphism of graphs $\mathbf{BS}(\varphi_{1,-},\varphi_{2,-})$ is injective;
		\item[(iii)] the edge $x_2\Sigma_2$ in the Bass-Serre graph $\mathbf{BS}(X_1,X_2,\tau)$ is a treeing edge.
	\end{enumerate}
\end{proposition}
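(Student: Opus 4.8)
The plan is to mirror, \emph{mutatis mutandis}, the proof of Proposition~\ref{prop:chara positive treeing edge for amalgam}, interchanging the roles of the two vertex groups and replacing the half-tree of $x_1\Sigma_1$ by that of $x_2\Sigma_2$ throughout.

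First I would establish that (i) and (ii) are equivalent. Recall from Remark~\ref{PseudoInitialObjects} that $\varphi_{j,-}(\gamma) = x_j\gamma^{\pi_{j,\tau}}$ for $\gamma$ in the relevant component, so $\mathbf{BS}(\varphi_{1,-},\varphi_{2,-})$ sends a vertex $\gamma\Gamma_j$ to $x_j\gamma^{\pi_{j,\tau}}\Gamma_j$ and an edge $\gamma\Sigma_j$ to $x_j\gamma^{\pi_{j,\tau}}\Sigma_j$. Since $X_j\curvearrowleft^{\pi_j}\Gamma_j$ is free, for a fixed $\gamma$ the map $\varphi_{j,-}$ restricts to a bijection from $\gamma\Gamma_j$ onto $x_j\gamma^{\pi_{j,\tau}}\Gamma_j$ and from $\gamma\Sigma_j$ onto $x_j\gamma^{\pi_{j,\tau}}\Sigma_j$; hence $\varphi_{j,-}$ is injective if and only if the induced maps on $\Gamma_j$-cosets and on $\Sigma_j$-cosets are, that is, if and only if $\mathbf{BS}(\varphi_{1,-},\varphi_{2,-})$ is injective.

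Next I would show (iii) $\Rightarrow$ (ii): when $x_2\Sigma_2$ is a treeing edge, the discussion following Lemma~\ref{Paths from BS tree to BS graphs} identifies the image of $\mathbf{BS}(\varphi_{1,-},\varphi_{2,-})$ with the half-tree of $x_2\Sigma_2$, which is a tree by definition of a treeing edge; as $\mathbf{BS}(\varphi_{1,-},\varphi_{2,-})$ is locally injective (Lemma~\ref{BSmorphismsLocallyInjective}) and its domain, the half-tree of $\Sigma_2$ in $\Tc$, is connected, and a locally injective morphism from a connected graph into a tree is injective, we conclude. For (ii) $\Rightarrow$ (iii) I would invoke Lemma~\ref{Lemma Treeing Edge}: it suffices to check there is no reduced path from $x_2\Gamma_2$ to itself starting by $x_2\Sigma_2$. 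Given such a reduced path $\omega$, the surjectivity of $\pat_{2,x_2}$ established in Section~\ref{SubsectionPathsAmalgams} yields $\gamma\in\NN 1$ with $\omega = \pat_{2,x_2}(\gamma)$, and Lemma~\ref{Paths from BS tree to BS graphs} exhibits $\omega$ as the image under $\mathbf{BS}(\varphi_{1,-},\varphi_{2,-})$ of the reduced path $\pat_{2,1_\Gamma}^{\Tc}(\gamma)$ in the tree $\Tc$; since the latter does not return to its source $\Gamma_2$ and $\mathbf{BS}(\varphi_{1,-},\varphi_{2,-})$ is injective, $\omega$ cannot return to $x_2\Gamma_2$, as desired.

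I do not anticipate any genuine obstacle: the argument is formally identical to the positive case, and the only point requiring care is the bookkeeping of coset representatives and the consistent use of the $\pat_{2,\cdot}$ (rather than $\pat_{1,\cdot}$) paths, together with the half-tree of $x_2\Sigma_2$ in place of that of $x_1\Sigma_1$.
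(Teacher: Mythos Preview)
Your proposal is correct and follows exactly the approach the paper intends: the paper itself merely writes ``By a very similar argument, we get also the following result'' after Proposition~\ref{prop:chara positive treeing edge for amalgam}, and your write-up faithfully carries out that symmetric argument with the appropriate bookkeeping (using $\pat_{2,\cdot}$, the half-tree of $x_2\Sigma_2$, and $\gamma\in\NN 1$).
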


\begin{remark}
	Putting the two previous propositions together, one can show that, 
	given a global pre-action of $\Gamma$, its Bass-Serre graph is a forest 
	if and only if the action $X_1\curvearrowleft^{\pi_{1,\tau}}\Gamma$ (or 
	equivalently $X_2\curvearrowleft^{\pi_{2,\tau}}\Gamma$) is free. 
\end{remark}

Say that a pre-action is \textbf{transitive} when its Bass-Serre graph is 
connected. Note that a global pre-action $(X_1,X_2,\tau)$ is transitive if 
and only if the action $X_1 \curvearrowleft^{\pi_{1,\tau}}\Gamma$, or 
equivalently $X_2 \curvearrowleft^{\pi_{2,\tau}}\Gamma$, is a transitive 
action. 
We will show that every transitive pre-action has a canonical extension to 
a transitive action, which is \textbf{as free as possible}. The 
construction is again  better described in terms of the Bass-Serre graph: 
we are 
going to attach as many treeing edges as possible to it.

\begin{theorem}\label{thm: amalgam free globalization}
	Every transitive $\Gamma$-pre-action $(X_1,X_2,\tau)$ admits a 
	transitive and global extension $(\tilde X_1, \tilde X_2,\tilde \tau)$ 
	which satisfies the following universal property: given any transitive 
	and global extension $(Y_1,Y_2,\tau')$ of $(X_1,X_2,\tau)$, there 
	exists a unique morphism of pre-actions $(\varphi_1,\varphi_2):(\tilde 
	X_1, \tilde X_2,\tilde \tau) \to (Y_1,Y_2,\tau')$ such that 
	$$(\varphi_{1\restriction X_1}, \varphi_{2\restriction X_2})=(\id_{X_1},\id_{X_2}) .$$
	Moreover, all the (oriented) edges from the Bass-Serre graph $\mathbf{BS}(X_1,X_2,\tau)$ to its complement in $\mathbf{BS}(\tilde X_1, \tilde X_2,\tilde \tau)$ are treeing edges.
\end{theorem}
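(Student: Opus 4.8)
The plan is to transcribe the proof of Theorem~\ref{thm: HNN free globalization} to the amalgam setting: I build $(\tilde X_1,\tilde X_2,\tilde\tau)$ from $(X_1,X_2,\tau)$ by attaching to its Bass-Serre graph as many treeing edges as the missing cosets force. Concretely, I would first enumerate the $\Sigma_1$-orbits of $X_1$ not contained in $\dom(\tau)$ as $(x_i\Sigma_1)_{i\in I}$, and the $\Sigma_2$-orbits of $X_2$ not contained in $\rng(\tau)$ as $(y_j\Sigma_2)_{j\in J}$. For each $i\in I$ I take a copy $(P^1_i,P^2_i,\tau_{+,i})$ of the positive translation pre-action $(\Gamma_1\sqcup\NN 2,\Gamma_2\cup\NN 2,\tau_+)$, and for each $j\in J$ a copy $(N^1_j,N^2_j,\tau_{-,j})$ of the negative translation pre-action $(\Gamma_1\cup\NN 1,\Gamma_2\sqcup\NN 1,\tau_-)$, all these copies being pairwise disjoint and disjoint from $X_1$ and $X_2$. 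Then I set
\[ \tilde X_1 = \left(X_1 \sqcup \bigsqcup_{i\in I} P^1_i \sqcup \bigsqcup_{j\in J} N^1_j\right)\bigg/_{\sim}\,, \qquad \tilde X_2 = \left(X_2 \sqcup \bigsqcup_{i\in I} P^2_i \sqcup \bigsqcup_{j\in J} N^2_j\right)\bigg/_{\sim}\,, \]
where $\sim$ identifies $x_i g\in X_1$ with the element $g$ of the base $\Gamma_1$-orbit $\Gamma_1\subseteq P^1_i$, for every $i\in I$ and $g\in\Gamma_1$, and identifies $y_j g\in X_2$ with the element $g$ of the base $\Gamma_2$-orbit $\Gamma_2\subseteq N^2_j$, for every $j\in J$ and $g\in\Gamma_2$. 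Since these identifications merely glue $\Gamma_1$-orbits (resp. $\Gamma_2$-orbits) pointwise along the free actions, $\tilde X_1$ and $\tilde X_2$ inherit free $\Gamma_1$- and $\Gamma_2$-actions. Finally I put $\tilde\tau = \tau\sqcup\bigsqcup_{i\in I}\tau_{+,i}\sqcup\bigsqcup_{j\in J}\tau_{-,j}$; this is a well-defined partial bijection because $\dom(\tau_{+,i})$ meets the other components of $\tilde X_1$ only along the orbit $x_i\Sigma_1\notin\dom(\tau)$ while $\rng(\tau_{+,i})$ lies in a fresh component, and symmetrically for each $\tau_{-,j}$.

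Next I would verify the conclusions in turn, mirroring the HNN proof. For \emph{transitivity}: the three kinds of pieces are transitive, their Bass-Serre graphs being respectively $\mathbf{BS}(X_1,X_2,\tau)$, a half-tree of $\Tc$, and a half-tree of $\Tc$, and the identifications connect them inside $\mathbf{BS}(\tilde X_1,\tilde X_2,\tilde\tau)$. For \emph{globality}: using $\tau_+=\id_{\restriction\Sigma\sqcup\NN 2}$, $\tau_-=\id_{\restriction\Sigma\sqcup\NN 1}$ and the partition $\Gamma=\Sigma\sqcup\NN 1\sqcup\NN 2$, one checks that every $\Sigma_1$-orbit of $\tilde X_1$ lies in $\dom(\tilde\tau)$ — the orbits inherited from $X_1$ are either already in $\dom(\tau)$ or equal to some $x_i\Sigma_1$, now handled by $\tau_{+,i}$, while the orbits coming from the fresh parts of $P^1_i$ or $N^1_j$ lie in $\dom(\tau_{+,i})$ resp. $\dom(\tau_{-,j})$ — and symmetrically every $\Sigma_2$-orbit of $\tilde X_2$ lies in $\rng(\tilde\tau)$; hence $\tilde\tau$ is a bijection $\tilde X_1\to\tilde X_2$. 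For the \emph{treeing edges}: the oriented edges from $\mathbf{BS}(X_1,X_2,\tau)$ to its complement in $\mathbf{BS}(\tilde X_1,\tilde X_2,\tilde\tau)$ are exactly the $x_i\Sigma_1$ ($i\in I$) and the $y_j\Sigma_2$ ($j\in J$); for each $i$ the morphism of pre-actions from the positive translation pre-action given by Remark~\ref{PseudoInitialObjects} with basepoint $x_i$ is injective, being an isomorphism onto the copy $(P^1_i,P^2_i,\tau_{+,i})$, so $x_i\Sigma_1$ is a treeing edge by Proposition~\ref{prop:chara positive treeing edge for amalgam}, and symmetrically $y_j\Sigma_2$ is a treeing edge by Proposition~\ref{prop:chara negative treeing edge for amalgam}.

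For the \emph{universal property}, given a transitive and global extension $(Y_1,Y_2,\tau')$ of $(X_1,X_2,\tau)$, I would define the desired morphism as the union of $(\id_{X_1},\id_{X_2})$ with the morphisms $(P^1_i,P^2_i,\tau_{+,i})\to(Y_1,Y_2,\tau')$ and $(N^1_j,N^2_j,\tau_{-,j})\to(Y_1,Y_2,\tau')$ furnished by Remark~\ref{PseudoInitialObjects}, taking as basepoints the images in $Y_1,Y_2$ of the points $x_i$ resp. $y_j$; these partial data agree on the glued orbits by $\Gamma_1$- and $\Gamma_2$-equivariance, and each of them is uniquely determined, so the union is the unique morphism of pre-actions restricting to the identity on $(X_1,X_2)$. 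As in the HNN case, uniqueness then also shows that $(\tilde X_1,\tilde X_2,\tilde\tau)$ is unique up to isomorphism, which justifies calling it \emph{the} free globalization.

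The only genuinely delicate point, I expect, is the bookkeeping of the gluing and the definition of $\tilde\tau$ in the first paragraph: one must simultaneously check that $\tau$, the $\tau_{+,i}$ and the $\tau_{-,j}$ are compatible, overlapping only on the single orbits $x_i\Sigma_1$ resp. $y_j\Sigma_2$ that each was designed to fill, and that after gluing nothing is left out, i.e. every $\Sigma_1$-orbit of $\tilde X_1$ ends in the domain and every $\Sigma_2$-orbit of $\tilde X_2$ in the range of $\tilde\tau$. This needs no new idea beyond the proof of Theorem~\ref{thm: HNN free globalization}, but must be carried out with care because there are now two sets $X_1$, $X_2$ to keep apart and because the relevant half-trees are glued along their root vertices, which have degree one.
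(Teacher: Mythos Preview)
Your proposal is correct and follows essentially the same approach as the paper's own proof: both enumerate the missing $\Sigma_1$- and $\Sigma_2$-orbits, glue in copies of the positive and negative translation pre-actions along the corresponding base $\Gamma_j$-orbits, and then verify transitivity, globality, the treeing-edge property (via Propositions~\ref{prop:chara positive treeing edge for amalgam} and~\ref{prop:chara negative treeing edge for amalgam}), and the universal property (via Remark~\ref{PseudoInitialObjects}) exactly as you outline. The only differences are notational.
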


\begin{proof}
	
	We will obtain the Bass-Serre graph of this action by adding only treeing edges to the Bass-Serre graph of the pre-action. 
	First enumerate the $\Sigma_1$-orbits which do not belong to the domain 
	of $\tau$ as $(x_i\Sigma_1)_{i\in I_+}$, and the $\Sigma_2$-orbits 
	which do not belong to the range of $\tau$ as $(x_i\Sigma_2)_{i\in 
		I_-}$, with disjoint index sets $I_+, I_-$. Then, we take copies 
	$(Y_{1,i},Y_{2,i}, \tau_i)$,  of the positive translation pre-action 
	$(\Gamma_1 \sqcup \NN 2,\Gamma_2 \cup \NN 2,\tau_+)$, for $i\in I_+$, 
	and copies $(Y_{1,i},Y_{2,i}, \tau_i)$ , of the negative translation 
	pre-action $(\Gamma_1 \cup \NN 1, \Gamma_2 \sqcup \NN 1, \tau_-)$, for 
	$i\in I_-$, which are pairwise disjoint (by this, we mean $Y_{1,i}\cup 
	Y_{2,i}$ is disjoint from $Y_{1,i'}\cup Y_{2,i'}$ whenever $i\neq i'$), 
	and disjoint from the original pre-action $(X_1,X_2,\tau)$. We set then
	\[
	\tilde X_1 = {\left( X_1 \sqcup \bigsqcup_{i\in I_+} Y_{1,i} \sqcup \bigsqcup_{i\in I_-} Y_{1,i} \right)}\bigg/_{\sim_1}
	\quad \text{ and } \quad
	\tilde X_2 = {\left( X_2 \sqcup \bigsqcup_{i\in I_+} Y_{2,i} \sqcup \bigsqcup_{i\in I_-} Y_{2,i} \right)}\bigg/_{\sim_2}
	\]
	where $\sim_1$ identifies the element $x_i g \in X_1$ with $g\in 
	\Gamma_1 \subset Y_{1,i}$, for each $i\in I_+$ and $g\in\Gamma_1$, and 
	$\sim_2$ identifies the element $x_i h \in X_2$ with $h\in \Gamma_2 
	\subset Y_{2,i}$, for each $i\in I_-$ and $h\in\Gamma_2$. Since the 
	identifications just glue some orbits pointwise and respect the 
	$\Gamma_j$-actions, $\tilde X_1$ is endowed with a free 
	$\Gamma_1$-action, and $\tilde X_2$ is endowed with a free 
	$\Gamma_2$-action. Now, we set
	\[
	\tilde \tau = \tau \sqcup \bigsqcup_{i\in I_+} \tau_i \sqcup \bigsqcup_{i\in I_-} \tau_i \, ,
	\]
	which is possible since the domain of $\tau_i$, for $i\in I_+$, intersects other components in $\tilde X_1$ only in the orbit $x_i\Sigma_1$, the range of $\tau_i$, for $i\in I_+$, does not intersect other components in $\tilde X_2$, and the situation is analogue for $\tau_i$ with $i\in I_-$. We have got a pre-action $(\tilde X_1, \tilde X_2,\tilde \tau)$.
	
	This pre-action is transitive, since all pre-actions $(X_1,X_2,\tau)$ and $(Y_{1,i},Y_{2,i}, \tau_i)$ are, and the identifications make connections between each 
	$(Y_{1,i},Y_{2,i}, \tau_i)$ and $(X_1,X_2,\tau)$ in the Bass-Serre graph. 
	It is also global, since every $\Sigma_1$-orbit in $Y_{1,i}$, 
	respectively $\Sigma_2$-orbit in $Y_{2,i}$, which is not in the domain, 
	respectively the range, of $\tau_i$ has been identified with an orbit 
	in $X_1$, respectively $X_2$, 
	and every $\Sigma_1$-orbit in $X_1$, respectively $\Sigma_2$-orbit in $X_2$, is now in the domain, respectively the range, of $\tilde \tau$.
	
	Moreover, the (oriented) edges from the Bass-Serre graph 
	$\mathbf{BS}(X_1,X_2,\tau)$ to its complement in $\mathbf{BS}(\tilde 
	X_1, \tilde X_2,\tilde \tau)$ are exactly the edges $x_i\Sigma_1$ for 
	$i\in I_+$, and the edges $x_i\Sigma_2$ for $i\in I_-$. For each $i\in 
	I_+$, the morphism of pre-actions $(\varphi_{1,+},\varphi_{2,+}): 
	(\Gamma_1 \sqcup \NN 2,\Gamma_2 \cup \NN 2,\tau_+) \to (\tilde X_1, \tilde X_2,\tau)$ 
	of Remark \ref{PseudoInitialObjects}, with basepoints $x_i\in \tilde 
	X_1$ and $x_i \tilde \tau \in \tilde X_2$, is injective since it 
	realizes an isomorphism onto $(Y_{1,i},Y_{2,i}, \tau_i)$, hence 
	$x_i\Sigma_1$ is a treeing edge by Proposition \ref{prop:chara positive 
		treeing edge for amalgam}. One proves similarly that the edges 
	$x_i\Sigma_2$ are treeing edges using Proposition \ref{prop:chara 
		negative treeing edge for amalgam}. 
	
	It now remains to prove the universal property. To do so, take any 
	transitive and global extension $(Y_1,Y_2,\tau')$ of $(X_1,X_2,\tau)$. 
	Then, the unique morphism of pre-actions $(\varphi_1,\varphi_2)$ from 
	$(\tilde X_1, \tilde X_2,\tilde \tau)$ to $(Y_1,Y_2,\tau')$ such that 
	$(\varphi_{1\restriction X_1}, \varphi_{2\restriction 
		X_2})=(\id_{X_1},\id_{X_2})$ is obtained by taking the union of 
	$(\id_{X_1},\id_{X_2})$ with the morphisms 
	$(\varphi_{1,i},\varphi_{2,i})$ from $(Y_{1,i},Y_{2,i}, \tau_i)$ to 
	$(Y_1,Y_2,\tau')$ coming from Remark \ref{PseudoInitialObjects} with 
	respect to basepoints $x_i$ and $x_i\tilde \tau^{\pm 1}$, which are 
	unique.		
\end{proof}

It is straightforward to deduce from the universal property above that the 
action we just built is unique up to isomorphism. We thus call it 
\textbf{the} \textbf{free globalization} of the pre-action 
$(X_1,X_2,\tau)$. The interested reader can establish a connection with the 
notion of partial action, as we did in section \ref{sec: partial actions 
	HNN} for HNN extensions. For the sake of brevity, we just observe the 
following useful analogue of Proposition~\ref{Free globalization faithful HNN}.

\begin{remark}\label{Free globalization faithful}
	In the context of Theorem \ref{thm: amalgam free globalization}, if the 
	pre-action $(X_1,X_2,\tau)$ is not global, then the conjugate actions 
	$\pi_{1,\tilde\tau}$ and $\pi_{2,\tilde\tau}$ induced by the free 
	globalization $(\tilde X_1,\tilde X_2, \tilde \tau)$ are highly 
	faithful. Indeed, by Corollary \ref{EquivStrongAndHighFaithfulness}, it 
	suffices to prove that $\pi_{1,\tilde\tau}$ is strongly faithful.
	Notice that $(\tilde X_1,\tilde X_2, \tilde \tau)$ contains a copy of the 
	positive  (or of the negative) translation pre-action, 
	which correspond to a half-tree in $\mathbf{BS}(\tilde X_1,\tilde X_2, 
	\tilde \tau)$. Now note that the positive  translation pre-action is 
	strongly faithful, meaning that given $F\Subset\Gamma$, we can find $x\in 
	\Gamma_1\cup N_{C_1}$ such that for all $f\in F$, we have $xf\neq x$ and 
	$xf\in \Gamma_1\cup N_{C_1}$ (indeed it suffices to take $x\in N_{C_1}$ 
	with a sufficiently long normal form). Similarly, the negative translation 
	is strongly faithful. It follows that the free globalization is strongly 
	faithful, hence highly faithful as wanted.	 
\end{remark}

Let us furthermore observe that we can always build the free globalization on a fixed couple of sets $(\bar X_1,\bar X_2)$ with $\bar X_j$ containing $X_i$, provided $\bar X_j$ contains infinitely many free $\Gamma_j$-orbits.

\begin{theorem}\label{Globalization in prescribed sets for amalgams}
	Let $\bar X_j$ be a countable set equipped with a free $\Gamma_j$-action for $j=1,2$. Suppose $X_j\subseteq \bar X_j$ is $\Gamma_j$-invariant, and $\bar X_j\setminus X_j$ contains infinitely many $\Gamma_j$-orbits. Suppose further that we have a pre-action $(X_1,X_2,\tau)$. Then there is a bijection $\bar\tau : \bar X_1 \to \bar X_2$ which extends $\tau$ such that $(\bar X_1, \bar X_2,\bar \tau)$ is (isomorphic to) the free globalization of $(X_1, X_2,\tau)$.
\end{theorem}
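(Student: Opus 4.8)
The plan is to transport the abstract free globalization furnished by Theorem~\ref{thm: amalgam free globalization} onto the prescribed pair $(\bar X_1,\bar X_2)$, following the strategy used for its HNN analogue in the proof of Theorem~\ref{free globalization in prescribed set for HNN}. So I would first let $(\tilde X_1,\tilde X_2,\tilde\tau)$ denote the free globalization of $(X_1,X_2,\tau)$, which exists and is unique up to isomorphism by Theorem~\ref{thm: amalgam free globalization}. The aim is then to build, for $j=1,2$, a $\Gamma_j$-equivariant bijection $\varphi_j\colon\tilde X_j\to\bar X_j$ restricting to $\id_{X_j}$, and to let $\bar\tau$ be the push-forward of $\tilde\tau$ along the pair $(\varphi_1,\varphi_2)$.

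For the existence of the $\varphi_j$ one uses the explicit shape of the construction in Theorem~\ref{thm: amalgam free globalization}: $\tilde X_j$ is obtained from $X_j$ by adjoining a countable disjoint union of copies of the $\Gamma_j$-sets underlying the positive and negative translation pre-actions, each copy being glued to $X_j$ along a single $\Gamma_j$-orbit. Hence $\tilde X_j\setminus X_j$ is a countable $\Gamma_j$-invariant set that is a union of \emph{free} $\Gamma_j$-orbits, and (the pre-action not being global) it consists of infinitely many of them, just as in the HNN proof. Since $\bar X_j\setminus X_j$ is likewise a countable union of infinitely many free $\Gamma_j$-orbits by hypothesis, I can match up a complete system of orbit representatives on each side and extend the matching $\Gamma_j$-equivariantly; together with $\id_{X_j}$ this yields the desired bijection $\varphi_j$.

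Finally I would define $\bar\tau\colon\bar X_1\to\bar X_2$ by $\varphi_1(x)\bar\tau:=\varphi_2(x\tilde\tau)$ for all $x\in\tilde X_1$. Then $\bar\tau$ is a bijection, being a composite of bijections; it extends $\tau$, because $\varphi_1,\varphi_2$ are the identity on $X_1,X_2$ and $\tilde\tau$ extends $\tau$; and $(\varphi_1,\varphi_2)$ is an isomorphism of pre-actions from $(\tilde X_1,\tilde X_2,\tilde\tau)$ onto $(\bar X_1,\bar X_2,\bar\tau)$, since each $\varphi_j$ is a $\Gamma_j$-equivariant bijection and the required compatibility $\varphi_2(x\tilde\tau)=\varphi_1(x)\bar\tau$ holds by the very definition of $\bar\tau$. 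By uniqueness of the free globalization, $(\bar X_1,\bar X_2,\bar\tau)$ is therefore isomorphic to it. None of these steps presents a genuine difficulty; the only point calling for some care is the orbit bookkeeping of the second paragraph, which is precisely where the hypothesis that $\bar X_j\setminus X_j$ contains infinitely many $\Gamma_j$-orbits is used.
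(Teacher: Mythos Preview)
Your proposal is correct and follows exactly the paper's approach: take the free globalization $(\tilde X_1,\tilde X_2,\tilde\tau)$, build $\Gamma_j$-equivariant bijections $\varphi_j:\tilde X_j\to\bar X_j$ extending $\id_{X_j}$ by matching free orbits outside $X_j$, and push forward $\tilde\tau$ to define $\bar\tau$. You supply slightly more detail on the orbit-counting than the paper (which simply asserts that $\tilde X_j\setminus X_j$ contains infinitely many $\Gamma_j$-orbits), and your parenthetical that the pre-action is non-global is a tacit assumption in the paper's own argument as well.
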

\begin{proof}
	Let $(\tilde X_1, \tilde X_2,\tilde \tau)$ be the free globalization of $(X_1, X_2,\tau)$. The fact that $\tilde X_j\setminus X_j$ contains infinitely many $\Gamma_j$-orbits and is countable implies that there exist $\Gamma_j$-equivariant bijections $\varphi_j: \tilde X_j \to \bar X_j$ whose restrictions to $X_j$ are the identities. Then, one can push forward the bijection $\tilde \tau$, to obtain a bijection $\bar \tau: \bar X_1 \to \bar X_2$ defined by
	\[
	x \varphi_1 \bar\tau :=  x \tilde\tau \varphi_2 \quad \text{ for all } x\in \tilde X_1 \, ,
	\]
	which extends $\tau$.
	Now, $(\varphi_1,\varphi_2)$ is an isomorphism of pre-actions between $(\tilde X_1, \tilde X_2,\tilde \tau)$ and $(\bar X_1, \bar X_2,\bar \tau)$.
\end{proof}

\section{High transitivity for amalgams}\label{High transitivity for amalgams}

As in Section \ref{SectFreeGlobalizationAmalgams}, we fix an amalgam 
$\Gamma=\Gamma_1 *_\Sigma \Gamma_2$, and sets of representatives $C_j$ of 
left $\Sigma_j$-cosets in $\Gamma_j$ such that $1\in C_j$, for $j=1,2$, so 
that normal forms of elements of $\Gamma$ are well-defined. We still denote 
by $\NN j$ the set of elements of $\Gamma$ whose normal form begins with an 
element of $C_j\setminus\{1\}$, for $j=1,2$, so that we have 
$\Gamma = 
\Sigma \sqcup 
\NN 1 
\sqcup \NN 2$.

Non-degeneracy and topological freeness become now essential. Hence, \textbf{we assume from now on that our amalgam $\Gamma$ is non-degenerate} and that \textbf{the $\Gamma$-action on the boundary of its Bass-Serre tree is topologically free}. 

\subsection{Using the free globalization towards high transitivity}
This section is devoted to a key proposition which will allow us to extend 
any given transitive pre-action which is not global to a global one such 
that the associated $\Gamma$-action sends one fixed tuple to another fixed 
tuple. 

\begin{proposition}\label{prop: key prop for amalgams}
	Suppose $(X_1,X_2,\tau)$ is a transitive non-global pre-action, that 
	$X_j$ is a finite union of orbits of a free action $\bar 
	X_j\curvearrowleft \Gamma_j$, where $\bar X_j$ is countable, and that the complement $\bar X_j \setminus X_j$ 
	contains infinitely many $\Gamma_j$-orbits.
	Let $x_1,...,x_k,y_1,...,y_k\in \bar X_1$ be pairwise distinct points. 
	Then $(X_1,X_2,\tau)$ can be extended to a transitive and global 
	pre-action $(\bar X_1, \bar X_2,\tilde \tau)$ so that there is an 
	element $\gamma\in\Gamma$ such that $x_i \gamma^{\pi_{1,\tilde 
			\tau}}=y_i$, and the action $\pi_{1,\tilde\tau}$ is highly faithful.
\end{proposition}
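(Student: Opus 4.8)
The plan is to follow the proof of Proposition~\ref{prop: key prop for HNN} line by line, replacing the tools for HNN extensions by their amalgam counterparts from Section~\ref{SectFreeGlobalizationAmalgams} (positive and negative translation pre-actions, path-type elements of $\NN 2$, Bass-Serre graphs, and the free globalization of Theorem~\ref{thm: amalgam free globalization}), and using non-degeneracy in place of the non-ascending hypothesis. First I would fix once and for all a coset representative $c\in C_1\setminus\{1\}$ and a coset representative $d\in C_2\setminus\{1\}$ (these exist since a non-degenerate amalgam is non-trivial), then apply Theorem~\ref{Globalization in prescribed sets for amalgams} to obtain a bijection $\bar\tau:\bar X_1\to\bar X_2$ extending $\tau$ whose pre-action $(\bar X_1,\bar X_2,\bar\tau)$ is the free globalization of $(X_1,X_2,\tau)$. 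Writing $F=\{x_1,\dots,x_k,y_1,\dots,y_k\}\subseteq\bar X_1$, I denote, for $x\in\bar X_1$ and a path-type element $\gamma\in\NN 2$, by $\Hc_x(\gamma)$ the half-graph in $\mathbf{BS}(\bar X_1,\bar X_2,\bar\tau)$ of the last edge of $\pat_{1,x}(\gamma)$.

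The core of the argument is to build a path-type element $\gamma_\star\in\NN 2$ such that the last edge of $\pat_{1,x}(\gamma_\star)$ is a treeing edge for every $x\in F$, the half-trees $\Hc_x(\gamma_\star)$ ($x\in F$) are pairwise disjoint, and all of them are disjoint from the finite graph $\mathbf{BS}(X_1,X_2,\tau)$. A path-type element with all last edges treeing is obtained by iterating Lemma~\ref{lem: extend path with treeing edge} over the finitely many points of $F$, read through the correspondence between path-type elements and reduced paths of even length (Remarks~\ref{Remark Path Amalgams} and~\ref{RemarkPathAmalgams}), appending one coset representative to restore the right parity when needed and invoking Remark~\ref{RemarkPathAmalgams}(4) so that further extension never destroys the treeing property; extending still further makes all $\Hc_x$ avoid $\mathbf{BS}(X_1,X_2,\tau)$, which has finitely many vertices. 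Pairwise disjointness is then arranged by induction on the pairs $x\neq y$ in $F$ with $\Hc_x(\gamma)\cap\Hc_y(\gamma)\neq\emptyset$: such half-trees must be nested (otherwise one of them contains the antipode of the last edge of the other path, hence contains $\mathbf{BS}(X_1,X_2,\tau)$), say $\Hc_x(\gamma)\subseteq\Hc_y(\gamma)$; if the inclusion is strict, \textbf{non-degeneracy} of the amalgam provides two distinct path-type extensions of $\gamma$ of the same length whose $y$-paths diverge, which separates $\Hc_y$ from $\Hc_x(\gamma)\supseteq\Hc_x$.

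The delicate case, exactly as for HNN extensions, is $\Hc_x(\gamma)=\Hc_y(\gamma)$: then $\pat_{1,x}(\gamma)$ and $\pat_{1,y}(\gamma)$ share their last treeing edge and therefore terminate at a common $\Gamma_1$-vertex, so $y\gamma^{\pi_{1,\bar\tau}}=\big(x\gamma^{\pi_{1,\bar\tau}}\big)g$ for some $g\in\Gamma_1\setminus\{1\}$. Appending $c$ and setting $x'=x\gamma^{\pi_{1,\bar\tau}}$, $y'=y\gamma^{\pi_{1,\bar\tau}}$, one is done at once unless the treeing edges $e:=x'c\Sigma_1$ and $y'c\Sigma_1$ coincide, i.e. $y'c=x'c\sigma$ with $\sigma=c^{-1}gc\in\Sigma_1\setminus\{1\}$. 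Here I would use the \textbf{topological freeness of $\Gamma\act\partial\Tc$}: since $\sigma\neq1$ it cannot fix the half-tree of $\Sigma_1$ in $\Tc$ pointwise, so there is a reduced path $\omega$ in that half-tree, starting by the edge $\Sigma_1$, whose ranges under $\id$ and $\sigma$ differ, and $\omega$ can be taken of odd length — the parity needed so that the associated element $\gamma^+\in\NN 2$ (which has even normal form, by Remark~\ref{Remark Path Amalgams}) makes $\gamma c\gamma^+$ again a path-type element — this parity being easily achieved since all vertices of $\Tc$ have degree at least two, so that a vertex nearest the base among those moved by $\sigma$ has a farther neighbour which $\sigma$ also moves. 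Feeding $\omega$ into the morphism $(\Gamma,\Gamma,\id)\to(\bar X_1,\bar X_2,\bar\tau)$ based at $x'c$ (Remark~\ref{PseudoInitialObjects}), together with the left-translation automorphism by $\sigma$, Lemma~\ref{Paths from BS tree to BS graphs}, and the injectivity of that morphism on the half-tree of the treeing edge $e$ (Proposition~\ref{prop:chara positive treeing edge for amalgam}), one obtains that $\pat_{1,x'c}(\gamma^+)$ and $\pat_{1,y'c}(\gamma^+)$ diverge inside the half-tree of $e$, so that $\gamma_\star=\gamma c\gamma^+$ separates $\Hc_x$ from $\Hc_y$. I expect this subcase — and more precisely the task of matching the branching available in the Bass-Serre tree of a non-degenerate amalgam to what the HNN argument extracts from non-ascending — to be the main obstacle.

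With $\gamma_\star$ at hand, the endgame mirrors the HNN construction. For each $z\in F$, put $z'=z\gamma_\star^{\pi_{1,\bar\tau}}$ and consider the morphism from the negative translation pre-action based at $z'c$; its image is the half-graph of $\mathbf{BS}(\bar X_1,\bar X_2,\bar\tau)$ opposite to the half-tree $\Hc_z(\gamma_\star c)$. Intersecting these images and restricting $\bar\tau$ amounts to erasing $\bar\tau$ on the (infinitely many) $\Sigma_1$- and $\Sigma_2$-orbits of the pairwise disjoint half-trees $\Hc_z(\gamma_\star c)$, $z\in F$, which leaves a transitive non-global pre-action still having infinitely many unused free $\Gamma_1$- and $\Gamma_2$-orbits. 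Then adjoin fresh $\Gamma_2$-orbits $z_1,\dots,z_k$ and extend the partial bijection by $x_i'c\sigma\mapsto z_i\vartheta(\sigma)$ and $y_i'c\sigma\mapsto z_id\,\vartheta(\sigma)$ for $\sigma\in\Sigma_1$ — legitimate since $z_i\Sigma_2\neq z_id\Sigma_2$ while $z_i$ and $z_id$ lie in the same $\Gamma_2$-orbit, and since the $\Sigma_1$-orbits $x_i'c\Sigma_1,y_i'c\Sigma_1$ have just been erased — keeping the pre-action transitive and non-global. A final application of Theorem~\ref{Globalization in prescribed sets for amalgams} yields the required permutation $\tilde\tau:\bar X_1\to\bar X_2$; a direct normal-form computation then gives $x_i\big((\gamma_\star c)\,d\,(\gamma_\star c)^{-1}\big)^{\pi_{1,\tilde\tau}}=y_i$ for all $i$, and $\pi_{1,\tilde\tau}$ is highly faithful by Remark~\ref{Free globalization faithful} because the pre-action it globalizes is not global. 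The verifications that erasing touches neither $\pat_{1,x_i}(\gamma_\star)$ nor $\pat_{1,x_i}(\gamma_\star c)$ away from its last edge, and the concluding computation, are routine and parallel to the HNN case.
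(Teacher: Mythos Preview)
Your proposal is correct and follows essentially the same route as the paper's proof: free globalization on $(\bar X_1,\bar X_2)$, a path-type element $\gamma$ with treeing last edges and pairwise disjoint half-trees $\Hc_x(\gamma)$, erasure of these half-trees, insertion of fresh $\Gamma_2$-orbits to connect $x_i'$ to $y_i'$, and a final free globalization yielding the element $\gamma c_1 c_2 c_1^{-1}\gamma^{-1}$.

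The one substantive difference is in the equal-half-trees subcase. The paper observes that when $\Hc_x(\gamma)=\Hc_y(\gamma)$ the common last edge is already $\overline{x'\Sigma_1}=\overline{y'\Sigma_1}$, so $y'=x'\sigma$ with $\sigma\in\Sigma_1\setminus\{1\}$ directly; it then invokes the \emph{negative} translation pre-action based at $x'$ and the half-tree of $\Sigma_2$ in $\Tc$, producing $\gamma^*\in N_{C_1}$ with $\gamma'=\gamma\gamma^*$. You instead append $c\in C_1\setminus\{1\}$ first (mimicking the HNN proof more literally), split into a further dichotomy, and in the remaining case use the \emph{positive} translation pre-action and the half-tree of $\Sigma_1$. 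Both approaches work; the paper's is one step shorter since the last edge of a path-type element in $N_{C_2}$ is already negative, so no appending is needed before calling on topological freeness. Relatedly, in the endgame the paper erases the half-trees $\Hc_z(\gamma)$ via the \emph{positive} translation pre-action based at $z'$, whereas you erase $\Hc_z(\gamma_\star c)$ via the negative one based at $z'c$; these describe the same operation up to the extra edge you inserted.

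One small slip: your parity bookkeeping is off. With $\gamma$ path-type (odd normal form) and $c\in C_1$, the concatenation $\gamma c$ has even length, so for $\gamma c\gamma^+$ to be path-type in $N_{C_2}$ you need $\gamma^+$ of \emph{odd} normal-form length, i.e.\ $\omega$ of \emph{even} length rather than odd. This is harmless---extend $\omega$ by one edge if necessary---but worth correcting. (The paper's sentence about $\gamma^*$ being ``path-type in $N_{C_1}$'' has an analogous imprecision.)
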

Notice that the choice to work in $\bar X_1$ is arbitrary. We could prove a similar statement for the $\Gamma_2$-action on $\bar X_2$. 

\begin{proof}
	We will denote the set $\{x_1,...,x_k,y_1,...,y_k\}$ by $F$.  First, by Theorem \ref{Globalization in prescribed sets for amalgams}, we find a bijection $\bar\tau: \bar X_1 \to \bar X_2$ such that $(\bar X_1, \bar X_2,\bar \tau)$ is the free globalization of $(X_1, X_2,\tau)$. 
	
	\begin{claim}
		There exists a path-type element $\gamma$ in $\NN 2$ such that for 
		every $x\in F$, the last edge of $\pat_{1,x}(\gamma)$ is a treeing 
		edge.
	\end{claim}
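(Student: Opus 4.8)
The plan is to reproduce, with the obvious two-vertex bookkeeping, the argument used for the first claim in the proof of Proposition~\ref{prop: key prop for HNN}. The two structural inputs are: first, that the Bass-Serre graph $\mathbf{BS}(\bar X_1,\bar X_2,\bar\tau)$ is connected and admits a treeing edge — connectedness because the free globalization of a transitive pre-action is transitive (Theorem~\ref{thm: amalgam free globalization}), and existence of a treeing edge because $(X_1,X_2,\tau)$ is not global, so at least one of the edges from $\mathbf{BS}(X_1,X_2,\tau)$ to its complement (which are all treeing by Theorem~\ref{thm: amalgam free globalization}) was actually added; and second, the dictionary of Remarks~\ref{Remark Path Amalgams} and~\ref{RemarkPathAmalgams} between path-type elements of $\NN 2$ issued from a point $x\in\bar X_1$ and reduced paths in $\mathbf{BS}(\bar X_1,\bar X_2,\bar\tau)$ starting with the edge $x\Sigma_1$.

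The first step is to prove the one-point version: for every $x\in\bar X_1$ and every path-type element $\gamma_0$ of $\NN 2$, there is a path-type extension $\gamma'$ of $\gamma_0$ such that the last edge of $\pat_{1,x}(\gamma')$ is a treeing edge. Starting from the reduced path $\pat_{1,x}(\gamma_0)$, Lemma~\ref{lem: extend path with treeing edge} yields a reduced extension $\omega'$ whose last edge is a treeing edge. The one point of care compared to the HNN setting is parity: a path-type element of $\NN 2$ corresponds to a reduced path of \emph{even} length starting with $x\Sigma_1$ (equivalently, ending at a vertex of type $\Gamma_1$), so if $\omega'$ has odd length it must be prolonged by one more edge. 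This is possible because $\bar\tau$ is global, so every vertex of $\mathbf{BS}(\bar X_1,\bar X_2,\bar\tau)$ has degree $[\Gamma_1:\Sigma_1]$ or $[\Gamma_2:\Sigma_2]$, both $\geq 2$ by non-triviality (a fortiori by non-degeneracy); hence the range of $\omega'$ admits an outgoing edge distinct from the reversal of the last edge of $\omega'$, and appending it produces a reduced path which still ends with a treeing edge (a reduced path that uses a treeing edge only uses treeing edges afterwards). The resulting even-length reduced path, read back through Remark~\ref{Remark Path Amalgams}, is $\pat_{1,x}(\gamma')$ for a unique path-type element $\gamma'$ of $\NN 2$; by Remark~\ref{RemarkPathAmalgams} and uniqueness, $\gamma'$ is a path-type extension of $\gamma_0$.

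Then, iterating over $F=\{x_1,\dots,x_k,y_1,\dots,y_k\}$: start from the path-type element $c$ for some $c\in C_2-\{1\}$ (which exists by non-triviality), enumerate $F$ as $z_1,\dots,z_{2k}$, and apply the one-point version successively, so that having built $\gamma^{(i)}$ one chooses a path-type extension $\gamma^{(i+1)}$ of $\gamma^{(i)}$ with the last edge of $\pat_{1,z_{i+1}}(\gamma^{(i+1)})$ a treeing edge. By Remark~\ref{RemarkPathAmalgams}, for each $j\leq i+1$ the path $\pat_{1,z_j}(\gamma^{(i+1)})$ extends $\pat_{1,z_j}(\gamma^{(j)})$, whose last edge is a treeing edge, so its last edge is again a treeing edge; thus after $2k$ steps $\gamma:=\gamma^{(2k)}$ is a path-type element of $\NN 2$ with the desired property for all $x\in F$. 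I expect the only genuinely non-routine point to be the parity/degree step of the previous paragraph — in particular making sure it is the notion of \emph{path-type} extension, rather than just an extension of the associated path, that is propagated through the induction so that one stays inside $\NN 2$.
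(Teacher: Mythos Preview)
Your proof is correct and follows essentially the same approach as the paper: use Lemma~\ref{lem: extend path with treeing edge} to get a one-point extension step, then iterate over $F$ using Remark~\ref{RemarkPathAmalgams}(4) to preserve the treeing property under further extensions. Your treatment is in fact slightly more careful than the paper's, which asserts the one-point extension step ``follows from Lemma~\ref{lem: extend path with treeing edge}'' without spelling out the parity adjustment you handle explicitly (extending by one more edge when the length is odd, using that every vertex of the global Bass-Serre graph has degree $\geq 2$ and that edges following a treeing edge in a reduced path are again treeing).
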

	\begin{cproof}
		Recall the correspondence established in Section 
		\ref{SubsectionPathsAmalgams} between path-type elements and reduced 
		paths of even length. Since $\mathbf{BS}(\bar X_1, \bar X_2,\bar 
		\tau)$ is connected and has treeing edges, 
		it follows from Lemma \ref{lem: extend path with treeing edge} that 
		for every $x\in\bar X_1$, and every path-type element $\gamma\in 
		\NN 2$, 
		there is a path-type extension $\gamma'$ of $\gamma$ such that the 
		last edge of $\pat_{1,x}(\gamma')$ is a treeing edge. 
		Now, it suffices to start with any path-type element $\gamma_0 \in 
		\NN 2$, to extend it to a path-type element $\gamma_1$ such that the 
		last edge of $\pat_{1,x_1}(\gamma_1)$ is a treeing edge, 
		then to extend $\gamma_1$ to a path-type element $\gamma_2$ such that 
		the last edge of $\pat_{1,y_1}(\gamma_2)$ is a treeing edge, \ldots,
		and iterate this extension procedure until we reach an element 
		$\gamma_{2k}\in \NN 2$ such that all last edges of 
		$\pat_{1,x}(\gamma_{2k})$, for all $x\in F$, are treeing edges (by 
		Remark \ref{RemarkPathAmalgams} $(4)$). 
	\end{cproof}
	Given $x\in \bar X_1$, and a path-type element $\gamma$ in $\NN 2$, we 
	will denote by $\Hc_{x}(\gamma)$ the half-graph of the last edge of 
	$\pat_{1,x}(\gamma)$.
	\begin{claim}
		There exists a path-type element $\gamma$ in $\NN 2$ such that for 
		every $x\in F$, the last edge  of $\pat_{1,x}(\gamma)$ is a treeing 
		edge, 
		and the half-trees $\Hc_{x}(\gamma)$, for $x\in F$, are pairwise 
		disjoint subgraphs, and all disjoint from $\mathbf{BS}(X_1,X_2,\tau)$.
	\end{claim}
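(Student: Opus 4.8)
The plan is to mimic, essentially line by line, the proof of the analogous claim inside Proposition~\ref{prop: key prop for HNN}; the only genuinely amalgam-specific point is that path-type elements of $\NN 2$ correspond to reduced paths of \emph{even} length, so that every extension adds an even number of edges. By Theorem~\ref{Globalization in prescribed sets for amalgams} we may fix a bijection $\bar\tau\colon \bar X_1\to\bar X_2$ realising the free globalization of $(X_1,X_2,\tau)$ and work inside $\mathbf{BS}(\bar X_1,\bar X_2,\bar\tau)$. Starting from the element $\gamma$ given by the previous claim, I would first extend it — using Lemma~\ref{lem: extend path with treeing edge} together with part (4) of Remark~\ref{RemarkPathAmalgams}, so that the last edges of all $\pat_{1,x}(\gamma)$, $x\in F$, remain treeing — until every half-tree $\Hc_x(\gamma)$ avoids the \emph{finite} subgraph $\mathbf{BS}(X_1,X_2,\tau)$. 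This property, and disjointness of two such half-trees, is clearly preserved under any further path-type extension, so it suffices to show: given $x\neq y$ in $F$ with $\Hc_x(\gamma)\cap\Hc_y(\gamma)\neq\emptyset$, one can find a path-type extension $\gamma'$ with $\Hc_x(\gamma')\cap\Hc_y(\gamma')=\emptyset$; an easy induction over pairs then finishes the claim.

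So fix such $x,y$. As in the HNN case, the two intersecting half-trees must be nested (otherwise one of them would contain $\mathbf{BS}(X_1,X_2,\tau)$), say $\Hc_x(\gamma)\subseteq\Hc_y(\gamma)$. If the inclusion is strict, there is a path-type extension $\gamma''$ of $\gamma$ such that $\pat_{1,y}(\gamma'')$ ends with the last edge of $\pat_{1,x}(\gamma)$; using non-degeneracy — precisely, that $[\Gamma_1:\Sigma_1]\geq 3$ or $[\Gamma_2:\Sigma_2]\geq 3$, so that within two extending edges one can always branch — one finds another path-type extension $\gamma'$ of $\gamma$ of the same length with $\pat_{1,y}(\gamma')\neq\pat_{1,y}(\gamma'')$; then $\Hc_y(\gamma')$ is disjoint from $\Hc_y(\gamma'')=\Hc_x(\gamma)\supseteq\Hc_x(\gamma')$, so $\gamma'$ works. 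The remaining and main case is $\Hc_x(\gamma)=\Hc_y(\gamma)$, where the two paths share their (treeing) last edge; this is where the topological freeness hypothesis enters, exactly as in the HNN argument.

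In that case, set $x'=x\gamma^{\pi_{1,\bar\tau}}$, $y'=y\gamma^{\pi_{1,\bar\tau}}$ (so $x'\neq y'$ and $x'\Gamma_1=y'\Gamma_1$) and fix $c\in C_1\setminus\{1\}$. If $x'c\Sigma_1\neq y'c\Sigma_1$, then $\pat_{1,x}(\gamma c)$ and $\pat_{1,y}(\gamma c)$ already diverge inside a half-tree, and $\gamma'=\gamma cc'$ works for any $c'\in C_2\setminus\{1\}$. Otherwise $y'c=x'c\sigma$ with $\sigma\in\Sigma_1\setminus\{1\}$, and the common last edge $e:=x'c\Sigma_1$ of $\pat_{1,x}(\gamma c)$ and $\pat_{1,y}(\gamma c)$ is a treeing edge. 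By Proposition~\ref{half-trees and topological freeness}, $\sigma$ does not fix pointwise the half-tree of the edge $\Sigma_1$ in the Bass-Serre tree $\Tc$; taking a reduced path from $\Gamma_1$ to a vertex it moves and, if necessary, appending one edge, one obtains an even-length reduced path $\omega$ in that half-tree, starting with $\Sigma_1$, with $\sigma\cdot\omega\neq\omega$. By Remark~\ref{Remark Path Amalgams}, $\omega=\pat_{1,1_\Gamma}^{\Tc}(\gamma^+)$ for a path-type element $\gamma^+\in\NN 2$; Lemma~\ref{Paths from BS tree to BS graphs}, applied to the left-translation automorphism $(\delta\mapsto\sigma\delta,\,\delta\mapsto\sigma\delta)$ of $(\Gamma,\Gamma,\id)$ and to the morphism from $(\Gamma,\Gamma,\id)$ into $(\bar X_1,\bar X_2,\bar\tau)$ with basepoint $x'c$, then identifies $\pat_{1,x'c}(\gamma^+)$ and $\pat_{1,y'c}(\gamma^+)=\pat_{1,x'c\sigma}(\gamma^+)$ with the images of $\omega$ and $\sigma\cdot\omega$; since $e$ is a treeing edge, Proposition~\ref{prop:chara positive treeing edge for amalgam} makes this identification injective, so these two paths diverge inside the half-tree of $e$. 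Concatenating (cf. Section~\ref{SubsectionPathsAmalgams} and part (1) of Remark~\ref{RemarkPathAmalgams}), $\pat_{1,x}(\gamma c\gamma^+)$ and $\pat_{1,y}(\gamma c\gamma^+)$ diverge inside the half-tree of $e$, so $\gamma'=\gamma c\gamma^+$ does the job. I expect no new obstacle beyond the HNN case: the substance is again this last step — tracking paths under the morphisms from translation pre-actions and invoking topological freeness — while the amalgam-specific parity bookkeeping (even-length paths, even-step extensions) is a minor overhead.
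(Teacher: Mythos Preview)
Your argument is correct and follows the paper's proof almost verbatim. The one noteworthy variation is in the case $\Hc_x(\gamma)=\Hc_y(\gamma)$: the paper observes directly that the common last edge is $\overline{x'\Sigma_1}=\overline{y'\Sigma_1}$, hence $y'=x'\sigma$ for some $\sigma\in\Sigma_1\setminus\{1\}$, and then works with the \emph{negative} translation pre-action and a path-type element $\gamma^*\in\NN 1$ to set $\gamma'=\gamma\gamma^*$; you instead first append a letter $c\in C_1\setminus\{1\}$ (as in the HNN proof), introduce a harmless extra case split on whether $x'c\Sigma_1=y'c\Sigma_1$, and then invoke the \emph{positive} translation pre-action with $\gamma^+\in\NN 2$ to set $\gamma'=\gamma c\gamma^+$. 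Both routes are valid; the paper's is slightly more direct since it avoids the extra letter and the auxiliary case distinction.
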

	
	\begin{cproof}
		We start with a path-type element $\gamma$ in $\NN 2$ such that for 
		every 
		$x\in F$, the last edge of $\pat_{1,x}(\gamma)$ is a treeing edge. 
		Since $X_j$ is a finite union of $\Gamma_j$-orbits for $j=1,2$, the Bass-Serre graph $\mathbf{BS}(X_1,X_2,\tau)$ has finitely many vertices. 
		Hence, by extending further the path-type element $\gamma$, we can assume that for every $x\in F$, the half-tree $\Hc_{x}(\gamma)$ does not intersect $\mathbf{BS}(X_1,X_2,\tau)$.
		
		Notice that, given $x,y\in F$, if the half-trees $\Hc_{x}(\gamma)$ and $\Hc_{y}(\gamma)$ are disjoint, 
		then so are the half-trees $\Hc_{x}(\gamma')$ and $\Hc_{y}(\gamma')$ for every path-type extension $\gamma'$ of $\gamma$, since $\Hc_{x}(\gamma') \subseteq \Hc_{x}(\gamma)$ and $\Hc_{y}(\gamma') \subseteq \Hc_{y}(\gamma)$.
		Hence, it suffices to prove that, for any $x,y\in F$ with $x\neq y$ and such that $\Hc_{x}(\gamma)$ and $\Hc_{y}(\gamma)$ intersect,
		there exists a path-type extension $\gamma'$ of $\gamma$ such that $\Hc_{x}(\gamma')$ and $\Hc_{y}(\gamma')$ are disjoint. 
		Indeed, an easy induction gives then an extension $\gamma^{(n)}$ such that the half-trees $\Hc_{x}(\gamma^{(n)})$, for $x\in F$, are pairwise disjoint.
		
		Take now $x,y\in F$ with $x\neq y$ and such that $\Hc_{x}(\gamma)$ and $\Hc_{y}(\gamma)$ intersect. 
		These half-trees have to be nested. Indeed, if they are not, 
		$\Hc_{x}(\gamma)$ contains the antipode of the last edge of 
		$\pat_y(\gamma)$, hence contains $\mathbf{BS}(X_1,X_2,\tau)$, which 
		is impossible.
		Without loss of generality, we assume $\Hc_{x}(\gamma) \subseteq \Hc_{y}(\gamma)$. We now distinguish two cases.
		\begin{itemize}
			\item If $\Hc_{x}(\gamma) \subsetneq \Hc_{y}(\gamma)$, there is a 
			path type extension $\gamma''$ of $\gamma$ such that 
			$\pat_{1,x}(\gamma)$ and $\pat_{1,y}(\gamma'')$ have the same last 
			edge. 
			We have the product of normal forms  
			\[
			\gamma'' = \gamma \cdot (c_1  \cdots c_n)  \, ,
			\]
			where $n\geq 2$ is even. Since the amalgam $\Gamma$ is 
			non-degenerate, we can obtain another normal form $\gamma' = 
			\gamma \cdot (c'_1 \cdots c'_n)$
			by replacing a letter $c_i$ in the factor $\Gamma_j$ such 
			that $[\Gamma_j:\Sigma_j]\geq 3$ by another letter $c'_i$ in 
			$C_j\setminus\{1\}$.
			This change has the effect that $\pat_{1,y}(\gamma')$ and 
			$\pat_{1,y}(\gamma'')$ are distinct reduced paths (which are both 
			extensions of the $\pat_{1,y}(\gamma)$). 
			Hence, since $\Hc_{y}(\gamma)$ is a tree, the sub-trees  $\Hc_{y}(\gamma'')$ and $\Hc_{y}(\gamma')$ must be disjoint. 
			Since $\Hc_{x}(\gamma) = \Hc_{y}(\gamma'')$ we are done.			 
			\item If $\Hc_{x}(\gamma) = \Hc_{y}(\gamma)$, then 
			$\pat_{1,x}(\gamma)$ and $\pat_{1,y}(\gamma)$ have the same 
			terminal edge, which is 
			\[
			e := \overline{x' \Sigma_1} = \overline{y' \Sigma_1} \, , 
			\quad \text{ where } \quad 
			x' = x \gamma^{\pi_{1,\bar\tau}} \text{ and } y' = y \gamma^{\pi_{1,\bar\tau}} \, .
			\]
			Consequently, one has $y' = x'\sigma^{\pi_{1,\bar\tau}}$ for some $\sigma\in \Sigma$. 
			Note that, since $x\neq y$, one has $\sigma\neq 1$ and consider 
			the morphism of pre-actions from the negative translation 
			pre-action $(\varphi_{1,-},\varphi_{2,-}) : (\Gamma_1 \cup \NN 1, 
			\Gamma_2 \sqcup \NN 1, \tau_-) \to (\bar X_1, \bar X_2,\bar\tau)$ 
			coming from Remark \ref{PseudoInitialObjects}, with basepoints $x_1 = x'$ and $x_2 = x' \bar \tau$.
			Since $e$ is a treeing edge, this morphism is injective by Proposition \ref{prop:chara negative treeing edge for amalgam}.
			The half-tree $\Hc_{x}(\gamma) = \Hc_{y}(\gamma)$ is thus isomorphic, via $\mathbf{BS}(\varphi_{1,-},\varphi_{2,-})$, to the half-tree $\Hc$ of $\Sigma_2$ in the Bass-Serre tree $\Tc$.
			
			Note that the left translation by $\sigma$ (i.e. $\gamma^*\mapsto 
			\sigma\gamma^*$) defines an automorphism of the negative 
			translation pre-action, which we write as 
			$(\sigma_1,\sigma_2)$. The morphism of graphs
			$\mathbf{BS}(\sigma_1,\sigma_2)$ maps 
			$\pat^\Tc_{2,1_\Gamma}(\gamma^*)$ to 
			$\pat^\Tc_{2,\sigma}(\gamma^*)$ 
			in $\Hc$ by Lemma \ref{Paths from BS tree to BS graphs} (note that these paths both have $\Sigma_2$ as first edge).
			Since the left $\Gamma$-action on the boundary $\partial\Tc$ of 
			its Bass-Serre tree is topologically free, the left $\sigma$ 
			action does not fix 
			the half-tree $\Hc$ pointwise. 
			Hence there exists an element $\gamma^* \in \Sigma \sqcup \NN 1$ 
			such that $\pat^\Tc_{2,1}(\gamma^*)$ and 
			$\pat^\Tc_{2,\sigma}(\gamma^*)$ have distinct ranges. 
			Moreover, up to extending $\gamma^*$, we can further assume that 
			$\gamma^*$ is a path type element of $\NN 1$ so that
			the element 
			$\gamma' := \gamma\gamma^*$ is a path-type element of $\NN 2$.
			
			Now, the images of $\pat^\Tc_{2,1}(\gamma^*)$ and $\pat^\Tc_{2,\sigma}(\gamma^*)$ by $\mathbf{BS}(\varphi_{1,-},\varphi_{2,-})$ 
			are $\pat_{2,x'\bar \tau}(\gamma^*)$ and $\pat_{2,y' 
				\bar\tau}(\gamma^*)$ by Lemma \ref{Paths from BS tree to BS 
				graphs}, 
			and these paths diverge in the half-tree $\Hc_{x}(\gamma) = \Hc_{y}(\gamma)$. 
			Note finally that the starting edges of these paths are both equal 
			to $e$; this implies that $\pat_{1,x}(\gamma')$ and 
			$\pat_{1,y}(\gamma')$ don't have the same range. 
			Hence $\Hc_{x}(\gamma')$ and $\Hc_{y}(\gamma')$ are disjoint.
		\end{itemize}
		We are done in both cases.
	\end{cproof}
	
	We then modify the bijection $\bar \tau$ to get the pre-action $(\bar X_1,\bar X_2,\tilde \tau)$ we are looking for. 
	First, given an element $\gamma$ as in the previous claim, we consider for each $z\in F$ the morphism of pre-actions from the positive translation pre-action
	\[  
	(\psi_{1,z},\psi_{2,z}) : (\Gamma_1 \sqcup \NN 2,\Gamma_2 \cup 
	\NN 2,\tau_+) \to (\bar X_1, \bar X_2,\bar\tau)
	\]
	coming from Remark \ref{PseudoInitialObjects}, with basepoints $z' := z\gamma^{\pi_{1,\bar\tau}} \in \bar X_1$ and $z' \bar \tau \in \bar X_2$. 
	Note that the image of this morphism corresponds to the half-graph 
	opposite to the half-tree $\Hc_{z}(\gamma)$. Then, we define 
	$X_j'=\bigcap_{z\in F}\rng(\psi_{j,z})\subset\bar X_j$, and consider 
	the restriction $(X_1',X_2',\tau')$ of $(\bar X_1,\bar X_2,\bar\tau)$. 
	Informally speaking, we erase $\bar \tau$ on the $\Sigma_1$-orbits 
	corresponding to edges in the half-trees $\Hc_{z}(\gamma)$ for $z\in F$. 
	Note that this leaves infinitely many $\Gamma_1$-orbits in $\bar 
	X_1$ outside $\dom(\tau')$, respectively infinitely many 
	$\Gamma_2$-orbits in $\bar X_2$ outside $\rng(\tau')$, and the 
	pre-action $(X_1',X_2',\tau')$ is transitive.
	Notice also that, for any $z\in F$, we have $z' \Gamma_1 \cap \dom(\tau') = z' \Sigma_1$
	(in other words, the only edge in the star at $z'$ which belongs to $\mathbf{BS}(X_1',X_2',\tau')$ is $z'\Sigma_1$).
	In particular, given any $c_1 \in C_1\setminus\{1\}$, the orbits $x'_i c_1 \Sigma_1$ and $y'_i c_1 \Sigma_1$, for $1\leq i\leq k$, are not in $\dom(\tau')$.
	
	We now extend $\tau'$. Pick some orbits $z_1\Gamma_2, \ldots, z_k 
	\Gamma_2$ in $\bar X_2 \setminus \rng(\tau')$, add them to $X_2'$, take $c_j$ in 
	$C_j\setminus\{1\}$ for $j=1,2$, and set $x_i'c_1\sigma\tau' := z_i 
	\vartheta(\sigma)$ and $y_i'c_1 \sigma \tau' := z_i c_2 
	\vartheta(\sigma)$ for $i=1,\ldots,k$ and $\sigma \in \Sigma_1$. 
	This is possible since the $\Sigma_2$-orbits of the points $z_i$ and 
	$z_ic_2$ are pairwise disjoint (we use again the freeness of the 
	$\Gamma_2$-action), and since the $\Sigma_1$-orbits at $x'_i c_1$, $y'_i c_1$ 
	for $1\leq i\leq k$ are pairwise disjoint and were not initially in the domain of 
	$\tau'$.
	Note that, after this extension, $(X_1',X_2',\tau')$ is still transitive. Then we apply Theorem \ref{Globalization in prescribed sets for amalgams} to get an extension $\tilde \tau: \bar X_1\to \bar X_2$ of $\tau'$ such that $(\bar X_1, \bar X_2,\tilde\tau)$ is the free globalization of $(X_1',X_2',\tau')$. 
	A computation shows then that $x_i(\gamma c_1 c_2 c_1\inv 
	\gamma\inv)^{\pi_{1,\tilde\tau}} = y_i$ for all $i=1,\ldots,k$. 
	Finally, the action $\pi_{1,\tilde\tau}$ is highly faithful by Remark 
	\ref{Free globalization faithful}.
\end{proof}

\subsection{Highly transitive actions of amalgams}\label{SectHTAmalg}

From now on, we fix free actions $X_1 \curvearrowleft^{\pi_1}\Gamma_1$ and $X_2 \curvearrowleft^{\pi_2}\Gamma_2$ with infinitely many orbits. We endow the set of bijections from $X_1$ onto $X_2$ with the topology of pointwise convergence, which is a Polish topology. We then set
\[
\PA = \{\tau: X_1 \to X_2 \text{ bijective} : \, x\sigma \tau = x\tau 
\vartheta(\sigma) \text{ for all } \sigma\in \Sigma_1\} \, .
\]
In other words, $\PA$ is the set of bijections $\tau: X_1 \to X_2$ such 
that 
$(X_1,X_2,\tau)$ is a (global) pre-action of $\Gamma$. This is clearly a 
closed subset for the topology of pointwise convergence, hence a Polish 
space. Recall that every $\tau\in \PA$ induces an action 
$X_j\curvearrowleft^{\pi_{j,\tau}}\Gamma$ for $j=1,2$. 
We will focus on the action
$\pi_{1,\tau}$, which we will abbreviate by $\pi_\tau$.

\begin{definition}
	Let us set
	\begin{align*}
	\TA &= \{\tau\in \PA\colon
	\pi_\tau \text{ is transitive} \} \ ; \\
	\HFA &= \{\tau\in \PA \colon  
	\pi_\tau
	\text{ is highly faithful } \} \ ; \\
	\HTA &= \{\tau\in \PA \colon 
	\pi_\tau
	\text{ is highly transitive } \} \ .
	\end{align*}
\end{definition}

As in the HNN case, the subset $\TA$ isn't closed for the 
topology of pointwise convergence, but we have the following result.
\begin{lemma}\label{PolishSpaceAmalgam}
	The set $\TA$ is $G_\delta$ in $\PA$, 
	hence a Polish space. 
	Moreover, $\TA\neq\emptyset$.
\end{lemma}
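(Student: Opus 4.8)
The plan is to transcribe the proof of Lemma~\ref{PolishSpaceHNN} to the amalgam setting.

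For non-emptiness, first note that since the amalgam is non-degenerate it is in particular non-trivial, so picking $a\in C_1\setminus\{1\}$ and $b\in C_2\setminus\{1\}$, the elements $(ab)^n$, $n\geq 1$, lie in pairwise distinct left $\Gamma_1$-cosets; hence $[\Gamma:\Gamma_1]$ is countably infinite, and likewise $[\Gamma:\Gamma_2]$. Since $X_1\curvearrowleft^{\pi_1}\Gamma_1$ and $X_2\curvearrowleft^{\pi_2}\Gamma_2$ are free with infinitely many orbits, we may therefore choose $\Gamma_j$-equivariant bijections $\varphi_j:\Gamma\to X_j$ for $j=1,2$, where $\Gamma$ carries the right multiplication action of $\Gamma_j$. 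Put $\tau=\varphi_1^{-1}\varphi_2:X_1\to X_2$ (composition written on the right). Because $\sigma=\vartheta(\sigma)$ in $\Gamma$ for every $\sigma\in\Sigma_1$ and each $\varphi_j$ is $\Gamma_j$-equivariant, one checks that $\sigma^{\pi_1}\tau=\tau\vartheta(\sigma)^{\pi_2}$, so $\tau\in\PA$; moreover $(\varphi_1,\varphi_2)$ is then an isomorphism of global pre-actions from the translation pre-action $(\Gamma,\Gamma,\id)$ onto $(X_1,X_2,\tau)$. Consequently $\pi_\tau=\pi_{1,\tau}$ is conjugate to the right translation action $\Gamma\curvearrowleft\Gamma$, which is transitive, so $\tau\in\TA$ and $\TA\neq\emptyset$.

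For the $G_\delta$ property, I would write
\[
\TA=\bigcap_{x,y\in X_1}O_{x,y},\qquad O_{x,y}=\{\tau\in\PA:\ \exists\gamma\in\Gamma,\ x\gamma^{\pi_\tau}=y\}.
\]
Each $O_{x,y}$ is open for the topology of pointwise convergence: if $x\gamma^{\pi_\tau}=y$, this equality only reads off the values of $\tau$ and $\tau^{-1}$ along the finite path $\pat_{1,x}(\gamma)$ in $\mathbf{BS}(X_1,X_2,\tau)$, so every $\tau'\in\PA$ agreeing with $\tau$ on that finite set also belongs to $O_{x,y}$. As $X_1$ is countable, $\TA$ is a countable intersection of open subsets of the Polish space $\PA$, hence $G_\delta$, and a $G_\delta$ subset of a Polish space is Polish.

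There is no real obstacle here: the argument is essentially identical to the HNN case. The only points deserving a line of verification are that the indices $[\Gamma:\Gamma_j]$ are infinite (so that the equivariant bijections $\varphi_j$ exist) and that pushing the translation pre-action forward along $(\varphi_1,\varphi_2)$ respects the defining relation $\sigma^{\pi_1}\tau=\tau\vartheta(\sigma)^{\pi_2}$, which is immediate from the amalgam relation.
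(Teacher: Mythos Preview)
Your proof is correct and follows essentially the same route as the paper: non-emptiness via push-forward of the translation pre-action along $\Gamma_j$-equivariant bijections $\varphi_j:\Gamma\to X_j$, and the $G_\delta$ property via the decomposition $\TA=\bigcap_{x,y\in X_1}O_{x,y}$. You even supply details the paper leaves implicit (the check that $[\Gamma:\Gamma_j]=\aleph_0$ and the reason $O_{x,y}$ is open).
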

\begin{proof}
	Since  $X_1 \curvearrowleft^{\pi_1}\Gamma_1$ and $X_2 \curvearrowleft^{\pi_2}\Gamma_2$ have infinitely many orbits, there are $\Gamma_j$-equivariant bijections $\varphi_j:\Gamma \to X_j$ for $j=1,2$. 
	It then suffices to push-forward the translation pre-action by 
	$(\varphi_1,\varphi_2)$ to get an element of $\TA$ (its Bass-Serre 
	graph will be isomorphic to the classical Bass-Serre tree and 
	$\pi_\tau$ will be conjugated to the translation action 
	$\Gamma\curvearrowleft\Gamma$). Hence, $\TA$ is non-empty. To show that 
	$\TA$ is $G_\delta$ in $\PA$, it suffices to write 
	$\TA=\bigcap_{x,x'\in X_1}O_{x,y}$, where for $x,y\in X_1$, 
	$O_{x,y}=\{\tau\in\PA\,:\,\text{ there exists }\gamma\in\Gamma\text{ 
		such that }x\gamma^{\pi_\tau}=y\}$.
	Since $O_{x,y}$ is obviously open in $\PA$ for all $x,y\in X_1$, 
	this shows that $\TA$ is a $G_\delta$ subset of $\PA$. \end{proof}

Here comes the theorem proving that our amalgam $\Gamma$ admits a highly 
transitive highly faithful action, thus proving Theorem 
\ref{ThmGroupsHTAmalgam}.

\begin{theorem}\label{ThmGroupsHTamalgamBaire}
	The set $\HTA\cap\HFA$ is dense $G_\delta$ in $\TA$. 
	In particular, $\Gamma$ admits actions which are both highly transitive and highly faithful.
\end{theorem}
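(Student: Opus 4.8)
The plan is to mirror the proof of the HNN version, Theorem~\ref{ThmGroupsHTHNNBaire}, and obtain $\HTA\cap\HFA$ as a dense $G_\delta$ subset of the Polish space $\TA$, so that non-emptiness of $\TA$ together with the Baire category theorem yields the ``in particular'' assertion (and hence also Theorem~\ref{ThmGroupsHTAmalgam}). By Lemma~\ref{PolishSpaceAmalgam}, $\TA$ is a non-empty $G_\delta$ subset of $\PA$, hence a Polish space in its own right, so the Baire category theorem is available in $\TA$.

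First I would record the $G_\delta$ descriptions. For $k\geq 1$ and pairwise distinct $x_1,\dots,x_k,y_1,\dots,y_k\in X_1$, the set
\[
 V_{x_1,\dots,x_k,y_1,\dots,y_k}=\{\tau\in\TA\colon \exists\,\gamma\in\Gamma,\ x_i\gamma^{\pi_\tau}=y_i\ \text{for all}\ i\}
\]
is open in $\TA$, and by Lemma~\ref{HT and disjoint supports} the intersection of all these sets is exactly $\HTA$. Likewise, for $F\Subset\Gamma\setminus\{1\}$ the set $W_F=\{\tau\in\TA\colon\exists\,x\in X_1,\ xf^{\pi_\tau}\neq x\ \text{for all}\ f\in F\}$ is open, the intersection of all the $W_F$ is the set of strongly faithful actions, and since $\Gamma$ is infinite this coincides with $\HFA$ by Corollary~\ref{EquivStrongAndHighFaithfulness}. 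Thus $\HTA\cap\HFA$ is $G_\delta$ in $\TA$, and it remains to prove density.

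By the Baire category theorem it suffices to prove that each open set $V_{x_1,\dots,x_k,y_1,\dots,y_k}\cap W_F$ is dense; since $V_{x_1,\dots,x_k,y_1,\dots,y_k}\cap\HFA\subseteq V_{x_1,\dots,x_k,y_1,\dots,y_k}\cap W_F$, it is actually enough to show that every $V_{x_1,\dots,x_k,y_1,\dots,y_k}\cap\HFA$ is dense in $\TA$. So fix $\tau\in\TA$ and a finite set $F'\Subset X_1$ determining a basic neighborhood of $\tau$; I want $\tau'\in V_{x_1,\dots,x_k,y_1,\dots,y_k}\cap\HFA$ agreeing with $\tau$ on $F'$. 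The key step is to pass to a finite piece: since $(X_1,X_2,\tau)$ is transitive, pick a finite connected subgraph $\Gr$ of $\mathbf{BS}(X_1,X_2,\tau)$ containing the edge $z\Sigma_1$ for every $z\in F'$, let $\tau_0$ be the restriction of $\tau$ to the union of the $\Sigma_1$-orbits corresponding to the (positive) edges of $\Gr$, and put $X_1^0=\dom(\tau_0)\cdot\Gamma_1$ and $X_2^0=\rng(\tau_0)\cdot\Gamma_2$. Then $(X_1^0,X_2^0,\tau_0)$ is a transitive pre-action with Bass-Serre graph $\Gr$, which is non-global: being a proper finite subgraph of the connected infinite graph $\mathbf{BS}(X_1,X_2,\tau)$, $\Gr$ has a vertex still carrying an edge outside $\Gr$, so some $\Sigma_1$-orbit of $X_1^0$ is missing from $\dom(\tau_0)$ or some $\Sigma_2$-orbit of $X_2^0$ from $\rng(\tau_0)$. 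Moreover $X_j^0$ is a finite union of $\Gamma_j$-orbits inside $X_j$, and $X_j\setminus X_j^0$ still contains infinitely many $\Gamma_j$-orbits. We may therefore apply Proposition~\ref{prop: key prop for amalgams} with $\bar X_j=X_j$ to the pre-action $(X_1^0,X_2^0,\tau_0)$ and the points $x_1,\dots,x_k,y_1,\dots,y_k$: it produces a transitive global extension $\tau'$ of $\tau_0$ with $\pi_{1,\tau'}$ highly faithful and an element $\gamma\in\Gamma$ with $x_i\gamma^{\pi_{1,\tau'}}=y_i$ for all $i$. Hence $\tau'\in V_{x_1,\dots,x_k,y_1,\dots,y_k}\cap\HFA$, and since $F'\subseteq\dom(\tau_0)$ and $\tau'$ extends $\tau_0$, the bijection $\tau'$ agrees with $\tau$ on $F'$, as required.

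Essentially all the genuine content is already packed into Proposition~\ref{prop: key prop for amalgams} (which is where topological freeness on the boundary is used), so the only work left here is the bookkeeping of the restriction--extension step: checking that the finite connected subgraph $\Gr$ gives a legitimate transitive, non-global pre-action to which the key proposition applies, and that the global extension it returns remains unchanged on $F'$. The main (rather mild) obstacle is thus verifying non-globality of the cut-down pre-action and keeping track of the various unions of orbits. As in Remark~\ref{rmk: HNN HT without Baire}, one could alternatively avoid the Baire category theorem entirely and build the desired $\tau'\in\HTA\cap\HFA$ directly by an exhaustion argument, interleaving the transitivity requirements with the faithfulness requirements.
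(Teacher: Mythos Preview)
Your proof is correct and follows essentially the same approach as the paper: the same $G_\delta$ description via the open sets $V_{x_1,\dots,x_k,y_1,\dots,y_k}$ and $W_F$, the same reduction to density of $V_{x_1,\dots,x_k,y_1,\dots,y_k}\cap\HFA$, and the same restriction to a finite connected subgraph of the Bass-Serre graph before invoking Proposition~\ref{prop: key prop for amalgams}. You even spell out the non-globality check and the orbit-counting hypotheses that the paper leaves implicit, which is a welcome bit of extra care.
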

\begin{proof}
	For $k\geq 1$ and $x_1,\dots x_k,y_1,\dots,y_k\in X_1$ pairwise distinct, the sets 
	\[
	V_{x_1,\dots,x_k,y_1,\dots,y_k}=\{\tau\in\TA\,:\,
	\exists\gamma\in\Gamma\,,\,x_i\gamma^{\pi_\tau}=y_i\text{ for all }1\leq i\leq k\}
	\]
	are obviously open in $\TA$. Similarly, for finite subsets $F$ of 
	$\Gamma\setminus\{1\}$, the sets
	\[
	W_F = \{ \tau\in\TA \, : \,
	\exists x\in X_1\,,\, x f^{\pi_\tau} \neq x\text{ for all } f\in F \}
	\]
	are also obviously open in $\TA$.
	Now, using Lemma \ref{HT and disjoint supports},
	and since every strongly faithful action of $\Gamma$ is highly faithful by Corollary \ref{EquivStrongAndHighFaithfulness}, we have
	\[
	\HTA\cap\HFA = \underset{\underset{\text{ \ \ \ \ \ \ \ \  \ \ \ \ \ \
				pairwise distinct}}
		{F\Subset \Gamma\setminus\{1\}, \, k\geq 1, \, x_1,\dots x_k,y_1,\dots,y_k\in X_1}}
	{\bigcap}(V_{x_1,\dots,x_k,y_1,\dots,y_k} \cap W_F) \, .
	\]
	
	To conclude, it suffices to show that each set 
	$(V_{x_1,\dots,x_k,y_1,\dots,y_k}) \cap \HFA$ is dense in $\TA$, since this 
	immediately implies that each open set $(V_{x_1,\dots,x_k,y_1,\dots,y_k}) 
	\cap W_F$ is dense in $\TA$. 
	To do this, let $\tau\in\TA$ and let $F$ be a finite subset of $X_1$. 
	Fix a finite
	connected subgraph $\Gr$ of $\mathbf{BS}(X_1,X_2,\tau)$ 
	containing the edges $z\Sigma_1$ for $z\in F$, and denote by $\tau_0$ the 
	restriction of $\tau$ to the union of the $\Sigma_1$-orbits in $X_1$ 
	corresponding to the edges of $\Gr$.
	Then apply Proposition~\ref{prop: key prop for amalgams} to the transitive 
	pre-action $(\dom(\tau_0)\cdot\Gamma_1,\rng(\tau_0)\cdot\Gamma_2,\tau_0)$, 
	whose Bass-Serre graph is $\Gr$, to get an extension $\tau'$ such that 
	$\tau'\in V_{x_1,\dots,x_k,y_1,\dots,y_k}\cap\HFA$. Moreover, since 
	$F\subset \dom(\tau_0)$, it follows that $\tau$ and $\tau'$ coincide on 
	$F$.\end{proof}

\begin{remark} As in Remark \ref{rmk: HNN HT without Baire}, one can give a 
	direct proof of the previous theorem without relying on Baire's theorem.
\end{remark}

\section{Highly transitive actions of groups acting on trees}\label{GroupsActingTrees}

\subsection{Proofs of Theorem \ref{ThmMain} and \ref{Thm td}}
Let us begin with a few preliminaries.

Suppose we are given an action  of a countable group $G$ on a 
tree $\mathcal{T}$, and a proper subtree $\Tc'$ such that $\Tc'$ and 
$g\Tc'$, where $g\in G$, are either equal or disjoint subtrees.

We can then can form a ``quotient'' tree $\bar \Tc$ by shrinking each 
subtree $g\Tc'$ to a single vertex, that we will denote by $(g\Tc')$. 
The tree $\bar\Tc$ is naturally endowed with a $G$-action and the ``quotient map'' $q:\Tc \to \bar\Tc$ is $G$-equivariant. 
The image by $q$ of a path in $\Tc$ is a path which is obtained by shrinking each subpath contained in a subtree $g\Tc'$ to the vertex $(g\Tc')$ in $\bar \Tc$. 
In case of a geodesic ray, its image by $q$ is either a geodesic ray in $\bar \Tc$, or a geodesic which ends at a vertex $(g\Tc')$. 
Hence $q$ induces a map $\partial q:\partial \Tc \to V(\bar\Tc) \cup \partial\bar\Tc$.
\begin{remark}\label{Boundary after shrinking}
	The restriction of $\partial q$ to $(\partial q)\inv(\partial\bar\Tc)$ is injective.
\end{remark}
\begin{proof}
	Given $\xi,\xi'\in\partial \Tc$ such that $\partial q (\xi)$ and $\partial q(\xi')$ lie in $\partial \bar\Tc$, consider geodesic rays $\omega,\omega'$ in $\Tc$ tending to $\xi,\xi'$.
	Each ray contains all edges of its image under $q$. 
	Hence $\partial q (\xi) = \partial q(\xi')$ implies that $\omega$ and $\omega'$ have infinitely many common edges, and therefore $\xi = \xi'$. 
\end{proof}
One can also notice, although we do not need this fact below, that $\partial q$ is continuous at each point $\xi\in (\partial q)\inv(\partial\bar\Tc)$.
In case $\Tc'$ is bounded, one has in fact $\partial q:\partial \Tc \to \partial\bar\Tc$, and $\partial q$ is continuous and injective.
\begin{lemma}\label{Permanence under shinking}
	In the context above, assume that $G\curvearrowright\mathcal{T}$ is a minimal action. Then:
	\begin{enumerate}[label=(\arabic*)]
		\item if $G\curvearrowright \Tc$ is of general type, then so is $G\curvearrowright \bar\Tc$;
		\item if $G\curvearrowright \partial\Tc$ is topologically free, then so is $G\curvearrowright \partial\bar\Tc$.
	\end{enumerate}
\end{lemma}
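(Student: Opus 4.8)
The plan is to prove both assertions by transporting hyperbolic elements and fixed-point-on-boundary data through the quotient map $q$ and its boundary map $\partial q$. For (1), the key point is that a hyperbolic element of $G\curvearrowright\Tc$ stays hyperbolic after shrinking: if $g$ is hyperbolic on $\Tc$ with axis $A_g$, then since $\Tc'$ is a proper subtree and the translates $h\Tc'$ are disjoint or equal, the axis $A_g$ cannot be entirely swallowed by a single translate $h\Tc'$ (otherwise $g$ would preserve that bounded-or-proper subtree, and iterating would contradict the $g$-invariance of $A_g$ being a bi-infinite line meeting infinitely many $G$-translates; more carefully, $A_g$ is $g$-invariant and unbounded, so it is not contained in any $h\Tc'$ since $g$ would then fix $h\Tc'$ setwise and act on it, but $A_g\cap h\Tc'$ would be a $g^{k}$-invariant sub-line inside $h\Tc'$ for the stabiliser... ). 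The cleaner route: the image $q(A_g)$ is a bi-infinite reduced path in $\bar\Tc$ on which $g$ acts by a nontrivial translation, provided $q(A_g)$ is infinite in both directions, which holds because $A_g$ meets infinitely many distinct translates $h\Tc'$ (as $A_g$ is unbounded and each $h\Tc'$, being a translate of a proper subtree of a minimal action, has the property that $A_g\cap h\Tc'$ is bounded — here one uses minimality to rule out $\Tc'$ containing a half-tree). Hence $g$ is hyperbolic on $\bar\Tc$. Then, taking two transverse hyperbolic elements $g_1,g_2$ on $\Tc$, their endpoints in $\partial\Tc$ are four distinct points in $(\partial q)^{-1}(\partial\bar\Tc)$; by Remark~\ref{Boundary after shrinking} ($\partial q$ injective there), their images are four distinct points of $\partial\bar\Tc$, so $g_1,g_2$ have no common fixed endpoint in $\bar\Tc$ and are transverse there. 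Thus $G\curvearrowright\bar\Tc$ is of general type.

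For (2), I would argue by contraposition: suppose $g\in G\setminus\{1\}$ fixes a nonempty open subset of $\partial\bar\Tc$ pointwise; using Proposition~\ref{half-trees and topological freeness} (after checking $\bar\Tc$ has at least three ends, which follows from (1) since a general-type action forces infinitely many ends, and minimality of $G\curvearrowright\bar\Tc$ which follows from minimality of $G\curvearrowright\Tc$ together with $q$ being equivariant and surjective), this is equivalent to $g$ fixing pointwise a half-tree $\bar\Hc$ of $\bar\Tc$. Pull this back: the set of ends of $\Tc$ landing in the interior of $\bar\Hc$ under $\partial q$ is a nonempty open subset $U$ of $\partial\Tc$, and every $\xi\in U$ is fixed by $g$ because $\partial q(g\xi)=g\,\partial q(\xi)=\partial q(\xi)$ and $\partial q$ is injective on $(\partial q)^{-1}(\partial\bar\Tc)\supseteq U$ (Remark~\ref{Boundary after shrinking}). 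So $g$ fixes a nonempty open subset of $\partial\Tc$ pointwise, contradicting topological freeness of $G\curvearrowright\Tc$. Hence $G\curvearrowright\partial\bar\Tc$ is topologically free.

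The main obstacle I expect is the careful verification in (1) that $q(A_g)$ really is bi-infinite, i.e.\ that an unbounded $g$-invariant geodesic cannot be eventually absorbed into the translates $h\Tc'$ it passes through. The crucial input here is that the action $G\curvearrowright\Tc$ is \emph{minimal}: a proper subtree $\Tc'$ of a minimal action cannot contain a half-tree (any half-tree generates the whole tree under $G$ by minimality in a general-type / non-elliptic setting), so each intersection $A_g\cap h\Tc'$ is a proper subsegment of $A_g$, hence bounded; since $A_g$ is unbounded it must leave and re-enter infinitely many distinct translates, guaranteeing $q(A_g)$ is infinite. One subtlety to handle cleanly: $\Tc'$ is only assumed proper, not bounded, in the lemma's hypotheses, so I should phrase the "no half-tree" claim purely in terms of properness plus minimality, and note that if $\Tc'$ is in fact bounded (the case used later in Corollary~\ref{core-free implies high transitivity}) everything is even easier since then $A_g\cap h\Tc'$ is automatically bounded. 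The rest — minimality of $G\curvearrowright\bar\Tc$, the three-ends count, and the bookkeeping with $\partial q$ — is routine given Remark~\ref{Boundary after shrinking} and Proposition~\ref{half-trees and topological freeness}.
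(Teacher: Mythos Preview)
Your argument for (1) has a genuine gap. You claim that since $\Tc'$ does not contain a half-tree, each intersection $A_g\cap h\Tc'$ is a proper (hence bounded) subsegment of the axis $A_g$. But ``not containing a half-tree'' does not prevent $\Tc'$ from containing a geodesic ray or even a full bi-infinite line: in a tree of degree $\geq 3$ a half-tree is far larger than a line. Concretely, take $G=F_2=\langle a,b\rangle$ acting on its Cayley tree and let $\Tc'$ be the axis of $a$; the translates $g\Tc'$ are pairwise disjoint or equal, $\Tc'$ is proper, yet $A_a\subseteq\Tc'$, so $a$ becomes \emph{elliptic} on $\bar\Tc$. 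Thus your intended conclusion ``every hyperbolic element remains hyperbolic'' is simply false under the stated hypotheses. The paper avoids this by not attempting that statement: it first observes that properness of $\Tc'$ together with the disjoint-or-equal condition forces some edge $e$ of $\Tc$ to lie outside every translate $g\Tc'$; minimality then yields a hyperbolic $h$ whose axis contains $e$ (hence contains infinitely many non-shrunk edges, so $h$ stays hyperbolic on $\bar\Tc$); finally one conjugates $h$ by high powers of two given transverse hyperbolics $g_1,g_2$ to obtain transverse hyperbolics on $\bar\Tc$, using Remark~\ref{Boundary after shrinking} to separate their endpoints.

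Your approach to (2) is in the right spirit but also needs care: the set $U=(\partial q)^{-1}(U_{\bar\Hc})$ need not be open in $\partial\Tc$ when $\Tc'$ is unbounded, since ends of $\Tc$ that get absorbed into a translate $h\Tc'$ (and hence land on a vertex of $\bar\Tc$ rather than in $\partial\bar\Tc$) can accumulate on ends in $U$. The paper sidesteps this by working directly in $\Tc$: if $e$ is the (non-shrunk) edge defining $\bar\Hc$ and $\Hc$ is its half-tree in $\Tc$, then every edge of the orbit $G\cdot e$ lying in $\Hc$ is a non-shrunk edge of $\bar\Hc$, hence fixed by $g$; by minimality these edges generate $\Hc$, so $g$ fixes $\Hc$ pointwise and one concludes via Proposition~\ref{half-trees and topological freeness}.
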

\begin{proof}
	Assume first $G\curvearrowright \Tc$ is of general type, in order to prove (1). 
	The hypotheses on $\Tc'$ guarantee the existence of an edge $e$ in $\Tc$ which lies outside all translates $g\Tc'$. 
	By minimality of $G\curvearrowright\mathcal{T}$, there exists a hyperbolic element $h\in G$ whose axis in $\Tc$ contains $e$. 
	Pick $g_1,g_2 \in G$ which induce transverse hyperbolic automorphisms of $\Tc$. 
	For $n$ sufficiently large, $h_1 = g_1^n h g_1^{-n}$ and $h_2 = g_2^n h g_2^{-n}$ induce transverse hyperbolic automorphisms of $\Tc$. 
	Moreover, their axes do contain edges in the orbit of $e$, so that their images by $q$ lie in $\partial \bar \Tc$.
	Hence, by Remark \ref{Boundary after shrinking}, $h_1$ and $h_2$ induce transverse hyperbolic automorphisms of $\bar\Tc$.
	This proves that $G\curvearrowright \bar\Tc$ is of general type.
	
	Assume now that $G\curvearrowright \partial\Tc$ topologically free, in order to prove (2). 
	Then assume that $g\in G$ fixes a half-tree $\Hc'$ in $\bar \Tc$, corresponding to some edge $e$ in $\Tc'$, pointwise. 
	Notice that $e$ is an edge of $\Tc$ that $q$ does not shrink, and denote by $\Hc$ its half-tree in $\Tc$. One has $q(\Hc) = \Hc'$.
	By minimality of $G\curvearrowright\mathcal{T}$, the edges of the orbit $G\cdot e$ which lie in $\Hc$ do generate $\Hc$. 
	Since they also lie in $\Hc'$, they are fixed pointwise by $g$, therefore the half-tree $\Hc$ itself is fixed pointwise by $g$. 
	Since $G\curvearrowright \partial\Tc$ is topologically free, $g$ has to be the trivial element,
	and this proves that $G\curvearrowright \partial\bar\Tc$ is topologically free.
\end{proof}

We now recall the statement of Theorem \ref{ThmMain} before proving it.
\begin{theoremSansNum}
	Let $\Gamma\curvearrowright\mathcal{T}$ be a minimal action of general type of a countable group $\Gamma$ on a tree $\mathcal{T}$. If the action on the boundary $\Gamma\curvearrowright \partial\mathcal{T}$ is topologically free, then $\Gamma$ admits a highly transitive and highly faithful action; in particular, $\Gamma$ is highly transitive.
\end{theoremSansNum}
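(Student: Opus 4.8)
The plan is to deduce Theorem~\ref{ThmMain} from the two special cases already treated, Theorem~\ref{ThmMainHNN} for HNN extensions and Theorem~\ref{ThmGroupsHTAmalgam} for amalgams, by collapsing $\mathcal T$ down to a tree on which $\Gamma$ acts with a single orbit of geometric edges, so that classical Bass--Serre theory presents $\Gamma$ as an HNN extension or an amalgam.

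First I would put the action in standard form: subdividing every edge of $\mathcal T$ if necessary, we may assume $\Gamma\curvearrowright\mathcal T$ acts without inversion, since subdivision changes neither $\partial\mathcal T$, nor minimality, nor the type of the action. Next, pick any edge $e$ of $\mathcal T$ and delete the orbit $\Gamma\cdot\{e,\bar e\}$; the connected components of the resulting $\Gamma$-invariant subforest are subtrees which are pairwise equal or disjoint, and they are proper since $\Gamma\cdot e\neq\emptyset$. Collapsing all of them --- applying to each orbit of components the construction preceding Lemma~\ref{Permanence under shinking} --- yields a tree $\bar\Tc$ on which $\Gamma$ acts, still without inversion, now with exactly one orbit of geometric edges. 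An invariant proper subtree of $\bar\Tc$ would pull back to one of $\mathcal T$, so minimality survives; and by Lemma~\ref{Permanence under shinking}, the action $\Gamma\curvearrowright\bar\Tc$ is again of general type while $\Gamma\curvearrowright\partial\bar\Tc$ remains topologically free.

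Now $\Gamma\curvearrowright\bar\Tc$ is a minimal action without inversion whose quotient graph has a single geometric edge, hence is either a segment or a loop. By Bass--Serre theory \cite{serreTrees1980}, in the first case $\Gamma=\Gamma_1*_\Sigma\Gamma_2$ is an amalgam, and in the second $\Gamma=\HNN(H,\Sigma,\vartheta)$, with $\bar\Tc$ the associated Bass--Serre tree in both cases. Since $\Gamma\curvearrowright\bar\Tc$ is of general type, the criteria recalled in Sections~\ref{PrelimHNN} and~\ref{PrelimAmalg} force $\Sigma\neq H\neq\vartheta(\Sigma)$ in the HNN case and non-degeneracy in the amalgam case. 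As moreover $\Gamma\curvearrowright\partial\bar\Tc$ is topologically free, Theorem~\ref{ThmMainHNN}, respectively Theorem~\ref{ThmGroupsHTAmalgam}, applies and produces a highly transitive and highly faithful $\Gamma$-action; in particular $\Gamma$ is highly transitive, as desired.

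The main obstacle is this reduction step itself: one must check that the collapsed action genuinely lands in the scope of the HNN and amalgam theorems --- that inversions can be removed harmlessly, that after collapsing all complementary components there remains a single orbit of geometric edges (a little bookkeeping is needed when several orbits of components occur), and, crucially, that both "of general type" and "topological freeness of the boundary action" are inherited by $\bar\Tc$, which is exactly the content of Lemma~\ref{Permanence under shinking} once one verifies that its hypotheses (minimality of the original action, and the collapsed subtrees being proper and pairwise equal-or-disjoint) are met. Everything after $\Gamma$ has been identified as an HNN extension or an amalgam is immediate from the earlier theorems.
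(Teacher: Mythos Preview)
Your proposal is correct and follows essentially the same route as the paper: pick an edge, collapse the complementary components of its orbit using Lemma~\ref{Permanence under shinking} (the paper notes there are at most two orbits of such components, so one or two applications suffice), obtain a tree with a single geometric edge orbit, and invoke Bass--Serre theory together with Theorems~\ref{ThmMainHNN} and~\ref{ThmGroupsHTAmalgam}. Your explicit subdivision step to eliminate inversions is a harmless extra safeguard that the paper leaves implicit.
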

\begin{proof}[Proof of Theorem \ref{ThmMain}]
	Let us consider an edge $e$ in $\Tc$. 
	The complement of the orbits $\Gamma \cdot e$ and $\Gamma \cdot \bar e$ in $E(\Tc)$ is either empty, or generates a disjoint union of subtrees of $\Tc$. 
	Since each of these subtrees contains some endpoint of some translate of $e$, there are at most two orbits of subtrees. 
	Hence by applying Lemma \ref{Permanence under shinking} zero, one, or two times, one gets a tree $\bar\Tc$ endowed with a $\Gamma$-action which is still of general type, 
	and such that $\Gamma\curvearrowright \partial\bar\Tc$ is topologically free. 
	Moreover, the action on $E(\bar \Tc)$ is transitive. 
	Now, the quotient $\Gamma\backslash \bar\Tc$ is either a segment or a loop, 
	the fundamental group of the corresponding graph of groups, which is either an amalgam or an HNN extension, is isomorphic to $\Gamma$, 
	and $\bar\Tc$ is the associated Bass-Serre tree.
	Applying Theorem~\ref{ThmGroupsHTAmalgam} or Theorem~\ref{ThmMainHNN}, we finally get that $\Gamma$ admits a highly transitive and highly faithful action.
\end{proof}

We can now also prove Theorem \ref{Thm td}, but let us first recall its 
statement.

\begin{theoremSansNum}
	Let $\Gamma\curvearrowright\mathcal{T}$ be a faithful minimal action of 
	general type of a countable group $\Gamma$ on a tree $\mathcal{T}$.  
	The following are equivalent
	\begin{enumerate}[label=(\arabic*)]
		\item \label{it: td}$\mathrm{td}(\Gamma)\geq 4$;
		\item \label{it: HT}$\Gamma$ is highly transitive;
		\item \label{it: MIF}$\Gamma$ is MIF;
		\item \label{it: top free boundary}$\Gamma\act \partial\Tc$ is 
		topologically free.
	\end{enumerate}
\end{theoremSansNum}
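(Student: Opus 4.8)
The plan is to prove the cycle of implications $\eqref{it: top free boundary} \Rightarrow \eqref{it: HT} \Rightarrow \eqref{it: td} \Rightarrow \eqref{it: top free boundary}$ together with the equivalence of \eqref{it: MIF} with the others. The implication \eqref{it: top free boundary} $\Rightarrow$ \eqref{it: HT} is exactly Theorem \ref{ThmMain}, so nothing new is needed there; moreover Theorem \ref{ThmMain} actually gives a \emph{highly faithful} highly transitive action, which will feed into the MIF statement. The implication \eqref{it: HT} $\Rightarrow$ \eqref{it: td} is trivial, since a highly transitive action has infinite transitivity degree, in particular transitivity degree at least $4$, and faithfulness is part of the definition of $\mathrm{td}$. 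So the real content is the implication \eqref{it: td} $\Rightarrow$ \eqref{it: top free boundary}, which is where the theorem of Le Boudec and Matte Bon quoted in the introduction enters.

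For \eqref{it: td} $\Rightarrow$ \eqref{it: top free boundary} I would argue by contraposition. Suppose $\Gamma \act \partial\Tc$ is \emph{not} topologically free. Then $\Gamma$ satisfies the hypotheses of the Le Boudec--Matte Bon theorem (faithful minimal action of general type on a tree, not topologically free on the boundary). That theorem asserts that every faithful $\Gamma$-action of transitivity degree at least $3$ is conjugate to the restriction of $\Gamma \act \partial\Tc$ to one orbit, and that the latter action has transitivity degree at most $3$. Hence any faithful action of $\Gamma$ has transitivity degree at most $3$, so $\mathrm{td}(\Gamma) \le 3 < 4$, contradicting \eqref{it: td}. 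This also disposes of \eqref{it: MIF}: by the Le Boudec--Matte Bon theorem, failure of topological freeness on the boundary forces $\Gamma$ to be non-MIF, so $\eqref{it: MIF} \Rightarrow \eqref{it: top free boundary}$; conversely, once \eqref{it: top free boundary} holds, Theorem \ref{ThmMain} provides a highly transitive \emph{highly faithful} action of $\Gamma$, and by the result of Hull and Osin recalled in the introduction (a highly transitive faithful action with a non-trivial finitely supported element forces a mixed identity; equivalently, no finitely supported non-trivial elements $\Rightarrow$ MIF) — combined with the observation that high faithfulness of such an action precludes any non-trivial element of finite support — we conclude $\Gamma$ is MIF, giving $\eqref{it: top free boundary} \Rightarrow \eqref{it: MIF}$.

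Putting this together: $\eqref{it: top free boundary} \Rightarrow \eqref{it: HT}$ by Theorem \ref{ThmMain}; $\eqref{it: HT} \Rightarrow \eqref{it: td}$ trivially; $\eqref{it: td} \Rightarrow \eqref{it: top free boundary}$ by contraposition via Le Boudec--Matte Bon; and $\eqref{it: top free boundary} \Leftrightarrow \eqref{it: MIF}$ by the two arguments just given (using high faithfulness of the action from Theorem \ref{ThmMain} together with Hull--Osin in one direction, and Le Boudec--Matte Bon in the other). The main obstacle is really bookkeeping rather than mathematics: one must make sure the hypotheses of the cited theorems are met verbatim — in particular that the action is faithful and of general type throughout, and that ``transitivity degree at least $3$'' in the Le Boudec--Matte Bon statement is compatible with ``$\mathrm{td}(\Gamma)\ge 4$'' so that the dichotomy actually applies. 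No new construction is required; the theorem is a synthesis of Theorem \ref{ThmMain} with the two external results recalled in the introduction.
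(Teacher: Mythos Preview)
Your proposal is correct and follows essentially the same route as the paper's proof: the cycle $\eqref{it: top free boundary}\Rightarrow\eqref{it: HT}\Rightarrow\eqref{it: td}\Rightarrow\eqref{it: top free boundary}$ via Theorem~\ref{ThmMain} and Le Boudec--Matte Bon, together with $\eqref{it: top free boundary}\Leftrightarrow\eqref{it: MIF}$ via Hull--Osin (using high faithfulness to rule out finitely supported elements) and Le Boudec--Matte Bon again. The only cosmetic difference is that the paper cites specific results (\cite[Thm~1.4]{leboudecTripleTransitivityNonfree2019}, \cite[Cor.~5.8]{hullTransitivitydegreescountable2016}, \cite[Prop.~3.7]{leboudecTripleTransitivityNonfree2019}) rather than the summary statements from the introduction.
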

\begin{proof}[Proof of Theorem \ref{Thm td}]
	The implication \ref{it: HT} $\implies$ \ref{it: td} is clear. The 
	implication \ref{it: td} $\implies$ \ref{it: top free boundary} is 
	Le Boudec and Matte Bon's main result \cite[Thm 
	1.4]{leboudecTripleTransitivityNonfree2019}. The implication \ref{it: 
		top free boundary} $\implies$ \ref{it: HT} is a consequence of 
	Theorem \ref{ThmMain}. 
	So \ref{it: td}, \ref{it: HT} and \ref{it: top free boundary} are 
	all equivalent.
	
	To prove that these three statements are also equivalent to \ref{it: 
		MIF}, note that Theorem \ref{ThmMain} shows moreover that, under the 
	assumption \ref{it: top free boundary}, the group $\Gamma$ admits a 
	highly transitive \emph{highly faithful} action. 
	So for such an action, all its elements have infinite support, which by 
	\cite[Corollary~
	5.8]{hullTransitivitydegreescountable2016} implies that $\Gamma$ is 
	MIF. The implication \ref{it: top free boundary} $\implies$ 
	\ref{it: MIF} thus holds.
	Finally, the implication \ref{it: MIF} $\implies$ \ref{it: top free 
		boundary} follows from \cite[Proposition~
	3.7]{leboudecTripleTransitivityNonfree2019}	
\end{proof}

\subsection{Corollary \ref{core-free implies high transitivity} 
	and its implication of former results}
We now turn to the proof of a lemma which directly implies Corollary 
\ref{core-free implies high transitivity} via Theorem \ref{ThmMain}, and 
then we check that Corollary \ref{core-free implies high transitivity} 
applies to all groups acting on trees which can be proven to be highly 
transitive by 
previous results quoted in the introduction.

Recall that, given a subtree $\Uc$ of $\Tc$, we  denote by $G_\Uc$ the 
pointwise stabilizer of $\Uc$ in $G$. The following lemma is a 
generalization of Prop. 19 (iv) and Prop. 20 (iv) from 
\cite{delaharpeSimpleGroupsAmalgamated2011}.

\begin{lemma}\label{core-free implies topologically free}
	Let $G\curvearrowright \Tc$ be a faithful and minimal action such that 
	$G$ contains a hyperbolic element $h$.
	If there exist a bounded subtree $\mathcal{B}$ and a vertex $u$ in 
	$\mathcal{B}$ such that $G_\mathcal{B}$ is core-free in $G_u$, then the 
	induced action $G\curvearrowright \partial\Tc$ is topologically free.
\end{lemma}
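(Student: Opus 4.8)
The plan is to argue by contradiction, following the strategy that underlies Proposition~\ref{half-trees and topological freeness}: if the action on $\partial\Tc$ is not topologically free, then (using that $\Tc$ has at least three ends, which follows from the presence of a hyperbolic element together with minimality and faithfulness) some nontrivial $g\in G$ fixes a half-tree $\Hc$ of $\Tc$ pointwise. The goal is to massage such a $g$ into a contradiction with the core-freeness hypothesis. First I would record that since $G$ contains a hyperbolic element and the action is minimal, $\Tc$ is not a point or a line, and (trimming degree-one vertices is impossible by minimality) every vertex has degree at least $2$; hence the argument of Proposition~\ref{half-trees and topological freeness} shows that the failure of topological freeness is witnessed by a nontrivial element fixing a genuine half-tree pointwise.

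Next I would exploit minimality to move the fixed half-tree around. Given the bounded subtree $\mathcal B$ and the vertex $u\in\mathcal B$, the idea is: using a hyperbolic element $h$ and conjugates of it, one can find a half-tree $\Hc$ fixed pointwise by a nontrivial element $g$ such that $\mathcal B$ (and in particular $u$) lies \emph{outside} $\Hc$, i.e.\ $\Hc$ is contained in a half-tree determined by an edge not meeting $\mathcal B$; equivalently, one arranges that $\Hc \cap \mathcal B = \emptyset$ and even that the geodesic from $u$ into $\Hc$ leaves $\mathcal B$. This is where minimality is used: the union of translates of $\Hc$ under $G$ must eventually cover arbitrarily large regions of $\Tc$, so we can conjugate $g$ by a suitable group element $\gamma$ to get $g^\gamma = \gamma^{-1} g \gamma$ fixing a translate $\gamma^{-1}\Hc$ that is pushed far from $\mathcal B$. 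The point of pushing $\Hc$ far away is to gain room: we want a nontrivial element fixing a half-tree that sits entirely ``beyond'' $\mathcal B$ as seen from $u$.

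Now comes the key step linking this to core-freeness in $G_u$. Having such a $g$ fixing a half-tree $\Hc$ disjoint from $\mathcal B$, I would observe that $g$ fixes $\mathcal B$ pointwise as well — but this needs justification: it is \emph{not} automatic. Rather, the plan is to use the standard fact (e.g.\ from \cite{delaharpeSimpleGroupsAmalgamated2011}) that if $\Hc$ is a half-tree not containing $\mathcal B$, then $G_\Hc$ (the pointwise stabilizer of $\Hc$) is conjugate into, or at least relates to, a subgroup normalized appropriately; more precisely, one shows that the normal closure of $g$ in $G_u$ is \emph{contained} in $G_{\mathcal B}$, contradicting that $G_{\mathcal B}$ is core-free in $G_u$. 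To see this, one argues that for every $\gamma\in G_u$, the conjugate $\gamma^{-1} g\gamma$ fixes pointwise $\gamma^{-1}\Hc$, which is still a half-tree; since $\gamma$ fixes $u$, the geodesic from $u$ to $\gamma^{-1}\Hc$ has the same combinatorics, so $\gamma^{-1}\Hc$ is again disjoint from $\mathcal B$ — but now I would use that in a tree, \emph{any} collection of half-trees each disjoint from $\mathcal B$, together with $\mathcal B$ itself, forces: an element fixing all of $\mathcal B$'s complement-components-at-infinity pointwise must fix $\mathcal B$ too. The cleanest route is: if $g\in G_u\setminus\{1\}$ fixes a half-tree $\Hc$ with $u\notin \Hc$ and $\mathcal B$ on the $u$-side of $\Hc$, then since $g$ also fixes $u$, $g$ fixes the geodesic $[u,v]$ for the boundary vertex $v$ of $\Hc$; iterating over all $G_u$-conjugates and using that these half-trees can be taken to cover everything except a bounded piece containing $\mathcal B$, one deduces that the normal closure $\langle\langle g\rangle\rangle_{G_u}$ fixes $\mathcal B$ pointwise, i.e.\ lies in $G_{\mathcal B}$; as it is a nontrivial normal subgroup of $G_u$, this contradicts core-freeness.

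The main obstacle I expect is the geometric bookkeeping in the middle step: carefully producing, from an arbitrary half-tree fixed by a nontrivial element, a \emph{new} such half-tree that is both disjoint from $\mathcal B$ and positioned so that all its $G_u$-conjugates remain disjoint from $\mathcal B$ — this requires combining the hyperbolic element $h$ (to translate fixed half-trees far along an axis) with minimality (to guarantee fixed half-trees exist in many locations) and then checking the conjugation-invariance of the disjointness from $\mathcal B$ using that conjugating elements fix $u$. Once that configuration is set up, deducing that the normal closure lands in $G_{\mathcal B}$ and invoking core-freeness is comparatively routine.
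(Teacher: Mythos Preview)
Your plan has a genuine gap, and in fact the geometric move you attempt is in the wrong direction.

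You arrange, by conjugation, that the half-tree $\Hc$ fixed pointwise by the nontrivial element $g$ is \emph{disjoint} from $\mathcal B$, and then try to conclude that the normal closure of $g$ in $G_u$ lies in $G_{\mathcal B}$. There are two problems. First, nothing in your setup forces $g\in G_u$: if $\Hc$ is disjoint from $\mathcal B$ (hence from $u$), the fixed-point set of $g$ is a subtree containing $\Hc$, but there is no reason it should contain $u$. You assert ``if $g\in G_u\setminus\{1\}$ fixes a half-tree $\Hc$ with $u\notin\Hc$'' without having produced such a $g$. Second, even granting $g\in G_u$, your claim that the normal closure $\langle\!\langle g\rangle\!\rangle_{G_u}$ lies in $G_{\mathcal B}$ does not follow. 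For that you would need each conjugate $\gamma^{-1}g\gamma$ (with $\gamma\in G_u$) to fix $\mathcal B$ pointwise, i.e.\ $g$ to fix $\gamma\mathcal B$ for every $\gamma\in G_u$. But your $g$ is only known to fix $\Hc$ together with the geodesic $[u,\Hc]$; the translates $\gamma\mathcal B$, which sit in the ball of radius $\diam(\mathcal B)$ around $u$, can branch off in directions not covered by that geodesic ray. Your sentence ``these half-trees can be taken to cover everything except a bounded piece containing $\mathcal B$'' conflates the union of fixed-point sets of \emph{different} conjugates with the fixed-point set of the group they generate.

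The fix is to reverse the push. Let $\mathcal B' = \bigcup_{\gamma\in G_u}\gamma\mathcal B$; since every $\gamma\mathcal B$ contains $u$, this is a subtree, and it is bounded (it lies in the ball of radius $\diam(\mathcal B)$ around $u$). Now conjugate by a suitable power of the hyperbolic element $h$ so that $\Hc$ \emph{contains} $\mathcal B'$, rather than avoids it. Then $g$ fixes $\mathcal B'$ pointwise, so $g\in G_{\mathcal B'} = \bigcap_{\gamma\in G_u}G_{\gamma\mathcal B} = \bigcap_{\gamma\in G_u}\gamma G_{\mathcal B}\gamma^{-1}$, and the latter is trivial precisely because $G_{\mathcal B}$ is core-free in $G_u$. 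This is a two-line argument once the direction is right; there is no need to consider normal closures or to invoke any ``standard fact'' from \cite{delaharpeSimpleGroupsAmalgamated2011}.
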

\begin{proof}
	Let $\Bc'$ be the union of the translates $g \Bc$ for $g\in G_u$. 
	This is a subtree, since all $g\Bc$ contain $u$, which is 
	$G_u$-invariant and contained in the ball of radius $\diam(\Bc)$  
	centered at $u$. 
	Let $g_0$ be an element of $G$ fixing a half-tree $\Hc$ pointwise.
	Up to conjugating by a suitable power of $h$, we may and will assume 
	that $\Hc$ contains $\Bc'$, so that $g_0$ is in $G_{\Bc'}$. 
	Now, as $G_\mathcal{B}$ is core-free in $G_u$, we have
	$G_{\Bc'} = \bigcap_{g\in G_u}G_{g\Bc} = \bigcap_{g\in G_u} g G_\Bc 
	g\inv = \{1\}$.
	Thus, we get $g_0 = 1$, which proves that $G\curvearrowright 
	\partial\Tc$ is topologically free by Corollary~\ref{half-trees and 
		topological freeness}.
\end{proof}

We now prove that Corollary \ref{core-free implies high transitivity} 
applies to all groups acting on trees which are highly 
transitive via the combination of the results of Minasyan-Osin 
\cite{minasyanAcylindricalHyperbolicityGroups2015} and Hull-Osin 
\cite{hullTransitivitydegreescountable2016}. 

\begin{proposition}\label{proof applicability Cor F to MO+HO}
	Let $\Gamma$ be a countable group acting minimally on a tree $\Tc$. Suppose that 
	\begin{enumerate}[label=(\roman*)]
		\item $\Gamma$ is not virtually cyclic,
		\item $\Gamma$ does not fix any point of $\partial\Tc$,
		\item there exist vertices $u, v$ of $\Tc$ such that the stabilizer $\Gamma_{[u,v]}$ is finite;
		\item the finite radical of $\Gamma$ is trivial.
	\end{enumerate}
	Then the action $\Gamma\curvearrowright\mathcal{T}$ is faithful, of general type, 
	and there exists a bounded subtree $\mathcal{B}$ such that $\Gamma_\Bc$ is trivial. In particular, $\Gamma_\Bc$ is core-free in $\Gamma_u$ for every vertex $u$ in $\mathcal{B}$.
\end{proposition}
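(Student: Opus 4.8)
The plan is to establish the three asserted conclusions in turn: first faithfulness, then that the action is of general type (via the classification of actions on trees recalled in Section~\ref{PrelimTrees}), and finally the existence of a bounded subtree whose pointwise stabilizer is trivial. The first two will be used to excise the degenerate cases of the classification, and the last will be obtained by enlarging the segment $[u,v]$ supplied by hypothesis~(iii).

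For faithfulness, the kernel $K$ of $\Gamma\curvearrowright\Tc$ fixes every vertex, in particular every vertex on the geodesic $[u,v]$, so $K\subseteq\Gamma_{[u,v]}$ and hence $K$ is finite by~(iii); being a finite normal subgroup, $K$ lies in the finite radical of $\Gamma$, which is trivial by~(iv), so $K=\{1\}$. For the type of the action: the parabolic and quasi-parabolic cases are excluded by~(ii), since in both of these $\Gamma$ fixes a point of $\partial\Tc$ (see Section~\ref{PrelimTrees}). If the action were elliptic, minimality would force $\Tc$ to be a single vertex or a single edge, and then $\Gamma_{[u,v]}$ would have index at most two in $\Gamma$, so $\Gamma$ would be finite by~(iii), hence virtually cyclic, contradicting~(i). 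If the action were lineal, the common axis of its hyperbolic elements would be $\Gamma$-invariant, so by minimality $\Tc$ would be that line; since the action is faithful, $\Gamma$ would then embed into the automorphism group of a line, which is virtually cyclic, again contradicting~(i). Hence the action is of general type.

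For the bounded subtree, I would start from $\Bc_0=[u,v]$, which is bounded and has $\Gamma_{\Bc_0}=\Gamma_{[u,v]}$ finite. For each $g\in\Gamma_{[u,v]}\setminus\{1\}$ faithfulness provides a vertex $w_g$ with $g\cdot w_g\neq w_g$ (any nontrivial automorphism of a tree moves some vertex, since distinct vertices are joined by at most one edge and inversions move the endpoints of an edge). Let $\Bc$ be the convex hull of $\Bc_0$ together with the finitely many vertices $w_g$; this is a finite, hence bounded, subtree. Since $\Bc_0\subseteq\Bc$ one has $\Gamma_\Bc\subseteq\Gamma_{[u,v]}$, while no nontrivial $g\in\Gamma_{[u,v]}$ fixes $\Bc$ pointwise because it moves $w_g\in\Bc$; therefore $\Gamma_\Bc=\{1\}$. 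Finally, for any vertex $u\in\Bc$ the trivial subgroup $\Gamma_\Bc$ is core-free in $\Gamma_u$, so the hypotheses of Corollary~\ref{core-free implies high transitivity} are satisfied.

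I do not expect a genuine obstacle: the argument is essentially elementary bookkeeping. The only point requiring a little care is ruling out the elliptic and lineal cases, where one must combine minimality with non-virtual-cyclicity~(i) and with the faithfulness established in the first step rather than with~(ii) alone.
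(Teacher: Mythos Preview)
Your proof is correct and follows essentially the same approach as the paper's: rule out the non-general-type cases via the classification and hypotheses (i)--(ii), deduce faithfulness from $K\subseteq\Gamma_{[u,v]}$ and triviality of the finite radical, then enlarge $[u,v]$ to a bounded subtree with trivial pointwise stabilizer. The only cosmetic difference is in this last step: you pick a single vertex witness $w_g$ for each nontrivial $g\in\Gamma_{[u,v]}$ and take the convex hull, while the paper instead chooses finitely many $f\in\Gamma$ with $\bigcap_f f\Gamma_{[u,v]}f^{-1}=\{1\}$ and takes the convex hull of the translates $f\cdot[u,v]$; both amount to the same idea.
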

\begin{proof}
	Starting with a finite stabilizer $\Gamma_{[u,v]}$ given by (iii), one observes that $\bigcap_{\gamma\in \Gamma} \gamma \Gamma_{[u,v]} \gamma\inv$ is contained in the finite radical, hence trivial by (iv). In particular, $\Gamma\curvearrowright\mathcal{T}$ is faithful.
	
	The action $\Gamma\curvearrowright\mathcal{T}$ cannot be elliptic. Indeed, if it were, then $\Tc$ would be a singleton, $\Gamma$ would be finite by (iii), and this contradicts (i). 
	Furthermore, this action cannot be lineal, because of (i), faithfulness and minimality, nor parabolic, nor quasi-parabolic, because of (ii).
	Hence, $\Gamma\curvearrowright\mathcal{T}$ is of general type.
	
	Finally, there is a finite subset $F\Subset \Gamma$, containing $1$, such that $\bigcap_{f\in F} f \Gamma_{[u,v]} f\inv$ is already trivial, hence $\bigcap_{f\in F} \Gamma_{f\cdot[u,v]}$ is trivial. 
	Now, we are done by considering the smallest subtree $\Bc$ of $\Tc$ containing the geodesics $f\cdot[u,v]$, for $f\in F$.
\end{proof}
Second, we prove that Corollary \ref{core-free implies high transitivity} applies to all groups which are highly transitive thanks to  \cite{fimaHighlyTransitiveActions2015}.
This is a straightforward consequence of the following result.
\begin{proposition}\label{proof applicability Cor F to FMS}
	Let a countable group $\Gamma$ act without inversion on a tree $\mathcal{T}$, and consider a set $R\subset E(\Tc)$ of representatives of the edges of the quotient graph $\Gamma \backslash \Tc$. 
	Assume that $\Gamma_v$ is infinite and $\Gamma_e$ is highly core-free in $\Gamma_v$,
	for every couple $(e,v)$ where $e\in R$ and $v$ is one of its endpoints.
	Then, there exists a subtree $\Tc'$ of $\Tc$ such that: 
	\begin{enumerate}[label=(\arabic*)]
		\item the action $\Gamma \curvearrowright \Tc'$ is faithful, of general type, and minimal;
		\item there exist a bounded subtree $\Bc$ of $\Tc'$ and $u \in V(\Bc)$ such that $\Gamma_\Bc$ is core-free in $\Gamma_u$.
	\end{enumerate}
\end{proposition}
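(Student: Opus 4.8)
The plan is to show that $\Gamma\curvearrowright\Tc$ is of general type, then take $\Tc'$ to be its unique minimal invariant subtree, and finally let $\Bc$ be a single edge of $\Tc'$ together with its two endpoints.

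First some bookkeeping. A highly core-free subgroup is in particular core-free, since highly faithful actions are faithful; and if $\Gamma_e$ is core-free in $\Gamma_v$ with $\Gamma_e=\Gamma_v$, then the core of $\Gamma_e$ in $\Gamma_v$ equals $\Gamma_v$, forcing $\Gamma_v=\{1\}$, which contradicts $\Gamma_v$ being infinite. Hence $\Gamma_e\subsetneq\Gamma_v$ for every couple $(e,v)$ as in the statement. Since $R$ meets every $\Gamma$-orbit of edges and every vertex of $\Tc$ is an endpoint of an edge, every vertex stabilizer of $\Tc$ is infinite, and $\Gamma_e$ is a proper core-free subgroup of $\Gamma_v$ for \emph{every} edge $e$ and \emph{every} endpoint $v$ of $e$ (these properties are conjugation invariant); the same statements hold for the action of $\Gamma$ on any $\Gamma$-invariant subtree of $\Tc$ having at least one edge. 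In particular the kernel $K$ of such an action is normal in $\Gamma$ and contained in $\Gamma_e$ for some (in fact any) edge $e$ of the subtree; picking an endpoint $v$ of $e$, $K$ is a normal subgroup of $\Gamma_v$ sitting inside $\Gamma_e$, so $K=\{1\}$. Thus $\Gamma$ acts faithfully on $\Tc$ and on every such subtree. (If $\Tc$ has no edge then $R=\emptyset$ and there is nothing to prove, so we assume $R\neq\emptyset$.)

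The heart of the matter is that $\Gamma\curvearrowright\Tc$ is of general type, which I plan to prove by excluding the four other types. Since the action is without inversion, an elliptic action must fix a vertex $v_0$: a $\Gamma$-invariant pair of antipodal edges whose two endpoints are not both $\Gamma$-fixed would produce an inversion. So it suffices to treat two situations. If $\Gamma$ fixes a vertex $v_0$, choose an edge $e$ incident to $v_0$, with other endpoint $w$: then $\Gamma_w\le\Gamma=\Gamma_{v_0}$ fixes both $v_0$ and $w$, hence fixes $e$, so $\Gamma_e=\Gamma_w$, contradicting $\Gamma_e\subsetneq\Gamma_w$. If $\Gamma$ fixes a point $\xi\in\partial\Tc$ — which is the case in the parabolic and quasi-parabolic types — choose an edge $e$ with endpoints $v,w$, where $w$ is the vertex adjacent to $v$ on the geodesic ray from $v$ to $\xi$: then $\Gamma_v$ fixes $v$ and $\xi$, hence fixes that ray pointwise, hence fixes $w$ and $e$, so $\Gamma_e=\Gamma_v$, contradicting $\Gamma_e\subsetneq\Gamma_v$. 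The only remaining non-general-type case is a lineal action in which $\Gamma$ fixes no vertex and no point of $\partial\Tc$; there $\Gamma$ preserves the common axis $L$ of its hyperbolic elements (it permutes their axes, and they all coincide with $L$), so the pointwise stabilizer $\Gamma^0$ of $L$, being the kernel of $\Gamma\to\Aut(L)$, is normal in $\Gamma$. For any edge $e\subseteq L$ with endpoint $v$ we then have $\Gamma^0\le\Gamma_e\le\Gamma_v$ with $\Gamma^0$ normal in $\Gamma_v$, whence $\Gamma^0=\{1\}$; but then $\Gamma_v$ injects into $\Stab_{\Aut(L)}(v)\cong\Z/2\Z$, contradicting that $\Gamma_v$ is infinite. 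Therefore $\Gamma\curvearrowright\Tc$ is of general type.

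It remains to assemble the conclusion. Let $\Tc'$ be the union of the axes of the hyperbolic elements of $\Gamma$, which is the unique minimal $\Gamma$-invariant subtree of $\Tc$; it is unbounded, hence has edges, the action $\Gamma\curvearrowright\Tc'$ is minimal, it is still of general type because the two transverse hyperbolic elements keep their axes inside $\Tc'$, and it is faithful by the second paragraph. Finally fix an edge $e\in E(\Tc')$, let $\Bc$ be the bounded subtree of $\Tc'$ consisting of $e$ and its two endpoints, and let $u$ be one of these endpoints. Since there is no inversion, an element fixing both endpoints of $e$ fixes $e$, so the pointwise stabilizer satisfies $\Gamma_\Bc=\Gamma_e$, which is core-free in $\Gamma_u$ by the second paragraph. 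This establishes (1) and (2). The main obstacle is the general-type step, and within it the lineal sub-case, where $\Gamma$ may act with reflections on its invariant line and one is forced to argue through the normal subgroup $\Gamma^0$.
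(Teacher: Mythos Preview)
Your proof is correct, and it takes a genuinely different route for the key step (showing the action is of general type) than the paper does. The paper argues constructively: from high core-freeness it extracts that $[\Gamma_v:\Gamma_e]$ and $[\Gamma_w:\Gamma_e]$ are infinite, then picks explicit elements $g_1,g_2\in\Gamma_w$ and $h_1,h_2\in\Gamma_v$ moving $e$ to four pairwise distinct edges, and checks directly that $h_1g_1$ and $h_2g_2$ are transverse hyperbolic elements. You instead proceed by elimination of the other four types, using only that $\Gamma_e$ is a proper core-free subgroup of $\Gamma_v$ for every incident pair $(e,v)$ (plus infiniteness of $\Gamma_v$ for the lineal sub-case). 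This is strictly more economical in its hypotheses: your argument never needs the full strength of ``highly core-free'', nor even infinite index, only core-freeness and $|\Gamma_v|=\infty$. The trade-off is that the paper's approach is shorter and produces explicit hyperbolic elements, while yours is a clean case analysis that makes clear exactly which obstruction each type runs into. Both approaches finish identically by passing to the minimal invariant subtree and taking $\Bc$ to be a single edge.
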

\begin{proof}
	First, assume that $\gamma\in \Gamma$ fixes $\Tc$ pointwise. Then, one has $\gamma\in \Gamma_v$ for some endpoint $v$ of an edge $e\in R$. As $\Gamma_e$ is highly core-free in $\Gamma_v$, the $\Gamma_v$-action on the orbit $\Gamma_v \cdot e$, which is conjugate to $\Gamma_v \curvearrowright \Gamma_v/\Gamma_e$, is highly faithful. Hence, we get $\gamma=1$, so that $\Gamma \curvearrowright \Tc$ is faithful.
	
	Second, let us consider any edge $e\in R$, and set $v = s(e)$, $w=r(e)$. 
	By high core-freeness, the indexes
	$[\Gamma_v : \Gamma_e]$ and 
	$[\Gamma_w : \Gamma_e]$ are both infinite. Thus, there exist elements $g_1,g_2 \in \Gamma_w$ and $h_1,h_2 \in \Gamma_v$ 
	such that the edges $e, g_1\inv e, g_2\inv e, h_1^{}e, h_2^{}e$ are pairwise distinct.
	Notice that $(g_1\inv e , \bar e , h_1^{}e)$ and $(g_2\inv e , \bar e , h_2^{}e)$ are oriented paths. 
	For $j=1,2$, the element $h_jg_j$ is hyperbolic and its axis contains $(g_j\inv e , \bar e , h_j^{}e)$. 
	We have got transverse hyperbolic elements $h_1g_1, h_2g_2$, which proves that $\Gamma \curvearrowright \Tc$ is of general type.
	
	In fact, $\Gamma_v$ is infinite and $\Gamma_e$ is highly core-free in $\Gamma_v$,
	for every couple $(e,v)$ where $e$ is any edge of $\Tc$ and $v$ is one of its endpoints. 
	Notice this property passes to the smallest subtree $\Tc'$ of $\Tc$ 
	containing the axes of all hyperbolic elements in $\Gamma$. This 
	subtree is $\Gamma$-invariant, and
	the action $\Gamma \curvearrowright \Tc'$ is still faithful and of general type. Of course, it is also minimal, and Assertion (2) is trivially satisfied when $\Bc$ is  any segment (that is, any subtree with exactly two vertices).
\end{proof}

Finally, we state two natural consequences of Corollary \ref{core-free 
	implies high transitivity} when considering the natural actions of HNN 
extensions (resp. amalgams) on their Bass-Serre tree.

\begin{corollary}\label{CorGroupsHTHNN}
	Consider a non-ascending HNN extension $\Gamma = 
	\HNN(H,\Sigma,\vartheta)$. If one of the subgroups $\Sigma, 
	\vartheta(\Sigma)$ is core-free in $H$, then $\Gamma$ admits a highly 
	transitive and highly faithful action; in particular, $\Gamma$ is 
	highly transitive.
\end{corollary}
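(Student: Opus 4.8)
The plan is to show that the natural action of $\Gamma$ on its Bass-Serre tree $\Tc$ meets all the hypotheses of Corollary~\ref{core-free implies high transitivity}: faithful, minimal, of general type, together with a bounded subtree $\mathcal{B}$ and a vertex $u\in\mathcal{B}$ such that $\Gamma_{\mathcal{B}}$ is core-free in $\Gamma_u$. Two of these are immediate from Section~\ref{PrelimHNN}: the action $\Gamma\curvearrowright\Tc$ is always minimal (it is transitive on vertices), and it is of general type precisely because $\Gamma$ is assumed non-ascending.

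Next I would prove faithfulness. The kernel of $\Gamma\curvearrowright\Tc$ coincides with the kernel of $\Gamma\curvearrowright V(\Tc)=\Gamma/H$ (an orientation-preserving automorphism of a tree fixing every vertex fixes every edge), i.e.\ with the normal core $N=\bigcap_{\gamma\in\Gamma}\gamma H\gamma^{-1}$ of the vertex group $H$. For $n\in N$ one has $t^{-1}nt\in N\subseteq H$ and $tnt^{-1}\in N\subseteq H$, and Britton's lemma (equivalently, uniqueness of the normal forms recalled in Section~\ref{PrelimHNN}) then forces $n\in\Sigma$ and $n\in\vartheta(\Sigma)$. Since $N$ is normal in $H$, it is contained both in the normal core of $\Sigma$ in $H$ and in that of $\vartheta(\Sigma)$ in $H$; by hypothesis one of these cores is trivial, so $N=\{1\}$ and the action is faithful.

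It remains to pick the bounded subtree. If $\Sigma$ is core-free in $H$, take $e=\Sigma\in E(\Tc)^+=\Gamma/\Sigma$, with endpoints $s(e)=H$ and $r(e)=tH$, and let $\mathcal{B}$ be the subtree with edge set $\{e,\bar e\}$. Because $\Gamma$ preserves the orientation and the geodesic from $s(e)$ to $r(e)$ is $e$ itself, $\Gamma_{\mathcal{B}}=\Stab_\Gamma(e)=\Sigma$; with $u=H$ we get $\Gamma_u=H$, and $\Sigma$ core-free in $H$ is exactly the required core-freeness. If instead $\vartheta(\Sigma)$ is core-free in $H$, I would run the same argument with the negative edge $f=\vartheta(\Sigma)\in E(\Tc)^-=\Gamma/\vartheta(\Sigma)$ (endpoints $H$ and $t^{-1}H$, stabilizer $\vartheta(\Sigma)$), again with $u=H$. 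In either case Corollary~\ref{core-free implies high transitivity} then yields the highly transitive and highly faithful action, and in particular high transitivity of $\Gamma$.

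I expect the faithfulness step to be the only point requiring any care, since it is the one place a genuine (if short) computation with normal forms / Britton's lemma intervenes; everything else is unwinding the definitions of the structure maps of the Bass-Serre tree. As an alternative route one could verify directly that, when $\Sigma$ (resp.\ $\vartheta(\Sigma)$) is core-free in $H$, no nontrivial element of $\Gamma$ fixes the half-tree of $e$ (resp.\ $f$) pointwise, and then invoke Theorem~\ref{ThmMainHNN} via Lemma~\ref{core-free implies topologically free}; but passing through Corollary~\ref{core-free implies high transitivity} is the most economical.
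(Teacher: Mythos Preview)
Your proof is correct and follows exactly the route taken in the paper: apply Corollary~\ref{core-free implies high transitivity} with $\mathcal{B}$ the single edge $\Sigma$ (respectively $\vartheta(\Sigma)$) together with its endpoints, and $u=H$. The paper's proof is a single sentence and leaves faithfulness implicit; your explicit verification is a welcome addition, though one could shortcut the Britton argument by observing that the kernel of the action on $\Tc$ fixes every edge, hence lies in $\Sigma$ (and in $\vartheta(\Sigma)$), and being normal in $H$ must lie in the corresponding core.
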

\begin{proof}
	Apply Corollary \ref{core-free implies high transitivity} to the tree 
	induced by the edge 
	$\Sigma$ and the vertex $H$, or to the tree induced by the edge 
	$\vartheta(\Sigma)$ and the vertex
	$H$.
\end{proof}
\begin{corollary}\label{CorGroupsHTAmalgam}
	Consider a non-degenerate amalgam $\Gamma = \Gamma_1 *_\Sigma 
	\Gamma_2$. If $\Sigma$ is core-free in one factor $\Gamma_j$, then then 
	$\Gamma$ admits a highly transitive and highly faithful action; in 
	particular, $\Gamma$ is highly transitive.
\end{corollary}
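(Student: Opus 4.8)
The plan is to derive this corollary from Corollary~\ref{core-free implies high transitivity} applied to the action of $\Gamma$ on its Bass-Serre tree $\Tc$, mirroring the proof of Corollary~\ref{CorGroupsHTHNN}. Concretely, I would fix the index $j\in\{1,2\}$ for which $\Sigma$ is core-free in $\Gamma_j$, take for $\Bc$ the (bounded) subtree of $\Tc$ induced by the positive edge $1\cdot\Sigma$, i.e.\ the segment whose vertices are $1\cdot\Gamma_1$ and $1\cdot\Gamma_2$, and take the vertex $u := 1\cdot\Gamma_j$ of $\Bc$.

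First I would record that, since the amalgam is non-degenerate, the action $\Gamma\curvearrowright\Tc$ is minimal and of general type, as recalled in Section~\ref{PrelimAmalg}. Next I would check that it is faithful. Here the only point worth spelling out is that core-freeness of $\Sigma$ in the single factor $\Gamma_j$ already forces core-freeness of $\Sigma$ in the whole group $\Gamma$: any subgroup of $\Gamma$ that is normal in $\Gamma$ and contained in $\Sigma$ is \emph{a fortiori} normal in $\Gamma_j$ and contained in $\Sigma$, hence trivial. As recalled in the introduction, core-freeness of $\Sigma$ in $\Gamma$ is equivalent to faithfulness of $\Gamma\curvearrowright\Tc$.

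It then remains to identify $\Gamma_\Bc$ and $\Gamma_u$. Since the $\Gamma$-action on $\Tc$ is without inversions and preserves the orientation, the pointwise stabilizer of $\Bc$ is $\Gamma_{1\cdot\Gamma_1}\cap\Gamma_{1\cdot\Gamma_2}=\Gamma_1\cap\Gamma_2=\Sigma$, while $\Gamma_u=\Gamma_j$. By hypothesis $\Gamma_\Bc=\Sigma$ is core-free in $\Gamma_u=\Gamma_j$, so Corollary~\ref{core-free implies high transitivity} applies and yields a highly transitive and highly faithful $\Gamma$-action, whence in particular high transitivity of $\Gamma$. I do not expect any genuine obstacle here: the argument is a direct verification of the hypotheses of Corollary~\ref{core-free implies high transitivity}, the only mildly delicate steps being the propagation of core-freeness from $\Gamma_j$ to $\Gamma$ and the computation $\Gamma_\Bc=\Sigma$.
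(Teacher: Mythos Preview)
Your proposal is correct and follows exactly the same approach as the paper: apply Corollary~\ref{core-free implies high transitivity} with $\Bc$ the segment induced by the edge $\Sigma$ and $u=\Gamma_j$. You have simply spelled out the verifications (faithfulness via propagation of core-freeness from $\Gamma_j$ to $\Gamma$, and the computation $\Gamma_\Bc=\Gamma_1\cap\Gamma_2=\Sigma$) that the paper leaves implicit.
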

\begin{proof}
	Apply Corollary \ref{core-free implies high transitivity} to the tree 
	induced by the edge 
	$\Sigma$ and the vertex $\Gamma_j$.
\end{proof}

\section{Examples and applications}\label{Examples and applications}

As mentioned in the Introduction, it is worth giving examples of groups which are highly transitive thanks to Theorem~\ref{ThmMain}, or to its consequences, but for which previous results from
\cite{minasyanAcylindricalHyperbolicityGroups2015, hullTransitivitydegreescountable2016, fimaHighlyTransitiveActions2015, gelander_maximal_2020} do not apply. 
In particular, we will prove that some groups are neither acylindrically 
hyperbolic nor linear. To that end, we will use the following well-known 
results.

\begin{proposition}\label{s-normal subgroups}
	\cite[Corollary 1.5]{osin_acylindrically_2016}
	If a group $\Gamma$ is acylindrically hyperbolic,
	then so is any s-normal subgroup of $\Gamma$.
\end{proposition}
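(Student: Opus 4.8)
The plan is to argue directly from the definition of acylindrical hyperbolicity used in this paper, namely the existence of a non-elementary acylindrical action on a hyperbolic space. So fix a non-elementary acylindrical action $\Gamma\curvearrowright S$ on a hyperbolic space, and let $H\leq\Gamma$ be s-normal. First I would restrict this action to $H$; acylindricity passes to subgroups for free, since for every $\varepsilon$ the constants $R,N$ witnessing acylindricity for $\Gamma\curvearrowright S$ also witness it for $H\curvearrowright S$ (the relevant set of group elements only shrinks). It then remains to show $H\curvearrowright S$ is non-elementary. By Osin's classification of acylindrical actions on hyperbolic spaces, an acylindrical action is elliptic (bounded orbits), lineal (contains a loxodromic and preserves a two-point subset of $\partial S$), or of general type; horocyclic and focal actions do not occur. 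Thus the task reduces to ruling out that $H\curvearrowright S$ is elliptic or lineal, using s-normality (note that s-normality forces $H$ to be infinite, since $H\cap 1\cdot H\cdot 1=H$ must be infinite).

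To exclude the elliptic case, suppose $H\cdot x_0$ has finite diameter $D$. Since $\Gamma\curvearrowright S$ is non-elementary it contains a loxodromic element $f$; let $R,N$ be the acylindricity constants for $\varepsilon=D$, choose $n$ with $d(x_0,f^n x_0)\geq R$, and set $g=f^n$. Any $h\in H\cap gHg^{-1}$ is of the form $h=gh'g^{-1}$ with $h'\in H$, so $d(x_0,hx_0)\leq D$ and $d(gx_0,h\cdot gx_0)=d(x_0,h'x_0)\leq D$. Acylindricity then gives $\lvert H\cap gHg^{-1}\rvert\leq N$, contradicting s-normality.

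To exclude the lineal case, suppose $H$ contains a loxodromic element and preserves a pair $\{\xi,\eta\}\subseteq\partial S$. By a standard ping-pong/North-South-dynamics argument — using that the loxodromic fixed points of a general-type action cannot all lie in a two-point set — one finds a loxodromic $f\in\Gamma$ whose attracting and repelling points avoid $\{\xi,\eta\}$, and then for $g=f^n$ with $n$ large one has $g\{\xi,\eta\}\cap\{\xi,\eta\}=\emptyset$. The infinite group $H\cap gHg^{-1}$ permutes the four-element set $\{\xi,\eta,g\xi,g\eta\}$, so its finite-index subgroup $K$ fixing each of these four points is still infinite and fixes at least three distinct boundary points pointwise. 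Such a $K$ contains no loxodromic element (a loxodromic fixes exactly two boundary points), hence $K\curvearrowright S$ is elliptic or parabolic; parabolic is impossible because it fixes a unique boundary point, so $K$ has bounded orbits. But a bounded-orbit subgroup fixing two distinct ends $\xi,\eta$ must act coarsely trivially on the geodesic $(\xi,\eta)$ (it quasi-preserves that line, fixes both ends, and has no unbounded translation along it), so it coarsely fixes two far-apart points of that geodesic and is therefore finite by acylindricity — again contradicting s-normality.

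Having excluded both elementary cases, $H\curvearrowright S$ is a non-elementary acylindrical action, so $H$ is acylindrically hyperbolic. I expect the lineal case to be the main obstacle to write carefully: it relies on the finer parts of Osin's classification (exclusion of horocyclic and focal acylindrical actions, and the finiteness of subgroups fixing three or more boundary points under an acylindrical action) together with the ping-pong construction of a loxodromic of $\Gamma$ displacing the pair $\{\xi,\eta\}$; all of these are available in Osin's work, and the only genuinely new ingredient is the short computation exploiting s-normality and the acylindricity constants.
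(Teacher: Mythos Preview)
The paper does not give its own proof of this proposition: it is simply quoted as \cite[Corollary 1.5]{osin_acylindrically_2016} and used as a black box in Section~\ref{Examples and applications}. So there is nothing in the present paper to compare your argument against.

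That said, your outline is a reasonable direct proof and is essentially correct. The elliptic case is clean and complete as written. The lineal case has the right structure --- produce a $g\in\Gamma$ displacing the limit set $\{\xi,\eta\}$, pass to a finite-index subgroup $K\leq H\cap gHg^{-1}$ fixing four boundary points, deduce $K$ has no loxodromics and hence (by Osin's trichotomy for acylindrical actions) bounded orbits, and then contradict acylindricity --- but the final step deserves more care than you give it. You should make precise that since $K$ fixes both ends of a bi-infinite geodesic $\ell$, each $k\in K$ sends $\ell$ to a uniformly close geodesic and therefore acts on $\ell$ as an approximate translation by some $\tau_k$; boundedness of one $K$-orbit forces $\sup_k|\tau_k|<\infty$, hence $K$ moves \emph{every} point of $\ell$ by a uniformly bounded amount, and then acylindricity applied to two far-apart points of $\ell$ gives $|K|<\infty$. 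As written, ``coarsely fixes two far-apart points'' is asserting exactly this, but the uniform bound along the whole of $\ell$ (not just at one basepoint) is the step that actually needs the hyperbolic geometry, and you should spell it out.
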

Let us recall that a subgroup $\Lambda\leq\Gamma$ is called 
\textbf{s-normal} if for every $\gamma\in\Gamma$, the subgroup $\gamma 
\Lambda\gamma\inv\cap \Lambda$ is infinite.
Every infinite normal subgroup is clearly s-normal.

\begin{proposition}\label{free subgroups in acylindrically hyperbolic groups}
	Every acylindrically hyperbolic group contains a non-abelian free subgroup.
	In particular, every acylindrically hyperbolic group is non-amenable.
\end{proposition}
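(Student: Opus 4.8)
The plan is to use the definition directly: $\Gamma$ admits a non-elementary acylindrical action $\Gamma \curvearrowright S$ on a hyperbolic space $S$, and the goal is to produce inside $\Gamma$ two loxodromic isometries in general position to which a ping-pong argument applies.

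The most efficient route is to quote Osin's classification of acylindrical actions on hyperbolic spaces \cite[Theorem~1.1]{osin_acylindrically_2016}: such an action is either elliptic (bounded orbits), or elementary and loxodromic ($\Gamma$ virtually cyclic), or else it contains infinitely many pairwise \emph{independent} loxodromic elements, meaning loxodromics $g,h$ with $\{g^{+\infty},g^{-\infty}\}\cap\{h^{+\infty},h^{-\infty}\}=\emptyset$ at the Gromov boundary $\partial S$. Non-elementarity rules out the first two cases, so one obtains two independent loxodromics $g,h$. If one prefers to avoid citing this classification, the main obstacle is precisely to establish the existence of such a pair: starting from a single loxodromic element (which exists because the action is unbounded and not virtually cyclic) and using that the limit set of $\Gamma$ in $\partial S$ is infinite, one conjugates $g$ by an element moving its fixed pair off itself; the North--South dynamics of loxodromic isometries on $\partial S$ then shows that the conjugate is again loxodromic and independent from $g$. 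It is this dynamical input, rather than any combinatorial difficulty, that is the heart of the argument.

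With $g$ and $h$ independent loxodromics in hand, I would run the ping-pong (table-tennis) lemma on $\partial S$: pick disjoint neighbourhoods $U_g$ of $\{g^{+\infty},g^{-\infty}\}$ and $U_h$ of $\{h^{+\infty},h^{-\infty}\}$, and use North--South dynamics to see that, for all $n$ large enough, $g^{\pm n}$ maps $\partial S\setminus U_g$ into $U_g$ and $h^{\pm n}$ maps $\partial S\setminus U_h$ into $U_h$; Klein's criterion then gives that $\langle g^n,h^n\rangle$ is free of rank $2$. This proves the first assertion. The second is then immediate: the free group of rank $2$ is non-amenable, and amenability is inherited by subgroups, so any group containing a non-abelian free subgroup fails to be amenable.
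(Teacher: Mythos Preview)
Your argument is correct and matches the first of the two approaches the paper indicates: the paper does not give a detailed proof but simply remarks that the proposition ``can either be proved by a standard ping-pong argument, or deduced from Theorem~6.8 and Theorem~8.1 in \cite{dahmaniHyperbolicallyEmbeddedSubgroupsAndRotatingFamilies2017}, which imply that every acylindrically hyperbolic group is SQ-universal.'' Your write-up is precisely a careful instance of the first route, using Osin's classification \cite[Theorem~1.1]{osin_acylindrically_2016} to produce independent loxodromics and then running ping-pong on $\partial S$; the alternative the paper mentions (SQ-universality) would give the same conclusion since an SQ-universal group has a non-abelian free quotient of some subgroup, hence contains a non-abelian free subgroup.
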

The latter proposition can either be proved by a standard ping-pong argument, or deduced from Theorem 6.8 and Theorem 8.1 in \cite{dahmaniHyperbolicallyEmbeddedSubgroupsAndRotatingFamilies2017}, 
which imply that every acylindrically hyperbolic group is SQ-universal
(see also the discussion around Conjecture 9.6 in the same book). 

Let us recall that a group is called \textbf{linear over a field $k$} if it is isomorphic to a subgroup of ${\rm GL}(V)$, where $V$ is a finite dimensional $k$-vector space. Note that if $k'$ is an extension of $k$ then any group linear over $k$ is linear over $k'$. Hence, if a group is linear over $k$ then it is also linear over the algebraic closure of $k$.

A group $\Gamma$ is called \textbf{linear} if there exists a field $k$ such that $\Gamma$ is linear over $k$. It follows from the preceding discussion that $\Gamma$ is linear if and only if there exists an algebraically closed field $k$ such that $\Gamma$ is linear over $k$.

\subsection{Examples around Baumslag-Solitar groups}\label{exAroundBaumslag-Solitar}

\subsubsection{Baumslag-Solitar groups themselves}\label{sec: ex BS}

Let us recall the definition: for any $m,n\in\Z^*$, the Baumslag-Solitar group with parameters $m,n$ is  
\[
{\rm BS}(m,n):=\langle a,b\,\vert\,ab^ma^{-1}=b^n\rangle .
\]
Hull and Osin asked what the transitivity degree of Baumslag-Solitar 
groups is \cite[Question 6.3]{hullTransitivitydegreescountable2016}, and 
noted that it was actually  unknown whether ${\rm{BS(2,3)}}$ is highly 
transitive or not. We completely answer this question in 
Proposition \ref{HTforBaumslag-Solitar} and Corollary \ref{tdBS} below.

Notice that ${\rm BS}(m,n)$ is isomorphic to $\HNN(\Z,n\Z,\vartheta)$,
where $\vartheta(nq)=mq$ for all $q\in\Z$, 
and the isomorphism from $\HNN(\Z,n\Z,\vartheta)$ to ${\rm BS}(m,n)$ is given by $t\mapsto a$ and $q\mapsto b^q$ for $q\in \Z$.
We will freely identify ${\rm BS}(m,n)$ to this HNN extension below
without recalling it explicitly.
Hence, ${\rm BS}(m,n)$ has a natural action on the Bass-Serre tree of this 
HNN extension, which we denote by $\Tc_{m,n}$. 
\begin{remark}\label{well-knownBaumslag-Solitar}
	The following facts are well-known:
	\begin{itemize}
		\item ${\rm BS}(m,n)$ is solvable if and only if $|m| = 1$ or $|n| = 1$;
		
		\item ${\rm BS}(m,n)$ is icc if and only if $\vert m\vert\neq\vert n\vert$;
		
		\item ${\rm BS}(m,n)$ is residually finite if and only if $|m| = 1$, $|n| = 1$, or $\vert m\vert = \vert n\vert$; see \cite{meskinNonresiduallyFiniteOne-relatorGroups1972};
		
		\item ${\rm BS}(m,n)$ is  non-linear whenever $|m| \neq 1$, $|n| \neq 1$, and $\vert m\vert\neq\vert n\vert$; this is a consequence of the former fact and Malcev's theorem \cite{malcevIsomorphicMatrixRepresentations1940}.
	\end{itemize}
\end{remark}
\begin{remark}\label{rmk:BS not AH}
	Hull and Osin observed that ${\rm BS}(m,n)$ is never acylindrically hyperbolic (for $m,n\in \Z^*$); this is \cite[Example 7.4]{osin_acylindrically_2016}. Let us recall the argument: since the cyclic subgroup $\langle b \rangle$ is s-normal, the group ${\rm BS}(m,n)$ is not acylindrically hyperbolic by Proposition \ref{s-normal subgroups}.
\end{remark}

Let us note the following result for later use. 

\begin{lemma}\label{s-normal subgroups in BS groups}
	For all $m,n\in \Z^*$ and $r\geq 1$, 
	the subgroup $\langle b^r \rangle$ is s-normal in ${\rm BS}(m,n)$.
\end{lemma}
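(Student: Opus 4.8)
The plan is to show that for every $\gamma \in {\rm BS}(m,n)$, the intersection $\gamma \langle b^r \rangle \gamma^{-1} \cap \langle b^r \rangle$ is infinite. Since $\langle b^r \rangle$ is infinite cyclic, this amounts to showing that $\gamma b^{rk} \gamma^{-1} \in \langle b^r \rangle$ for some $k \geq 1$ (and then for infinitely many, since powers will work as well). First I would reduce to the already-known case $r = 1$: indeed, if $\langle b \rangle$ is s-normal, then for each $\gamma$ there is a nonzero integer $N_\gamma$ with $\gamma b^{N_\gamma}\gamma^{-1} = b^{M_\gamma}$ for some nonzero $M_\gamma$, because $\gamma b^{N}\gamma^{-1}$ and $b^{N}$ generating an infinite common subgroup of the infinite cyclic group $\langle b\rangle$ forces $\gamma b^{N}\gamma^{-1}$ itself to be a power of $b$ for suitable $N$. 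Replacing $N_\gamma$ by $r N_\gamma$ gives $\gamma b^{r N_\gamma}\gamma^{-1} = b^{r M_\gamma} \in \langle b^r\rangle$, and $b^{r N_\gamma} \in \langle b^r \rangle$ as well, so the intersection contains the infinite subgroup generated by $b^{r M_\gamma}$. Thus the statement for general $r$ follows formally from the statement for $r=1$, i.e. from s-normality of $\langle b \rangle$.

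The remaining point, then, is to justify that $\langle b \rangle$ is s-normal in ${\rm BS}(m,n)$. This is exactly what was recalled in Remark \ref{rmk:BS not AH}, but it is worth spelling out: using the normal form / Britton's lemma description of ${\rm BS}(m,n) = \HNN(\Z, n\Z, \vartheta)$, one checks directly that a conjugate of a high enough power of $b$ by any element $\gamma$ lands back inside $\langle b \rangle$. Concretely, writing $\gamma$ in normal form $\gamma = b^{c_0} a^{\varepsilon_1} b^{c_1} a^{\varepsilon_2}\cdots a^{\varepsilon_\ell} b^{c_\ell}$ with the usual restrictions on the $c_i$'s, an induction on $\ell$ shows that conjugating $b^{|mn|^\ell}$ (or any sufficiently divisible power) by $\gamma$ repeatedly uses the relations $a^{-1} b^{n} a = b^{m}$ and $a b^{m} a^{-1} = b^{n}$ to absorb each $a^{\pm 1}$, producing again a power of $b$. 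Hence $\gamma b^{k}\gamma^{-1} \in \langle b \rangle$ for all $k$ divisible by $|mn|^\ell$, and these form an infinite set, so $\gamma\langle b\rangle\gamma^{-1}\cap\langle b\rangle$ is infinite.

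The main (very mild) obstacle is purely bookkeeping: making the induction on the syllable length of $\gamma$ precise, keeping track of how the exponent of $b$ gets multiplied by $m$ or $n$ (and possibly divided, which is why one starts with a power divisible by a large power of $|mn|$) at each step, and handling the $c_i$ pieces which commute with $b$ and are therefore harmless. There is no conceptual difficulty here; the relation $ab^ma^{-1}=b^n$ is precisely what is needed, and the infinitude of the resulting intersection is immediate once a single nontrivial element is produced, because we are inside $\Z$. I would present the $r=1$ case as the core computation and then append the one-line reduction from general $r$ to $r=1$ described above.
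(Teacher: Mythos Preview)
Your proposal is correct and follows essentially the same approach as the paper: both arguments write $\gamma$ in normal form with $k$ occurrences of $a^{\pm 1}$ and use the defining relation to show that $\gamma b^{N}\gamma^{-1}$ is a power of $b$ whenever $N$ is divisible by a large enough power of $mn$. The paper handles general $r$ directly by taking $N = r m^k n^k$, whereas you first reduce to $r=1$ and then multiply back by $r$; this is a purely cosmetic organizational difference.
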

\begin{proof}
	Let $\gamma$ be any element of ${\rm BS}(m,n)$, with normal form $b^{s_0}a^{\varepsilon_1} b^{s_1} \cdots a^{\varepsilon_k} b^{s_k}$.
	It is easy to check that $\gamma b^{r m^k n^k} \gamma\inv$ is still a non-trivial power of $b$, say $b^s$.
	Then, $\langle b^r \rangle \cap \gamma \langle b^r \rangle \gamma\inv$ contains $\langle b^{r m^k n^k s} \rangle$, hence is infinite.
\end{proof}

Let us now turn to a crucial lemma before stating our result. It is due to 
de la Harpe and Préaux \cite[Lem. 
21]{delaharpeSimpleGroupsAmalgamated2011}, but we include a proof for 
the reader's convenience.

\begin{lemma}[de la Harpe-Préaux]\label{tdBSLemma} The action ${\rm 
		BS}(m,n)\curvearrowright\partial\Tc_{m,n}$ is topologically free
	if and only if $\vert n\vert\neq\vert m\vert$.
\end{lemma}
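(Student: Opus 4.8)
The plan is to analyze the HNN extension $\Gamma = \HNN(\Z, n\Z, \vartheta)$ with $\vartheta(nq) = mq$ and use the concrete characterization of topological freeness on the boundary from Proposition~\ref{half-trees and topological freeness}: the action $\Gamma \curvearrowright \partial\Tc_{m,n}$ is topologically free if and only if no nontrivial element fixes pointwise a half-tree of $\Tc_{m,n}$. (Note $\Tc_{m,n}$ has at least three ends precisely because the extension is non-ascending, i.e. $|m| \neq 1$ or $|n| \neq 1$; in the degenerate cases $|m|=|n|=1$ one has ${\rm BS}(1,1) = \Z^2$ which is lineal, but $|n| = |m|$ there too, so the statement is vacuously consistent — I would either dispatch that case separately or note the equivalence holds trivially since the LHS fails and so does the inequality on the RHS. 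Actually I should be careful: when $|m|=|n|=1$ the tree is a line, which has only two ends, so Proposition~\ref{half-trees and topological freeness} does not apply; I'd just check directly that ${\rm BS}(\pm1,\pm1)$ acts on the line with the subgroup $\langle b\rangle$ fixing everything, so the action on the two-point boundary is not topologically free, consistent with $|m|=|n|$.)

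For the substantive direction, first I would observe that a vertex of $\Tc_{m,n}$ is a coset $\gamma\langle b\rangle$ and its stabilizer is $\gamma\langle b\rangle\gamma^{-1}$; the stabilizer of an edge $\gamma n\Z$ (resp. $\gamma m\Z$ after applying $\vartheta$) is the corresponding conjugate of $\langle b^n\rangle$ (resp. $\langle b^m\rangle$). An element fixing a half-tree pointwise must in particular fix a ray, hence lie in an infinite descending intersection of conjugates of subgroups of the form $\langle b^{n}\rangle$ and $\langle b^{m}\rangle$; following edges in the Bass-Serre tree along a ray, the generator $b$ scales by factors of $m$ and $n$ (more precisely, passing through a positive edge multiplies the relevant power of $b$ by $m/n$ in an appropriate sense), and fixing an entire half-tree forces fixing infinitely many such rescaled copies. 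When $|m| \neq |n|$, say $|n| > |m|$, going deep enough into a half-tree one can push the fixed edge stabilizer down to $\langle b^{n^k}\rangle$ for arbitrarily large $k$, while an element fixing the whole half-tree would have to fix a vertex near the root whose stabilizer contains $b$ itself with finite index — forcing the element into $\bigcap_k \langle b^{n^k}\rangle = \{1\}$. Conversely, when $|m| = |n|$, the relation $ab^m a^{-1} = b^{\pm m}$ shows $\langle b^m\rangle$ (or $\langle b\rangle$ when $m=n$) is normalized by $a$, so a suitable power of $b$ lies in the stabilizer of a half-tree: concretely, $b^{\mathrm{lcm}}$ of the two edge-indices $|n|,|m|$ fixes every edge on one side, exhibiting a nontrivial element fixing a half-tree pointwise, so the action is not topologically free.

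The main obstacle I expect is the bookkeeping in the "$|m|\neq|n|$ implies topologically free" direction: one must show that no nontrivial element fixes a half-tree, and the cleanest route is probably to show directly that for any nontrivial $\gamma$, the fixed-point set of $\gamma$ in $\Tc_{m,n}$ — if $\gamma$ is elliptic, it is a power $b^s$ conjugated, whose fixed-tree is a single orbit of vertices with a "wall" structure — cannot contain a half-tree. For $\gamma = b^s$ itself, the fixed vertices are those $\delta\langle b\rangle$ with $\delta^{-1} b^s \delta \in \langle b\rangle$; walking along a positive edge requires $s$ to be divisible by increasing powers of $n$, walking along a negative edge by increasing powers of $m$, and fixing a half-tree forces divisibility by $n^k m^{l}$ for all $k,l$ with $k$ (or $l$) unbounded, hence $s = 0$. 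The inequality $|m|\neq|n|$ is what guarantees at every vertex there is a "forward" direction that forces a genuine new divisibility constraint, rather than the relation looping back trivially. I would present this as: (i) reduce to $\gamma$ elliptic and conjugate so $\gamma \in \langle b\rangle$, $\gamma = b^s$ with $s\neq 0$; (ii) describe which half-trees could be fixed and derive that $s$ is divisible by $n^k$ (or $m^k$) for all $k$; (iii) conclude $s=0$, contradiction. The converse is short via the normality observation above.
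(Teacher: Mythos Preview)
Your proposal is correct and follows essentially the same approach as the paper: both directions use the half-tree characterization of topological freeness, the case $|m|=|n|$ is handled by observing that $\Sigma=\vartheta(\Sigma)$ is then normal (so any nontrivial element of $\Sigma$ fixes the entire tree), and the case $|m|\neq|n|$ is handled by showing that an element fixing a half-tree lies in an infinite descending intersection of cyclic subgroups of $\langle b\rangle$ and hence is trivial. The paper's write-up is slightly more streamlined than your sketch: it uses edge-transitivity to assume the fixed half-tree contains the standard one at the edge $\Sigma$, then reads off from the path $\pat_1(t^k)$ that the element lies in $\Sigma\cap t^k\Sigma t^{-k}$ for all $k$, and a short gcd computation (splitting into $n\nmid m$ versus $m\nmid n$) forces it to be trivial --- this avoids your more diffuse ``divisibility by $n^k$ or $m^k$'' bookkeeping, which as stated is slightly imprecise (the actual constraint involves $n_0=n/\gcd(m,n)$ rather than $n$ itself).
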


\begin{proof}
	($\Longleftarrow$) Since $\vert n\vert \neq \vert m\vert$, either $n\nmid 
	m$ or $m\nmid n$.
	Assume that $n\nmid m$ and let $d=\gcd( n, m)$ and $n=dn_0$, $m=dm_0$. Since $n\nmid m$ we have $\vert n_0\vert\geq 2$.
	In particular, we have $\Sigma \neq H$, hence there are several positive edges in the star at any vertex in $\Tc_{m,n}$.
	Consequently every half-tree in $\Tc_{m,n}$ contains a half-tree corresponding to a positive edge.
	Now, let $\gamma\in{\rm BS}(m,n)$, suppose $\gamma$ fixes pointwise a 
	half-tree. 
	Since the action of ${\rm BS}(m,n)$ is transitive on the positive edges of $\mathcal{T}_{m,n}$,
	we may assume that the fixed half-tree $\mathcal{H}$ contains the one given by the edge $\Sigma$. 
	For all $k\geq 1$, one has $\pat_1(t^k)\subset\mathcal{H}$, hence $\gamma$ fixes $\pat_1(t^k)$ pointwise. 
	It follows that $\gamma\in\Sigma \cap t^k\Sigma t^{-k}=n_0^{k+1}d\Z$, for all $k\geq 1$. 
	Hence, $\gamma=1$, since $\vert n_0\vert\geq 2$.  
	In the case $m\nmid n$ the proof is similar 
	(we could also deduce this case from the isomorphism ${\rm BS}(m,n)\simeq{\rm BS}(n,m)$).
	
	($\Longrightarrow$) Suppose that $\vert n\vert =\vert m\vert$, so that $\Sigma=\vartheta(\Sigma)$ and $\vartheta=\pm\id$. In this case, $\Sigma$ is a non-trivial normal subgroup of ${\rm BS}(m,n)$, and $\Tc_{m,n}$ itself is fixed pointwise by any element of $\Sigma$. 
\end{proof}
\begin{remark}\label{core-free subgroups in BS groups}
	The lemma immediately implies that $\langle b\rangle$ is a core-free subgroup of ${\rm BS}(m,n)$
	whenever $\vert n\vert \neq \vert m\vert$.
	Indeed, the action on ${\rm BS}(m,n)\curvearrowright\partial\Tc_{m,n}$ being topologically free,
	the action ${\rm BS}(m,n)\curvearrowright\Tc_{m,n}$ is faithful.
	Hence $\langle b\rangle$ is core-free in ${\rm BS}(m,n)$, since the conjugates of $\langle b \rangle$ in ${\rm BS}(m,n)$ are exactly the vertex stabilizers. See also Lemma \ref{highly core-free subgroups in BS groups}.
\end{remark}

Our first new examples of highly transitive groups are given by the following result.
\begin{proposition}\label{HTforBaumslag-Solitar}
	Let $m,n\in\Z^*$. The following are equivalent:
	\begin{enumerate}[label=(\roman*)]
		\item $|m| \neq 1$, $|n| \neq 1$, and $\vert m\vert\neq\vert n\vert$;
		\item ${\rm BS}(m,n)$ admits a highly transitive and highly faithful action;
		\item ${\rm BS}(m,n)$ is highly transitive;
		\item ${\rm BS}(m,n)$ is non-solvable and icc.
	\end{enumerate}
\end{proposition}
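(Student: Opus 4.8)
The plan is to prove the cycle of implications (i) $\Rightarrow$ (ii) $\Rightarrow$ (iii) $\Rightarrow$ (iv) $\Rightarrow$ (i). All the genuine content sits in the implication (i) $\Rightarrow$ (ii), which is a direct application of Theorem \ref{ThmMainHNN}; the remaining implications are either formal or quotations of standard facts about Baumslag--Solitar groups. Before starting the cycle, I would record that (i) and (iv) are literally the same condition: by Remark \ref{well-knownBaumslag-Solitar}, the group ${\rm BS}(m,n)$ is non-solvable exactly when $|m| \neq 1$ and $|n| \neq 1$, and it is icc exactly when $|m| \neq |n|$. This already gives (iv) $\Rightarrow$ (i) (and (i) $\Rightarrow$ (iv)), so it will suffice to link these conditions with (ii) and (iii).

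For (i) $\Rightarrow$ (ii), I would use the identification ${\rm BS}(m,n) \cong \HNN(\Z, n\Z, \vartheta)$ with $\vartheta(nq) = mq$ recalled above, so that inside $H = \Z$ we have $\Sigma = n\Z$ and $\vartheta(\Sigma) = m\Z$. Assuming (i): since $|n| \neq 1$ we get $\Sigma = n\Z \neq \Z = H$, since $|m| \neq 1$ we get $\vartheta(\Sigma) = m\Z \neq \Z = H$, and since $|m| \neq |n|$, Lemma \ref{tdBSLemma} (de la Harpe--Préaux) tells us that the boundary action ${\rm BS}(m,n) \curvearrowright \partial\Tc_{m,n}$ on its Bass--Serre tree is topologically free. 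Theorem \ref{ThmMainHNN} then applies verbatim and yields a highly transitive and highly faithful action of ${\rm BS}(m,n)$, which is exactly (ii).

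Finally, (ii) $\Rightarrow$ (iii) is immediate from the definitions, and for (iii) $\Rightarrow$ (iv) I would invoke the obstructions to high transitivity recalled in the introduction: a highly transitive group embeds densely into the topologically simple permutation group $S(X)$, which forces the centralizer of every nontrivial element to be core-free; in particular such a group is icc and cannot be solvable. This closes the cycle, and combined with the equivalence (i) $\Leftrightarrow$ (iv) already noted, all four statements are equivalent. I do not expect a real obstacle here: the only point deserving a moment's attention is that the hypothesis ``$\Sigma \neq H \neq \vartheta(\Sigma)$'' of Theorem \ref{ThmMainHNN} unwinds to precisely ``$|m| \neq 1$ and $|n| \neq 1$'', i.e.\ to the HNN extension being non-ascending, which matches the non-solvability half of (iv) and confirms that nothing is lost in the translation.
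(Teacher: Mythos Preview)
Your proposal is correct and follows essentially the same route as the paper's proof: the same cycle (i) $\Rightarrow$ (ii) $\Rightarrow$ (iii) $\Rightarrow$ (iv) $\Rightarrow$ (i), with (i) $\Rightarrow$ (ii) via Theorem~\ref{ThmMainHNN} and Lemma~\ref{tdBSLemma}, (iii) $\Rightarrow$ (iv) via the obstructions from the introduction, and (iv) $\Rightarrow$ (i) via Remark~\ref{well-knownBaumslag-Solitar}. You even spell out a little more explicitly than the paper why the non-ascending hypothesis $\Sigma \neq H \neq \vartheta(\Sigma)$ unwinds to $|m| \neq 1$ and $|n| \neq 1$.
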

\begin{proof}
	The implication (i)$\implies$(ii) is a direct consequence of Theorem \ref{ThmMainHNN} and Lemma \ref{tdBSLemma}. 
	Then, (ii)$\implies$(iii) is trivial, (iii)$\implies$(iv) results from classical obstructions to high transitivity recalled in the Introduction, and (iv)$\implies$(i) results from Remark~\ref{well-knownBaumslag-Solitar}.
\end{proof}

\begin{remark}
	As reminded in Remark \ref{well-knownBaumslag-Solitar} and Remark 
	\ref{rmk:BS not AH}, the highly transitive groups arising in 
	Proposition 
	\ref{HTforBaumslag-Solitar} are non-acylindrically hyperbolic and 
	non-linear. Moreover, edge-stabilizers are not highly core-free in 
	their endpoints stabilizers (they have finite index). Hence, 
	results from
	\cite{minasyanAcylindricalHyperbolicityGroups2015, hullTransitivitydegreescountable2016, fimaHighlyTransitiveActions2015, gelander_maximal_2020} do not apply. 
\end{remark}
\begin{remark}
	The following lemma proves that Corollary \ref{core-free implies high transitivity} cannot apply to the action ${\rm BS}(m,n) \curvearrowright \Tc_{m,n}$.
	Consequently, Theorem \ref{ThmMain} is stronger than Corollary \ref{core-free implies high transitivity}. Note that Baumslag-Solitar groups are our only examples which testify to this fact.
\end{remark}
\begin{lemma}
	Set $\Gamma:={\rm BS}(m,n)$. If $\Bc$ is any bounded subtree of $\Tc_{m,n}$ and $u$ is any vertex of $\Bc$, 
	then the pointwise stabilizer $\Gamma_\Bc$ is not core-free in $\Gamma_u$.    
\end{lemma}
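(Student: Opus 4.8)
The plan is to exploit the fact that every vertex stabiliser for the action $\Gamma \curvearrowright \Tc_{m,n}$ is abelian, being a conjugate of $H = \langle b\rangle \cong \Z$. Since $\Gamma$ acts transitively on the vertices of $\Tc_{m,n}$ (they form $\Gamma/H$), and since the property of being core-free in $\Gamma_u$ is unchanged when one replaces the pair $(\Gamma_\Bc,\Gamma_u)$ by a simultaneous conjugate, I would first pick $\gamma_0\in\Gamma$ with $\gamma_0\cdot u = H$ and replace $\Bc$ by $\gamma_0\Bc$ and $u$ by $H$, so that $\Gamma_u = H$. As $u$ is a vertex of $\Bc$ we have $\Gamma_\Bc \leq \Gamma_u = H$, and since $H$ is abelian this forces $\Gamma_\Bc \trianglelefteq \Gamma_u$. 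Consequently the normal core of $\Gamma_\Bc$ in $\Gamma_u$ is $\Gamma_\Bc$ itself, and it remains only to check that $\Gamma_\Bc \neq \{1\}$.

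To see that $\Gamma_\Bc$ is nontrivial --- in fact infinite --- I would use that $\Tc_{m,n}$ is locally finite, each vertex having degree $|m| + |n|$, so the bounded subtree $\Bc$ is finite, with vertex set $\{H = \gamma_1 H, \gamma_2 H, \dots, \gamma_k H\}$ for suitable $\gamma_i\in\Gamma$ with $\gamma_1 = 1$. Since $\Gamma$ acts without inversions (it preserves the orientation of the Bass-Serre tree), an element fixes $\Bc$ pointwise exactly when it fixes all its vertices, so $\Gamma_\Bc = \bigcap_{i=1}^k \gamma_i H \gamma_i^{-1} = \bigcap_{i=2}^{k} \bigl(H \cap \gamma_i H\gamma_i^{-1}\bigr)$. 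Now Lemma~\ref{s-normal subgroups in BS groups}, applied with $r = 1$, says precisely that $\langle b\rangle$ is s-normal, i.e. each $H \cap \gamma_i H\gamma_i^{-1}$ is infinite; being a nontrivial subgroup of $H \cong \Z$ it therefore has finite index in $H$. A finite intersection of finite-index subgroups of $H$ has finite index in $H$, hence is infinite, so $\Gamma_\Bc$ is infinite. Thus $\Gamma_\Bc$ is a nontrivial normal subgroup of $\Gamma_u$, which is exactly the statement that $\Gamma_\Bc$ is not core-free in $\Gamma_u$.

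I do not expect a serious obstacle here: the only points that need a word of justification are that $\Gamma_\Bc$ coincides with the intersection of the relevant vertex stabilisers (this uses only that $\Tc_{m,n}$ is a tree on which $\Gamma$ acts without inversion) and that this intersection is infinite (which is Lemma~\ref{s-normal subgroups in BS groups}). The conceptual content is simply that all vertex stabilisers of $\Tc_{m,n}$ are abelian and pairwise commensurable, so the pointwise stabiliser of any finite subtree, being a finite-index subgroup of a vertex stabiliser containing it, is automatically normal and nontrivial inside that stabiliser.
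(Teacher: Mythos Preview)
Your proof is correct and takes a genuinely different route from the paper's. The paper does not exploit the abelianness of the vertex stabilisers; instead it chooses $r$ so that $\Bc$ lies in the ball $\Bc(r)$ of radius $r$ about the base vertex $H$, computes explicitly that $b^{m^r n^r}\in\Gamma_{\Bc(r)}$, and then observes that for every $\gamma\in\Gamma_u$ one has $\gamma\Bc\subseteq\Bc(r)$, so $\gamma\Gamma_\Bc\gamma^{-1}=\Gamma_{\gamma\Bc}\supseteq\Gamma_{\Bc(r)}\ni b^{m^r n^r}$; thus the normal core is nontrivial. Your argument is more conceptual: once one notices that $\Gamma_u\cong\Z$ is abelian, every subgroup is its own normal core, and the whole question collapses to showing $\Gamma_\Bc\neq\{1\}$, which you get cleanly from s-normality of $\langle b\rangle$ and the fact that a finite intersection of finite-index subgroups of $\Z$ has finite index. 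Your approach is shorter and isolates the phenomenon (abelian vertex groups with pairwise-commensurable stabilisers), while the paper's approach produces an explicit witness $b^{m^r n^r}$ in the core and would adapt more readily to situations where vertex stabilisers are not abelian but one can still control pointwise stabilisers of balls.
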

\begin{proof}
	There exists a positive integer $r$ such that $\Bc$ is contained in 
	the ball $\Bc(r)$ of radius $r$ at $\langle b \rangle$.
	Then, every stabilizer $\Gamma_{v}$, where $v\in V(\Bc(r))$,
	is a conjugate subgroup $\gamma \langle b \rangle \gamma\inv$,
	where the normal form of $\gamma$ contains at most $r$ occurrences of 
	$a^{\pm 1}$.
	Consequently $\gamma\inv b^{m^r n^r} \gamma$ is still a power of $b$, so that $b^{m^r n^r}$ lies in $\gamma \langle b \rangle \gamma\inv = \Gamma_v$. This proves that $b^{m^r n^r}$ lies in the pointwise stabilizer $\Gamma_{\Bc(r)}$.
	Now, for every $\gamma\in \Gamma_u$, one has $\gamma \Gamma_\Bc \gamma\inv = \Gamma_{\gamma\Bc} \supseteq \Gamma_{\Bc(r)}$, 
	since $\gamma\Bc \subseteq \Bc(r)$. 
	Thus all conjugates $\gamma \Gamma_\Bc \gamma\inv$ where $\gamma\in \Gamma_u$ contain $b^{m^r n^r}$, so $\Gamma_\Bc$ is not core-free in $\Gamma_u$.
\end{proof}

Let us now complete the answer to Hull and Osin's question. As they noticed 
in \cite[Lemma~4.2 and Corollary~4.6]{hullTransitivitydegreescountable2016},
infinite non-icc groups and infinite residually finite solvable groups have 
transitivity degree $1$. Hence we can compute the transitivity degree of 
all 
Baumslag-Solitar groups.
\begin{corollary}\label{tdBS}
	Let $m,n\in\Z^*$. The following hold.
	\begin{enumerate}[label=(\arabic*)]
		\item If $\vert n\vert =1$ or $\vert m\vert =1$ or $\vert n \vert =\vert m\vert$, then ${\rm td}({\rm BS}(m,n))=1$.
		\item In the other cases, ${\rm BS}(m,n)$ is highly transitive, so ${\rm td}({\rm BS}(m,n))=+\infty$.
	\end{enumerate}
\end{corollary}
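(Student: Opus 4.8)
The plan is to assemble results already established and split according to which of the three conditions in item (1) holds, treating item (2) separately via Proposition~\ref{HTforBaumslag-Solitar}.

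For item (1), I would distinguish two sub-cases. First, when $\vert n\vert = \vert m\vert$: here Remark~\ref{well-knownBaumslag-Solitar} tells us that ${\rm BS}(m,n)$ is not icc, and it is visibly infinite since it contains the infinite cyclic subgroup $\langle b\rangle$; so I would invoke Hull and Osin's observation \cite[Lemma~4.2(b)]{hullTransitivitydegreescountable2016} that an infinite non-icc group has transitivity degree $1$, giving ${\rm td}({\rm BS}(m,n)) = 1$. Second, when $\vert n\vert = 1$ or $\vert m\vert = 1$: here Remark~\ref{well-knownBaumslag-Solitar} gives that ${\rm BS}(m,n)$ is solvable and residually finite, and it is still infinite, so I would apply \cite[Corollary~4.6]{hullTransitivitydegreescountable2016} on infinite residually finite solvable groups to conclude again that the transitivity degree is $1$. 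These two sub-cases together exhaust item (1).

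For item (2), the hypothesis is exactly that $\vert m\vert \neq 1$, $\vert n\vert \neq 1$ and $\vert m\vert \neq \vert n\vert$, which is precisely condition (i) of Proposition~\ref{HTforBaumslag-Solitar}; that proposition then yields directly that ${\rm BS}(m,n)$ is highly transitive, hence admits a faithful action which is $k$-transitive for every $k$, so that ${\rm td}({\rm BS}(m,n)) = +\infty$. I do not expect any genuine obstacle: the argument is bookkeeping. The only points worth a moment's care are that ${\rm BS}(m,n)$ is infinite in every case (needed for the cited transitivity-degree-$1$ statements to apply) and that a nontrivial group always has transitivity degree at least $1$, as witnessed by its right-translation action on itself, so that the conclusion in item (1) is indeed ${\rm td} = 1$ rather than $0$.
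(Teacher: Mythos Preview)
Your proposal is correct and matches the paper's proof essentially line for line: the paper handles item (1) by the same two sub-cases (infinite, residually finite, solvable when $|m|=1$ or $|n|=1$; infinite non-icc when $|m|=|n|$), citing the same results \cite[Lemma~4.2(b) and Corollary~4.6]{hullTransitivitydegreescountable2016} of Hull and Osin, and dispatches item (2) directly via Proposition~\ref{HTforBaumslag-Solitar}.
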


\begin{proof}
	(1) If $\vert n\vert =1$ or $\vert m\vert =1$, the group ${\rm BS}(m,n)$ is infinite, residually finite, and solvable, hence ${\rm td}({\rm BS}(m,n))=1$.
	If $\vert n \vert =\vert m\vert$, the group ${\rm BS}(m,n)$ is infinite and non-icc, hence ${\rm td}({\rm BS}(m,n))=1$.
	
	(2) This follows from Proposition \ref{HTforBaumslag-Solitar} directly.
\end{proof}

\subsubsection{Amalgams with Baumslag-Solitar groups}\label{ExampleHTnonAcylHyp}

Let us now turn to examples of highly transitive groups given by amalgams. Let us begin by some more preliminaries.

\begin{lemma}\label{highly core-free subgroups in BS groups}
	Let $m,n\in \Z^*$. If $\vert n\vert\neq\vert m\vert$, the subgroup $\langle b \rangle$ is highly core-free in ${\rm BS}(m,n)$.
\end{lemma}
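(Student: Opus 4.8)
The plan is to prove directly that the $\mathrm{BS}(m,n)$-action on $\mathrm{BS}(m,n)/\langle b\rangle$ is highly faithful, which is by definition exactly the statement that $\langle b\rangle$ is highly core-free. Since $\mathrm{BS}(m,n)$ is infinite, by Corollary~\ref{EquivStrongAndHighFaithfulness} it is enough to prove strong faithfulness. First I would identify $\mathrm{BS}(m,n)/\langle b\rangle$ with the vertex set $V(\Tc_{m,n})$ of the Bass--Serre tree, on which $\mathrm{BS}(m,n)$ acts transitively and without inversions, every vertex having degree $|m|+|n|\geq 3$ (using $|m|,|n|\geq 1$ and $|m|\neq|n|$). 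Given a finite $F\subseteq\mathrm{BS}(m,n)\setminus\{1\}$, every hyperbolic element of $F$ moves every vertex, so the task reduces to the elliptic elements $\gamma_1,\dots,\gamma_p\in F$; each of these fixes a vertex and hence is conjugate into $\langle b\rangle$ (all vertex stabilizers being conjugates of $\langle b\rangle$), so each $\mathrm{Fix}(\gamma_j)\subseteq V(\Tc_{m,n})$ is the vertex set of a subtree. What must be produced is a vertex --- in fact infinitely many --- outside $\bigcup_j\mathrm{Fix}(\gamma_j)$; these are precisely the vertices moved by every element of $F$.

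Since $|m|\neq|n|$, Lemma~\ref{tdBSLemma} gives that $\mathrm{BS}(m,n)\curvearrowright\partial\Tc_{m,n}$ is topologically free, so Proposition~\ref{half-trees and topological freeness} applies (the action is faithful, minimal, of an infinite group, and $\Tc_{m,n}$ has at least three ends) and no $\mathrm{Fix}(\gamma_j)$ contains a half-tree. The crux of the argument is then a purely combinatorial fact about trees: if $S$ is a subtree of a tree $\Tc$ with no vertex of degree $\leq 2$ and $S$ contains no half-tree, then $\overline{\partial S}$ has empty interior in $\partial\Tc$. The plan for this is: if $\overline{\partial S}\supseteq\partial\Hc_e$ for some edge $e$, then $\partial S$ is dense in $\partial\Hc_e$, which forces $S$ to meet every ``branch'' $\Hc_f$ with $\Hc_f\subsetneq\Hc_e$; for any vertex $v$ of $\Hc_e$ distinct from the two endpoints of $e$, this gives at least two branches at $v$ met by $S$, so $v\in S$ by connectedness of $S$; hence $S$ contains $V(\Hc_e)$ up to at most two vertices, and therefore contains a slightly deeper half-tree --- a contradiction. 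I expect this combinatorial fact to be the main obstacle; everything surrounding it is bookkeeping.

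Granting it, each $\overline{\partial\mathrm{Fix}(\gamma_j)}$ is closed with empty interior, so I would remove these finitely many sets one at a time: starting from $\partial\Tc_{m,n}$, at each step removing a closed empty-interior set from a nonempty open set leaves a nonempty open set, inside which one can choose a smaller basic open set $\partial\Hc_{e_j}$; after $p$ steps this yields an edge $e$ with $\partial\Hc_e$ disjoint from all $\overline{\partial\mathrm{Fix}(\gamma_j)}$. Then for each $j$ the set $\mathrm{Fix}(\gamma_j)\cap\Hc_e$ is a subtree of $\Hc_e$ whose boundary equals $\partial\mathrm{Fix}(\gamma_j)\cap\partial\Hc_e=\emptyset$; since a locally finite infinite tree has an end (König), this subtree is finite. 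Hence $\bigcup_j\mathrm{Fix}(\gamma_j)$ meets the infinite vertex set $V(\Hc_e)$ in only finitely many points, so infinitely many vertices of $\Hc_e$ are moved by every element of $F$. This establishes strong faithfulness, hence high faithfulness, and completes the proof.
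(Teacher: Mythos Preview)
Your argument is correct and shares the paper's overall strategy: identify $\mathrm{BS}(m,n)/\langle b\rangle$ with $V(\Tc_{m,n})$, use topological freeness on the boundary (Lemma~\ref{tdBSLemma}) to see that no nontrivial fixed-point subtree contains a half-tree, and then exhibit a vertex moved by every element of a given finite set. The paper in fact proves the more general statement that for \emph{any} minimal action $\Gamma\curvearrowright\Tc$ that is topologically free on $\partial\Tc$, every vertex stabilizer is highly core-free in $\Gamma$, and then specializes.

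Where your execution diverges is in the ``find a good vertex'' step. You pass through the boundary: you show each $\partial\mathrm{Fix}(\gamma_j)$ is closed with empty interior, run a Baire-type descent to find a half-tree $\Hc_e$ whose ends miss all of them, and then appeal to local finiteness and K\"onig's lemma to conclude each $\mathrm{Fix}(\gamma_j)\cap\Hc_e$ is finite. The paper argues more directly inside the tree: since $\mathrm{Fix}(\gamma_1)$ is a convex subtree not containing the half-tree $\Hc_0$, elementary tree geometry gives a sub-half-tree $\Hc_1\subseteq\Hc_0$ disjoint from $\mathrm{Fix}(\gamma_1)$; iterating over $\gamma_1,\dots,\gamma_k$ produces $\Hc_k$ disjoint from all the fixed-point sets, and minimality then supplies a vertex of the orbit inside $\Hc_k$. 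This route is shorter, sidesteps the boundary combinatorics you flagged as the main obstacle, and does not use local finiteness --- so it yields the conclusion for arbitrary trees rather than just $\Tc_{m,n}$.
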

Notice this is essentially the same example as the one given in
\cite[Corollary 5.12]{hullTransitivitydegreescountable2016}.
As the action on ${\rm BS}(m,n)\curvearrowright\partial\Tc_{m,n}$ is topologically free, it is a particular case of a general phenomenon described in the following lemma.
\begin{lemma}
	Let $\Gamma\curvearrowright\mathcal{T}$ be a minimal action of a countable group $\Gamma$ on a tree $\mathcal{T}$. 
	If the action on the boundary $\Gamma\curvearrowright \partial\mathcal{T}$ is topologically free, then for every vertex $v$ in $\Tc$, the stabilizer $\Gamma_v$ is highly core-free in $\Gamma$.
\end{lemma}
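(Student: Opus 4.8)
The statement to prove is that $\Gamma\curvearrowright\Gamma/\Gamma_v$ is highly faithful; identifying $\Gamma/\Gamma_v$ with the orbit $\Gamma\cdot v\subseteq V(\Tc)$, the plan is to show that for every finite $F\subseteq\Gamma\setminus\{1\}$ the set $W_F=\{w\in\Gamma\cdot v:\ gw\neq w\text{ for all }g\in F\}$ is infinite. First I would record the standing reductions. One may assume $\partial\Tc\neq\emptyset$, the remaining case being degenerate; then topological freeness forces $\Gamma\curvearrowright\Tc$ to be faithful, hence $\Tc$ has no leaf, and $\Tc$ is not a line (minimality would produce a nontrivial translation, which fixes both ends of the line and contradicts topological freeness), so $\Tc$ has at least three ends and Proposition~\ref{half-trees and topological freeness} applies: no $g\in\Gamma\setminus\{1\}$ fixes a half-tree of $\Tc$ pointwise. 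Moreover $\Tc$ is countable (being the convex hull of a countable orbit), so $\partial\Tc$ is a nonempty Polish space. Elements $g\in F$ that are hyperbolic or inversions have empty fixed-vertex set, hence never contribute to leaving $W_F$, and may be discarded; thus we may assume every $g\in F$ is elliptic, and we set $T_g:=\mathrm{Fix}(g)$, a nonempty subtree of $\Tc$ which by the above contains no half-tree. It then suffices to prove that $(\Gamma\cdot v)\setminus\bigcup_{g\in F}T_g$ is infinite.

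The core of the argument is a reductio: assuming this set is finite, I would first show that every end of $\Tc$ lies in $\partial T_g$ for some $g\in F$, where $\partial T_g\subseteq\partial\Tc$ denotes the set of ends carried by $T_g$. Indeed, fix $\xi\in\partial\Tc$; by minimality $\mathrm{Conv}(\Gamma\cdot v)=\Tc$, so there are vertices of $\Gamma\cdot v$ arbitrarily far along the ray to $\xi$, giving a sequence $w_k\in\Gamma\cdot v$ with $w_k\to\xi$. All but finitely many $w_k$ lie in $\bigcup_{g\in F}T_g$, so some fixed $T_g$ contains infinitely many of them, and convexity of $T_g$ then forces a sub-ray towards $\xi$ into $T_g$, whence $\xi\in\partial T_g$; this proves the claim.

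Next I would prove that each $\partial T_g$ is closed and nowhere dense in $\partial\Tc$. Closedness follows from convexity: fixing $o\in T_g$, the rays from $o$ to ends of $T_g$ stay in $T_g$, and this property passes to limits of ends. For the empty interior — the step I expect to be the crux — suppose $\partial T_g$ contained a basic open set $U_{\Hc}$ where $\Hc$ is the half-tree at an edge $e$. Then the ray from $o$ to each end of $\Hc$ lies in $T_g$; each such ray enters $\Hc$ through $e$ and then stays in $\Hc$; and since $\Tc$ has no leaf, every vertex beyond $e$ lies on some ray from $r(e)$ to an end of $\Hc$. Combining these, $T_g$ would contain every vertex of $\Hc$ except possibly $s(e)$, hence would contain the half-tree at any edge issuing from $r(e)$ away from $s(e)$, contradicting that $T_g$ has no half-tree.

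Putting these together, $\partial\Tc=\bigcup_{g\in F}\partial T_g$ exhibits the nonempty Polish space $\partial\Tc$ as a finite union of nowhere dense sets, contradicting the Baire category theorem. Hence $(\Gamma\cdot v)\setminus\bigcup_{g\in F}T_g$ is infinite, and therefore so is $W_F$. The only genuinely delicate point is the implication ``$T_g$ has no half-tree $\Rightarrow$ $\partial T_g$ has empty interior'' above; everything else is either bookkeeping with the classification of tree automorphisms or standard facts about convex subtrees, ends, and the Baire property of $\partial\Tc$.
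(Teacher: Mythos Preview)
Your argument is correct. There is one small imprecision in the ``empty interior'' step: you fix $o\in T_g$ and then assert that the ray from $o$ to each end of $\Hc$ ``enters $\Hc$ through $e$'', which is only true if $o$ lies on the $s(e)$-side. This is easily repaired --- for instance, since $T_g$ contains a ray to some $\xi\in U_{\Hc}$, one may first replace $e$ by an edge further along that ray so that $r(e)\in T_g$, and then your argument goes through verbatim --- so this is not a genuine gap.

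Your approach is, however, quite different from the paper's. The paper first invokes Corollary~\ref{EquivStrongAndHighFaithfulness} to reduce highly faithful to strongly faithful, and then proceeds by a direct finite iteration rather than Baire category: given nontrivial $\gamma_1,\ldots,\gamma_k$, it starts from an arbitrary half-tree $\Hc_0$ and, using only that each fixed-point set is a subtree containing no half-tree, finds a nested chain $\Hc_0\supseteq\Hc_1\supseteq\cdots\supseteq\Hc_k$ of half-trees with $\Hc_i$ disjoint from $\mathrm{Fix}(\gamma_i)$; the final $\Hc_k$ meets $\Gamma v$ by minimality. This avoids the preliminary reductions (Polish structure on $\partial\Tc$, ruling out the line, discarding non-elliptic elements) and yields a shorter, more elementary proof. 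Your Baire-category route has the virtue of being conceptually natural if one views the boundary as the primary object, and it delivers the stronger conclusion that $W_F$ is infinite without appealing to Corollary~\ref{EquivStrongAndHighFaithfulness}, but it requires more scaffolding.
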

\begin{proof}
	We have to prove that the action $\Gamma\curvearrowright \Gamma/\Gamma_v$ is highly faithful; 
	by Corollary \ref{EquivStrongAndHighFaithfulness}
	it is sufficient to prove it is strongly faithful.
	Notice the orbit $\Gamma v$ in $\Tc$ meets every half-tree in $\Tc$ 
	by minimality of the action $\Gamma\curvearrowright\mathcal{T}$, 
	and that $\Gamma\curvearrowright \Gamma v$ is conjugate to $\Gamma\curvearrowright \Gamma/\Gamma_v$.
	
	Take any non-trivial elements $\gamma_1, \ldots, \gamma_k \in \Gamma$.
	Let us start with any half-tree $\Hc_0$. 
	Then, since the fixed points of $\gamma_1$ form a subtree, and since $\Gamma\curvearrowright \partial\mathcal{T}$ is topologically free, there exists a half-tree $\Hc_1 \subseteq \Hc_0$, all of whose vertices are moved by $\gamma_1$.
	Then applying the same argument to $\gamma_2$, we get a half-tree $\Hc_2 \subseteq \Hc_1$, 
	all of whose vertices are moved by $\gamma_1$ and $\gamma_2$. 
	And so on, and so forth, we finish with a half-tree $\Hc_k$, 
	all of whose vertices are moved by all elements $\gamma_1, \ldots, \gamma_k$. Finally $\Hc_k$ contains a point of $\Gamma v$, which is moved by all elements $\gamma_1, \ldots, \gamma_k$. This proves that $\Gamma\curvearrowright \Gamma v$ is strongly faithful.
\end{proof}

We also need a general fact about s-normality in amalgams.
\begin{lemma}\label{s-normality in amalgams}
	Let us consider an amalgam $\Gamma = \Gamma_1 *_\Sigma \Gamma_2$.
	If all infinite subgroups $\Sigma' < \Sigma$ are s-normal in both $\Gamma_1$ and $\Gamma_2$, then they are also all s-normal in $\Gamma$.
\end{lemma}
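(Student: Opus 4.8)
The plan is to prove this by a ``relay'' argument that walks through a word for $\gamma$ and repeatedly invokes the hypothesis in the two factors. Fix an infinite subgroup $\Sigma' < \Sigma$ and an element $\gamma \in \Gamma$; we must show that $\Sigma' \cap \gamma\Sigma'\gamma^{-1}$ is infinite. Since $\Gamma$ is generated by $\Gamma_1 \cup \Gamma_2$, I would first write $\gamma = g_1 g_2 \cdots g_n$ with each $g_i \in \Gamma_1 \cup \Gamma_2$ (no normal form is really needed, only this factorization; if $\gamma = 1$ the statement is trivial).

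Next I would set $\Lambda_0 := \Sigma'$ and define inductively, for $k = 1, \ldots, n$,
\[
 \Lambda_k := g_{n+1-k}\,\Lambda_{k-1}\, g_{n+1-k}^{-1} \cap \Lambda_{k-1} \, .
\]
The key claim, established by induction on $k$, is that each $\Lambda_k$ is an infinite subgroup of $\Sigma$. Indeed, if $\Lambda_{k-1}$ is an infinite subgroup of $\Sigma' < \Sigma$, then it is in particular an infinite subgroup $<\Sigma$, so by hypothesis it is s-normal in the factor $\Gamma_j$ that contains the letter $g_{n+1-k}$; hence $\Lambda_k = g_{n+1-k}\Lambda_{k-1}g_{n+1-k}^{-1} \cap \Lambda_{k-1}$ is infinite, and it is clearly again contained in $\Lambda_{k-1} \subseteq \Sigma$.

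Finally I would extract the conclusion from the two obvious nestings $\Lambda_k \subseteq \Lambda_{k-1}$ and $\Lambda_k \subseteq g_{n+1-k}\Lambda_{k-1}g_{n+1-k}^{-1}$. The first gives $\Lambda_n \subseteq \Lambda_0 = \Sigma'$; telescoping the second gives
\[
 \Lambda_n \subseteq g_1\Lambda_{n-1}g_1^{-1} \subseteq g_1 g_2 \Lambda_{n-2} g_2^{-1} g_1^{-1} \subseteq \cdots \subseteq g_1\cdots g_n\,\Sigma'\,g_n^{-1}\cdots g_1^{-1} = \gamma\Sigma'\gamma^{-1} \, .
\]
Therefore $\Lambda_n \subseteq \Sigma' \cap \gamma\Sigma'\gamma^{-1}$ is infinite, i.e. $\Sigma'$ is s-normal in $\Gamma$, as desired.

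This argument is essentially bookkeeping, so there is no deep obstacle; the one point that needs care is to ensure that the intermediate groups $\Lambda_k$ remain infinite subgroups \emph{of $\Sigma$} (rather than merely of $\Gamma$), so that the hypothesis remains applicable at each step — this is precisely why one intersects back with $\Lambda_{k-1}$ at every stage instead of passing to a bare conjugate. Geometrically, the same argument reads as follows on the Bass-Serre tree: $\Sigma'$ fixes the edge $e$ stabilized by $\Sigma$, and for any $\gamma$ one walks along the geodesic from $e$ to $\gamma e$, whose consecutive vertices carry stabilizers conjugate to $\Gamma_1$ and $\Gamma_2$ alternately, shrinking at each step — by s-normality in the corresponding factor — the subgroup that fixes pointwise the portion of the geodesic traversed so far, until one reaches an infinite subgroup of $\Sigma' \cap \gamma\Sigma'\gamma^{-1}$.
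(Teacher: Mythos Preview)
Your proof is correct and follows essentially the same approach as the paper's: both write $\gamma$ as a product of elements from the factors and inductively shrink $\Sigma'$ by intersecting with conjugates one letter at a time, using s-normality in the relevant factor at each step. The only cosmetic difference is that the paper reads the word left to right and conjugates by $\gamma_k^{-1}$ (landing in $\Sigma_0 \cap \gamma^{-1}\Sigma_0\gamma$), whereas you read right to left and conjugate by $g_{n+1-k}$ (landing in $\Sigma' \cap \gamma\Sigma'\gamma^{-1}$); these are equivalent formulations of the same relay argument.
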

\begin{proof}
	Let $\Sigma_0$ be any infinite subgroup of $\Sigma$, and let $\gamma$ be any element of $\Gamma$, that we write as a product
	$\gamma = \gamma_1 \cdots \gamma_n$
	of elements of $\Gamma_1$ or $\Gamma_2$. Set
	$\Sigma_k = \Sigma_{k-1} \cap \gamma_k\inv \Sigma_{k-1}\gamma_k$ for $k=1,\ldots,n$. 
	Let us prove by induction that $\Sigma_k$ is infinite,
	and contained in $\Sigma_0 \cap (\gamma_1 \cdots \gamma_k)\inv \Sigma_0 (\gamma_1 \cdots \gamma_k)$ for $k=0,\ldots,n$. 
	
	For $k=0$, the group $\Sigma_0$ has been supposed infinite,
	and it coincides with the intersection $\Sigma_0 \cap (\gamma_1 \cdots \gamma_k)\inv \Sigma_0 (\gamma_1 \cdots \gamma_k)$
	in this case.
	Then, for $k\geq 1$, the subgroup $\Sigma_{k-1}$ is infinite by induction hypothesis, 
	therefore $\Sigma_{k-1}$ is s-normal in $\Gamma_1$ and $\Gamma_2$. 
	Consequently, the subgroup $\Sigma_k = \Sigma_{k-1} \cap \gamma_k\inv \Sigma_{k-1}\gamma_k$ is infinite.
	Moreover, one has
	\begin{align*}
	\Sigma_k &= \Sigma_{k-1} \cap \gamma_k\inv \Sigma_{k-1}\gamma_k \\
	&\subseteq
	\Sigma_0 \cap (\gamma_1 \cdots \gamma_{k-1})\inv \Sigma_0 (\gamma_1 \cdots \gamma_{k-1})
	\cap \gamma_k\inv \Sigma_0 \gamma_k
	\cap (\gamma_1 \cdots \gamma_k)\inv \Sigma_0 (\gamma_1 \cdots \gamma_k)
	\end{align*}
	by induction hypothesis, whence $\Sigma_k \subseteq \Sigma_0
	\cap (\gamma_1 \cdots \gamma_k)\inv \Sigma_0 (\gamma_1 \cdots \gamma_k)$.
	
	Finally, applying the result with $k=n$, we get that $\Sigma_n$ is infinite and $\Sigma_n \subseteq \Sigma_0
	\cap \gamma\inv \Sigma_0 \gamma$, which proves that $\Sigma_0$ is s-normal in $\Gamma$, as desired.
\end{proof}

Let us finally turn to our examples of highly transitive amalgams.
\begin{proposition}\label{ExProdAmalgBS}
	Let $m,n,k\in\Z^*$, and let $\Lambda$ be a countable group containing a 
	proper infinite cyclic subgroup $\langle c \rangle$.
	The amalgam
	\(
	\Gamma = {\rm BS}(m,n) *_{\langle b^k = c\rangle} \Lambda
	\)
	has the following properties:
	\begin{enumerate}[label=(\arabic*)]
		\item if $\vert n\vert \neq \vert m\vert$, then $\Gamma$ admits an action which is both highly transitive and highly faithful;
		\item if $\vert n\vert \neq \vert m\vert$ and $\langle c \rangle$ is 
		s-normal in $\Lambda$, then $\Gamma$ is not acylindrically hyperbolic;
		\item if $|m| \neq 1$, $|n| \neq 1$, and $\vert m\vert\neq\vert n\vert$, then $\Gamma$ is not a linear group.
	\end{enumerate}
\end{proposition}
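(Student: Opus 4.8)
The plan is to obtain all three statements from results already proved in the paper, after recording that $\Gamma$ is a non-degenerate amalgam with edge group $\Sigma := \langle b^k\rangle = \langle c\rangle$ (we identify $b^k$ with $c$, both being of infinite order): the subgroup $\langle b^k\rangle$ has infinite index in ${\rm BS}(m,n)$, since the powers of the generator $a$ already lie in infinitely many distinct cosets, while $\Sigma = \langle c\rangle$ is proper in $\Lambda$ by hypothesis, so the amalgam is non-trivial and non-degenerate. For part (1), the point is that $\Sigma$ is core-free in the factor ${\rm BS}(m,n)$: when $\vert n\vert\neq\vert m\vert$ the subgroup $\langle b\rangle$ is core-free in ${\rm BS}(m,n)$ by Remark~\ref{core-free subgroups in BS groups} (equivalently Lemma~\ref{highly core-free subgroups in BS groups}), and since $\langle b^k\rangle\leq\langle b\rangle$ the normal core in ${\rm BS}(m,n)$ of $\langle b^k\rangle$ is contained in that of $\langle b\rangle$, hence trivial. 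Then Corollary~\ref{CorGroupsHTAmalgam} (itself deduced from Theorem~\ref{ThmGroupsHTAmalgam}), applied to $\Gamma$ with the core-free edge group $\Sigma$ sitting inside ${\rm BS}(m,n)$, yields at once a highly transitive and highly faithful action of $\Gamma$.

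For part (3), I would use that the factor ${\rm BS}(m,n)$ embeds into the amalgam $\Gamma$ and that any subgroup of a linear group is linear; hence, were $\Gamma$ linear over some field $k$, the group ${\rm BS}(m,n)$ would be linear over $k$. This contradicts Remark~\ref{well-knownBaumslag-Solitar}, which asserts that the finitely generated group ${\rm BS}(m,n)$ is non-linear whenever $\vert m\vert\neq 1$, $\vert n\vert\neq 1$ and $\vert m\vert\neq\vert n\vert$. So $\Gamma$ is not linear.

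The substantial part is (2). The strategy is to produce an infinite amenable s-normal subgroup of $\Gamma$ and then combine Propositions~\ref{s-normal subgroups} and~\ref{free subgroups in acylindrically hyperbolic groups}. The candidate is the edge group $\Sigma = \langle b^k\rangle\cong\Z$, and s-normality of $\Sigma$ in $\Gamma$ should follow from Lemma~\ref{s-normality in amalgams}, which requires that every infinite subgroup of $\Sigma$ be s-normal in both factors. Such a subgroup has the form $\langle b^{kr}\rangle$ with $r\geq 1$; it is s-normal in ${\rm BS}(m,n)$ by Lemma~\ref{s-normal subgroups in BS groups}, and seen inside $\Lambda$ it is the subgroup $\langle c^r\rangle$ of finite index $r$ in the s-normal subgroup $\langle c\rangle$. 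The only ingredient not already in the paper is the elementary fact that a finite-index subgroup $\Sigma'$ of an s-normal subgroup $\Sigma$ of a group $G$ is again s-normal: for each $g\in G$ the group $\Sigma'\cap g\Sigma' g^{-1}$ has finite index in the infinite group $\Sigma\cap g\Sigma g^{-1}$, hence is infinite. Granting this, Lemma~\ref{s-normality in amalgams} applies and $\Sigma$ is s-normal in $\Gamma$; being infinite cyclic, $\Sigma$ contains no non-abelian free subgroup, so it is not acylindrically hyperbolic by Proposition~\ref{free subgroups in acylindrically hyperbolic groups}, and therefore neither is $\Gamma$ by Proposition~\ref{s-normal subgroups}.

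The main obstacle is thus concentrated in step (2): the whole argument rests on verifying the hypotheses of Lemma~\ref{s-normality in amalgams}, namely that all infinite subgroups of the edge group are s-normal in both factors, the single point that is not completely formal being the passage from the s-normality of $\langle c\rangle$ in $\Lambda$ to that of all its finite-index subgroups. Parts (1) and (3) are immediate once the amalgam is correctly set up, the edge group is seen to be core-free in the Baumslag-Solitar factor, and the linearity obstruction for ${\rm BS}(m,n)$ is recalled.
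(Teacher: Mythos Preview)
Your proof is correct and follows essentially the same route as the paper: non-degeneracy plus core-freeness of $\langle b^k\rangle$ in ${\rm BS}(m,n)$ feeding into Corollary~\ref{CorGroupsHTAmalgam} for (1), non-linearity of the Baumslag--Solitar factor for (3), and Lemma~\ref{s-normality in amalgams} combined with Propositions~\ref{s-normal subgroups} and~\ref{free subgroups in acylindrically hyperbolic groups} for (2). The only cosmetic difference is that, to show every $\langle c^r\rangle$ is s-normal in $\Lambda$, the paper writes down an explicit generator of $\langle c\rangle\cap\lambda\langle c\rangle\lambda^{-1}$ and raises it to the $r$-th power, whereas you invoke the general (and equally valid) finite-index-subgroup-of-s-normal-is-s-normal argument.
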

\begin{proof} 
	(1) The amalgam $\Gamma$ is non-degenerate.
	By Lemma \ref{highly core-free subgroups in BS groups}, or Remark \ref{core-free subgroups in BS groups}, $\langle b^k \rangle$ is a core-free subgroup of ${\rm BS}(m,n)$. 
	Then Corollary \ref{core-free implies high transitivity}, or in this case Corollary \ref{CorGroupsHTAmalgam}, 
	implies that $\Gamma$ admits an action which is both highly transitive and highly faithful.
	
	(2) By Lemma \ref{s-normal subgroups in BS groups}, every non-trivial subgroup of $\langle b \rangle$ is s-normal in ${\rm BS}(m,n)$. 
	Furthermore, in $\Lambda$, for any $\lambda\in\Lambda$, the 
	intersection $\langle c \rangle \cap \lambda \langle c \rangle 
	\lambda\inv$ 
	is infinite cyclic, say generated by $c_\lambda$, 
	since $\langle c \rangle$ is s-normal in $\Lambda$. 
	Then, for every $l\geq 1$, one has 
	$\langle c^l \rangle \cap \lambda \langle c^l \rangle \lambda\inv
	= \langle c_\lambda^l \rangle$, which is infinite.
	Hence, every non-trivial subgroup of $\langle c \rangle$ is s-normal in 
	$\Lambda$.
	
	Then, Lemma~\ref{s-normality in amalgams} implies that every non-trivial subgroup of $\langle b^k \rangle = \langle c \rangle$ is s-normal in the amalgam $\Gamma$.
	Now, $\langle c \rangle$ is cyclic, so it is not acylindrically hyperbolic by Proposition \ref{free subgroups in acylindrically hyperbolic groups}, 
	so $\Gamma$ is not acylindrically hyperbolic either by Proposition \ref{s-normal subgroups}. 
	
	(3) The group $\Gamma$ contains a copy of ${\rm BS}(m,n)$, which is non-linear (see Remark \ref{well-knownBaumslag-Solitar}). Hence $\Gamma$ cannot be a linear group.
\end{proof}
\begin{remark}
	The previous proposition shows in particular that if $|m| \neq 1$, $|n| 
	\neq 1$, $\vert m\vert\neq\vert n\vert$, and if one chooses $\Lambda$ such 
	that $\langle c \rangle$ is not highly core-free in $\Lambda$ (e.g. $\Lambda=\Z$ 
	and $c \geq 2$), then results from
	\cite{minasyanAcylindricalHyperbolicityGroups2015, hullTransitivitydegreescountable2016, fimaHighlyTransitiveActions2015, gelander_maximal_2020} do not apply to prove that $\Gamma$ is highly transitive. 
\end{remark}
\begin{remark}
	On the other hand, if $|m| \neq 1$, $|n| \neq 1$, $\vert m\vert\neq\vert n\vert$, and if one chooses $\Lambda$ such that $\langle c \rangle$ is highly core-free in $\Lambda$ (e.g. $\Lambda = {\rm BS}(m,n)$ and $c=b^k$), then Corollary B of \cite{fimaHomogeneousActionsUrysohn2018}
	shows that $\Gamma$ admits homogeneous actions on bounded Urysohn spaces. 
\end{remark}

\subsection{Examples around finitely supported permutations}\label{ex around fsupp permutations}

We now turn to examples constructed from the group of finitely supported 
permutations on an infinite countable set. 

\subsubsection{Examples of HNN extensions over $S_f(X)$}\label{sec: ex HNN 
	from fs perm}

We denote by $S_f(X)$ the subgroup of $S(X)$ consisting of finitely supported permutations.

The group $S_f(X)$ is known to be not linear but we could not find any 
elementary proof in the literature and this is why we have chosen to 
include a complete proof below. We thank Julien Bichon for explaining to us 
the following argument.

\begin{lemma}\label{LemmaPermutationNotLinear}
	Let $\Gamma$ be a group. If, for any prime number $q$ and any $N\in\N^*$, $\Gamma$ contains a subgroup $G$ with $G\simeq(\Z/q\Z)^N$ then $\Gamma$ is not linear.
\end{lemma}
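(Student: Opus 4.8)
The plan is to argue by contradiction. Suppose $\Gamma$ is linear, so that $\Gamma$ embeds into $\mathrm{GL}(V)$ for some finite-dimensional vector space $V$ over a field $k$; set $d=\dim_k V$. Since a group linear over $k$ is also linear over the algebraic closure $\bar k$ (as recalled just before the lemma), we may assume $k=\bar k$ is algebraically closed and identify $\Gamma$ with a subgroup of $\mathrm{GL}_d(k)$.

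First I would choose a prime $q$ different from the characteristic of $k$: if $\mathrm{char}(k)=0$ any prime works, and if $\mathrm{char}(k)=p>0$ take any prime $q\neq p$. By hypothesis, $\Gamma$ contains a subgroup $G\simeq(\Z/q\Z)^{d+1}$. The crucial point is that, since $q$ is invertible in $k$, the polynomial $X^q-1$ is separable over $k$; hence every element $g\in G$, which satisfies $g^q=1$, is diagonalizable over $k$. As $G$ is abelian, it is a commuting family of diagonalizable matrices, and such a family over the algebraically closed field $k$ is simultaneously diagonalizable. After conjugating, we may therefore assume $G$ is contained in the diagonal torus $D\simeq(k^\times)^d$.

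It then remains to bound the $q$-torsion of $D$. The set of $q$-torsion elements of $k^\times$ is the group $\mu_q(k)$ of $q$-th roots of unity in $k$, which is cyclic; since $q\neq\mathrm{char}(k)$ and $k$ is algebraically closed, $\mu_q(k)$ has exactly $q$ elements. Consequently the $q$-torsion subgroup of $D$ is isomorphic to $(\Z/q\Z)^d$. But $G\simeq(\Z/q\Z)^{d+1}$ consists of $q$-torsion elements of $D$, so $(\Z/q\Z)^{d+1}$ embeds into $(\Z/q\Z)^d$, which is impossible by comparing $\mathbb{F}_q$-dimensions (equivalently, cardinalities). This contradiction shows that $\Gamma$ cannot be linear.

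I do not expect any serious obstacle: the argument is elementary once the right prime is chosen. The only steps requiring a little care are the reduction to an algebraically closed field, the selection of $q$ coprime to $\mathrm{char}(k)$ (which is exactly what forces elements of order dividing $q$ to be semisimple), and the standard fact that commuting diagonalizable operators over an algebraically closed field are simultaneously diagonalizable. The intended application to $S_f(X)$ is then immediate, since $S_f(X)$ contains, for every prime $q$ and every $N$, a copy of $(\Z/q\Z)^N$ given by $N$ pairwise disjoint $q$-cycles.
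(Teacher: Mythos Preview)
Your proof is correct and follows essentially the same approach as the paper's: reduce to an algebraically closed field, choose a prime $q$ different from the characteristic so that $X^q-1$ is separable, simultaneously diagonalize the finite abelian group $G\simeq(\Z/q\Z)^N$, and conclude by bounding the $q$-torsion of the diagonal torus. The only cosmetic difference is that the paper phrases the key step as the inequality $N\le\dim V$ rather than directly invoking $N=d+1$ for a contradiction.
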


\begin{proof}
	Let $k$ be any algebraically closed field and denote by $p$ its 
	characteristic. Let us recall some elementary facts. For $n\in\N^*$, let us 
	denote by $U_n(k)\subset k^*$ the multiplicative 	subgroup of $n$-th roots of 
	unity. Elements of $U_n(k)$ are exactly the roots of the polynomial 
	$P=X^n-1\in k[X]$. Since $P'=nX^{n-1}$ all the roots of $P$ are simple if 
	$p=0$ or if $p$ is a prime number which does not divide $n$. Hence, if 
	$p=0$ or $p$ is prime and does not divide $n$ one has $\vert U_n(k)\vert 
	=n$.
	To deduce the Lemma, it suffices to prove the following claim.
	\vspace{0.2cm}
	
	\noindent\textbf{Claim.}\textit{ Let $V$ be a finite dimensional vector 
		space over an algebraically closed field $k$. If ${\rm GL}(V)$ contains a 
		subgroup $G$ isomorphic to $(\Z/q\Z)^N$, where $N\in\N^*$ and $q$ is any 
		prime number with $q\neq {\rm char}(k)$ then $N\leq {\rm dim}(V)$.}
	\vspace{0.2cm}
	
	Note that any element $g\in G$ satisfies $g^q=1$ hence, the minimal 
	polynomial $\mu_g$ of $g$ divides $X^q-1$. Since $q\neq{\rm char}(k)$, 
	$X^q-1$ has only simple roots so $\mu_g$ has only simple roots and $g$ is 
	diagonalizable with eigenvalues in $U_q(k)$. Moreover, since $G$ is finite 
	abelian and all its elements are diagonalisable, there exists a basis 
	$\mathcal{B}=(e_1,\dots,e_n)$ of $V$ which simultaneously diagonalises 
	every element of $G$. Let us denote by $\lambda(g)\in U_q(k)^n$ the element 
	$\lambda(g)=(\lambda_1(g),\dots,\lambda_n(g))$, where 
	$g(e_k)=\lambda_k(g)e_k$. This defines an injective map $G\rightarrow 
	U_q(k)^n$, $g\mapsto\lambda(g)$. It follows that $\vert G\vert=q^N\leq 
	\vert U_q(k)^n\vert=q^n$,
	hence $N\leq n = \dim(V)$.
\end{proof}

\begin{proposition}\label{PermutationNotLinear}
	The group $S_f(X)$ is not linear.
\end{proposition}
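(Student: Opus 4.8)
The plan is to invoke Lemma \ref{LemmaPermutationNotLinear}: it is enough to show that for every prime number $q$ and every $N \in \N^*$, the group $S_f(X)$ contains a subgroup isomorphic to $(\Z/q\Z)^N$.

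Since $X$ is infinite, I would first choose pairwise disjoint subsets $B_1, \dots, B_N \subseteq X$, each of cardinality $q$. On each block $B_i$ I pick a cyclic permutation $\sigma_i$ of order $q$, extended by the identity outside $B_i$; this lies in $S_f(X)$, with $\supp \sigma_i = B_i$. Because the supports $B_1, \dots, B_N$ are pairwise disjoint, the permutations $\sigma_1, \dots, \sigma_N$ pairwise commute and each has order $q$, so the subgroup $G = \langle \sigma_1, \dots, \sigma_N \rangle$ of $S_f(X)$ is a quotient of $(\Z/q\Z)^N$.

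It remains to see that $G$ is in fact isomorphic to $(\Z/q\Z)^N$, i.e. that the canonical surjection $(\Z/q\Z)^N \to G$ is injective. This is immediate from the disjointness of supports: an element $\sigma_1^{a_1} \cdots \sigma_N^{a_N}$, with integers $0 \le a_i < q$, restricts to $\sigma_i^{a_i}$ on $B_i$ and to the identity elsewhere, so if it equals $\id_X$ then $\sigma_i^{a_i} = \id$ for each $i$, forcing $a_i = 0$. Hence $\lvert G \rvert = q^N$ and $G \simeq (\Z/q\Z)^N$. As $q$ and $N$ are arbitrary, Lemma \ref{LemmaPermutationNotLinear} yields that $S_f(X)$ is not linear. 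There is no real obstacle in this argument; the only point requiring a line of justification is the injectivity just discussed, which is a direct consequence of the blocks being disjoint.
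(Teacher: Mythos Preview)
Your proof is correct and follows essentially the same approach as the paper: both invoke Lemma \ref{LemmaPermutationNotLinear} after embedding $(\Z/q\Z)^N$ into $S_f(X)$ via $N$ pairwise disjoint $q$-cycles. You spell out the injectivity argument in slightly more detail than the paper, which simply asserts it is easy to check.
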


\begin{proof}
	Let $N\in\N^*$ and $q$ be a prime number. 
	Since $X$ is infinite, one can choose $q$-cycles $\sigma_1, \ldots, \sigma_N$ in $S_f(X)$ with pairwise disjoint supports. It is then easy to check that there is an injective morphism of groups defined by
	\[
	(\Z/q\Z)^N \to S_f(X) \quad ; \quad
	(x_1, \ldots, x_n) \mapsto \sigma_1^{x_1} \cdots \sigma_N^{x_N} \, .
	\]
	Hence, the proof follows from Lemma \ref{LemmaPermutationNotLinear}.\end{proof}

For any subset $F\subseteq X$, let us denote by $\Sigma(F)$ the pointwise stabilizer of $F$ in $S_f(X)$,
and remark that whenever $F$ is finite, the subgroup $\Sigma(F)$ is 
infinite, in fact isomorphic to $S_f(X)$ itself. We will abbreviate 
$\Sigma(\{x\})$ as $\Sigma(x)$.
For any $k\geq 1$, let $X^{(k)}$ denote the set of $k$-tuples of pairwise distinct points in $X$. 
\begin{lemma}\label{core-freeness in S_f(X)}
	Let $F$ be a non-empty subset of $X$. The following hold:
	\begin{enumerate}[label=(\arabic*)]
		\item the stabilizer $\Sigma(F)$ is a core-free subgroup of $S_f(X)$, with infinite index;
		\item if $F$ is finite, then $\Sigma(F)$ is not highly core-free in $S_f(X)$;
		\item if $F$ is finite, then $\Sigma(F)$ is s-normal in $S_f(X)$.
	\end{enumerate}
\end{lemma}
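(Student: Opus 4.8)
All three parts rest on a single observation: any conjugate of $\Sigma(F)$ is again the pointwise stabiliser of a translate of $F$, namely $g\inv\Sigma(F)g=\Sigma(Fg)$ for every $g\in S_f(X)$; moreover $\Sigma(E)\cap\Sigma(E')=\Sigma(E\cup E')$, the group $\Sigma(E)$ is infinite (being isomorphic to $S_f(X\setminus E)$) as soon as $E$ is finite, and $S_f(X)$ acts transitively on $X$. I would first record these elementary facts. Item (1) then follows quickly: the normal core $\bigcap_{g}g\inv\Sigma(F)g=\bigcap_g\Sigma(Fg)=\Sigma\big(\bigcup_g Fg\big)=\Sigma(X)=\{\id\}$, because $F$ is non-empty and the $S_f(X)$-orbit of any point of $F$ is all of $X$; and the index is infinite because, picking $x_0\in F$, one has $\Sigma(F)\subseteq\Sigma(x_0)$ and $[S_f(X):\Sigma(x_0)]=|X|=\infty$.

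For item (2), let $k=|F|$ and identify, as an $S_f(X)$-set, the coset space $S_f(X)/\Sigma(F)$ with $X^{(k)}$ via an enumeration of $F$ (using that $\Sigma(F)$ is the stabiliser of the corresponding $k$-tuple and that $S_f(X)$ is $k$-transitive). By the description of high core-freeness recalled in the introduction, it is enough to show that $S_f(X)\curvearrowright X^{(k)}$ is not highly faithful, and I will in fact show it is not strongly faithful. The support in $X^{(k)}$ of a transposition $\tau=(x\ y)$ is exactly the set of $k$-tuples having an entry in $\{x,y\}$. Choosing $k+1$ transpositions $\tau_0,\dots,\tau_k$ whose (two-element) supports in $X$ are pairwise disjoint, no $k$-tuple can have an entry in each of the $k+1$ disjoint pairs, so $\bigcap_{i=0}^{k}\supp(\tau_i)=\emptyset$ in $X^{(k)}$; hence $\Sigma(F)$ is not highly core-free in $S_f(X)$.

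For item (3), fix $g\in S_f(X)$. Then $g\Sigma(F)g\inv=\Sigma(Fg\inv)$ is the pointwise stabiliser of the finite set $F':=Fg\inv$, so $g\Sigma(F)g\inv\cap\Sigma(F)=\Sigma(F\cup F')$, which is infinite since $F\cup F'$ is finite and $X$ is infinite; thus $\Sigma(F)$ is s-normal in $S_f(X)$. None of this is difficult; the only place that calls for a little care is the identification $S_f(X)/\Sigma(F)\cong X^{(k)}$ together with the right-action bookkeeping used to compute supports in item (2), which is where I would write the details out fully.
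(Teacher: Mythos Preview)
Your proof is correct and follows essentially the same approach as the paper: both use the conjugation formula $g\inv\Sigma(F)g=\Sigma(Fg)$ to compute the core in (1) and the intersection in (3), and both identify $S_f(X)/\Sigma(F)$ with $X^{(k)}$ and exhibit $k+1$ finitely supported permutations with pairwise disjoint supports in $X$ to defeat strong faithfulness in (2). The only cosmetic differences are that the paper reduces (1) to a single point $x\in F$ before taking the core, and in (2) speaks of arbitrary permutations with disjoint finite supports rather than transpositions specifically.
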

\begin{proof}
	(1) Let $x\in F$. Since the action $X \curvearrowleft S_f(X)$ is transitive (even highly transitive) and faithful we have 
	$\bigcap_{g\in S_f(X)}g\inv \Sigma(x) g
	=\bigcap_{g\in S_f(X)}\Sigma(x\cdot g)
	=\bigcap_{y\in X} \Sigma(y)=\{1\}$, 
	hence $\Sigma(x)$ is core-free in $S_f(X)$. A fortiori, $\Sigma(F)$ is 
	core-free in $S_f(X)$.
	Let us denote by $\tau_y\in S_f(X)$ the transposition $\tau_y=(x\,\,\,y)$ for $y\neq x$. 
	The subgroup $\Sigma(x)$ has infinite index since, for all $y,z\in X \setminus \{x\}$,
	one has $\tau_y^{-1}\tau_z\in \Sigma(x) \Leftrightarrow y=z$. 
	A fortiori, $\Sigma(F)$ has infinite index in $S_f(X)$.
	
	(2) Let us write $F = \{x_1,\ldots, x_k\}$, 
	with $\bar x =(x_1,\ldots, x_k) \in X^{(k)}$.
	The action $X^{(k)} \curvearrowleft S_f(X)$ is transitive, 
	since the action $X \curvearrowleft S_f(X)$ is  highly transitive,
	and the stabilizer of $\bar x$ is $\Sigma(F)$.
	Consequently, the action $\Sigma(F) \backslash S_f(X) \curvearrowleft S_f(X)$ is conjugate to $X^{(k)} \curvearrowleft S_f(X)$.
	Now, $X^{(k)} \curvearrowleft S_f(X)$ is not strongly faithful, 
	since taking $k+1$ permutations with pairwise disjoint and finite supports in $X$, every point in $X^{(k)}$ will be fixed by at least one of them.
	Consequently, $\Sigma(F) \backslash S_f(X) \curvearrowleft S_f(X)$ is not highly faithful.
	
	(3) For any $g\in S_f(X)$, we have
	$\Sigma(F) \cap g\inv \Sigma(F) g = \Sigma(F \cup F\cdot g)$,
	and $F \cup F\cdot g$ is still finite,
	hence $\Sigma(F) \cap g\inv \Sigma(F) g$ is infinite.
\end{proof}
\begin{proposition}\label{examples HNN with core-free subgroups}
	Let $Y$ and $Z$ be two distinct infinite proper subsets of $X$, let 
	$\tau: Y\to Z$ be a bijection, 
	and let $\vartheta = \tau_*: S_f (Y) \to S_f(Z)$ be the isomorphism defined by $\vartheta(\sigma) = \tau\inv \sigma \tau$. 
	Then, the HNN extension
	\(
	\Gamma = \HNN(S_f(X),S_f(Y),\vartheta)
	\)
	has the following properties:
	\begin{enumerate}[label=(\arabic*)]
		\item it admits an action which is both highly transitive and highly faithful;
		\item it is not linear;
		\item if $Y$ and $Z$ are both cofinite, then it is not acylindrically hyperbolic;
		\item if $Y$ and $Z$ are both cofinite, then for every bounded subtree $\Bc$ of its Bass-Serre tree, 
		the pointwise stabilizer $\Gamma_\Bc$ is not highly core-free in a vertex stabilizer $\Gamma_u$.
	\end{enumerate}
\end{proposition}
\begin{proof}
	(1) Note that $S_f(Y) = \Sigma(X\setminus Y)$. Thus, $S_f(Y)$ is a core-free subgroup of $S_f(X)$ by Lemma~\ref{core-freeness in S_f(X)}. Hence, Corollary \ref{core-free implies high transitivity},
	or in this case Corollary \ref{CorGroupsHTHNN}, applies. 
	
	(2) Follows from Proposition \ref{PermutationNotLinear}.
	
	(3) As $Y$ and $Z$ are both cofinite and since the intersection of finitely many cofinite subsets is cofinite, the powers $\tau^n$, 
	defined by composition of partial bijections in $X$ (for $n\in \Z$) 
	all have a cofinite domain and a cofinite range, that we will denote by $Y_n$ and $Z_n$ respectively. 
	Let $U$ be any cofinite subset of $X$.
	For any $g\in S_f(X)$, one has $S_f(U)\cap g\inv S_f(U) g = S_f(U\cap U\cdot g)$, 
	and $U\cap U\cdot g$ is still cofinite.
	Moreover, for any $n\in \Z$, one has
	$S_f(U)\cap t^{-n} S_f(U) t^n 
	= S_f(U\cap U\tau^n)$, where $t\in\Gamma$ is the stable letter. Note that $U\tau^n$ is cofinite since the bijection $\tau^n$ realizes a bijection between $U\cap Y_n$ and $U\tau^n$, 
	so that the subset $U\tau^n$ is cofinite in $Z_n$, hence cofinite. 
	Therefore, $U\cap U\tau^n$ is cofinite.
	
	Then, given $\gamma\in \Gamma$, one can write $\gamma = \gamma_1 \cdots \gamma_k$, 
	where each $\gamma_j$ is either a power of the stable letter $t$, or an element of $S_f(X)$, 
	and an easy induction based on previous facts shows that 
	$S_f(Y) \cap \gamma\inv S_f(Y) \gamma$ contains $S_f(V)$ for some cofinite set $V$. This proves that $S_f(Y)$ is an s-normal subgroup in $\Gamma$.
	
	Furthermore, $S_f(Y)$ is an amenable group, hence it is not acylindrically hyperbolic by Proposition \ref{free subgroups in acylindrically hyperbolic groups}. 
	Finally, Proposition \ref{s-normal subgroups} implies that $\Gamma$ is not acylindrically hyperbolic.
	
	(4) Up to conjugating and to enlarging $\Bc$, we may and will assume without loss of generality 
	that the stabilizer $\Gamma_u$ is $S_f(X)$, and that $\Gamma_e = S_f(Y)$ for some edge $e$ in $\Bc$. 
	Since $\Gamma$ acts transitively on the positive edges, 
	there exists $\gamma_1, \ldots, \gamma_k \in \Gamma$ 
	such that
	\[
	\Gamma_\Bc = S_f(Y) \cap \bigcap_{j=1}^k \gamma_j\inv S_f(Y) \gamma_j \, .
	\]
	As in the proof of (2), we see that there exist cofinite sets $V_1, \ldots, V_k$ such that the intersection
	$S_f(Y) \cap  \gamma_j\inv S_f(Y) \gamma_j$
	contains $S_f(V_j)$ for every $j$, hence $\Gamma_\Bc$ contains $S_f(\bigcap_{j=1}^k V_j)$, where $\bigcap_{j=1}^k V_j$ is cofinite. Now, $S_f(\bigcap_{j=1}^k V_j)$ is not highly core-free in $\Gamma_u = S_f(X)$ by Lemma~\ref{core-freeness in S_f(X)}, hence $\Gamma_\Bc$ is not either.
\end{proof}
\begin{remark}
	When both $Y$ and $Z$ are cofinite in $X$, this proposition 
	provides more explicit new examples of highly transitive groups, 
	since 
	items (1), (3) and (4) show that
	the results from
	\cite{minasyanAcylindricalHyperbolicityGroups2015, 
		hullTransitivitydegreescountable2016, 
		fimaHighlyTransitiveActions2015, 
		gelander_maximal_2020} do not apply.
\end{remark}

\begin{remark}\label{rmk: obvious HT action}In the context of Proposition 
	\ref{examples HNN with 
		core-free subgroups}, 
	notice that $\Gamma$ obviously admits a highly transitive action when $\tau$ can be extended to a permutation $\tilde\tau \in S(X)$. 
	Indeed, the $\Gamma$-action defined by $t\mapsto \tilde\tau$ and $\sigma\mapsto \sigma$ for $\sigma\in S_f(X)$ 
	is highly transitive since its restriction to $S_f(X)$ already is (in the terminology of Section \ref{SectFreeGlobalizationHNN}, this action 
	corresponds to the global pre-action $(X,\tilde\tau)$).
	Nevertheless, the $\Gamma$-action we obtain factors through the semi-direct product $S_f(X) \rtimes \langle \tilde \tau \rangle$, which is amenable while $\Gamma$ is not; hence the $\Gamma$-action is not faithful.
	Furthermore, such an extension to a permutation $\tilde \tau$ is not possible when $X-Y$ and $X-Z$ have different cardinalities.
\end{remark}
\subsubsection{Examples of HNN extensions over $S_f(\Z)\rtimes 
	\Z$}\label{sec: ex fg HNN from fs perm}
Let us now move to a modification of former examples to get groups which 
are moreover finitely generated. 
For these examples, we consider 
the permutation $s\in S(\Z)$ given by $k\cdot s = k+1$.
It is straightforward to check that the subgroup $\langle S_f(\Z), s 
\rangle < S(\Z)$ is finitely generated, and isomorphic to a semi-direct 
product of the form $S_f(\Z) \rtimes \Z$.
As before, for any subset $F\subseteq \Z$, let us denote by $\Sigma(F)$ the pointwise stabilizer of $F$ in $S_f(\Z)$.
For some purposes, we will need the action 
\[
\Z \times \Z \curvearrowleft \langle S_f(\Z), s \rangle \, ,
\qquad
(k,l)\cdot s^n g := (ks^n g, ls^n) = ((k+n)g,l+n)
\]
for $g\in S_f(\Z)$ and $n\in \Z$. Notice that this action is faithful, as is the action $\Z\curvearrowleft \langle S_f(\Z), s \rangle$. Moreover, given a subset $F\subseteq\Z$, we observe that the pointwise stabilizer in $\langle S_f(\Z), s \rangle$ of the subset $F\times \{0\}\subset \Z\times \Z$ is the subgroup $\Sigma(F) < S_f(\Z)$.
\begin{lemma}\label{core-freeness in S_f(ZxZ)}
	Let $F$ be a non-empty subset of $\Z$. The following hold:
	\begin{enumerate}[label=(\arabic*)]
		\item the stabilizer $\Sigma(F)$ is a core-free subgroup of $\langle S_f(\Z), s \rangle$, with infinite index;
		\item if $F$ is finite, then $\Sigma(F)$ is not highly core-free in $\langle S_f(\Z), s \rangle$;
	\end{enumerate}
\end{lemma}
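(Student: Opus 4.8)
The statement to prove is Lemma~\ref{core-freeness in S_f(ZxZ)}, asserting that for a non-empty $F\subseteq\Z$, the stabilizer $\Sigma(F)$ is core-free of infinite index in $\langle S_f(\Z),s\rangle$, and that when $F$ is finite it is not highly core-free. The plan is to mimic closely the proof of Lemma~\ref{core-freeness in S_f(X)}, using the faithful action $\Z\times\Z\curvearrowleft\langle S_f(\Z),s\rangle$ described just above the statement, in which $\Sigma(F)$ is precisely the pointwise stabilizer of $F\times\{0\}$.

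For part (1), first I would reduce to the case $F=\{x\}$ a single point, since $\Sigma(F)=\bigcap_{x\in F}\Sigma(x)$ and any subgroup of a core-free subgroup is core-free; also infinite index follows once we have it for one point. For core-freeness of $\Sigma(x)$: observe that $\gamma\inv\Sigma(x)\gamma$ is the pointwise stabilizer of $(x,0)\cdot\gamma$ in the action $\Z\times\Z\curvearrowleft\langle S_f(\Z),s\rangle$, so $\bigcap_{\gamma}\gamma\inv\Sigma(x)\gamma$ fixes the whole orbit of $(x,0)$. That orbit is all of $\Z\times\Z$: indeed $(x,0)\cdot s^n=(x+n,n)$ reaches every point of the diagonal shift, and then applying elements of $S_f(\Z)$ to the first coordinate while keeping the second fixed (elements of the form $s^n g s^{-n}\in S_f(\Z)$) sweeps out every $\Z\times\{n\}$. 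Since the action $\Z\times\Z\curvearrowleft\langle S_f(\Z),s\rangle$ is faithful, only the identity fixes the full orbit, so $\Sigma(x)$ is core-free. For infinite index, I would exhibit infinitely many cosets: with $\tau_y=(x\ y)\in S_f(\Z)$ for $y\ne x$, the elements $\tau_y\inv\tau_z$ lie in $\Sigma(x)$ only when $y=z$ (since $(x,0)\cdot\tau_y\inv\tau_z=(y,0)$ after comparing, precisely as in Lemma~\ref{core-freeness in S_f(X)}(1)), giving infinitely many distinct cosets; a fortiori $\Sigma(F)$ has infinite index.

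For part (2), write $F=\{x_1,\dots,x_k\}$. The coset action $\Sigma(F)\backslash\langle S_f(\Z),s\rangle\curvearrowleft\langle S_f(\Z),s\rangle$ is conjugate to the orbit action on the orbit of $(x_1,0),\dots,(x_k,0)$ — more precisely, to $\mathcal{O}\curvearrowleft\langle S_f(\Z),s\rangle$ where $\mathcal{O}$ is the orbit of the $k$-tuple of distinct points $((x_1,0),\dots,(x_k,0))$ in $(\Z\times\Z)^{(k)}$, since $\Sigma(F)$ is exactly the stabilizer of this tuple. To show this action is not highly faithful (equivalently, not strongly faithful, by Corollary~\ref{EquivStrongAndHighFaithfulness}, as the group is infinite), I would pick $k+1$ elements $g_0,\dots,g_k\in S_f(\Z)\subseteq\langle S_f(\Z),s\rangle$ with pairwise disjoint finite supports inside $\Z$; then for any tuple of $k$ distinct points $((a_1,b_1),\dots,(a_k,b_k))$ in the orbit, at least one $g_j$ has support disjoint from $\{a_1,\dots,a_k\}$, hence fixes that tuple (note $g_j\in S_f(\Z)$ fixes second coordinates automatically). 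Thus the intersection of the supports of $g_0,\dots,g_k$ in this action is empty, so the action is not strongly faithful, and therefore $\Sigma(F)$ is not highly core-free. The main obstacle — really the only subtle point — is getting the orbit computation and the conjugacy of actions exactly right, in particular checking that the orbit of $(x,0)$ is all of $\Z\times\Z$ and that the stabilizer of the $k$-tuple $((x_i,0))_i$ is genuinely $\Sigma(F)$ and not something larger; once these bookkeeping facts are pinned down, the argument is a direct transcription of Lemma~\ref{core-freeness in S_f(X)}.
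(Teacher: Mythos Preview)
Your proof is correct, and for part~(2) it follows essentially the same route as the paper: identify the coset action with the orbit of the $k$-tuple $\big((x_1,0),\ldots,(x_k,0)\big)$ in $(\Z\times\Z)^{(k)}$ and use $k+1$ finitely supported permutations with pairwise disjoint supports to witness failure of strong faithfulness.

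For part~(1), however, the paper takes a much shorter route than you do. It simply observes that $\Sigma(F)$ is already core-free and of infinite index in $S_f(\Z)$ by Lemma~\ref{core-freeness in S_f(X)}, and this is inherited by the overgroup $\langle S_f(\Z),s\rangle$: passing to a larger ambient group only adds conjugates, so the normal core can only shrink, and a subgroup of infinite index in $S_f(\Z)$ certainly has infinite index in any group containing $S_f(\Z)$. Your direct argument via transitivity of the $\Z\times\Z$ action is correct (modulo the slightly garbled parenthetical about $s^n g s^{-n}$; what you actually use is $(x,0)\cdot s^n g = ((x+n)g,n)$ for $g\in S_f(\Z)$), but it reproves from scratch something the previous lemma already gives you.
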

\begin{proof}
	(1) The group $\Sigma(F)$ is already core-free and has infinite index in $S_f(\Z)$ by Lemma~\ref{core-freeness in S_f(X)}.
	
	(2) Let us write $F = \{x_1,\ldots, x_k\}$, 
	with $x_1,\ldots,x_k$ pairwise distinct,
	and set
	\[
	\bar x = \big( (x_1,0),\ldots, (x_k,0) \big) \in (\Z\times \Z)^{(k)} \, .
	\]
	Let us denote by $\Omega$ the orbit of $\bar x$ under $\langle S_f(\Z), s \rangle$.
	As the action $\Z \curvearrowleft S_f(\Z)$ is  highly transitive, $\Omega$ is the union $\bigcup_{n\in\Z}(\Z \times \{n\})^{(k)}$.
	Furthermore, the stabilizer of $\bar x$ is the pointwise stabilizer of $F\times \{0\}$, that is, $\Sigma(F)$.
	Consequently, the action $\Sigma(F) \backslash \langle S_f(\Z), s \rangle \curvearrowleft \langle S_f(\Z), s \rangle$ is conjugate to $\Omega \curvearrowleft \langle S_f(\Z), s \rangle$.
	Now, $\Omega \curvearrowleft \langle S_f(\Z), s \rangle$ is not strongly faithful, 
	since taking $k+1$ elements of $S_f(\Z)$ with pairwise disjoint supports,
	every point in $\Omega = \bigcup_{n\in\Z}(\Z \times \{n\})^{(k)}$ will be fixed by at least one of them.
	Consequently, $\Sigma(F) \backslash \langle S_f(\Z), s \rangle \curvearrowleft \langle S_f(\Z), s \rangle$ is not highly faithful.\end{proof}

Using Lemma \ref{core-freeness in S_f(ZxZ)} we can prove the following Proposition exactly as we proved Proposition \ref{examples HNN with core-free subgroups}.

\begin{proposition}\label{examples HNN with core-free subgroups II}
	Let $Y$ and $Z$ be two distinct infinite proper subsets of $\Z$, let 
	$\tau: Y\to Z$ be a bijection, 
	and let $\vartheta = \tau_*: S_f (Y) \to S_f(Z)$ be the isomorphism 
	defined by $\vartheta(\sigma) = \tau\inv \sigma \tau$. 
	Then, the HNN extension
	\(
	\Gamma = \HNN(\langle S_f(\Z),s \rangle,S_f(Y),\vartheta)
	\)
	has the following properties:
	\begin{enumerate}[label=(\arabic*)]
		\item it admits an action which is both highly transitive and highly faithful;
		\item it is finitely generated and not linear;
		\item if $Y$ and $Z$ are both cofinite in $\Z$, then it is not 
		acylindrically hyperbolic;
		\item if $Y$ and $Z$ are both cofinite in $\Z$, then for every bounded 
		subtree $\Bc$ of its Bass-Serre tree, 
		the pointwise stabilizer $\Gamma_\Bc$ is not highly core-free in a vertex stabilizer $\Gamma_u$.
	\end{enumerate}
\end{proposition}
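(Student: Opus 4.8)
The plan is to transcribe, essentially verbatim, the proof of Proposition~\ref{examples HNN with core-free subgroups}, replacing $S_f(X)$ by the base group $\langle S_f(\Z),s\rangle$, replacing appeals to Lemma~\ref{core-freeness in S_f(X)} by appeals to Lemma~\ref{core-freeness in S_f(ZxZ)}, and using Proposition~\ref{PermutationNotLinear} for non-linearity. The following elementary observations will be used throughout: since $Y$ and $Z$ are proper subsets of $\Z$, the sets $\Z-Y$ and $\Z-Z$ are non-empty; inside $\langle S_f(\Z),s\rangle$ one has $S_f(Y)=\Sigma(\Z-Y)$ and $S_f(Z)=\Sigma(\Z-Z)$; and the HNN extension $\Gamma$ is non-ascending because $S_f(Y)$ and $S_f(Z)$ are locally finite whereas $\langle S_f(\Z),s\rangle$ is not.

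For item (1), I would note that $S_f(Y)=\Sigma(\Z-Y)$ is core-free in $\langle S_f(\Z),s\rangle$ by Lemma~\ref{core-freeness in S_f(ZxZ)}(1) (applied to the non-empty set $\Z-Y$), and then invoke Corollary~\ref{CorGroupsHTHNN} — a special case of Corollary~\ref{core-free implies high transitivity} — to obtain an action of $\Gamma$ which is both highly transitive and highly faithful. For item (2), finite generation is immediate: $\langle S_f(\Z),s\rangle$ is finitely generated and $\Gamma$ is obtained from it by adjoining the single stable letter $t$. Non-linearity follows because $\Gamma$ contains a copy of $S_f(\Z)$ (indeed already $\langle S_f(\Z),s\rangle\supseteq S_f(\Z)$), which is not linear by Proposition~\ref{PermutationNotLinear}, while linearity passes to subgroups (after extending scalars to an algebraic closure, as recalled at the start of the section).

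For items (3) and (4), assume now that $Y$ and $Z$ are cofinite in $\Z$. As in the proof of Proposition~\ref{examples HNN with core-free subgroups}(3), the key preliminary point is that for every $n\in\Z$ the power $\tau^n$, composed as a partial bijection of $\Z$, has cofinite domain $Y_n$ and cofinite range $Z_n$; this follows by induction from the cofiniteness of $Y$ and $Z$ together with the stability of cofinite sets under finite intersections and under $\tau^{\pm 1}$. Consequently, for any cofinite $U\subseteq\Z$ one has $S_f(U)\cap g\inv S_f(U)g=S_f(U\cap U\cdot g)$ with $U\cap U\cdot g$ cofinite for every $g\in\langle S_f(\Z),s\rangle$ (since $U\cdot s^n h=(U+n)\cdot h$ stays cofinite), and $S_f(U)\cap t^{-n}S_f(U)t^{n}=S_f(U\cap U\tau^{n})$ with $U\cap U\tau^{n}$ cofinite. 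An induction on the length of a normal form of $\gamma\in\Gamma$ then shows that $S_f(Y)\cap\gamma\inv S_f(Y)\gamma$ contains $S_f(V)$ for some cofinite $V$, so $S_f(Y)$ is s-normal in $\Gamma$; being locally finite, hence amenable, $S_f(Y)$ is not acylindrically hyperbolic by Proposition~\ref{free subgroups in acylindrically hyperbolic groups}, so $\Gamma$ is not either by Proposition~\ref{s-normal subgroups}, proving (3). For (4), after conjugating and enlarging $\Bc$ one may assume $\Gamma_u=\langle S_f(\Z),s\rangle$ and $\Gamma_e=S_f(Y)$ for an edge $e$ of $\Bc$; transitivity of $\Gamma$ on positive edges yields $\gamma_1,\dots,\gamma_k$ with $\Gamma_\Bc=S_f(Y)\cap\bigcap_{j=1}^k\gamma_j\inv S_f(Y)\gamma_j$, and the computation above produces cofinite $V_1,\dots,V_k$ with $\Gamma_\Bc\supseteq S_f\big(\bigcap_{j=1}^k V_j\big)=\Sigma(F)$ for a finite non-empty $F$. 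Since $\Sigma(F)$ is not highly core-free in $\langle S_f(\Z),s\rangle$ by Lemma~\ref{core-freeness in S_f(ZxZ)}(2), and a subgroup containing a subgroup that fails to be highly core-free also fails to be highly core-free (failure of strong faithfulness of $G/H'$ forces failure of strong, hence high, faithfulness of $G/H$ when $H'\le H$, using Corollary~\ref{EquivStrongAndHighFaithfulness}), we conclude that $\Gamma_\Bc$ is not highly core-free in $\Gamma_u$.

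I do not expect a real obstacle, since the paper explicitly states the argument runs "exactly as" for Proposition~\ref{examples HNN with core-free subgroups}; the only point demanding genuine care is the bookkeeping in (3)--(4), namely checking that all the relevant sets remain cofinite under conjugation by elements of the base group $\langle S_f(\Z),s\rangle$ and by the stable letter $t$. This is precisely where the hypothesis that $Y$ and $Z$ be \emph{cofinite} (not merely infinite) enters, through the cofiniteness of the domains and ranges of the partial bijections $\tau^{n}$; every other step is a direct translation of the corresponding step in the earlier proof.
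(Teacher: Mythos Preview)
Your proposal is correct and follows exactly the approach the paper indicates: transcribe the proof of Proposition~\ref{examples HNN with core-free subgroups}, replacing Lemma~\ref{core-freeness in S_f(X)} by Lemma~\ref{core-freeness in S_f(ZxZ)} and noting finite generation of the base group. The only additions you supply beyond the paper's terse pointer are the verification that the HNN extension is non-ascending (needed for Corollary~\ref{CorGroupsHTHNN}) and the monotonicity of ``not highly core-free'' under passing to overgroups, both of which are correct and used implicitly in the earlier proof.
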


\begin{remark}
	Again, when both $Y$ and $Z$ are cofinite in $\Z$, this proposition 
	provides explicit new examples of groups which 
	are highly transitive. 
\end{remark}
Note that when the complements of $Y$ and $Z$ have the same cardinality, 
these groups admit a natural highly transitive action, but it fails 
to be faithful. Indeed, the subgroup $\langle S_f(\Z), t \rangle < \Gamma$ is isomorphic to the HNN extension $\HNN(S_f(\Z), S_f(Y),\vartheta)$, and the action of this subgroup is not faithful by Remark \ref{rmk: obvious HT action}.
%
%
\subsubsection{Examples of amalgams}\label{sec: amalgams of fs permutations}
We now switch to the context of amalgams. We will need a refinement of 
Lemma 
\ref{s-normality in amalgams}.

\begin{lemma}\label{refined s-normality in amalgams}
	Let us consider an amalgam $\Gamma = \Gamma_1 *_\Sigma \Gamma_2$. 
	Assume there is a collection $\Cc$ of infinite subgroups of $\Sigma$ such that, for every $\Sigma' \in \Cc$ and every $\gamma \in \Gamma_1 \cup \Gamma_2$, 
	the intersection $\Sigma' \cap \gamma\inv \Sigma' \gamma$ contains an element of $\Cc$.
	Then, for every $\Sigma' \in \Cc$ and every $\gamma \in \Gamma$, 
	the intersection $\Sigma' \cap \gamma\inv \Sigma' \gamma$ contains an element of $\Cc$.
	In particular, all $\Sigma'\in \Cc$ are s-normal subgroups of $\Gamma$.
\end{lemma}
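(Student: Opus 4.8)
The plan is to follow the proof of Lemma~\ref{s-normality in amalgams} almost verbatim, simply keeping track of a descending sequence of subgroups that are actual \emph{members} of $\Cc$, rather than merely infinite. Fix $\Sigma' \in \Cc$ and $\gamma \in \Gamma$, and write $\gamma = \gamma_1 \cdots \gamma_n$ as a product of elements of $\Gamma_1 \cup \Gamma_2$. I set $T_0 = \Sigma'$, and then define inductively, for $k = 1, \ldots, n$, a subgroup $T_k \in \Cc$ with $T_k \subseteq T_{k-1} \cap \gamma_k\inv T_{k-1} \gamma_k$; this is possible precisely by the hypothesis of the lemma applied to $T_{k-1} \in \Cc$ and the factor element $\gamma_k \in \Gamma_1 \cup \Gamma_2$.

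The key claim, proved by induction on $k$, is that $T_k \subseteq \Sigma' \cap (\gamma_1 \cdots \gamma_k)\inv \Sigma' (\gamma_1 \cdots \gamma_k)$ for $k = 0, \ldots, n$. For $k = 0$ this is just the equality $T_0 = \Sigma'$. For the inductive step, one has $T_k \subseteq T_{k-1} \subseteq \Sigma'$, while $T_{k-1} \subseteq (\gamma_1 \cdots \gamma_{k-1})\inv \Sigma' (\gamma_1 \cdots \gamma_{k-1})$ by the induction hypothesis, so conjugating this inclusion by $\gamma_k$ yields $\gamma_k\inv T_{k-1} \gamma_k \subseteq (\gamma_1 \cdots \gamma_k)\inv \Sigma' (\gamma_1 \cdots \gamma_k)$; since $T_k \subseteq \gamma_k\inv T_{k-1} \gamma_k$ as well, we get $T_k \subseteq \Sigma' \cap (\gamma_1 \cdots \gamma_k)\inv \Sigma' (\gamma_1 \cdots \gamma_k)$. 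Taking $k = n$, the subgroup $T_n \in \Cc$ is contained in $\Sigma' \cap \gamma\inv \Sigma' \gamma$, which is exactly the first assertion of the lemma.

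For the ``in particular'' statement, since every member of $\Cc$ is infinite by assumption, the intersection $\Sigma' \cap \gamma\inv \Sigma' \gamma$ contains the infinite subgroup $T_n$, hence is itself infinite; as $\gamma \in \Gamma$ was arbitrary, this shows that $\Sigma'$ is s-normal in $\Gamma$. I do not anticipate any real difficulty here: the only point requiring a little care is to conjugate the \emph{already obtained} element $T_{k-1}$ of $\Cc$ by the \emph{new} factor $\gamma_k$ (using that conjugation by a group element is an order automorphism of the subgroup lattice) rather than trying to rebuild the intersection from scratch, exactly in the spirit of the proof of Lemma~\ref{s-normality in amalgams}.
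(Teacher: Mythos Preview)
Your proof is correct and follows essentially the same approach as the paper's: both write $\gamma$ as a product of factor elements, build a descending chain $T_k\in\Cc$ with $T_k\subseteq T_{k-1}\cap\gamma_k^{-1}T_{k-1}\gamma_k$, and verify inductively that $T_k\subseteq\Sigma'\cap(\gamma_1\cdots\gamma_k)^{-1}\Sigma'(\gamma_1\cdots\gamma_k)$. The only difference is cosmetic notation.
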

\begin{proof}
	Let $\Sigma_0$ be any element of $\Cc$, and let $\gamma$ be any element of $\Gamma$, that we write as a product
	$\gamma = \gamma_1 \cdots \gamma_n$
	of elements of $\Gamma_1$ or $\Gamma_2$. 
	Let us prove by induction that, for $k=0,\ldots,n$, there exists $\Sigma_k\in \Cc$ which is contained
	in $\Sigma_0 \cap (\gamma_1 \cdots \gamma_k)\inv \Sigma_0 (\gamma_1 \cdots \gamma_k)$. 
	
	For $k=0$, the group $\Sigma_0$ has been chosen in $\Cc$,
	and it coincides with \mbox{$\Sigma_0 \cap (\gamma_1 \cdots \gamma_k)\inv \Sigma_0 (\gamma_1 \cdots \gamma_k)$}
	in this case.
	Then, for $k\geq 1$, the subgroup $\Sigma_{k-1} \cap \gamma_k\inv \Sigma_{k-1} \gamma_k$ contains some $\Sigma_k \in \Cc$. 
	Moreover, one has $\Sigma_{k-1} \subseteq \Sigma_0 \cap (\gamma_1 \cdots \gamma_{k-1})\inv \Sigma_0 (\gamma_1 \cdots \gamma_{k-1})$ by induction hypothesis, hence
	\begin{align*}
	\Sigma_k &\subseteq \Sigma_{k-1} \cap \gamma_k\inv \Sigma_{k-1}\gamma_k \\
	&\subseteq
	\Sigma_0 \cap (\gamma_1 \cdots \gamma_{k-1})\inv \Sigma_0 (\gamma_1 \cdots \gamma_{k-1})
	\cap \gamma_k\inv \Sigma_0 \gamma_k
	\cap (\gamma_1 \cdots \gamma_k)\inv \Sigma_0 (\gamma_1 \cdots \gamma_k)
	\end{align*}
	whence $\Sigma_k \subseteq \Sigma_0
	\cap (\gamma_1 \cdots \gamma_k)\inv \Sigma_0 (\gamma_1 \cdots \gamma_k)$.
	
	Finally, for $k=n$, we get $\Sigma_n \subseteq \Sigma_0
	\cap \gamma\inv \Sigma_0 \gamma$ with $\Sigma_n \in \Cc$,  as desired.
\end{proof}

\begin{proposition}\label{examples of amalgams on fin supp permutations}
	Let $X,Y,Z$ be infinite countable sets such that $Z$ is proper subset of 
	$X \cap Y$.
	Then, the amalgam
	\(
	\Gamma = S_f(X) *_{S_f(Z)} S_f(Y)
	\)
	has the following properties:
	\begin{enumerate}[label=(\arabic*)]
		\item it admits an action which is both highly transitive and highly faithful;
		\item it is not a linear group;
		\item if $Z$ is cofinite in both $X$ and $Y$, then it is not acylindrically hyperbolic;
		\item if $Z$ is cofinite in $X$ (resp. $Y$), then $S_f(Z)$ is not highly core-free in $S_f(X)$ (resp. $S_f(Y)$).
	\end{enumerate}
\end{proposition}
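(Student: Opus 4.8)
The plan is to follow the blueprint of the proof of Proposition~\ref{examples HNN with core-free subgroups}, now in the amalgam setting. Throughout I would identify $S_f(Z)$ with the pointwise stabilizer $\Sigma(X\setminus Z)$ inside $S_f(X)$, and with $\Sigma(Y\setminus Z)$ inside $S_f(Y)$ (each finitely supported permutation of $Z$ being extended by the identity), so that $\Sigma=S_f(Z)$ plays the role of the amalgamated subgroup. Note that $Z\subsetneq X\cap Y$ forces both $X\setminus Z$ and $Y\setminus Z$ to be non-empty.

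For item (1), I would first record that the amalgam is non-degenerate: since $X\setminus Z\neq\emptyset$, Lemma~\ref{core-freeness in S_f(X)}(1) gives $[S_f(X):S_f(Z)]=\infty\geq 3$, and also $S_f(X)\neq S_f(Z)\neq S_f(Y)$; moreover the same lemma shows $S_f(Z)=\Sigma(X\setminus Z)$ is core-free in $S_f(X)$. Corollary~\ref{CorGroupsHTAmalgam} (a special case of Corollary~\ref{core-free implies high transitivity}) then yields a highly transitive and highly faithful $\Gamma$-action. Item (2) is immediate: $\Gamma$ contains $S_f(X)$ as a subgroup, $S_f(X)$ is not linear by Proposition~\ref{PermutationNotLinear}, and subgroups of linear groups are linear. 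For item (4), if $Z$ is cofinite in $X$ then $X\setminus Z$ is finite and non-empty, so $\Sigma(X\setminus Z)=S_f(Z)$ is not highly core-free in $S_f(X)$ by Lemma~\ref{core-freeness in S_f(X)}(2); the case of $Y$ is symmetric.

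The only genuinely computational point is item (3). Assuming $Z$ cofinite in both $X$ and $Y$, I would invoke Lemma~\ref{refined s-normality in amalgams} with $\Cc$ the family of all subgroups $S_f(W)$, where $W$ runs over the cofinite subsets of $Z$; these are infinite subgroups of $\Sigma=S_f(Z)$. To check the hypothesis, fix $\gamma\in S_f(X)$ and a cofinite $W\subseteq Z$. Conjugation in $S_f(X)$ carries supports to their images, so $\gamma\inv S_f(W)\gamma=S_f(W\gamma)$ (where $W\gamma$ denotes the image of $W$ under $\gamma$), and hence $S_f(W)\cap\gamma\inv S_f(W)\gamma=S_f(W\cap W\gamma)$. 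Now $W$ is cofinite in $X$ (since $Z$ is, and $W$ is cofinite in $Z$) and $W\gamma$ is cofinite in $X$ (since $\gamma$ has finite support), so $W':=(W\cap W\gamma)\cap Z$ is cofinite in $Z$; thus $S_f(W')\in\Cc$ and $S_f(W')\leq S_f(W\cap W\gamma)$. The case $\gamma\in S_f(Y)$ is identical with $Y$ replacing $X$. Lemma~\ref{refined s-normality in amalgams} then gives that every element of $\Cc$ is s-normal in $\Gamma$; in particular $S_f(Z)$ is s-normal in $\Gamma$. Since $S_f(Z)$ is locally finite, hence amenable, it is not acylindrically hyperbolic by Proposition~\ref{free subgroups in acylindrically hyperbolic groups}, and Proposition~\ref{s-normal subgroups} then shows $\Gamma$ is not acylindrically hyperbolic either.

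The main, and rather mild, obstacle is the bookkeeping in item (3): the sets $W\gamma$ arising from conjugation need not be contained in $Z$, so one must intersect back with $Z$ to stay within the family $\Cc$. Once this is observed the argument is a routine cofiniteness computation, entirely parallel to the HNN case treated in Proposition~\ref{examples HNN with core-free subgroups}, and I do not expect any further difficulty.
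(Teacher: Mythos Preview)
Your proposal is correct and follows essentially the same approach as the paper: items (1), (2), (4) via Lemma~\ref{core-freeness in S_f(X)}, Corollary~\ref{CorGroupsHTAmalgam}, and Proposition~\ref{PermutationNotLinear}, and item (3) via Lemma~\ref{refined s-normality in amalgams} applied to the family $\Cc=\{S_f(W):W\text{ cofinite in }Z\}$, followed by amenability of $S_f(Z)$ and Propositions~\ref{free subgroups in acylindrically hyperbolic groups} and~\ref{s-normal subgroups}. One harmless over-caution: since $W\subseteq Z$ already, the intersection $W\cap W\gamma$ is automatically contained in $Z$, so the extra step of intersecting back with $Z$ is unnecessary (though not wrong).
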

\begin{proof} Lemma \ref{core-freeness in S_f(X)} and Corollary \ref{CorGroupsHTAmalgam} imply (1). Proposition \ref{PermutationNotLinear} implies (2) while (4) follows from  Lemma \ref{core-freeness in S_f(X)}. Let us prove (3). Let $\Cc$ be the collection of subgroups of the form $S_f(U)$ where $U$ is cofinite in $Z$ (hence cofinite in both $X$ and $Y$).
	For every $\Sigma'=S_f(U) \in \Cc$ and every $\gamma \in S_f(X) \cup S_f(Y)$, one can check as in former proofs that
	the intersection $\Sigma' \cap \gamma\inv \Sigma' \gamma$ contains an element of $\Cc$. 
	Hence, Lemma~\ref{refined s-normality in amalgams} implies that $S_f(Z)$ is s-normal in $\Gamma$. Since $S_f(Z)$ is amenable it is not acylindrically hyperbolic hence $\Gamma$ is not acylindrically hyperbolic.
\end{proof}

Again, one can easily modify the previous examples to get groups which are 
moreover finitely generated. The proof of the following Proposition is 
exactly the same as the proof of Proposition \ref{examples of amalgams on 
	fin supp permutations} (by using Lemma \ref{core-freeness in S_f(ZxZ)}). We 
can now provide one last new class of highly transitive examples.

\begin{proposition}\label{examples of fg amalgams on fin supp permutations}
	Let $Z$ be an infinite proper subset of $\Z$ and consider the amalgam\\ $\Gamma:=\langle S_f(\Z),s\rangle*_{S_f(Z)}\langle S_f(\Z),s\rangle$. The following holds.
	\begin{enumerate}[label=(\arabic*)]
		\item $\Gamma$ admits an action which is both highly transitive and highly faithful;
		\item $\Gamma$ is finitely generated and not linear;
		\item If $Z$ is cofinite in $\Z$, then $\Gamma$ is not acylindrically hyperbolic and $S_f(Z)$ is not highly core-free in $\langle S_f(\Z),s\rangle$.
		
	\end{enumerate}
\end{proposition}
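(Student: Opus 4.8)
The plan is to replay the proof of Proposition~\ref{examples of amalgams on fin supp permutations} almost verbatim, with the factor $S_f(X)$ replaced by the finitely generated group $G:=\langle S_f(\Z),s\rangle\cong S_f(\Z)\rtimes\Z$, and with Lemma~\ref{core-freeness in S_f(ZxZ)} playing the role of Lemma~\ref{core-freeness in S_f(X)}. Write $\Sigma:=S_f(Z)$, so that $\Gamma=G*_\Sigma G$ with $\Sigma$ sitting inside the subgroup $S_f(\Z)<G$ on both sides; since $Z$ is a proper subset of $\Z$, the set $\Z\setminus Z$ is non-empty, and $\Sigma$ is exactly the pointwise stabilizer $\Sigma(\Z\setminus Z)$ of $\Z\setminus Z$ in $S_f(\Z)$.

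For item (1), I would first note that $\Gamma=G*_\Sigma G$ is non-degenerate, since $\Sigma\le S_f(\Z)\le G$ and $[G:S_f(\Z)]$ is infinite, so $[G:\Sigma]$ is infinite. By Lemma~\ref{core-freeness in S_f(ZxZ)}(1) the subgroup $\Sigma$ is core-free in $G$, and then Corollary~\ref{CorGroupsHTAmalgam} (a special case of Corollary~\ref{core-free implies high transitivity}) immediately yields a highly transitive and highly faithful action of $\Gamma$. For item (2), $G$ is finitely generated (even $2$-generated: take $s$ together with a transposition), hence so is the amalgam $\Gamma=G*_\Sigma G$; and $\Gamma$ contains an isomorphic copy of $S_f(\Z)$, which is not linear by Proposition~\ref{PermutationNotLinear}, so $\Gamma$ is not linear because subgroups of linear groups are linear.

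For item (3) I would assume $Z$ cofinite in $\Z$. Then $\Z\setminus Z$ is finite and non-empty, so $\Sigma=\Sigma(\Z\setminus Z)$ is the stabilizer of a finite set, and Lemma~\ref{core-freeness in S_f(ZxZ)}(2) gives that $\Sigma$ is not highly core-free in $G$. To see that $\Gamma$ is not acylindrically hyperbolic, I would take for $\Cc$ the family of all subgroups of $G$ of the form $S_f(U)$ with $U\subseteq\Z$ cofinite; every member of $\Cc$ is infinite, and $\Sigma=S_f(Z)\in\Cc$. The key computation is that for $\Sigma'=S_f(U)\in\Cc$ and $\gamma\in G$ one has $\gamma\inv S_f(U)\gamma=S_f(U\gamma)$, where $U\gamma$ is the image of $U$ under the faithful action of $G$ on $\Z$; since that action is by a finitely supported permutation followed by a translation, $U\gamma$ is again cofinite, whence $\Sigma'\cap\gamma\inv\Sigma'\gamma=S_f(U\cap U\gamma)\in\Cc$. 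The hypothesis of Lemma~\ref{refined s-normality in amalgams} is thus met for $\Gamma=G*_\Sigma G$, so $\Sigma=S_f(Z)$ is s-normal in $\Gamma$; being locally finite, hence amenable, $S_f(Z)$ contains no non-abelian free subgroup and so is not acylindrically hyperbolic by Proposition~\ref{free subgroups in acylindrically hyperbolic groups}, and therefore neither is $\Gamma$ by Proposition~\ref{s-normal subgroups}.

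There is essentially no genuine obstacle here: once the quoted lemmas are in place the argument is routine bookkeeping. The single place where the extra translation generator $s$ actually enters is the verification, needed for Lemma~\ref{refined s-normality in amalgams}, that a $G$-conjugate of an $S_f(U)$ with $U$ cofinite is again of that form — and this works because $G$ acts on $\Z$ as a finitely supported permutation composed with a translation, so it preserves cofiniteness. Everything else (core-freeness, non-linearity, failure of high core-freeness) is word-for-word as in Proposition~\ref{examples of amalgams on fin supp permutations}, with Lemma~\ref{core-freeness in S_f(ZxZ)} substituted for Lemma~\ref{core-freeness in S_f(X)}.
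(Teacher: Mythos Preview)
Your proof follows exactly the route the paper intends (the paper merely says the proof is ``exactly the same as the proof of Proposition~\ref{examples of amalgams on fin supp permutations}, by using Lemma~\ref{core-freeness in S_f(ZxZ)}''), and items~(1) and~(2) are fine. There is, however, a small technical slip in your treatment of item~(3): the collection $\Cc$ you choose --- all $S_f(U)$ with $U$ cofinite in $\Z$ --- does not satisfy the hypothesis of Lemma~\ref{refined s-normality in amalgams}, which requires the members of $\Cc$ to be subgroups of $\Sigma=S_f(Z)$. Indeed, if $U\not\subseteq Z$ then $S_f(U)\not\subseteq S_f(Z)$, and in the amalgam $G*_\Sigma G$ the two copies of such an $S_f(U)$ are genuinely different subgroups, so the inductive conjugation argument in the lemma breaks down.

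The fix is immediate and is what the paper does in the analogous Proposition~\ref{examples of amalgams on fin supp permutations}: take $\Cc$ to be the family of $S_f(U)$ with $U\subseteq Z$ cofinite in $\Z$ (equivalently, $U$ cofinite in $Z$). Your key computation still applies, and now $U\cap U\gamma\subseteq U\subseteq Z$ is again cofinite, so $S_f(U\cap U\gamma)\in\Cc$ and Lemma~\ref{refined s-normality in amalgams} is legitimately invoked. With this adjustment your argument is complete and matches the paper's.
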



\subsection{Faithful actions which are non-topologically free on the boundary}\label{ExampleFaithfulNonTopolFree}

Although our main result provides a complete characterization of high 
transitivity for groups admitting a faithful minimal action of general type 
on a tree, one may wonder if Corollaries \ref{CorGroupsHTHNN} and 
\ref{CorGroupsHTAmalgam} can hold in a wider context, namely, if the 
core-freeness assumption of the edge group in a vertex group can 
be weakened to core-freeness in the whole group. We will see that it is not 
the case. 

Thanks to the quoted result from 
\cite{leboudecTripleTransitivityNonfree2019}, this amounts to finding 
examples of amalgams and HNN extensions whose action on their Bass-Serre 
trees are minimal of general type and faithful, but the action on the 
boundary 
is not topologically free. By Bass-Serre theory, we essentially need to 
find faithful edge transitive actions on trees without inversions which are 
not 
topologically free on the boundary, but which in the amalgam case have two 
vertex 
orbits, while in the HNN case they have only one vertex orbit. 

Our examples belong to a class which was explored 
in depth by Le Boudec \cite{leboudecGroupsActingTrees2016, 
	le_boudec_c-simplicity_2017}, generalizing a construction of 
Bader-Caprace-Gelander-Mozes \cite{baderSimpleGroupsLattices2012} which 
takes its roots in the work of 
Burger-Mozes \cite{burger_groups_2000}. Such examples already 
appeared in Le Boudec and Matte-Bon's 
work on high transitivity, so our only contribution here is to point out 
that some of those naturally decompose as amalgams or HNN 
extensions. We will focus on specific easy examples instead of seeking 
large generality. For more examples, we refer the reader to Ivanov's recent 
work \cite{ivanovTwoFamiliesExamples2020}.

\subsubsection{An example of amalgam}
Let $\Tc_d$ be a $d$-regular tree of finite degree $d\geq 3$. As in 
\cite{burger_groups_2000}, let us fix a coloring on the set of 
edges $c:E(\Tc_d) \to \{1,\ldots,d\}$ such that:
\begin{itemize}
	\item every edge has the same color as its antipode;
	\item for any vertex $v$, the restriction of $c$ to 
	the star $\st(v)$ is a bijection onto $\{1,\ldots,d\}$.
\end{itemize}
For any vertex $v$, any automorphism $g\in \Aut(\Tc_d)$ 
induces a bijection $g_v:\st(v)\to \st(gv)$, which itself induces a 
permutation $\sigma(g,v) \in S_d$, where $S_d=\sym(\{1,\ldots,d\})$. Let $C_d$ be a cyclic subgroup of $S_d$ generated by a $d$-cycle. Consider the group, coming from \cite{leboudecGroupsActingTrees2016},
\[
G=G(C_d) = \{g\in \Aut(\Tc_d) : \, \sigma(g,v) \in C_d \text{ for 
	all but finitely many vertices} \}.
\]
\begin{remark}
	The group $G(C_d)$ is countable.
	Indeed, given an automorphism $g\in G(C_d)$ and any edge $e$ such that $\sigma(g, r(e))$ has to be in $C_d$,
	the permutation $\sigma(g, r(e))$ is determined by the color of $g(e)$.
	It follows that $g$ is completely determined 
	by its restriction to any finite subtree containing at least one edge
	and all stars at vertices $v$ such that $\sigma(g,v)\not\in C_d$.
\end{remark}

In order to forbid inversions, recall there is a natural equivalence 
relation $\mathcal R_{even}$ on $V(\Tc_d)$ which relates any two vertices 
at even distance from each other, and that this equivalence relation is 
preserved by any automorphism of $\Tc_d$. We then let 
\[
\Gamma= G^+ = \{g\in G : \, g \text{ does not exchange the two classes of }
\mathcal R_{even} \} \, .
\]
It is fairly easy to see that the action $\Gamma \curvearrowright \Tc_d$ is transitive on undirected 
edges, hence minimal, of general type, and without inversion.
Let us now fix some edge $e_0$ from  $v_1$ to $v_2$, and 
consider the stabilizers $\Gamma_1$, $\Gamma_2$ and $\Sigma$ of $v_1$, 
$v_2$ and $e_0$ respectively (in $\Gamma$). By Bass-Serre theory, we have the 
following.
\begin{remark}\label{IsomAmalgamTreeAmalgam}
	The morphism $\Gamma_1 \ast_\Sigma \Gamma_2 \to \Gamma$ given by 
	inclusions is an isomorphism, and $\Tc_d$ is the Bass-Serre tree of 
	$\Gamma_1 \ast_\Sigma \Gamma_2$.
\end{remark}

The following result summarizes well-known properties of $\Gamma$ showing that the hypothesis that $\Sigma$ is core-fre in $\Gamma_1$ or $\Gamma_2$ cannot be relaxed in Corollary \ref{CorGroupsHTAmalgam}. We provide a proof for the reader's convenience.
\begin{proposition}\label{prop:properties nonHTamalgam}
	With the above notations:
	\begin{enumerate}[label=(\arabic*)]
		\item $\Sigma$ is core-free in $\Gamma$, and the amalgam $\Gamma_1 
		\ast_\Sigma \Gamma_2$ is non-degenerate;
		\item the $\Gamma$-action on $\partial \Tc_d$ is not topologically 
		free;
		\item $\Gamma$ is not highly transitive;
		\item $\Gamma$ is icc.
	\end{enumerate}
\end{proposition}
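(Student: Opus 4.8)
The plan is to derive all four statements from the natural action $\Gamma\curvearrowright\Tc_d$, which by Remark~\ref{IsomAmalgamTreeAmalgam} is the Bass-Serre tree of the amalgam and which is faithful (being a subgroup of $\Aut(\Tc_d)$), minimal, without inversion, of general type, and transitive on undirected edges. For (1), the core of $\Sigma=\Stab_\Gamma(e_0)$ in $\Gamma$ equals $\bigcap_{\gamma\in\Gamma}\Stab_\Gamma(\gamma e_0)$, i.e. the pointwise stabilizer of the edge orbit $\Gamma\cdot e_0$, hence of all of $\Tc_d$ by edge-transitivity, hence trivial by faithfulness; so $\Sigma$ is core-free. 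For non-degeneracy, I would note that $G(C_d)$ contains an automorphism fixing $v_1$ whose local permutation at $v_1$ is a generator of $C_d$ (all other local permutations being trivial); fixing $v_1$ it lies in $G^+=\Gamma$, so $\Gamma_1$ acts transitively on $\st(v_1)$ (as $C_d$ is a transitive cyclic group on $d\geq 3$ letters), whence $[\Gamma_1:\Sigma]=d\geq 3$.

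For (2), I would exploit that $G(C_d)$ permits an arbitrary local action at one vertex: fix an edge $e=\{w,w'\}$ and let $\Hc$ be its half-tree on the $w'$-side. Since there are $d-1\geq 2$ colours distinct from $c(e)$, one can build $g\in G(C_d)$ that fixes $w$ and the whole $w'$-side of $e$ (in particular $\Hc$) pointwise and whose only non-trivial local permutation is at $w$, where it acts as a transposition of two colours different from $c(e)$. Such a $g$ is non-trivial, fixes $w$ hence preserves the two $\mathcal R_{even}$-classes so $g\in G^+=\Gamma$, and fixes the half-tree $\Hc$ pointwise. As $\Tc_d$ has at least three ends and $\Gamma$ is an infinite group acting faithfully and minimally, Proposition~\ref{half-trees and topological freeness} then gives that $\Gamma\curvearrowright\partial\Tc_d$ is not topologically free. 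Point (3) follows at once: by (1)--(2), $\Gamma$ admits a faithful minimal action of general type on $\Tc_d$ which is not topologically free on the boundary, so by \cite{leboudecTripleTransitivityNonfree2019} every faithful $\Gamma$-action of transitivity degree $\geq 3$ is conjugate to a boundary-orbit action of transitivity degree $\leq 3$; hence $\mathrm{td}(\Gamma)\leq 3$ and $\Gamma$ is not highly transitive.

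For (4), let $g\in\Gamma\setminus\{1\}$; I would produce infinitely many conjugates by translating along axes. If $g$ is hyperbolic with axis $A$, pick a hyperbolic $h$ with $\mathrm{axis}(h)\neq A$ (possible since the action is of general type, not lineal); then $h^ngh^{-n}$ has axis $h^nA$, and $h^nA=h^mA$ would force $h^{n-m}$ to stabilize $A$, whence $\mathrm{axis}(h)=\mathrm{axis}(h^{n-m})=A$, a contradiction, so the $h^ngh^{-n}$ are pairwise distinct. If $g$ is elliptic (it cannot be an inversion), $\mathrm{Fix}(g)$ is a non-empty subtree which is proper by faithfulness: if it is bounded, then for any hyperbolic $h$ the subtrees $h^n\,\mathrm{Fix}(g)$ are pairwise distinct (a hyperbolic element stabilizes no bounded subtree), so again the $h^ngh^{-n}$ are pairwise distinct; if it is unbounded, it contains a ray so $g$ fixes some end $\xi$, and choosing a hyperbolic $h$ for which $\xi$ is not an endpoint of $\mathrm{axis}(h)$ (possible by general type), the points $h^n\xi$ are pairwise distinct, hence so are the subtrees $h^n\,\mathrm{Fix}(g)$ and the elements $h^ngh^{-n}$. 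In every case the conjugacy class of $g$ is infinite, so $\Gamma$ is icc. I expect the main things requiring care to be the explicit construction of the automorphism in (2) and the end-bookkeeping in the unbounded elliptic case of (4); the rest is routine.
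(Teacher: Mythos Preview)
Your approach to (1) and (3) matches the paper's and is correct.

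For (2), your strategy is right but the specific construction you describe is impossible: if $g$ fixes $w$ and has local permutation $(i\ j)$ at $w$ (with $i,j\neq c(e)$), then $g$ swaps the neighbours $u_i,u_j$ of $w$ reached by the edges of colours $i,j$, and the edge $u_i\to w$ (colour $i$) is sent to the edge $u_j\to w$ (colour $j$); hence $\sigma(g,u_i)(i)=j$, so $\sigma(g,u_i)$ is \emph{not} trivial. What one can arrange, and this is what the paper does, is that $\sigma(g,v)\in C_d$ for all $v\neq w$: after fixing the transposition at $w$, extend $g$ inductively outward on the $w$-side so that at each subsequent vertex the local permutation is the unique element of $C_d$ compatible with the already-defined image of the incoming edge. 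This yields a non-trivial $g\in G(C_d)\cap G^+$ fixing the half-tree pointwise.

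For (4), your argument differs from the paper's and is mostly correct, but there is a gap in the unbounded elliptic case: from ``the points $h^n\xi$ are pairwise distinct'' you cannot conclude ``the subtrees $h^n\,\mathrm{Fix}(g)$ are pairwise distinct'', since $\mathrm{Fix}(g)$ may have several ends and $h^{n-m}$ could stabilise $\mathrm{Fix}(g)$ while sending $\xi$ to another of its ends. A clean fix is to choose $h$ so that its axis is \emph{not} contained in $\mathrm{Fix}(g)$ (possible since $\mathrm{Fix}(g)$ is a proper subtree and, by minimality of a general-type action, the axes of hyperbolic elements cover $\Tc_d$); then $h^{n-m}$ stabilising $\mathrm{Fix}(g)$ would force its axis into $\mathrm{Fix}(g)$, a contradiction. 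The paper instead gives a uniform boundary-dynamics argument: pick $\xi\in\partial\Tc_d$ not fixed by $g$, produce a hyperbolic $\gamma$ whose repelling point is close enough to $\xi$ to be moved by $g$, and observe that the fixed-point sets of $\gamma^n g\gamma^{-n}$ shrink towards the attracting point of $\gamma$, forcing infinitely many distinct conjugates.
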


\begin{proof}
	(1) The $\Gamma$-action on $\Tc_d$ is faithful (by definition) and of general type, so that 
	$\Sigma$ is core-free in $\Gamma$, and the amalgam 
	$\Gamma_1 \ast_\Sigma \Gamma_2$ is non-degenerate.
	
	(2) Consider the half-tree $\Hc$ associated to $e_0$. It suffices to prove that the pointwise stabilizer of $\Hc$ is a non-trivial group. 
	
	To do so, we follow the proof of \cite[Theorem C]{le_boudec_c-simplicity_2017}. 
	First, we take a non-trivial permutation $\sigma \in S_d$  which fixes the color of $e_0$ 
	(this exists since $d\geq 3$; notice it lives in $S_d \setminus C_d$). 
	Then, we define a non-trivial automorphism $\gamma\in \Gamma$ fixing $\Hc$ pointwise as follows.
	\begin{itemize}
		\item The restriction of $\gamma$ to $\Hc$ is the identity.
		\item Then, we let $\gamma$ act on $\st(v_1)$ so that $\sigma(g,v_1) = \sigma$ 
		(this is possible since $ \sigma$ fixes the color of $e_0$, and will guarantee that $\gamma$ is non-trivial since $\sigma$ is non-trivial).
		\item Then, we extend the action inductively: given any vertex $w$ 
		outside 
		$\Hc$, 
		we set $w'$ to be the unique neigbour of $w$ which is closer to $\Hc$ than $w$, 
		and define the $\gamma$-action on $\st(w)$ in terms of the (previously defined) $\gamma$-action on $\st(w')$. 
		Namely, denoting by $e_w$ the edge from $w$ to $w'$, the 
		$\gamma$-action on $\st(w')$ provides the edge $\gamma e_w$. Then 
		there is a unique element $\sigma_w\in C_d$ sending $c(e_w)$ onto 
		$c(\gamma e_w)$, 		 
		and we let $\gamma$ act on $\st(w)$ so that $\sigma(g,w) = \sigma_w$ 
		(note that $\sigma(g,w) = \sigma(g,w')$ as soon as $\sigma(g,w')$ was 
		already in $C_d$). 
	\end{itemize}
	
	(3) The $\Gamma$-action on $\Tc_d$ is minimal, by edge-transitivity, and of general type. Consequently, \cite[Corollary 1.5]{leboudecTripleTransitivityNonfree2019} applies, and 
	the transitivity degree of $\Gamma$ is at most $2$.
	
	(4) Let $\gamma_0$ be a non-trivial element of $\Gamma$ and $\xi$ be a
	point in $\partial \Tc_{d}$ which is not fixed by $\gamma_0$.
	Given edges $e,e'$ with sources in the same class of vertices, 
	such that $e$ is on the geodesic $[\xi,s(e')]$,
	there exists $\gamma \in \Gamma$ such that $\gamma e = e'$.
	This $\gamma$ is a hyperbolic element whose axis contains $e$ and $e'$.
	Moreover, if we choose $e$ close enough to $\xi$, then the repelling point $\xi^-$ of $\gamma$ in 
	$\partial\Tc_{d}$ is close enough to $\xi$ so that $\gamma_0 \xi^- \neq \xi^-$. Now, since $\xi^-$ is not fixed by $\gamma_0$, 
	the set of fixed points of $\gamma^n\gamma_0\gamma^{-n}$ moves into 
	smaller and smaller neighborhoods (in $\Tc_{d} \cup \partial \Tc_{d}$) 
	of the attracting point of $\gamma$ as $n\to +\infty$. 
	Thus, the set $\{ \gamma^n\gamma_0\gamma^{-n} : \, n\geq 1 \}$ is infinite.  
\end{proof}

\subsubsection{An example of HNN extension}
In the previous example, notice that even the subgroup
\[
U(\id) = \{g\in \Aut(\Tc_d) : \, \sigma(g,v) =\id \text{ for 
	every vertex } v \}
\]
includes inversions, so that one cannot easily find a subgroup of $G$ without inversion and acting transitively on $V(\Tc_d)$. 
Hence, we slightly modify the construction in order to get an example of HNN extension.

Let $\Tc_{d,d}$ be a $(d,d)$-biregular tree, where $d\geq 2$ 
(by this, we mean an oriented tree in which every star $\st(v)$ contains exactly $d$ positive edges and $d$ negative edges). 
Let us denote $\st(v)^+ = \st(v)\cap E(\Tc_{d,d})^+$ and $\st(v)^- = \st(v)\cap E(\Tc_{d,d})^-$,
and fix a coloring on the set of 
edges $c:E(\Tc_{d,d}) \to \{1,\ldots,2d\}$ such that:
\begin{itemize}
	\item every edge has the same color as its antipode;
	\item for any vertex $v$, the restriction of $c$ to 
	the star $\st(v)$ is a bijection onto $\{1,\ldots,2d\}$;
	\item for any vertex $v$, the image $c(\st(v)^+)$ is either $\{1,\ldots,d\}$ or $\{d+1,\ldots,2d\}$.
\end{itemize}
By $\Aut(\Tc_{d,d})$, we mean the group of automorphisms of $\Tc_{d,d}$ preserving the orientation. 
For any vertex $v$, any automorphism $g\in \Aut(\Tc_{d,d})$ 
induces bijections $g_v^\pm:\st(v)^\pm\to \st(gv)^\pm$, which themselves induces a permutation $\sigma(g,v) \in S_{2d}$, 
where $S_{2d}=\sym(\{1,\ldots,2d\})$, preserving the partition \mbox{$\{1,\ldots,d\} \sqcup \{d+1,\ldots,2d\}$.} 
Let $F_d$ be a the subgroup of $S_{2d}$ generated by the commuting 
elements 
\begin{align*}
\sigma_1&= (1 \quad 2\quad \cdots\quad d)(d+1\quad d+2\quad\cdots \quad 
2d),\\
\sigma_2&=(1\quad d+1)(2\quad d+2)\cdots(d\quad 2d).
\end{align*}
Then, the group, 
\[
\Gamma = G(F_d) = \{g\in \Aut(\Tc_{d,d}) : \, \sigma(g,v) \in F_d \text{ for 
	all but finitely many vertices} \}
\]
is countable (this is not hard to prove, using that $F_d$ acts freely on $\{1,\ldots,2d\}$).

It is fairly easy to see that the action $\Gamma \curvearrowright \Tc_{d,d}$ is transitive on positive 	edges, 
hence minimal, of general type. It is moreover without inversion since it 
preserves the orientation.
Let us fix some vertex $v$, some positive edges $e_1,e_2$ such that $r(e_1) = v = s(e_2)$, 
and some automorphism $\tau \in \Gamma$ such that $\tau(e_1) = e_2$.  
Now, consider the stabilizers $H = \Gamma_v$ and $\Sigma = \Gamma_{e_2}$, and the isomorphism $\vartheta: \Sigma \to \Gamma_{e_1}$ given by $\vartheta(\sigma)(x) = \tau\inv \sigma \tau (x)$. By Bass-Serre theory, we have the 
following.
\begin{remark}\label{IsomAmalgamTreeHNN}
	The morphism $\HNN(H,\Sigma,\vartheta) \to \Gamma$ given by 
	inclusions is an isomorphism, and $\Tc_{d,d}$ is the Bass-Serre tree of 
	$\HNN(H,\Sigma,\vartheta)$.
\end{remark}

The following result summarizes well-known properties of $\Gamma$ showing that the hypothesis that one of the subgroups $\Sigma, \vartheta(\Sigma)$ is core-free in $H$ cannot be relaxed in Corollary \ref{CorGroupsHTHNN}. We omit the proof, which is similar to the one of Proposition \ref{prop:properties nonHTamalgam}. 
\begin{proposition}
	With the above notations:
	\begin{enumerate}[label=(\arabic*)]
		\item $\Sigma$ is core-free in $\Gamma$, and the HNN extension $\HNN(H,\Sigma,\vartheta)$ is non-ascending;
		\item the $\Gamma$-action on $\partial \Tc_{d,d}$ is not topologically 
		free;
		\item $\Gamma$ is not highly transitive;
		\item $\Gamma$ is icc.
	\end{enumerate}
\end{proposition}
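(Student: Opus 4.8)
The plan is to transpose, almost verbatim, the proof of Proposition~\ref{prop:properties nonHTamalgam}, adjusting only for the passage to the biregular tree and to orientation-preserving automorphisms. For (1): the action $\Gamma\curvearrowright\Tc_{d,d}$ is faithful since $\Gamma$ is by construction a subgroup of $\Aut(\Tc_{d,d})$, and it is of general type, as recorded above; since $\Gamma$ is transitive on positive edges, the core of $\Sigma=\Gamma_{e_2}$ in $\Gamma$ equals $\bigcap_{\gamma\in\Gamma}\Gamma_{\gamma e_2}$, which is the pointwise stabilizer of $E(\Tc_{d,d})^+$, hence of the whole tree, hence trivial, so $\Sigma$ is core-free. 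Finally, being of general type, the classification of actions on trees recalled in Section~\ref{PrelimHNN} forces $\Sigma\neq H\neq\vartheta(\Sigma)$, i.e. the HNN extension is non-ascending; alternatively, an element of $H=\Gamma_v$ realising $\sigma_1$ at $v$ cyclically permutes both $\st(v)^+$ and $\st(v)^-$, hence fixes neither $e_2$ nor $e_1$.

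For (2), by Proposition~\ref{half-trees and topological freeness} it suffices to exhibit a nontrivial element of $\Gamma$ fixing pointwise the half-tree $\Hc$ of $e_2$, and I would reproduce the construction from \cite[Theorem~C]{le_boudec_c-simplicity_2017} used in the amalgam case. Since $d\geq2$, one can pick a nontrivial permutation $\sigma\in S_{2d}$ preserving the partition $\{1,\dots,d\}\sqcup\{d+1,\dots,2d\}$ and fixing the colour $c(e_2)$ --- for instance a transposition inside the block not containing $c(e_2)$, which has $d\geq2$ elements. One then defines $\gamma\in\Aut(\Tc_{d,d})$ to be the identity on $\Hc$, to realise the local permutation $\sigma$ at $v=s(e_2)$, and to be extended outward inductively, at each new vertex choosing the unique local permutation in $F_d$ compatible with what is already built. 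Then $\gamma$ is nontrivial (as $\sigma\neq\id$), lies in $\Gamma$ (its local permutation equals $\sigma$ at $v$ and belongs to $F_d$ everywhere else), and fixes $\Hc$ pointwise. The main obstacle I anticipate is precisely this inductive extension: one must check that the unique colour-preserving element of $F_d$ demanded at each vertex $w$ automatically sends $c(\st(w)^+)$ to $c(\st(gw)^+)$, so that $\gamma$ preserves the orientation and really lies in $\Gamma$. This is where the specific generators $\sigma_1,\sigma_2$ and the third condition imposed on the colouring $c$ enter: every element of $F_d$ either preserves both blocks of the partition or swaps them, and $F_d$ acts freely, hence simply transitively, on the $2d$ colours --- so the colour-preserving element of $F_d$ is forced to act on the blocks in the way dictated by the orientation. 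This is handled exactly as in the cited work of Le Boudec.

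For (3), with (1) and (2) in hand the action $\Gamma\curvearrowright\Tc_{d,d}$ is faithful, minimal (by edge-transitivity) and of general type, but not topologically free on the boundary; hence Theorem~\ref{Thm td} applies and shows $\Gamma$ is not highly transitive --- equivalently, \cite[Corollary~1.5]{leboudecTripleTransitivityNonfree2019} bounds $\mathrm{td}(\Gamma)$ by $3$. For (4), I would copy the argument of Proposition~\ref{prop:properties nonHTamalgam}(4): given $\gamma_0\neq1$, choose $\xi\in\partial\Tc_{d,d}$ with $\gamma_0\xi\neq\xi$; using transitivity on positive edges, build a hyperbolic element $\gamma$ whose axis runs through a positive edge close to $\xi$ and then through a far-away positive edge, so that the repelling endpoint $\xi^-$ of $\gamma$ is close enough to $\xi$ to satisfy $\gamma_0\xi^-\neq\xi^-$. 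Then the fixed-point sets of the conjugates $\gamma^n\gamma_0\gamma^{-n}$ are pushed into arbitrarily small neighbourhoods of the attracting endpoint of $\gamma$ as $n\to\infty$, so these conjugates are pairwise distinct and the conjugacy class of $\gamma_0$ is infinite.
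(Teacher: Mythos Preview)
Your proposal is correct and follows exactly the approach the paper intends: the paper omits the proof entirely, stating only that it ``is similar to the one of Proposition~\ref{prop:properties nonHTamalgam}'', and you have carried out precisely that transposition, including a careful treatment of the one genuinely new point---the orientation-preservation in the inductive extension for~(2), where the free (hence simply transitive) action of $F_d$ on colours together with its partition-preserving nature forces the correct block behaviour. Your handling of each item matches the amalgam proof line by line, with the appropriate substitutions (positive-edge transitivity in place of edge transitivity, non-ascending in place of non-degenerate via Section~\ref{PrelimHNN}).
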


\section{Other types of actions and necessity of the minimality 
	assumption}\label{MinimalityIsNecessary}

We now discuss various natural extensions of Theorem 
\ref{ThmMain} by considering  other types of actions (recall that group 
actions on trees are classified in five different types, see Section 
\ref{PrelimTrees}). As we will see, non-general type actions which are 
topologically free on the boundary seem to play 
no role regarding high transitivity. We will also see that the minimality 
hypothesis in Theorem 
\ref{ThmMain} cannot be avoided. 

Let us recall that a group action by homeomorphisms on a topological space
is called \textbf{minimal} when every orbit is dense. 
Note that given a group action on a tree, 
the minimality of the action on the boundary implies the minimality of 
the action on the tree but the converse does not hold: for instance the standard
$\Z$-action on itself is minimal but the action on the boundary is not since it
has two distinct fixed points.

\begin{proposition}\label{prop: rf has elliptic}
	Every residually finite group admits an elliptic faithful 
	action 
	on a tree with non-empty boundary
	such that the action on the boundary is both free and minimal.
\end{proposition}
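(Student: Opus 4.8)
The plan is to realize such an action on the \emph{coset tree} of a well-chosen residual chain. Since $\Gamma$ is infinite and residually finite, first I would fix a decreasing sequence of finite-index normal subgroups $\Gamma = N_0 \supsetneq N_1 \supsetneq N_2 \supsetneq \cdots$ with $\bigcap_{n\geq 0} N_n = \{1\}$ and all inclusions strict; this is possible because no $N_n$ can be trivial ($\Gamma$ being infinite) so infinitely many inclusions in any residual chain are proper, and we pass to the corresponding subsequence. Let $\Tc$ be the rooted tree whose level-$n$ vertices are the cosets in $\Gamma/N_n$, with $gN_{n+1}$ joined to $gN_n$; this is well defined since $N_{n+1}\subseteq N_n$, it is locally finite as each index $[N_n:N_{n+1}]$ is finite, every vertex has at least two children by strictness (so no vertex has degree $1$ and $\partial\Tc$ is a Cantor set), and $\Gamma$ acts on $\Tc$ by left translation, fixing the root $N_0$. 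Thus the action is elliptic.

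Next I would check faithfulness: an element $g$ acting trivially fixes every coset $N_n$, hence lies in every $N_n$, hence $g=1$. Then I would identify the boundary. Because $\Tc$ is a locally finite rooted tree in which every vertex has a child, $\partial\Tc$ is canonically the set of rays issued from the root (two distinct such rays diverge and never meet again, hence are inequivalent), and such a ray is exactly a coherent sequence $(x_nN_n)_n$, i.e.\ an element of the inverse limit $\widehat\Gamma := \varprojlim_n \Gamma/N_n$. The natural map $\iota:\Gamma\to\widehat\Gamma$, $g\mapsto (gN_n)_n$, is injective since $\bigcap_n N_n=\{1\}$, has dense image since every basic open set of $\widehat\Gamma$ is prescribed by a single coordinate $x_mN_m$ and is met by $\iota(x_m)$, and intertwines the induced action $\Gamma\curvearrowright\partial\Tc$ with the left translation action of $\Gamma$ on $\widehat\Gamma$ (the half-tree neighbourhoods of $\partial\Tc$ correspond to the cylinders of $\widehat\Gamma$, so $\iota$ is a homeomorphism onto the boundary).

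It then remains to show that left translation of $\Gamma$ on $\widehat\Gamma$ is free and minimal. For freeness, if $g\cdot(x_nN_n)_n = (x_nN_n)_n$ then $x_n^{-1}gx_n\in N_n$ for every $n$, and normality of $N_n$ yields $g\in x_nN_nx_n^{-1}=N_n$ for every $n$, so $g=1$. For minimality, fix $\xi\in\widehat\Gamma$: the map $h\mapsto h\xi$ is a homeomorphism of the compact group $\widehat\Gamma$ sending the dense subset $\iota(\Gamma)$ onto the orbit $\Gamma\cdot\xi$, so every orbit is dense and $\partial\Tc$ has no proper nonempty closed invariant subset.

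I do not anticipate a genuine obstacle. The only point that must be handled with care is the appeal to \emph{normality} of the $N_n$ in the freeness computation --- without it one would only obtain $x_n^{-1}gx_n\in N_n$ and could not conclude --- together with the elementary bookkeeping needed to arrange strict inclusions so that $\Tc$ is leafless; both are routine.
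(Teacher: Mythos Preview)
Your proof is correct and follows the same coset-tree construction as the paper; the paper's version is much terser, simply noting that transitivity on each level gives minimality on the boundary and that normality together with trivial intersection gives freeness, while you route minimality through the profinite group $\widehat\Gamma$, which is an equally valid (and slightly more informative) argument. One minor wording slip: the parenthetical ``so $\iota$ is a homeomorphism onto the boundary'' should say that the identification $\partial\Tc\cong\widehat\Gamma$ is a $\Gamma$-equivariant homeomorphism, since $\iota:\Gamma\to\widehat\Gamma$ itself only has dense (not full) image.
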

\begin{proof}
	Let $\Gamma$ be a residually finite group, let $(\Gamma_n)_{n\geq 0}$ be a 
	decreasing 
	chain of  finite index normal subgroups with trivial intersection, 
	where 
	$\Gamma_0=\Gamma$. Then the disjoint union of the coset spaces 
	$\Gamma/\Gamma_n$ has a natural tree structure where we connect each 
	$\gamma\Gamma_{n+1}$ to $\gamma\Gamma_n$, and the boudary is non-empty. Since the action is 
	transitive on 
	each level of the tree, this action is minimal on the boundary. It is 
	free 
	because the subgroups $\Gamma_n$ are normal and intersect trivially. 
	Finally it is elliptic because the vertex $\Gamma$ is fixed 
	(moreover, the only non-trivial invariant subtrees are balls around $\Gamma$).
\end{proof}
Since there are both highly transitive residually finite groups  (such as 
$\mathbb F_2$) and non-highly transitive residually finite groups (such as 
$\mathbb Z$), we see that there is no hope for a classification of the 
transitivity degree of groups admitting an elliptic faithful action on a 
tree 
such that the induced action on the boundary is free and minimal. We can use also this 
construction in 
order to show that the minimality assumption for the action on the tree is needed in Theorem 
\ref{ThmMain}.

\begin{proposition}\label{prop: free group times rf action on tree}
	Let $\Gamma$ be a non-abelian free group. Then for every residually finite group $\Lambda$, the group 
	$\Gamma\times \Lambda$ admits an action of general type on a tree which 
	is 
	topologically free on the boundary.
\end{proposition}
\begin{proof}
	Since $\Gamma$ is free, we have a free $\Gamma$-action on a tree $\Tc_1$ which is of general type because $\Gamma$ is not abelian. 
	Let $\Lambda\act\Tc_2$ be an action provided by 
	the previous proposition, let $o$ be its unique fixed point. 
	Our new tree $\Tc$ 
	is obtained by gluing over each vertex of $\Tc_1$ a copy of $\Tc_2$ at 
	its origin $o$. 
	To be more precise, the vertex set is 
	\( V(\Tc)=V(\Tc_1)\times V(\Tc_2) \), 
	and on the vertex set $\Tc_1\times \{o\}$ we put a copy of the 
	edges of $\Tc_1$, while for each  $v\in V(\Tc_1)$, we put a copy of the 
	edges of $\Tc_2$ on the vertex set $\{v\}\times \Tc_2$.
	
	Then the $\Gamma\times \Lambda$ action on $V(\Tc)$ given by $(\gamma, 
	\lambda) \cdot (x_1, x_2)=(\gamma\cdot x_1,\lambda\cdot x_2)$ is an action by 
	automorphisms on our new tree $\Tc$. Noting that each half-tree 
	contains 
	a copy of a half-tree of $\Tc_2$, it is not hard to check that this action is moreover 
	topologically free. Moreover, it is of general type since the 
	$\Gamma$-action on $\mathcal T_1$ was of general type.
\end{proof}

The previous proposition will allow us to show that in our main result, the hypothesis of minimality of the action on the tree is needed, 
using the well-known fact that non-trivial product groups cannot be highly transitive.
For the convenience of the reader, we provide a proof of the latter fact via the following stronger result.

\begin{proposition}\label{prop: dense in topo simple}
	Let $G$ be a non-abelian topologically simple group. If $\Gamma$ is a 
	dense 
	subgroup of $G$, then the centralizer of every non-trivial element of 
	$\Gamma$ is core-free.
\end{proposition}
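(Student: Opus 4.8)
The plan is to play the algebraic notion of core against the topological simplicity of $G$. Fix $\gamma\in\Gamma\setminus\{1\}$, write $C=C_\Gamma(\gamma)$ for its centralizer in $\Gamma$, and let $N=\bigcap_{g\in\Gamma}gCg\inv$ be the core of $C$ in $\Gamma$; we must show $N=\{1\}$. The key observation is that $N$ is a normal subgroup of $\Gamma$, so its closure $\overline N$ in $G$ is a normal subgroup of $\overline\Gamma=G$. Indeed, for each fixed $g\in\Gamma$ the continuous map $x\mapsto gxg\inv$ sends $N$ into $N$, hence $\overline N$ into $\overline N$; then, since conjugation $G\times G\to G$ is continuous and $\Gamma$ is dense in $G$, this upgrades to $g\overline N g\inv\subseteq\overline N$ for every $g\in G$. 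As $\overline N$ is a closed normal subgroup of the topologically simple group $G$, either $\overline N=\{1\}$ or $\overline N=G$.

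If $\overline N=\{1\}$, then $N\subseteq\overline N=\{1\}$ and we are done. It remains to rule out the case $\overline N=G$. For this I would use that every element of $N$ commutes with $\gamma$, so $N$ is contained in $C_G(\gamma)=\{g\in G:g\gamma g\inv=\gamma\}$, which is closed in $G$ (it is the preimage of $\{1\}$ under the continuous map $g\mapsto g\gamma g\inv\gamma\inv$). Hence $\overline N=G$ would force $C_G(\gamma)=G$, i.e. $\gamma\in Z(G)$. But $Z(G)=\bigcap_{g\in G}C_G(g)$ is an intersection of closed subgroups, hence a closed normal subgroup of $G$, and it is non-trivial since it contains $\gamma\neq 1$; topological simplicity then gives $Z(G)=G$, i.e. $G$ is abelian, contradicting the hypothesis that $G$ is non-abelian. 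Therefore $\overline N=G$ is impossible, and $N=\{1\}$, which is exactly the assertion that $C_\Gamma(\gamma)$ is core-free.

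There is essentially no serious obstacle here: the whole proof is the bookkeeping that the core $N$ is normal in $\Gamma$ and that centralizers and the center in a topological group are closed subsets, after which topological simplicity does all the work in two short case distinctions. The only points deserving a line of care are that passing to closures preserves normality (which is where density of $\Gamma$ in $G$ and the continuity of conjugation in $G$ enter) and the inclusion $\overline N\subseteq C_G(\gamma)$, which relies precisely on $C_G(\gamma)$ being closed.
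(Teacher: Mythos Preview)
Your proof is correct and follows essentially the same approach as the paper's: both use that the core $N$ of $C_\Gamma(\gamma)$ is normal in $\Gamma$, pass to its closure to get a closed normal subgroup of $G$, invoke topological simplicity, and then use that $C_G(\gamma)$ is closed to force $\gamma$ central, contradicting non-abelianness via $Z(G)$. The paper's version is simply more terse (it phrases it as a contradiction from the outset and skips the verification that $\overline N\trianglelefteq G$), while you spell out the two-case dichotomy and the continuity arguments more carefully.
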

\begin{proof}
	Let $\gamma\in\Gamma\setminus\{1_\Gamma\}$. If the centralizer of 
	$\Gamma$ 
	is not core-free, then there exists a normal subgroup $N\leq\Gamma$ 
	such 
	that every element of $N$ commutes with $\gamma$. Since $G$ is 
	topologically simple, $N$ is dense in $G$, so by continuity of group 
	multiplication we conclude that every element of $G$ commutes with 
	$\gamma$. In particular, $G$ has a non-trivial center, which 
	contradicts 
	the topological simplicity of $G$ since $G$ is not abelian.
\end{proof}
\begin{corollary}\label{cor: ht restriction}
	Let $\Gamma$ be a highly transitive group. Then the centralizer of 
	every 
	non-trivial element of $\Gamma$ must be core-free, and $\Gamma$ cannot 
	be 
	decomposed as a non-trivial direct product.
\end{corollary}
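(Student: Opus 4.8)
The plan is to realize $\Gamma$ as a dense subgroup of the symmetric group $S(X)$ of a countably infinite set $X$ and then invoke Proposition~\ref{prop: dense in topo simple}. First I would fix a faithful highly transitive action $\Gamma\curvearrowright X$, which exists by the definition of a highly transitive group. Faithfulness gives an injective homomorphism $\Gamma\hookrightarrow S(X)$. Moreover, the defining property of high transitivity, in the form provided by Lemma~\ref{HT and disjoint supports}, says exactly that the image of $\Gamma$ meets every basic open set of $S(X)$ for the topology of pointwise convergence: given a finite partial bijection sending pairwise distinct points $x_1,\dots,x_k$ to pairwise distinct points $y_1,\dots,y_k$, there is $\gamma\in\Gamma$ with $x_i\gamma=y_i$ for all $i$. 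Hence $\Gamma$ is dense in $S(X)$.

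Next I would recall that $S(X)$ is a non-abelian topologically simple group for the topology of pointwise convergence: indeed $|X|\geq 3$, and by the Baer--Schreier--Ulam description of the normal subgroups of $S(X)$ (for $X$ countable, these are $\{1\}$, the finitary alternating group, the finitary symmetric group, and $S(X)$ itself), the finitely supported permutations already form a dense normal subgroup, so the only closed normal subgroups are $\{1\}$ and $S(X)$. Applying Proposition~\ref{prop: dense in topo simple} to the dense inclusion $\Gamma\leq S(X)$ then yields immediately that the centralizer in $\Gamma$ of every non-trivial element of $\Gamma$ is core-free.

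For the last assertion, I would argue by contradiction: suppose $\Gamma=A\times B$ with $A$ and $B$ non-trivial, and pick $a\in A\setminus\{1\}$. Then $B$ is a non-trivial normal subgroup of $\Gamma$ contained in the centralizer $C_\Gamma(a)$, so the core of $C_\Gamma(a)$ (the largest normal subgroup of $\Gamma$ contained in it) contains $B$ and is in particular non-trivial, contradicting the first part. Hence $\Gamma$ has no non-trivial direct product decomposition.

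There is no genuine obstacle here, as this is a short formal consequence of Proposition~\ref{prop: dense in topo simple}; the only points needing (mild) care are the identification of the density of $\Gamma$ in $S(X)$ with high transitivity via Lemma~\ref{HT and disjoint supports}, and the citation for the topological simplicity of $S(X)$, both of which are standard and already alluded to in the introduction.
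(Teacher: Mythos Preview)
Your proposal is correct and follows essentially the same approach as the paper: embed $\Gamma$ densely in $S(X)$, invoke Proposition~\ref{prop: dense in topo simple} using the topological simplicity of $S(X)$, and then derive the direct product statement by exhibiting a non-trivial normal subgroup inside a centralizer. The only difference is that you supply slightly more detail (the reference to Lemma~\ref{HT and disjoint supports} and the Baer--Schreier--Ulam classification) than the paper does.
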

\begin{proof}
	If $\Gamma$ is highly transitive, it can be embedded as a dense 
	subgroup of 
	$S(X)$ for some infinite set $X$, and since the latter is topologically 
	simple, the conclusion follows from the previous result. Moreover, if 
	$\Gamma$ could be decomposed as a non-trivial direct product 
	$\Gamma_1\times\Gamma_2$, then if $\gamma\in\Gamma_1\setminus 
	\{1_{\Gamma_1}\}$, the element $(\gamma,1_{\Gamma_2})$ would commute 
	with 
	every element of the non-trivial normal subgroup 
	$\{1_{\Gamma_1}\}\times\Gamma_2$, a contradiction.
\end{proof}
Applying Proposition \ref{prop: free group times rf action on tree} for instance to $\Gamma=\mathbb F_2$ and 
$\Lambda=\Z$, we see that the finitely generated group $\mathbb F_2\times \Z$ admits a (faithful) 
action of general type on a tree $\Tc$ which is topologically free on the 
boundary, although the group $\mathbb F_2\times \Z$ is not highly transitive because it decomposes as a 
direct product. We see moreover that when restricting to the minimal 
component 
of this action, we will loose the 
faithfulness of 
the action (in particular the topological freeness), which is why Theorem 
\ref{ThmMain} cannot be applied.

We now move on to showing that no general classification can be hoped for 
in 
the case of parabolic actions (note that minimal parabolic actions only 
arise 
for non-finitely generated groups).

\begin{lemma}
	Every non-finitely generated group admits a faithful 
	parabolic action on a tree.
\end{lemma}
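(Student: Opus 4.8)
The plan is to let $\Gamma$ act on a suitable ``ray of cosets''. Since $\Gamma$ is not finitely generated, I would first write it as a strictly increasing union $\Gamma=\bigcup_{n\ge 0}\Gamma_n$ of proper subgroups with $\Gamma_0=\{1\}$ (for instance, for countable $\Gamma$, enumerate $\Gamma=\{g_1,g_2,\dots\}$, put $H_n=\langle g_1,\dots,g_n\rangle$; the chain $(H_n)$ never stabilizes, as otherwise $\Gamma$ would be finitely generated, so a strictly increasing cofinal subsequence exists). Then I would consider the graph $\Tc$ with vertex set $V(\Tc)=\bigsqcup_{n\ge 0}\Gamma/\Gamma_n$ and with exactly one edge joining $g\Gamma_n$ to $g\Gamma_{n+1}$ for every $g\in\Gamma$ and every $n\ge 0$ (well defined since $\Gamma_n\subseteq\Gamma_{n+1}$), together with its left-translation $\Gamma$-action. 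The ``level'' map $\operatorname{lev}\colon g\Gamma_n\mapsto n$ is $\Gamma$-invariant, so the action is without inversion; and its restriction to level $0$, that is, to $\Gamma/\Gamma_0=\Gamma$, is the left regular action, so the action on $\Tc$ is faithful.

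Next I would show that $\Tc$ is a tree. Every vertex $g\Gamma_n$ has a unique neighbour of higher level, namely $g\Gamma_{n+1}$; iterating produces an ascending ray out of each vertex. The graph is connected because any two ascending rays eventually merge: given $g,h$, choose $N$ with $g^{-1}h\in\Gamma_N$, so $g\Gamma_N=h\Gamma_N$. For acyclicity, for a vertex $x$ let $D(x)$ be the set of vertices whose ascending ray passes through $x$ (including $x$ itself); a short check shows that the only edge joining $D(x)$ to its complement is the edge from $x$ to its higher-level neighbour. If $\Tc$ contained a cycle it would contain an elementary one (Section~\ref{PrelimGraphs}); taking a vertex $w$ of maximal level on such a cycle, its two incident cycle-edges go down to two distinct children $a\neq b$ of $w$, and then the cycle, with $w$ removed, is a path from $a$ to $b$ avoiding $w$ --- impossible, since it would have to leave $D(a)$ (which does not contain $b$, the ascending rays being deterministic) through the edge joining $a$ to $w$. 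Hence $\Tc$ is a forest, and being connected it is a tree. (Alternatively, $\Tc$ is the Bass--Serre tree of the ray-shaped graph of groups $\Gamma_0\hookrightarrow\Gamma_1\hookrightarrow\cdots$, so this is classical Bass--Serre theory \cite{serreTrees1980}.)

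Finally I would read off the type of the action. Each $g\in\Gamma$ lies in some $\Gamma_N$, hence fixes the vertex $\Gamma_N$, so $g$ is elliptic and the action has no hyperbolic element. On the other hand $\Gamma$ fixes no vertex: fixing $g\Gamma_n$ would force $\Gamma=g\Gamma_ng^{-1}$, contradicting that $\Gamma_n$ is proper; and $\Gamma$ stabilizes no pair of antipodal edges, since the endpoints of such a pair sit at consecutive levels and would then be individually fixed. So the action is not elliptic, and by the classification recalled in Section~\ref{PrelimTrees} it is therefore parabolic (one may also check directly that $\Gamma$ fixes the unique end of $\Tc$ carried by the ascending rays, since $\Gamma$ maps ascending edges to ascending edges). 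The only genuinely delicate point is the acyclicity of $\Tc$; everything else is routine bookkeeping.
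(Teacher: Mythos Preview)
Your proof is correct and follows essentially the same construction as the paper: build the tree on $\bigsqcup_{n\ge 0}\Gamma/\Gamma_n$ for an exhausting chain $\Gamma_0=\{1\}\subset\Gamma_1\subset\cdots$ and act by left translation. The paper's proof simply asserts the tree structure and reads off faithfulness, ellipticity of every element, and absence of a global fixed point, whereas you additionally supply a careful verification that $\Tc$ is a tree (and note the Bass--Serre interpretation); this extra detail is fine but not required.
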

\begin{proof}
	Since $\Gamma$ is countable, it can be written as a countable 
	increasing 
	union of finitely generated subgroups 
	$\Gamma=\bigcup_{n\in\N}\Gamma_n$, 
	where $\Gamma_0=\{1\}$. We now put a tree structure on the vertex set 
	$\bigsqcup_n \Gamma/\Gamma_n$ by connecting each $\gamma \Gamma_n$ to 
	$\gamma\Gamma_{n+1}$. $\Gamma$ acts on this tree by left translation.  
	Since $\Gamma_0=\{1\}$, this action is faithful. Note that every group 
	element $g\in\Gamma$ is elliptic (with fixed point $\Gamma_n$, where 
	$n\in\N$ is such that $g\in\Gamma_n$). Since $\Gamma$ is not finitely 
	generated, this action has no global fixed point, so we have a 
	parabolic 
	action.
\end{proof}
Note that the $\Gamma$-invariant subtrees of the action constructed above 
are exactly those of the form $\sqcup_{n\geq 
	m}\Gamma/\Gamma_n$ for some $m\geq 0$. So this action has no minimal 
globally invariant 
subtree, as opposed to what happens for finitely generated groups. 

\begin{proposition}
	Let $\Gamma$ be a non-finitely generated group, let $\Lambda$ be a 
	residually finite infinite group. Then $\Gamma\times \Lambda$ admits a 
	parabolic action on a tree such that the action on the boundary is 
	topologically free.
\end{proposition}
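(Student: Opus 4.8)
The plan is to mimic the wedging construction used just above for general type actions: glue, over suitable vertices of the parabolic tree produced by the preceding lemma, copies of the residually finite tree produced by Proposition~\ref{prop: rf has elliptic}. The only new twist is that one must glue over the \emph{leaves} of the parabolic tree rather than over all of its vertices, so that no stabilizer issue spoils topological freeness.

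First, I would apply the preceding lemma to obtain a faithful parabolic action $\Gamma\curvearrowright\Tc_1$, where $\Tc_1=\bigsqcup_n \Gamma/\Gamma_n$ for an increasing exhaustion $\{1\}=\Gamma_0\leq\Gamma_1\leq\cdots$ of $\Gamma$ by finitely generated subgroups; after passing to a subsequence I may assume all inclusions are strict, so that $\Tc_1$ has enough branching. The level-$0$ vertices are exactly the leaves of $\Tc_1$, they form a single $\Gamma$-orbit, and each has trivial stabilizer. Second, Proposition~\ref{prop: rf has elliptic} gives a faithful action $\Lambda\curvearrowright\Tc_2$ fixing a vertex $o$, which is free and minimal on $\partial\Tc_2$; passing again to a subsequence I may assume $\Tc_2$ has no leaves, hence every half-tree of $\Tc_2$ contains a geodesic ray. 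Now form the tree $\Tc$ by gluing, over each leaf $v$ of $\Tc_1$, a copy of $\Tc_2$ along the identification $o\leftrightarrow v$: set $V(\Tc)=\bigl(V(\Tc_1)\times\{o\}\bigr)\cup\bigsqcup_{v\text{ leaf}}\{v\}\times V(\Tc_2)$, put a copy of the edges of $\Tc_1$ on $V(\Tc_1)\times\{o\}$ and, for each leaf $v$, a copy of the edges of $\Tc_2$ on $\{v\}\times V(\Tc_2)$. Since $\Gamma$ acts on $\Tc_1$ by automorphisms it permutes the leaves, so $(\gamma,\lambda)\cdot(x,y):=(\gamma x,\lambda y)$ defines an action of $\Gamma\times\Lambda$ on $\Tc$ by automorphisms (one checks $\Tc$ is a tree, being a union of trees glued along single points).

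Then I would verify the two required properties. \emph{Parabolicity}: every $(\gamma,\lambda)$ is elliptic, because $\gamma$ fixes some vertex $v\in V(\Tc_1)$ (the $\Gamma$-action on $\Tc_1$ being parabolic, all its elements are elliptic) and $\lambda o=o$, so $(\gamma,\lambda)$ fixes $(v,o)\in V(\Tc)$; thus the action has no hyperbolic element. On the other hand $\Tc_1\times\{o\}$ is an isometrically embedded subtree of $\Tc$ on which $\Gamma$ already has unbounded orbits, so $\Gamma\times\Lambda\curvearrowright\Tc$ is not elliptic, hence parabolic. \emph{Topological freeness}: suppose $(\gamma,\lambda)\neq 1$ fixes a nonempty open subset of $\partial\Tc$ pointwise; then it fixes some half-tree $\Hc$ of $\Tc$ pointwise (as $\Tc$ has no leaves, fixing a basic open set $U_\Hc$ pointwise forces fixing $\Hc$ pointwise). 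If $\Hc$ is one of the small half-trees contained in a single $\{v\}\times\Tc_2$ with $v$ a leaf of $\Tc_1$, then $\gamma$ fixes the leaf $v$, so $\gamma=1$, and $\lambda$ fixes a half-tree of $\Tc_2$ pointwise, so it fixes a point of $\partial\Tc_2$, so $\lambda=1$ by freeness on the boundary --- a contradiction. Otherwise $\Hc$ meets $\Tc_1\times\{o\}$, and using the branching of $\Tc_1$ one checks that $\Hc$ contains some leaf $v$ of $\Tc_1$ in its interior, hence contains the whole glued copy $\{v\}\times\Tc_2$; the same dichotomy then forces $\gamma=1$ (as $\gamma$ fixes $v$) and $\lambda=1$ (as $\lambda$ fixes all of $\Tc_2$), again a contradiction. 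So the boundary action is topologically free.

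The main obstacle is precisely what makes the naive ``glue over every vertex'' version fail in the parabolic setting: since the parabolic tree of a non-finitely generated group typically has large vertex stabilizers, an element $(\gamma,1)$ with $\gamma$ in such a stabilizer would fix pointwise the half-tree hanging off that vertex, ruining topological freeness. Restricting the gluing to the leaves, which carry trivial stabilizers, removes this; the remaining bookkeeping is to confirm that after this restriction every half-tree of $\Tc$ still contains (or equals) a full glued copy of $\Tc_2$ at a leaf, which is exactly why one first arranges the exhausting chain $(\Gamma_n)$ to be strictly increasing.
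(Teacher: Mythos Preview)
Your proof is correct and follows essentially the same construction as the paper: glue a copy of the residually-finite tree $\Tc_2$ at each level-$0$ vertex of the parabolic tree $\Tc_1$, then check parabolicity and topological freeness on the boundary. Your version is considerably more detailed---in particular you explain why gluing specifically at the leaves (which have trivial $\Gamma$-stabilizer) is what makes topological freeness go through, and you carry out the half-tree case analysis that the paper simply omits (the paper's proof also contains a typo, writing ``elliptic'' where ``parabolic'' is meant).
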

\begin{proof}
	As before we use the $\Gamma$-action on a tree $\Tc_1$ provided by the 
	previous lemma. 
	Let $\Lambda\act\Tc_2$ be an elliptic action provided 
	by 
	Proposition \ref{prop: rf has elliptic}, whose unique fixed point is 
	denoted by $o\in V(\Tc_2)$. This time, we glue a copy of $\Tc_2$ on 
	each terminal vertex
	(that is, each vertex of the form $\gamma\Gamma_0$) in the tree $\Tc_1$, thus yielding a 
	$\Gamma\times 
	\Lambda$ action which is easily seen to be parabolic. Moreover, the 
	action on the boundary is topologically free. Note furthermore that the 
	action on 
	the 
	boundary has a unique fixed point, corresponding to the unique element 
	of 
	the boundary of $\Tc_1$.
\end{proof}
As an example, we can take for $\Gamma$ the group of finitely supported 
permutations, and for $\Lambda$ the group $\Z$, and we get a non-highly 
transitive group with a parabolic action which is topologically free on the 
boundary, and which has no minimal component. We do not know if there is a 
highly transitive group with a parabolic action which is topologically free 
on the boundary. \\

Let us now treat the quasi-parabolic case. First, note that the 
Baumslag-Solitar groups ${\rm BS}(1,n)$ for $n\geq 2$ provide examples of 
groups 
admitting an action on a tree which is minimal and quasi-parabolic (since 
it is an ascending HNN extension, cf. Sec. \ref{PrelimHNN}) and 
topologically free on the boundary (by Lemma \ref{tdBSLemma}), but which 
are not highly transitive since they are solvable.

\begin{remark}
	Another example of a non-highly transitive group with a quasi-parabolic 
	minimal action on a tree which is topologically free on the boundary is 
	provided by Thompson's group $F=\la x_0,x_1,x_2,\ldots\vert\, x_k\inv 
	x_nx_k=x_{n+1}\text{ for all }k<n\ra$. $F$ is not highly transitive 
	since, 
	by \cite[Corollary~
	5.3]{leboudecTripleTransitivityNonfree2019}, it has transitivity degree 
	at 
	most $2$. Let $H$ be the 
	subgroup generated by $\{x_i: i\geq 1\}$ (which is isomorphic to $F$), 
	and $\vartheta$ be the endomorphism which takes $x_i$ to $x_{i+1}$ and 
	observe that $F=\HNN(H,H,\vartheta)$. By Section \ref{PrelimHNN} the 
	action 
	of $F$ on the associated Bass-Serre tree is quasi-parabolic and 
	minimal. 
	Moreover, since $x_0^{-k}\vartheta(H)x_0^k=\langle x_n\,\vert\,n\geq 
	k+2\rangle$ for all $k\geq 1$, it is not difficult to check that 
	$\vartheta(H)$ is core-free in $H$ hence, the action is also 
	topologically 
	free on the boundary.
\end{remark}

Coupled with the previous examples, the following
proposition shows that for groups admitting minimal quasi-parabolic 
actions on a tree, the topological freeness of the action does not play a 
role in their high transitivity.

\begin{proposition}
	The finitely generated group $\Gamma=S_f(\Z)\rtimes\Z$ is highly 
	transitive and finitely 
	generated but admits a minimal quasi-parabolic action on a tree which 
	is topologically free on the boundary.
\end{proposition}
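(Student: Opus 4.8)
The plan is to recognise $\Gamma=S_f(\Z)\rtimes\Z$ as an \emph{ascending} HNN extension; its Bass--Serre tree will then carry a minimal quasi-parabolic action, and topological freeness on the boundary can be checked directly. The high transitivity and finite generation are immediate: viewing $\Gamma=\langle S_f(\Z),s\rangle\le S(\Z)$ with $s\colon k\mapsto k+1$, the tautological action $\Gamma\curvearrowright\Z$ is faithful and highly transitive because already $S_f(\Z)$ acts highly transitively on $\Z$; and $s^{-n}(0\ 1)s^{n}=(n\ n+1)$ for all $n\in\Z$, while these transpositions generate $S_f(\Z)$, so $\Gamma=\langle(0\ 1),s\rangle$.

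For the HNN structure I would take $H=S_f(\Z_{\ge 0})$ and let $\vartheta\colon H\to H$ be the injective endomorphism shifting the support of a permutation up by one, so that $\vartheta(\sigma)=s^{-1}\sigma s$ and $\vartheta(H)=S_f(\Z_{\ge 1})\subsetneq H$. The canonical surjection $\HNN(H,H,\vartheta)\to\Gamma$ sending the stable letter $t$ to $s$ and $H$ into $\Gamma$ by inclusion is an isomorphism: for any ascending HNN extension one has $\HNN(H,H,\vartheta)=\widetilde H\rtimes\langle t\rangle$ with $\widetilde H=\bigcup_{n\ge 0}t^{n}Ht^{-n}$ the direct limit of $H\xrightarrow{\vartheta}H\xrightarrow{\vartheta}\cdots$ on which $t$ acts by the limit shift automorphism, and here that direct limit is $\varinjlim\bigl(S_f(\Z_{\ge 0})\subseteq S_f(\Z_{\ge -1})\subseteq\cdots\bigr)=S_f(\Z)$ with $t$ acting as $s$. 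Since the edge subgroup $\Sigma=H$ equals $H$ while $\vartheta(\Sigma)=S_f(\Z_{\ge 1})\ne H$, the discussion in Section~\ref{PrelimHNN} shows that $\Gamma\curvearrowright\Tc$, the action on the Bass--Serre tree, is quasi-parabolic; it is minimal because $\Gamma$ is transitive on $V(\Tc)=\Gamma/H$, the tree has infinitely many ends because every vertex has $[H:\vartheta(H)]=\infty$ negative neighbours, and the action is faithful because $\bigcap_{n\ge 0}t^{-n}Ht^{n}=\bigcap_{n}S_f(\Z_{\ge n})=\{1\}$, i.e.\ $H$ is core-free in $\Gamma$.

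To prove $\Gamma\curvearrowright\partial\Tc$ topologically free I would invoke Proposition~\ref{half-trees and topological freeness} and show that no $\gamma_0\in\Gamma\setminus\{1\}$ fixes a half-tree pointwise. Suppose $\gamma_0$ fixes $\Hc_f$ pointwise. Since $\Gamma$ is transitive on positive (hence on negative) edges, one may conjugate so that $f=\Sigma$ or $f=\overline{\Sigma}$. If $f=\overline{\Sigma}$, then $\Hc_f$ is the subtree of $\Tc$ hanging below the vertex $H$, so $\gamma_0\in\Gamma_H=S_f(\Z_{\ge 0})$ must fix all its downward neighbours; unwinding the Bass--Serre dictionary identifies these neighbours with $\Z_{\ge 0}$ in such a way that $\gamma_0$ fixes the one labelled $m$ exactly when $\gamma_0$ fixes the point $m$, forcing $\gamma_0=1$. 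If $f=\Sigma$, then $\Hc_f$ is the component of $tH$ in $\Tc\setminus\{f,\overline f\}$, namely the one containing the common fixed end $\xi$, so $\gamma_0\in\Gamma_{tH}=tHt^{-1}$; writing $\gamma_0=t\beta t^{-1}$ with $\beta\in S_f(\Z_{\ge 0})$, the element $\gamma_0$ must fix every downward neighbour of $tH$ except the one leading back to $H$, and the same computation (conjugated by $t^{-1}$) shows this forces $\beta$ to fix every point of $\Z_{\ge 1}$, hence also $0$, so again $\gamma_0=1$. In every case $\gamma_0=1$, a contradiction.

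The identification of the HNN structure and the Bass--Serre bookkeeping are routine. The only genuinely delicate point is the last step: keeping careful track, separately for the half-trees pointing towards and away from the fixed end $\xi$, of exactly which downward neighbours of the relevant vertex an element of its stabiliser is compelled to fix, and then reducing this to the obvious fact that $\bigcap_{m\ge 1}\Stab_{S_f(\Z_{\ge 0})}(m)=\{1\}$.
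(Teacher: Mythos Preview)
Your proof is correct and follows the same overall strategy as the paper: both identify $\Gamma$ with the ascending HNN extension $\HNN(S_f(\Z_{\ge 0}),S_f(\Z_{\ge 0}),\vartheta)$, where $\vartheta$ is the shift, and use the Bass--Serre tree of this decomposition. There are two technical differences worth noting. First, for the isomorphism $\HNN(H,H,\vartheta)\cong\Gamma$, the paper checks injectivity of the canonical surjection via an explicit normal-form computation, whereas you invoke the general fact that an ascending HNN extension splits as (direct limit)$\rtimes\Z$ and identify the direct limit with $S_f(\Z)$; your route is slicker and avoids case analysis. Second, for topological freeness on the boundary, the paper simply observes that $\vartheta(\Sigma)=S_f(\Z_{\ge 1})=\Sigma(0)$ is core-free in $H=S_f(\Z_{\ge 0})$ (Lemma~\ref{core-freeness in S_f(X)}) and then appeals to the general Lemma~\ref{core-free implies topologically free}. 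Your direct half-tree argument is essentially a special case of the proof of that lemma carried out by hand; it works, but the paper's route is shorter because the relevant lemma is already available.

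One minor inaccuracy: with the paper's Definition~\ref{def:halfgraph}, the half-tree $\Hc_\Sigma$ actually contains the vertex $s(\Sigma)=H$ (see Remark~\ref{RemHalGraphs}), so in your case $f=\Sigma$ the element $\gamma_0$ in fact fixes \emph{all} downward neighbours of $tH$, including the one leading back to $H$. This does not affect your conclusion, since the weaker statement you use (fixing those labelled by $\Z_{\ge 1}$) already forces $\beta=1$.
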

\begin{proof}
	We have already observed that $\Gamma$ is highly transitive thanks to 
	its natural action on $\Z$. We will obtain our desired action on a tree by showing that it can be written 
	as an 
	\emph{ascending} HNN extension. Note that $\Gamma$ is a semi-direct product, so it does have a natural HNN extension decomposition, but this decomposition provides a lineal action so we need another one.
	
	Denote by $\tau$ the translation on $\Z$. Let 
	$\vartheta$ be the corresponding inner automorphism of 
	$S(\Z)$, i.e. $\vartheta(\gamma)=\tau\inv\gamma\tau$.
	Consider the subgroups $H=\Sigma=S_f(\N)$ which we view as subgroups 
	of 
	$S_f(\Z)<\Gamma$. Note that $\vartheta(\Sigma)$ is the stabilizer of 
	$0$ in 
	$S_f(\N)$ i.e. $\vartheta(\Sigma)=\Sigma(0)<S_f(\N)$ with the notations 
	of 
	Section \ref{ex around fsupp permutations}. We claim that 
	$\Gamma=\HNN(H,\Sigma,\vartheta)$.
	
	First, since $\vartheta(h) = \tau\inv h\tau$ for all $h\in H$, we have a quotient map
	$\pi:\HNN(H,\Sigma,\vartheta)\to 
	\Gamma$ given by $t\mapsto \tau$ and $h\mapsto h$ for every $h\in H$.
	To show that $\pi$ is injective, we use the fact that the HNN extension is ascending:
	since $H= \Sigma$, for every $g \in \HNN(H,\Sigma,\vartheta)$,
	there exists $k\in \Z$ such that $t^{-k} g t^{k} = h t^n$ with $h\in H$ and $n\in \Z$ (it suffices to take $k$ sufficently large).
	If $g\neq 1$, one must have $h\neq \id$ or $n\neq 0$, and it is clear that $\pi(g) \neq 1$ in both cases.
	
	So we do have $\Gamma=\HNN(H,\Sigma,\vartheta)$, in particular it is an 
	ascending non-degenerate HNN extension. So as explained in Section 
	\ref{PrelimHNN}, its action on its Bass-Serre tree is minimal and 
	quasi-parabolic. Finally, $\vartheta(\Sigma)=\Sigma(0)$ is core-free in 
	$H=S_f(\N)$ by Lemma \ref{core-freeness in S_f(X)}, so 
	by Lemma \ref{core-free implies topologically free} we conclude that 
	the 
	action on the boundary is topologically free as wanted.
\end{proof}
\begin{remark}
	One can also construct a tree $\Tc$ directly, with vertex set  $V = \bigsqcup_{k\in\Z} S_f(\Z)/H_k$, where $H_k := S_f(\Z_{\geq -k})$, and positive edges corresponding to inclusions $\sigma H_k \subseteq \sigma H_{k+1}$.
	The reader can verify that $\Gamma=S_f(\Z)\rtimes\Z$ acts on $\Tc$ 
	via 
	\[
	(\sigma,n) \cdot \sigma'H_k :=
	\sigma \tau^n (\sigma'H_k) \tau^{-n}
	= \sigma (\tau^n \sigma' \tau^{-n})H_{k+n},
	\]
	where $\tau$ is still the translation on $\Z$, and then check ``by hand'' that this action has all the properties announced in the above proposition.
\end{remark}

We finally mention the lineal case. Note that in this case, minimal actions 
are not interesting with respect to high transitivity since no subgroup 
of the automorphism group of the biinfinite line is highly transitive. 

In the elliptic case, a natural weakening of the minimality assumption was 
provided by asking that the 
action on the boundary is minimal. Here however, this is still too strong a 
condition since any lineal action will have the two ends corresponding to 
the axis as an invariant set. We thus replace it by topological 
transitivity (which means the existence of a dense orbit) and observe that in this setup, there seems to be no 
connection 
between high transitivity and lineal actions.

\begin{proposition}
	Let $\Gamma$ be a residually finite group, then the group $\Gamma\times 
	\Z$ admits a lineal  action on a tree which is both topologically free 
	and topologically transitive on the boundary.
\end{proposition}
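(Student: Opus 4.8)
The plan is to follow the gluing recipe used in the preceding propositions of this section: inflate each vertex of a bi-infinite line by a copy of a suitable $\Gamma$-tree. First I would apply the construction in the proof of Proposition~\ref{prop: rf has elliptic} (which works for any residually finite group, whether finite or infinite) to obtain a faithful action of $\Gamma$ on a tree $\Tc_2$ possessing a $\Gamma$-fixed vertex $o$, and such that the induced action $\Gamma \curvearrowright \partial\Tc_2$ is free and minimal. Let $L$ be the bi-infinite line, with $V(L)$ identified with $\Z$ and the standard $\Z$-action by translation. I form the tree $\Tc$ with vertex set $V(L) \times V(\Tc_2)$ by putting a copy of the edges of $L$ on $V(L) \times \{o\}$ and, for every $v \in V(L)$, a copy of the edges of $\Tc_2$ on $\{v\} \times V(\Tc_2)$; since each branch $\{v\}\times\Tc_2$ meets $L\times\{o\}$ only at the vertex $(v,o)$, the graph $\Tc$ is easily seen to be a tree. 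Letting $\Gamma\times\Z$ act by $(\gamma,n)\cdot(v,w) = (v+n,\gamma w)$ gives an action by automorphisms of $\Tc$; it preserves $L\times\{o\}$ and fixes it pointwise (as $\gamma o = o$), and it is faithful because the $\Gamma$-action on $\Tc_2$ is.

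To see that this action is lineal, note that every $(\gamma,0)$ fixes the geodesic $L\times\{o\}$ pointwise, hence is elliptic, while for $n\neq 0$ the element $(\gamma,n)$ translates $L\times\{o\}$ by $n$ and moving a vertex off $L\times\{o\}$ strictly increases its displacement, so $(\gamma,n)$ is hyperbolic with axis exactly $L\times\{o\}$. Thus $\Gamma\times\Z$ contains hyperbolic elements and they all share the axis $L\times\{o\}$, which is the definition of a lineal action. For the boundary, write $\eta_\pm$ for the two ends of $L\times\{o\}$ and, for $v\in\Z$ and $\xi\in\partial\Tc_2$, write $\xi^{(v)}$ for the end of $\Tc$ obtained by running along $L$ to $(v,o)$ and then along a ray of $\{v\}\times\Tc_2$ representing $\xi$; then $\partial\Tc = \{\eta_+,\eta_-\}\sqcup\bigsqcup_{v\in\Z}\{\xi^{(v)}:\xi\in\partial\Tc_2\}$, with $(\gamma,n)\cdot\eta_\pm = \eta_\pm$ and $(\gamma,n)\cdot\xi^{(v)} = (\gamma\xi)^{(v+n)}$.

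Topological freeness on the boundary then follows: a non-trivial element $(\gamma,n)$ fixes no point $\xi^{(v)}$ — because $n\neq 0$ changes the superscript, and when $n=0$ freeness of $\Gamma\curvearrowright\partial\Tc_2$ forbids $\gamma\xi=\xi$ — so its fixed-point set in $\partial\Tc$ lies in $\{\eta_+,\eta_-\}$, a set with empty interior since any basic open neighbourhood of $\eta_+$ (the half-tree on the $+\infty$-side of an edge of $L$) contains infinitely many of the $\xi^{(v)}$. For topological transitivity I would exhibit a dense orbit: fix $\xi_0\in\partial\Tc_2$ and set $\omega = \xi_0^{(0)}$; its orbit is $\{\xi^{(v)} : v\in\Z,\ \xi\in\Gamma\cdot\xi_0\}$, and $\Gamma\cdot\xi_0$ is dense in $\partial\Tc_2$ by minimality. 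Given a non-empty basic open set $U_\Hc\subseteq\partial\Tc$, the half-tree $\Hc$ is either the half-tree of an edge of some branch $\{v\}\times\Tc_2$ on the side away from $(v,o)$ — in which case $U_\Hc = \{\xi^{(v)} : \xi\in W\}$ for a non-empty open $W\subseteq\partial\Tc_2$, and $\Gamma\cdot\xi_0$ meets $W$ — or it contains $(v,o)$ for some $v$, hence contains an entire branch $\{v'\}\times\Tc_2$, so that $U_\Hc$ contains every $\xi^{(v')}$; in both cases $U_\Hc$ meets the orbit of $\omega$, which is therefore dense. The only part requiring care — but not any real difficulty — is the bookkeeping: identifying the ends of $\Tc$ and the shape of its basic open sets, and checking that the displacement of $(\gamma,n)$ is minimised exactly along $L\times\{o\}$.
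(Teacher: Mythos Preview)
Your proof is correct and follows exactly the construction the paper sketches: glue a copy of the $\Gamma$-tree $\Tc_2$ from Proposition~\ref{prop: rf has elliptic} at its fixed point $o$ onto every vertex of the line, and let $\Gamma\times\Z$ act diagonally. The paper's own proof is a one-sentence sketch (``as in the previous constructions\ldots''), whereas you carry out the verifications of lineality, topological freeness, and topological transitivity in full; in particular your explicit description of $\partial\Tc$ and the dense-orbit argument are correct, and you rightly prove topological transitivity rather than minimality (the fixed ends $\eta_\pm$ prevent the latter, even though the paper's proof sketch inadvertently says ``minimal on the boundary'').
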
	
\begin{proof}
	Let $\Tc$ be a tree equipped with an elliptic $\Gamma$-action which is 
	minimal and free on the boundary as provided by Proposition \ref{prop: rf has elliptic}, let $o$ be the fixed point. As in the previous 
	constructions, we then glue a copy of $\Tc$ at the vertex $o$ on top of 
	every element of $\Z$, thus obtaining a tree with a natural 
	$\Gamma\times \Z$-action which is both topologically free 
	and topologically transitive on the boundary: every element of the boundary which does not belong to the two element set $\partial \Z$ has a dense orbit and is fixed by no nontrivial group element.
\end{proof}
The previous proposition provides us many non-highly transitive groups with 
a lineal action on a tree which is both topologically free 
and topologically transitive on the boundary.

In the opposite direction, the group $S_f(\Z)\rtimes\Z$ provides us an 
example of a highly 
transitive group satisfying the assumptions of the previous proposition, 
thus showing that lineal actions which are topologically free and 
topologically transitive 
on the boundary do not play a role in high transitivity.

\begin{proposition}
	The highly transitive group $S_f(\Z)\rtimes\Z$ admits a lineal  action 
	on a tree which is both topologically free 
	and topologically transitive on the boundary.
\end{proposition}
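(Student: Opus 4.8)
The plan is to realize $G:=S_f(\Z)\rtimes\Z$ as a group acting on a tree built by grafting, over each vertex of a line, a copy of a fixed tree on which $S_f(\Z)$ acts elliptically in a way that is ``topologically free on the boundary''. Write $G=H\rtimes\langle s\rangle$ with $H=S_f(\Z)$, $s$ the shift $n\mapsto n+1$, and $\vartheta\in\Aut(H)$ the conjugation by $s$ (so that $G=\HNN(H,H,\vartheta)$); let $\varepsilon\colon G\to\Z$ be the associated retraction. Fix finite subsets $\emptyset=F_0\subsetneq F_1\subsetneq\cdots$ of $\Z$ with $\bigcup_nF_n=\Z$, and set $K_n=\Sigma(F_n)\leq H$. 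Then $K_0=H$, the $K_n$ decrease, each $K_n$ is infinite, each index $[K_n:K_{n+1}]$ is infinite (it is the size of an $S_f$-orbit), and $\bigcap_nK_n=\{1\}$.

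First I would build the auxiliary tree $\Tc$ with vertex set $\bigsqcup_{n\geq 0}H/K_n$, joining $hK_{n+1}$ to $hK_n$; it is a tree rooted at $o:=K_0$, every vertex has infinite degree, and $H$ acts on it by left translation fixing $o$. The key verification here is that \emph{no non-trivial element of $H$ fixes a half-tree of $\Tc$ pointwise}: any half-tree contains the full subtree above some vertex $hK_{n+1}$, hence all $hK_m$ with $m\geq n+1$, and an element $h_0$ fixing all of these satisfies $h^{-1}h_0h\in\bigcap_mK_m=\{1\}$, so $h_0=1$. This is the analogue, for $S_f(\Z)$, of Proposition~\ref{prop: rf has elliptic}; the point is that although the $K_n=\Sigma(F_n)$ are very far from normal, their triviality of total intersection is all one needs once one only asks for topological freeness on the boundary rather than freeness on the tree.

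Next I would form $T$ from a line $L$ with vertex set $\Z$ by grafting a copy $\Tc_m$ of $\Tc$, at its root, onto the vertex $m$ for each $m\in\Z$; equivalently $T$ has vertex set $\bigsqcup_{n\geq 0}G/K_n$, with the shifted edges of the $\Tc_m$ together with the edges $\{s^mK_0,s^{m+1}K_0\}$, and $G$ acting by left translation. Concretely $s$ shifts $L$ and carries $\Tc_m$ onto $\Tc_{m+1}$ tautologically, while $h\in H$ fixes $L$ pointwise and acts on $\Tc_m$ through $\rho\circ\vartheta^{-m}$, $\rho$ being the left‑translation action of $H$ on $\Tc$; these are compatible exactly because $\vartheta$ is conjugation by $s$. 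Since $s$ translates $L$, the action is lineal with axis $L$, and every vertex of $T$ has degree $\geq 3$. I would then show no $g\in G\setminus\{1\}$ fixes a half-tree of $T$ pointwise: if $\varepsilon(g)\neq 0$ then $g$ translates $L$ and sends the subtree above any vertex of a grafted copy to a subtree lying over a different vertex of $L$, so it fixes no vertex of any half-tree; if $g=h\in H\setminus\{1\}$, then any half-tree of $T$ contains, for some $m$, infinitely many vertices $s^m\sigma K_n$ (fixed $\sigma$, $n\to\infty$) of $\Tc_m$, and $h$ fixing all of them forces $\sigma^{-1}\vartheta^{-m}(h)\sigma\in\bigcap_nK_n=\{1\}$, hence $h=1$. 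As every vertex of $T$ has degree $\geq 3$, half-trees have ``full'' boundary sets forming a basis of $\partial T$, so fixing a non-empty open subset of $\partial T$ pointwise is the same as fixing a half-tree pointwise; therefore the $G$-action on $\partial T$ is topologically free, in particular faithful.

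Finally, for topological transitivity on $\partial T$: $s$ moves the grafted copies along $L$, and on each $\Tc_m$ the group $H$ acts through a left‑translation action, which is transitive on every level of $\Tc$, hence minimal on $\partial\Tc$. Given two basic open sets $U_{\Hc_1}$ and $U_{\Hc_2}$, I would locate inside $\Hc_1$ the subtree lying above an edge high up in some grafted copy, then exhibit a group element carrying that subtree into $\Hc_2$ (a power of $s$ to land over the right vertex of $L$, composed if necessary with an element of $H$ to hit the right part of the copy), which gives $g\,U_{\Hc_1}\cap U_{\Hc_2}\neq\emptyset$. I expect the construction and verification of the auxiliary action $\rho$ of $S_f(\Z)$, and the careful bookkeeping of which half-trees of $T$ contain which pieces of the grafted copies, to be the only genuinely delicate steps; everything else is the standard grafting construction already used repeatedly in this section.
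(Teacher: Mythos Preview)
Your proof is correct and shares the paper's overall architecture: graft, onto each vertex of a line translated by $s$, a copy of a tree on which $H=S_f(\Z)$ acts elliptically so that no non-trivial element fixes a half-tree, then check lineality, topological freeness, and topological transitivity.

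The difference is in the grafted tree. The paper chooses the most economical option: a bouquet of rays indexed by $S_f(\Z)$ itself, with vertex set $\{0\}\times\{*\}\times\Z \sqcup \N^*\times S_f(\Z)\times\Z$ and action $(\sigma,k)\cdot(n,\tau,m)=(n,\sigma\vartheta^k(\tau),m+k)$. Since $H$ acts on itself freely by left translation, no non-trivial $h\in H$ fixes any vertex off the line, and topological freeness follows in one line; the paper then just says the remaining verifications are ``not hard''. Your coset tree for the chain $K_n=\Sigma(F_n)$ works equally well and is more in the spirit of Proposition~\ref{prop: rf has elliptic}: your key computation (that $h$ fixing $s^m\sigma K_n$ for all large $n$ forces $\sigma^{-1}\vartheta^{-m}(h)\sigma\in\bigcap_n K_n=\{1\}$) is the natural analogue. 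The paper's choice buys brevity; yours buys a tree in which every vertex has infinite degree, which makes the passage from ``no half-tree fixed pointwise'' to ``topologically free on $\partial T$'' (essentially Proposition~\ref{half-trees and topological freeness}, whose proof does not actually need minimality once every half-tree has at least two ends) entirely routine.
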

\begin{proof}
	Let us construct a tree $\Tc$ as follows.
	We start with $\Z$, seen as a bi-infinite line.
	Then, for every $(\sigma, k) \in S_f(\Z)\rtimes\Z$,
	we consider an infinite ray $\Rc_{(\sigma, k)}$, and link its origin $o_{(\sigma, k)}$ to the vertex $k\in \Z$ by an edge.
	The boundary $\partial \Tc$ consists of points $\xi_\gamma$ for $\gamma \in S_f(\Z)\rtimes\Z$ (the extremities of the rays, which are isolated in $\partial \Tc$) and two accumulation points $\eta_\pm$.
	
	The group $S_f(\Z)\rtimes\Z$ acts on $\Tc$ by $(\sigma, k) \cdot k' = k+k'$ and by $\gamma \cdot \Rc_{\gamma'} = \Rc_{\gamma\gamma'}$ (it permutes the rays).
	The induced action on $\partial \Tc$ fixes $\eta_\pm$ and is transitive free on $\{\xi_\gamma : \gamma \in S_f(\Z)\rtimes\Z\}$.
	Thus, it is both topologically free and topologically transitive.
\end{proof}

\bibliographystyle{amsplain}


\begin{dajauthors}

\begin{authorinfo}[fima]
  Pierre Fima\\
  Université Paris Cité and Sorbonne Université, CNRS, IMJ-PRG, \\
  F-75013 Paris, France.\\
  pierre.fima@imj-prg.fr
\end{authorinfo}

\begin{authorinfo}[lemaitre]
  François Le Maître\\
  Université Paris Cité and Sorbonne Université, CNRS, IMJ-PRG, \\
F-75013 Paris, France.\\
françois.le-maitre@imj-prg.fr
\end{authorinfo}

\begin{authorinfo}[moon]
  Soyoung Moon\\
  Université de Bourgogne, Institut Mathématiques de Bourgogne,  CNRS, BP 47870,\\
   21078 Dijon cedex France.\\
   soyoung.moon@u-bourgogne.fr
\end{authorinfo}

\begin{authorinfo}[stalder]
  Yves Stalder\\
  Université Clermont Auvergne, CNRS, LMBP, \\
  F-63000 Clermont-Ferrand, France.\\
  yves.stalder@uca.fr
\end{authorinfo}
\end{dajauthors}

\end{document}